\documentclass[11pt,letter paper, oneside]{amsart}
                      
\usepackage[T1]{fontenc}
\usepackage{lmodern}
\usepackage{graphicx}
\usepackage{amsmath, amsthm, amssymb, amsthm, xfrac, mathtools}
\usepackage{stmaryrd}
\usepackage{tikz}
\usepackage{tikz-cd}
\usepackage[pdftex,lmargin=1in,rmargin=1in,tmargin=1in,bmargin=1in]{geometry}
\theoremstyle{definition}
\newtheorem{definition}{Definition}[section]

\newtheorem{example}[definition]{Example}
\newtheorem{construction}[definition]{Construction}
\theoremstyle{theorem}
\newtheorem{theorem}[definition]{Theorem}
\newtheorem{corollary}[definition]{Corollary}
\newtheorem{proposition}[definition]{Proposition}

\newtheorem{lemma}[definition]{Lemma}
\theoremstyle{remark}
\newtheorem{remark}[definition]{Remark}

\newcommand{\Real}{\mathbb{R}}
\newcommand{\Number}{\mathbb{N}}
\newcommand{\Complex}{\mathbb{C}}
\newcommand{\Rational}{\mathbb{Q}}
\newcommand{\Integer}{\mathbb{Z}}

\usepackage[style=backend=biber,style=alphabetic,doi=false,isbn=false,url=false,dateabbrev=true]{biblatex}

\usepackage{capt-of}

\usepackage{amsmath}
\usepackage{amssymb}

\ifcsname endofdump\endcsname\endofdump\fi

\AtEveryBibitem{\clearfield{version}}
\renewbibmacro{in:}{}
\DeclareFieldFormat*{title}{#1}
\usepackage{xpatch}
\xpatchbibmacro{journal+issuetitle}{\usebibmacro{issue+date}}{, \usebibmacro{date}}{}{}

\DefineBibliographyStrings{english}{pages={, pages },page={p\adddot}}
\author{Yu Huang}
\date{\today}
\title{Pseudo-Anosov flow and dynamics on guts}
\usepackage[bookmarks=true, bookmarksopen=true, colorlinks=true, linkcolor=blue]{hyperref}
\hypersetup{
 pdfauthor={Yu Huang},
 pdftitle={},
 pdfkeywords={},
 pdfsubject={},
 pdfcreator={},
 pdflang={English}}
\begin{document}

\address{Beijing International Center for Mathematical Research, Peking University\\
				Beijing 100871, China P.R.}
\email{hytopaz@stu.pku.edu.cn}
\theoremstyle{theorem}
\newtheorem*{thm:flowinguts}{Theorem~\ref{thm:flowinguts}}
\newtheorem*{thm:annulusisotopy}{Theorem~\ref{thm:annulusisotopy}}
\newtheorem*{thm:encodeclosedorbit}{Theorem~\ref{thm:encodeclosedorbit}}
\newtheorem*{thm:facetinvariance}{Theorem~\ref{thm:facetinvariance}}
\newtheorem*{thm:openfaceinvariance}{Theorem~\ref{thm:openfaceinvariance}}
\newtheorem*{cor:homologicallytrivialorbits}{Corollary~\ref{cor:homologicallytrivialorbits}}
\newtheorem*{cor:distinguished}{Corollary~\ref{cor:distinguished}}
\newtheorem*{thm:growthrate}{Theorem~\ref{thm:growthrate}}
\newtheorem*{thm:entropy}{Theorem~\ref{thm:entropy}}

\newtheorem*{prop:shadow3}{Theorem~\ref{prop:shadow3}}

\newtheorem*{thm:example}{Theorem~\ref{cor:example}}
\newtheorem*{thm:secondmain}{Theorem~\ref{thm:secondmain}}
\newtheorem*{thm:hyperbolicity}{Theorem~\ref{thm:hyperbolicity}}
\begin{abstract}
We relate the dynamics on a closed 3-manifold to the topological invariant, homology guts, proposed by Agol and Zhang \autocite{AgolZhang2022_GutsInSuturedDecompositionsAndTheThurstonNorm_ARXv1}.
Given a closed manifold \(M\) that admits a pseudo-Anosov flow \(\varphi\) without perfect fits, we construct a canonical semiflow on the homology guts associated to homology classes carried by \(\varphi\).
We show that, up to orbit equivalent outside half annuli, the semiflow on the guts is invariant on each Thurston open cone.
As an application, the fundamental group and the sutured structure of homology guts encode closed orbits of \(\varphi\) with particular type.
Besides, for a positive class in \(H^{1}(M)\), the canonical semiflow on the guts has a well-defined growth rate.
\end{abstract}
\maketitle
\tableofcontents
\section{Introduction}
\label{sec:org2e6590d}
\subsection{Overview}
\label{sec:org404197b}
It has long been a central question to understand the interplay between topology and dynamics on 3-manifolds.
Recently, there have been many works on this topic via combinatorial objects and circle actions.
One promising direction is to arrange topological objects in a suitable position relative to the flow.
For instance, a modified JSJ decomposition for Anosov flows is known to hold: up to some technical details, any Anosov flow on a toroidal manifold can be canonically decomposed into ``building blocks'', by cutting along quasi-transverse tori (see \autocite[Section 2.2]{BarthelmeFenley2017_CountingPeriodicOrbitsOfAnosovFlowsInFreeHomotopyClasses_PUB}).
Following this line, this paper relates the dynamics on a 3-manifold to a topological invariant, the homology guts, proposed by Agol and Zhang \autocite{AgolZhang2022_GutsInSuturedDecompositionsAndTheThurstonNorm_ARXv1}.

To this end, we construct a canonical semiflow on the homology guts associated to a class carried by a pseudo-Anosov flow without perfect fits (Theorem \ref{thm:flowinguts}).
On the one hand, the original pseudo-Anosov flow is obtained by gluing back the semiflows on the guts and some product flows blocks (Lemma \ref{lem:windowproduct}).
On the other hand, the canonical semiflow offers a new way to bridge dynamics and topology, and suggests that the guts encodes dynamical information in a natural way.
For example, the fundamental group of homology guts encodes closed orbits with certain homology classes: the homology class of a closed orbit is in a closed face \(F\) of the Fried cone if and only if the closed orbit can be isotoped into the guts \(\mathrm{Guts}(F^{\vee})\) associated to the dual open Thurston cone \(F^{\vee}\subset H^{1}(M)\).

We will prove that the semiflow is independent of all choices made in its construction, up to orbit equivalence outside half annuli.
Moreover, like the homology guts, the semiflow is invariant on each Thurston open cone.
Similar to the pseudo-Anosov flow, for a positive class in \(H^{1}(M)\), the growth rate of the canonical semiflow is shown to exist.

One important ingredient of the construction is the Annulus Isotopy Theorem (Theorem \ref{thm:annulusisotopy}).
While the Transverse Surface Theorem \autocite[Theorem A]{LandryMinskyTaylor2025_TransverseSurfacesAndPseudoAnosovFlows_PUB} characterizes when a taut surface \(S\) can be ``transverse'' to the flow, the Annulus Isotopy Theorem shows how to put a product annulus in \(M-S\) in an aligned position with the flow.
It provides the second step of a flow-version of the sutured decomposition sequence.
We conjecture that similar results hold for any decomposition surface with suitable conditions.
See Section \ref{sec:discussion} for more discussion.

Before stating our results we introduce some broad-strokes definitions.
Let \(\varphi\) be a pseudo-Anosov flow without perfect fits on a closed 3-manifold \(M\).
We say \(\varphi\) \emph{carries} a class \(z\in H_{2}(M)\) if the negative Euler class of \(\varphi\) matches the Thurston norm at \(z\).
All classes carried by \(\varphi\) form a (possibly empty) closed cone \(\mathcal{C}_{2}(\varphi)\) in \(H_{2}(M)\) which is spanned by a closed face of the Thurston unit ball.
In \autocite{Mosher1992_DynamicalSystemsAndTheHomologyNormOfA3ManifoldIi_PUB}, Mosher shows that \(\varphi\) \emph{represents} \(\mathcal{C}_{2}(\varphi)\), in the sense that the carried cone \(\mathcal{C}_{2}(\varphi)\) of \(\varphi\) is dual to its Fried cone \(\mathcal{C}_{\mathrm{Fr}}(\varphi)\), spanned by homology classes of closed \(\varphi\)-orbits.

Our main theorem is:
\begin{thm:flowinguts}
Let \(M\) be a closed 3-manifold and \(\varphi\) be a pseudo-Anosov flow without perfect fits on \(M\).
Denote by \(\mathcal{C}_{2}(\varphi)\) the carried cone of \(\varphi\) in \(H_{2}(M)\).
For any primitive element \(z\) in \(\mathcal{C}_{2}(\varphi)\), the homology guts \(\mathrm{Guts}(M,z)\) admits a tight semiflow induced by \(\varphi\).
\end{thm:flowinguts}
In the sense of Fenley-Mosher \autocite{FenleyMosher2001_QuasigeodesicFlowsInHyperbolic3Manifolds_PUB}, semiflows simply generalize flows from closed manifolds to compact manifolds, which are allowed to stop at the boundary (see also Definition \ref{def:partialflow}).
Since the guts is a sutured submanifold, we require the semiflow on it to be ``compatible'' with the sutured structure (see Definition \ref{def:tight}).
\subsection{Construction}
\label{sec:org15b126b}
Given a second homology class \(z\), the construction of the homology guts \(\mathrm{Guts}(M,z)\) consists of two main steps.
Firstly, one removes a maximal collection of embedded, disjoint, taut surfaces with class \(z\).
Next, one cuts along a maximal collection of product annuli and disks.
After discarding all product sutured manifold components called \emph{windows}, one obtains the \emph{homology guts} \(\mathrm{Guts}(z)\).
Surprisingly, Agol and Zhang \autocite{AgolZhang2022_GutsInSuturedDecompositionsAndTheThurstonNorm_ARXv1} show that \(\mathrm{Guts}(z)\) is an topology invariant of \(M\), only depending on \(z\) up to equivalence of guts.
Furthermore, the homology guts is invariant on each Thurston open cone and the guts of different homology classes are related by sutured decompositions \autocite[Theorem 1.2]{AgolZhang2022_GutsInSuturedDecompositionsAndTheThurstonNorm_ARXv1}.
The homology guts is closely related to many other invariants, such as volume, sutured Floer homology and \(L^{2}\)-torsion.

The construction in Theorem \ref{thm:flowinguts} relies on two theorems: the Transverse Surface Theorem \autocite[Theorem A]{LandryMinskyTaylor2025_TransverseSurfacesAndPseudoAnosovFlows_PUB} for the first step, and the Annulus Isotopy Theorem stated below for the second.
We summarize the complete procedure in Construction \ref{cst:flow}.
\begin{thm:annulusisotopy}[Annulus Isotopy Theorem]
Assume that \(M\) is atoroidal and admits an almost pseudo-Anosov flow \(\varphi\) without perfect fits.
Let \(S\) be an embedded, compact, oriented, taut surface in \(M\) which is positively transverse to \(\varphi\) minimally.
Suppose that \(A\) is a homotopically non-trivial, oriented annulus with two boundary components lying on opposite sides of \(S\).
In other words, \(A\) is a product annulus in the sutured manifold \((M-S,S_{+},S_{-},\emptyset)\)..

Then, \(A\) is ambiently isotopic in \(M-S\) to an annulus \(A_{\varphi}\), which is either product \(\varphi\)-saturated or transverse to a further blowup of \(\varphi\).
We call that \(A_{\varphi}\) is \emph{in aligned position} with \(\varphi\).
\end{thm:annulusisotopy}
Theorem \ref{thm:annulusisotopy} is of independent interest since it is the second step in a flow-version of sutured decomposition sequence.
One may ask if a similar result holds for any decomposition surface with suitable conditions.
See Section \ref{sec:discussion} for more discussions.
We remark that in \autocite{HuangTaylor2026_PseudoAnosovFlowsHyperbolicGeometryAndTheCurveGraph_ARXv1}, Huang and Taylor proved a slightly weaker version of the Annulus Isotopy Theorem independently (see Remark \ref{rem:HTwork}).
\subsection{Invariance}
\label{sec:org6fac001}
Interestingly, the semiflow constructed above is a canonical one, independent of choices in the construction, up to orbit equivalence outside half annuli (Definition \ref{def:halfannulus}).
\begin{thm:facetinvariance}
Let \(M\) be an oriented connected closed 3-manifold and let \(\varphi\) be a pseudo-Anosov flow without perfect fits in \(M\). Suppose that \(z\in \mathcal{C}_{2}(\varphi)\) is primitive.

Then, up to orbit equivalence outside half annuli, the semiflow on \(\mathrm{Guts}(M,z)\) is independent of choices in Construction \ref{cst:flow}.
Precisely speaking, it doesn't depend on choices of the facet \(F(z)\), the product annuli collection \(\mathcal{A}\), the isotopy choices of \(F_{\varphi}(z)\) and \(\mathcal{A}_{\varphi}\), and the blowup ways of \(\varphi^{\sharp}\) and \(\varphi^{\sharp\sharp}\).
\end{thm:facetinvariance}
Here we eliminate considerable flexibility of blowup constructions by removing all half annuli.

Moreover, it is invariant on each Thurston open cone, similar to the homology guts.
\begin{thm:openfaceinvariance}
Assume that \(M\) be an oriented connected closed 3-manifold which admits a pseudo-Anosov flow \(\varphi\) without perfect fits.
Suppose that \(y,z\in \mathcal{C}_{2}(\varphi)\) are primitive and in the same open face.
Then, the guts \(\mathrm{Guts}_{\varphi}(M,y)\) is orbit equivalent to \(\mathrm{Guts}_{\varphi}(M,z)\) outside half annuli.
\end{thm:openfaceinvariance}
\subsection{Application}
\label{sec:org01d2ba6}
The canonical semiflow gives a new bridge between dynamics and topology, and suggests that the guts encodes dynamical information naturally.
As a direct application, the fundamental group of homology guts encodes closed orbits with certain homology class.
\begin{thm:encodeclosedorbit}
Let \(F\) be a closed face of \(\mathcal{C}_{\mathrm{Fr}}(\varphi)\) and let \(F^{\vee}\) be its dual open face in \(\mathcal{C}_{2}(\varphi)\).
Assume that \(\gamma\) is a closed orbit of \(\varphi\).
Then \([\gamma]\in F\) if and only if \(\gamma\) can be isotopic into \(\mathrm{Guts}(F^{\vee})\).
Equivalently, \([\gamma]\in F\) if and only if \(\pi_{1}(\gamma)\) and \(\pi_{1}(\mathrm{Guts}(F^{\vee}))\), as conjugate classes, have nonempty intersection.
\end{thm:encodeclosedorbit}
In particular, the collection of fundamental group of homology guts satisfies cluster property, namely, it's a finite collection of infinite-index quasi-convex subgroups of \(\pi_{1}(M)\) which encodes closed orbits on closed Thurston face. See Definition \ref{def:cluster} for details.
This property plays a central role in many works of Yi Liu \autocite{Liu2020_VirtualHomologicalSpectralRadiiForAutomorphismsOfSurfaces_PUB,Liu2022_FiniteVolumeHyperbolic3ManifoldsAreAlmostDeterminedByTheirFiniteQuotientGroups_PUB,Liu2025_TheEulerClassOneConjectureForFillableContactStructures_ARXv2}.
In \autocite{Liu2020_VirtualHomologicalSpectralRadiiForAutomorphismsOfSurfaces_PUB}, Liu gave a construction using Markov partition in the suspension flow case.
Liu's construction may depend on the choice of Markov partition and is merely a collection of subgroups.
Theorem \ref{thm:encodeclosedorbit} shows that homology guts gives a canonical submanifold realization of cluster property since the guts is a topological invariant of \(M\).
We note that Theorem \ref{thm:encodeclosedorbit} is stronger than the cluster property.

As an interesting corollary, if \(\varphi\) represents a top-dimensinal face, then all homologically trivial closed orbits are encoded by a homology guts.
\begin{cor:homologicallytrivialorbits}
Suppose that \(\varphi\) represents a top-dimensional closed cone \(\mathcal{C}_{2}(\varphi)\).
Then a closed orbit of \(\gamma\) is homologically trivial if and only if \(\gamma\) can be isotoped into \(\mathrm{Guts}(T)\) where \(T\) is the top open face of \(\mathcal{C}_{2}(\varphi)\).
\end{cor:homologicallytrivialorbits}
Beyond the fundamental group, the sutured structure of the guts also encodes some typical closed orbits. 
We say that a closed orbit \(\gamma\) is \(z\)-\emph{distinguished} in the direction of a prong \(P\) if there is a surface with \([S]=z\) so that \(P\cap S\) contains a closed curve component.
\begin{cor:distinguished}
Let \(z\in \mathcal{C}_{2}(\varphi)\) and let \(D_{n}(z)\) be the set of regular closed orbits which is \(z\)-distinguished in at least \(n\) directions. Then
\begin{enumerate}
\item each orbit in \(D_{1}(z)\) has a power which is isotoped into \(R_{\pm}\)-part of \(\mathrm{Guts}(z)\),
\item \(\#D_{2}(z)\leq\) the number of sutured annuli in \(\mathrm{Guts}(z)\),
\item \(\#D_{4}(z)\leq\) the number of 4-STs in \(\mathrm{Guts}(z)\).
\end{enumerate}
\end{cor:distinguished}
\subsection{Growth rate}
\label{sec:orgcc7abd2}
We show that for a positive class in \(H^{1}(M)\), the canonical semiflow has a well-defined growth rate and entropy.
For any open face \(F\) of \(\mathcal{C}_{2}(\varphi)\), denote by \(\varphi_{F}\) the canonical semiflow on \(\mathrm{Guts}(M,F)\) constructed in Theorem \ref{thm:flowinguts}.
We define the exponential growth rates as
\[ \mathrm{gr}_{\varphi_{F}}(\eta)= \lim_{L\to \infty} \#\{\gamma\in \mathcal{O}(\varphi_{F}): \eta(\gamma)\leq L\}^{\frac{1}{L}}, \]
where \(\mathcal{O}(\varphi_{F})\) the set of closed orbits of \(\varphi_{F}\) and \(\eta\in H^{1}(M)\).
\begin{thm:growthrate}[Growth rate]
For any positive class \(\eta\in H^{1}(M)\), the growth rate \(\mathrm{gr}_{\varphi_{F}}(\eta)\) exists.
The growth rate \(\mathrm{gr}_{\varphi_{F}}(\eta)\) is strictly greater than 1 for every positive \(\eta\in H^{1}(M)\) if and only if \(\varphi_{F}\) contains infinitely many primitive closed orbits.
\end{thm:growthrate}

As the logarithm of the growth rate, the entropy plays a more central role in many fields.
We summarize the basic properties of the entropy function.
Let \(\mathcal{C}^{+}(F)\) be the cone in \(H^{1}(M)\) which consists of positive classes.
The associated \emph{entropy function} is
\[ \begin{aligned}
\mathrm{ent}_{\varphi_{F}}: \mathcal{C}^{+}(F) &\to [0,+\infty)\\
\eta&\mapsto \log( \mathrm{gr}_{\varphi_{F}}(\eta) ).
\end{aligned} \]
Combining Theorem \ref{thm:growthrate} and \autocite[Theorem 9.1]{LandryMinskyTaylor2022_FlowsGrowthRatesAndTheVeeringPolynomial_PUB}, we have
\begin{thm:entropy}
The entropy function \(\mathrm{ent}_{\varphi_{F}}\) is continuous, convex, and homogeneous of degree -1.
\end{thm:entropy}
\subsection{Methods}
\label{sec:orgd006c21}
We explain the key ideas toward proofs of the main theorems.
To obtain the semiflow, we are required to put the embedded taut surfaces, and product annuli and disks in a good position with the flow \(\varphi\).
The surface case is solved by Landry, Minsky and Taylor \autocite{LandryMinskyTaylor2025_TransverseSurfacesAndPseudoAnosovFlows_PUB}: any taut surface \(S\) with \([S]\in \mathcal{C}_{2}(\varphi)\) is isotopic to be almost transverse to the flow.
The product annulus case is solved in the Annulus Isotopy Theorem (Theorem \ref{thm:annulusisotopy}), and it suffices to cut all product annuli due to Remark \ref{rem:productdisk}.

The Annulus Isotopy Theorem is proved using the orbit space and shadows, both of which are developed by Fenley in a series of works.
The orbit space \(P_{\varphi}\) is obtained by quotienting orbits in the universal cover \(\widetilde{M}\), and is homeomorphic to a plane.
A lift of an embedded transverse surface projects to an open region whose boundary consists of slice leaves.
Suppose that a product annulus \(A\) connects \(S_{1}\) and \(S_{2}\).
Then after fixing lifts, shadows of \(\widetilde{S}_{1}\) and \(\widetilde{S}_{2}\) will contains a \(g\)-invariant line respectively, where \(g\in \pi_{1}(M)\) represents the fundamental group of \(A\).
By Lemma \ref{lem:invariantline}, it suffice to find a \(g\)-invariant line in the intersection of their shadows
But it doesn't hold always.
Such invariant line exists when \(g\)-action is hyperbolic or fixes a regular point (Lemma \ref{lem:hyperbolic_invariantline} and Lemma \ref{lem:regular-invariantline}).
Otherwise, if the fixed point is singular, sometimes we are required to blow up \(\varphi\) further such that \(A\) can be isotopic to be a transverse annulus (Lemma \ref{lem:singular-invariantline}).

To prove two invariance theorems (Theorem \ref{thm:facetinvariance} and Theorem \ref{thm:openfaceinvariance}), we devide the guts into \emph{singular} and \emph{regular parts} (Section \ref{sec:singularguts}).
Roughly speaking, the singular part consists of the components near a singular orbit; these are either 4-STs or non-longitudinal 2-STs.
The invariance is established separately, using different arguments.
One reason we split the proof into two parts is that \emph{bad regions} appear only in singular guts (see Remark \ref{rem:badregions}).
\subsection{Organization}
\label{sec:org896c16b}
In Section \ref{sec:preliminary}, we review pseudo-Anosov flows and sutured manifolds, and introduce the semiflow.
In Section \ref{sec:orbitspace}, we review the orbit space and shadows.
In Section \ref{sec:annulusisotopy}, we prove the Annulus Isotopy Theorem (Theorem \ref{thm:annulusisotopy}), and in Section \ref{sec:construction} we construct the semiflow on the guts (Theorem \ref{thm:flowinguts}).
Section \ref{sec:invariancefacet} and Section \ref{sec:invarianceopenface} are devoted to the proofs of two invariance theorems (Theorem \ref{thm:facetinvariance} and Theorem \ref{thm:openfaceinvariance}).
We present applications in Section \ref{sec:application} and conclude with further discussions and questions in Section \ref{sec:discussion}.
\subsection*{Acknowledgement}
\label{sec:org90c59ca}
The author thanks his advisor Yi Liu for helpful comments, and thanks Yaoping Xie and Jianru Duan for valuable conversations.
The author also thanks Samuel Taylor and Chicheuk Tsang for their helpful comments on an earlier draft.
\section{Preliminary}
\label{sec:preliminary}
In this paper, we sometimes omit the \(\Integer\) coefficient in homology and cohomology groups.
We always denote by \(N_{\epsilon}(X)\) a small enough, open, (tubular) neighborhood of \(X\) (if \(X\) is a submanifold).
If \(S\) is a properly embedded surface in a 3-manifold \(M\),  we rename \(M\setminus N_{\epsilon}(S)\) by \(M-S\).
\subsection{Pseudo-Anosov flow}
\label{sec:pre:pAflow}
We refer the reader to \autocite[Section 4]{FenleyMosher2001_QuasigeodesicFlowsInHyperbolic3Manifolds_PUB} and \autocite[Section 5]{AgolTsang2024_DynamicsOfVeeringTriangulations_PUB} for a thorough discussion of the definition of pseudo-Anosov flow on a closed 3-manifold.
There are two versions, smooth one and topological one. Up to orbit equivalence, two versions are equivalent for transitive flows \autocite{Shannon2020_DehnSurgeriesAndSmoothStructuresOn3DimensionalTransitiveAnosovFlows_PUB,AgolTsang2024_DynamicsOfVeeringTriangulations_PUB}.
We will typically work with transitive flow so we won't distinguish them.
In fact, in our main theorem, \(M\) is assumed to be atoroidal, so the pseudo-Anosov flow on it must be transitive due to \autocite[Proposition 2.7]{Mosher1992_DynamicalSystemsAndTheHomologyNormOfA3ManifoldI_PUB}.

We summarize the definition as follows. Let \(\varphi\) be a (smooth) pseudo-Anosov flow.
\begin{enumerate}
\item There is a finite collection of closed orbits \(\{\gamma_{i}\}_{i\in I}\), called \emph{singular orbits} of \(\varphi\) and a Riemannian metric \(g\) on \(M\) removing \(\cup_{i\in I}\gamma_{i}\), such that \(\varphi\) is smooth away from the singular orbits.
\item Outside these singular orbits \(\{\gamma_{i}\}_{i\in I}\), there is a splitting of the tangent bundle into three \(\varphi\)-invariant line bundles,
\[ TM=E^{s}\oplus E^{u} \oplus T\varphi, \]
such that
\[ |d\varphi_{t}(v)|<C\lambda^{-t}|v|,\quad \forall v\in E^{s},t>0, \]
and
\[ |d\varphi_{t}(v)|<C\lambda^{t}|v|, \quad \forall v\in E^{u},t<0, \]
hold for some uniform parameters \(C,\lambda>1\).
\item Each singular orbit is a \emph{pseudo-hyperbolic orbit}: \(\gamma_{i}\) has a neighborhood \(N_{i}\) and a map \(f_{i}\) sending \(N_{i}\) to a \emph{standard pseudo-hyperbolic local model} \(N_{n_{i},k_{i},\lambda}\) for some \(n_{i}\geq 3\), such that,
\begin{enumerate}
\item \(f_{i}\) is bi-Lipschitz on \(N_{i}\) and smooth away from those singular orbits \(\gamma_{i}\).
\item \(f_{i}\) preserves orbits and maps \(E^{s}\) and \(E^{u}\) to line bundles tangent to local stable foliation \(\Lambda^{s}\) and local unstable foliation \(\Lambda^{u}\) respectively.
\end{enumerate}
In this case, \(\gamma_{i}\) is called \(n_{i}\)-\emph{pronged}.
\end{enumerate}
\subsection{Dynamic blowup}
\label{sec:pre:blowup}
We refer the reader to \autocite[Section 3]{LandryMinskyTaylor2025_TransverseSurfacesAndPseudoAnosovFlows_PUB} for a detailed discussion of the dynamic blowup and almost pseudo-Anosov flows.
We summarize the procedure.

To blowup a singular orbit \(\gamma\) locally, we choose a smooth transverse disk \(D\) to \(\gamma\), and translate the flow near \(\gamma\) to a return map \(f\) on \(D\) (which may not defined on the whole \(D\)).
\(f\) has only one fixed point \(O\). \(O\) has an even number of \emph{prongs} embedded in \(D\).
For some \(k\), \(f^{k}\)-action fixes every prong and is a contraction or an expansion.
We denote by \(T\) the union of \(O\) and its prongs, which is a star shape in \(D\).
On each component of \(D\setminus T\), the \(f^{k}\)-action is topologically equivalent to standard Anosov map.

A return map \(f^{\sharp}\) is a \emph{blowup} of \(f\) if:

There is a surjective map \(\pi:D\to D\), which semiconjugates \(f^{\sharp}\) to \(f\).
The preimage of \(T\) is a properly embedded tree \(T^{\sharp}\), and the preimage \(O^{\sharp}\) of \(O\) is a subtree of \(T^{\sharp}\).
The periodic points of \(f^{\sharp}\) are the vertices of \(T^{\sharp}\), and a power of \(f^{\sharp}\) acts by translation on edges of \(O^{\sharp}\).
On each open quadrants of \(D\setminus T^{\sharp}\), \(\pi\) conjugates \(f^{\sharp}\)-action to \(f\)-action.
On each edge of \(O^{\sharp}\), the direction of \(f^{\sharp}\)-action is compactible with the \(f^{\sharp}\)-action on near open quadrants.
See the right of Figure \ref{fig:blowup}.

Blowing up \(\gamma\) locally is to replace the (semi)flow in a neighborhood of \(\gamma\) by the suspension flow of \(f^{\sharp}\).
Edges of \(O^{\sharp}\) suspend to \(\varphi^{\sharp}\)-invariant annuli, called \emph{blown-up annuli}.
On each blown annulus, flowlines are asymptotic to one boundary component in backward time and to another boundary component in forward time.
The suspension of \(O^{\sharp}\) is a union of blown annuli, called the \emph{blown-up complex} associated to \(\gamma\).

Globally, we say a flow \(\varphi^{\sharp}\) on \(M\) is a \emph{(dynamic) blowup} of a pseudo-Anosov flow \(\varphi\), if there exists a blowdown map \(\pi:\varphi^{\sharp}\to \varphi\), which satisfies:
\begin{enumerate}
\item \(\pi\) is a semiconjugacy,
\item there is a finite collection \(\Pi\) of singular orbits of \(\varphi\), such that, \(\pi\) is a homeomorphism and a flow conjugacy from \(\pi^{-1}(M\setminus \Pi)\) to \(M\setminus \Pi\),
\item Each singular orbit \(\gamma\) in \(\Pi\) has a neighborhood \(N_{\gamma}\) such that \(\varphi^{\sharp}\) restricting on \(\pi^{-1}(N_{\gamma})\) is a local blowup of \(\varphi\) restricting on \(N_{\gamma}\).
\end{enumerate}
The boundary-fixing homeomorphic type of the blown-up complex associated to \(\varphi^{\sharp}\) is called the \emph{combinatorial type} of \(\varphi^{\sharp}\).

One can blow up a closed orbit to a hollow torus, namely, the Fried blowup. 
Locally, it blow up the transverse disk \(D\) to an annulus, and blow up the unique fixed point to a circle.
See the middle of Figure \ref{fig:blowup} and \autocite[Section 3]{LandryMinskyTaylor2025_TransverseSurfacesAndPseudoAnosovFlows_PUB} for detailed construction 
Giving a closed orbit \(\gamma\), the Fried blowup \(\varphi^{\circ}_{\gamma}\) at \(\gamma\) is not unique up to conjugacy (see \autocite[Chapter 1, Section 7.4]{Shannon2020_DehnSurgeriesAndSmoothStructuresOn3DimensionalTransitiveAnosovFlows_PUB}).
We note that the flow on a blown-up annulus is unique up to orbit equivalence, and so is the flow on \(\partial \varphi_{\gamma}^{\circ}\).

\begin{figure}[htbp]
\centering
\includegraphics[scale=0.6]{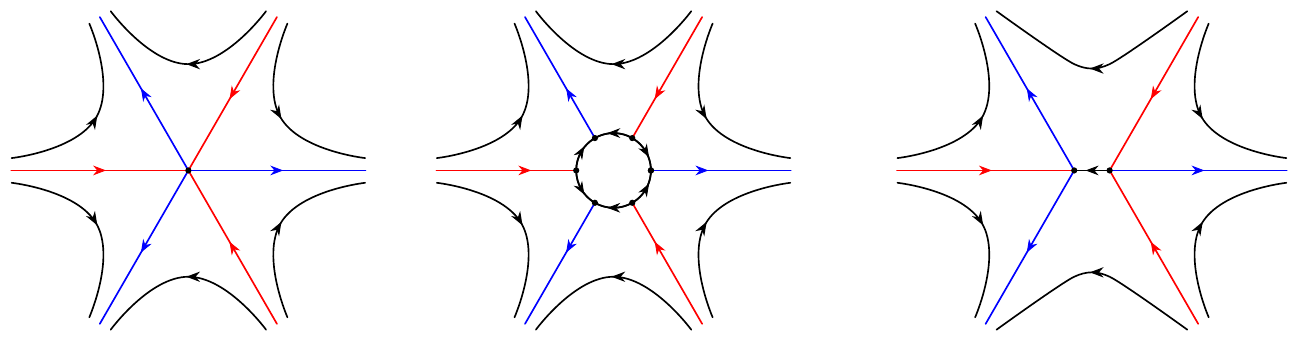}
\caption{\label{fig:blowup}From left to right: the local picture of a singular orbit in a pseudo-Anosov flow, the Fried blowup result, a dynamic blowup result.}
\end{figure}
\subsection{Thurston norm}
\label{sec:org1412951}
This section reviews the Thurston norm and taut surfaces.
\begin{definition}
Let \(S\) be a compact connected orientable surface. We define \(\chi_{-}(S)=\max \{0,-\chi(S)\}\).
If \(S\) is disconnected, we define
\[ \chi_{-}(S)=\sum_{S_{i}\subset S \text{ a component}} \chi_{-}(S_{i}). \]
\end{definition}
Based on this, Thurston defines a pseudo-norm on \(H_{2}(M,\partial M;\Real)\) and \(H_{2}(M;\Real)\) in \autocite{Thurston1986_NormForTheHomologyOf3Manifolds_PUB}.
\begin{definition}[Thurston norm]
Let \(M\) be a compact oriented 3-manifold and \(Y\) be a subsurface of \(\partial M\).
For any \(z\in H_{2}(M,Y)\), we define the \emph{Thurston norm} of \(z\) to be
\[ x(z)=\min \{\chi_{-}(S): (S,\partial S)\subset (M,Y), [S]=z\in H_{2}(M,K)\}. \]
\end{definition}
Thurston proved that \(x(nz)=nx(z)\), so \(x\) extends to a pseudo-norm on \(H_{2}(M,K;\Real)\) by linearity and continuity.
When \(M\) is atoroidal, the Thurston norm is non-degenerate.

We denote by \(\mathcal{B}_{\mathrm{Th}}(M)\) the unit ball of the Thurston norm on \(H_{2}(M,\partial M)\). Thurston \autocite[Theorem 2]{Thurston1986_NormForTheHomologyOf3Manifolds_PUB} proves that \(\mathcal{B}_{\mathrm{Th}}(M)\)  is a rational polyhedron.
To simplify the notation, we call \(\mathcal{B}_{\mathrm{Th}}(M)\) the \emph{Thurston ball} and call \(\partial \mathcal{B}_{\mathrm{Th}}(M)\) the \emph{Thurston sphere}.
A \emph{Thurston cone} is refered to the cone formed by the origin and a (open or closed) face of the Thurston ball.

\begin{definition}[taut surface]
Let \(M\) be a compact oriented 3-manifold, and let \(S\) be an oriented surface representing nontrivial \(\alpha\in H_{2}(M,\partial M)\).
We say that \(S\) is \emph{taut} (\emph{norm-minimizing}) if
\begin{enumerate}
\item \(S\) contains no null-homologous component in \(H_{2}(M,\partial M)\),
\item \(\chi_{-}(S)=x_{M}(\alpha)\).
\end{enumerate}
\end{definition}

There is a standard surgery to eliminate the interesection between two surfaces, called the \emph{double curve sum}.
Precisely speaking, let \(S\) and \(T\) be two properly embedded surface in general position. One applies cut-and-paste surgery to the intersection curves of \(S\) and \(T\), and turn the immersed surface \(S\cup T\) to a properly embedded oriented surface \(S\oplus T\).
\begin{lemma}
If every component of \(S\cap T\) is essential in \(S\) and \(T\), then \(\chi_{-}(S)+\chi_{-}(T)=\chi_{-}(S\oplus T)\).
\end{lemma}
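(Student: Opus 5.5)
The identity follows from additivity of Euler characteristic under cut-and-paste, together with the fact that essential intersection curves prevent any disk or sphere components from appearing. I assume, as in the standard setting, that neither \(S\) nor \(T\) has sphere or disk components.

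\textbf{Step 1: Euler characteristic is additive.} The intersection \(S\cap T\) is a disjoint union of circles (and properly embedded arcs if \(\partial S,\partial T\) meet). Let \(N\) be a small regular neighborhood of \(S\cap T\) in \(S\cup T\). Outside \(N\), the surface \(S\oplus T\) coincides with \(S\cup T\). Inside \(N\), each double circle's cross-shaped neighborhood is replaced by two disjoint annuli, and each double arc's neighborhood by two disjoint strips. Euler characteristic is computed by cutting along the curves of \(S\cap T\). Circles have \(\chi=0\), and arcs contribute compatibly on both sides of the bookkeeping. Hence
\[ \chi(S\oplus T)=\chi(S)+\chi(T). \]

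\textbf{Step 2: pass from \(\chi\) to \(\chi_{-}\).} It suffices to show that no component of \(S\oplus T\) is a sphere or a disk. Then every component of \(S\oplus T\) has \(\chi\le 0\), so \(\chi_{-}(S\oplus T)=-\chi(S\oplus T)\), and likewise \(\chi_{-}(S)=-\chi(S)\) and \(\chi_{-}(T)=-\chi(T)\). The identity \(\chi_{-}(S)+\chi_{-}(T)=\chi_{-}(S\oplus T)\) then follows from Step 1.

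\textbf{Step 3: rule out disks and spheres.} This is the main obstacle. Suppose some component \(P\) of \(S\oplus T\) is a disk or a sphere. The curves of \(S\cap T\) lying in \(P\) are the seam curves, and they cut \(P\) into pieces. Each piece is a subsurface of \(S\) or of \(T\) cut along \(S\cap T\).

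Take an innermost seam circle in \(P\). It bounds a disk in \(P\) containing no other seam curve in its interior, so this disk lies entirely in \(S\) or entirely in \(T\). The seam circle would then be inessential in \(S\) or in \(T\), contradicting the hypothesis. The same contradiction arises for an outermost seam arc, which would cobound a disk with a boundary arc.

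If \(P\) contains no seam curve at all, then \(P\) is itself a component of \(S\) or of \(T\). That contradicts the standing assumption that these surfaces have no sphere or disk components.

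Hence every component of \(S\oplus T\) has nonpositive Euler characteristic, and the lemma follows.
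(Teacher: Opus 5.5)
The paper states this lemma without proof, treating it as standard. Your argument is the standard one and is correct: additivity of \(\chi\) under cut-and-paste, then an innermost-circle/outermost-arc argument showing that no component of \(S\oplus T\) is a disk or sphere.

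Two small points to tighten.

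\begin{itemize}
\item \textbf{Step 1.} The cleanest way to state it is that \(S\sqcup T\) and \(S\oplus T\) are glued from the same pieces (\(S\) and \(T\) cut along \(S\cap T\)) along the same number of circles and arcs. Hence \(\chi(S\oplus T)=\chi(S)+\chi(T)\) exactly.
\item \textbf{The standing assumption.} You assume \(S\) and \(T\) have no sphere or disk components, but this is not needed as an extra hypothesis. Every simple closed curve in a sphere or disk, and every proper arc in a disk, is inessential. So the lemma's hypothesis forces such components to be disjoint from the other surface. They then pass unchanged into \(S\oplus T\) and contribute \(0\) to \(\chi_{-}\) on both sides, so the general statement reduces to your case.
\end{itemize}
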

\subsection{Sutured manifold}
\label{sec:orgf99a34e}
To study taut surfaces and taut foliations, Gabai \autocite{Gabai1983_FoliationsAndTheTopologyOf3Manifolds_PUB} introduces sutured manifolds and sutured decomposition.
\begin{definition}
A \emph{sutured manifold} \((N,R_{+},R_{-},\gamma)\) is a compact oriented 3-manifold \(N\) with boundary together with a set \(\gamma\subset \partial N\) satisfying the following conditions:
\begin{enumerate}
\item \(\gamma\) consists of pairwise disjoint annuli \(A(\gamma)\) and tori \(T(\gamma)\). A component of \(A(\gamma)\) is called a \emph{sutured annulus} and a component of \(T(\gamma)\) is called a \emph{sutured torus}.
\item Each sutured annulus contains a homologically nontrivial oriented simple closed curve, called a \emph{suture}.
\item \(R_{+}\cup R_{-}= \partial N\setminus \operatorname{int}(\gamma)\) is oriented such that the orientation of \(R_{\pm}\) is coherent with the orientation of sutures in \(\partial R_{\pm}\) by boundary orientation principle.
Denote by \(R_{+}(\gamma)\) (resp. \(R_{-}(\gamma)\)) the components of \(\partial N\setminus \operatorname{int}(\gamma)\) whose normal vectors point out of (resp. into) \(N\).
\end{enumerate}
In practice, we usually abbreviate the notation to \((N,\gamma)\), or more simply to \(N\), when there is no ambiguity.
\end{definition}

\begin{definition}
Let \((N,R_{+},R_{-},\gamma)\) be a sutured manifold and \(S\) be a properly embedded surface in \(N\).
We call \(S\) a \emph{decomposition surface} for \((N,\gamma)\) if the following conditions hold:
\begin{enumerate}
\item For any sutured torus \(T\), the intersection \(S\cap T\) is either empty or a collection of essential simple closed curves, all of which represent the same homology class in \(H_{1}(T)\).
\item For any sutured annulus \(A\), \(S\cap A\) is either empty, a simple closed curve with the same homology class as the suture \(A\cap s(\gamma)\), or a properly embedded nonseparating arc.
\item No component of \(\partial S\) bounds a disc in \(R(\gamma)\) and no component of \(S\) is a disc \(D\) with \(\partial D\subset R_{+}\cup R_{-}\).
\end{enumerate}
\end{definition}
Given a decomposition surface \(S\) in a sutured manifold \((N,\gamma)\), we construct a suture structure on \(N'= N- S\) as follows:
\[ \begin{aligned}
\gamma' &= (\gamma\cap N') \cup \overline{N_{\epsilon}(S_{+}'\cap R_{-})} \cup  \overline{N_{\epsilon}(S_{-}'\cap R_{+})}\\
R_{+}' &= ((R_{+}\cap M')\cup S_{+}') \setminus \operatorname{int}(\gamma') \\
R_{-}' &= ((R_{-}\cap M')\cup S_{-}') \setminus \operatorname{int} (\gamma')
\end{aligned} \]
Here, \(S_{+}'\) (resp. \(S_{-}'\)) is the union of the component of \(\partial N_{\epsilon}(S)\) whose normal vector points out of (resp. into) \(N\).

We say that \((N,R_{+},R_{-},\gamma) \stackrel{S}{\rightsquigarrow} (M',R_{+}',R_{-}',\gamma')\) is a \emph{sutured manifold decomposition}.
\begin{definition}
A sutured manifold \((N,\gamma)\) is \emph{taut} if \(N\) is irreducible and \(R_{+}\) and \(R_{-}\) (pushed slightly into \(N\)) are both taut surfaces in \(N\).
\end{definition}

Tautness of  sutured manifold could be pullback by a sutured manifold decomposition.
\begin{lemma}[{\cite[Lemma 0.4]{Gabai1987_FoliationsAndTheTopologyOf3ManifoldsII_PUB}}]
Suppose \((N,\gamma)\) is a sutured manifold and \((N,\gamma)\stackrel{S}{\rightsquigarrow} (N',\gamma')\) is a sutured manifold decomposition. If \((N',\gamma')\) is taut, then so is \((N,\gamma)\).
\end{lemma}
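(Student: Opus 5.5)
We would follow Gabai's original argument and prove the two defining conditions of tautness for \((N,\gamma)\) separately: first irreducibility of \(N\), then the norm-minimizing property of \(R_{\pm}(\gamma)\). Throughout, the key input from tautness of \((N',\gamma')\) is this. Each copy \(S'_{\pm}\) of \(S\) lies in \(R'_{\pm}\) (up to the annular neighborhoods added to \(\gamma'\)), and \(R'_{\pm}\) are taut in the irreducible manifold \(N'\). Hence they are incompressible, and so are \(S'_{\pm}\subset \partial N'\). In particular \(S\) is incompressible in \(N\) in the relative sense we need: a compressing disk for \(S\) in \(N\) whose interior misses \(S\) would be a compressing disk for \(R'_{\pm}\) in \(N'\).

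\textbf{Irreducibility.} Let \(\Sigma\subset N\) be an embedded 2-sphere in general position with respect to \(S\), chosen to minimize \(\#(\Sigma\cap S)\). Suppose \(\Sigma\cap S\neq\emptyset\), and take a curve \(c\) of \(\Sigma\cap S\) innermost on \(\Sigma\), bounding a disk \(D\subset\Sigma\) with interior disjoint from \(S\). By incompressibility, \(c\) bounds a disk \(E\subset S\). The sphere \(D\cup E\), pushed off \(S\) into \(N'\), bounds a ball by irreducibility of \(N'\). Isotoping \(D\) across this ball reduces \(\#(\Sigma\cap S)\), a contradiction. Hence \(\Sigma\subset N'\), so \(\Sigma\) bounds a ball in \(N'\subset N\).

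\textbf{Norm-minimizing.} We argue for \(R_{+}\); the case of \(R_{-}\) is symmetric. Suppose \(T\) is a surface in \(N\) with \(\partial T=\partial R_{+}\) (in \(\gamma\)) and \([T]=[R_{+}]\) in \(H_{2}(N,\gamma)\), with \(\chi_{-}(T)<\chi_{-}(R_{+})\). We may assume \(T\) has no sphere or disk components. Put \(T\) in general position with respect to \(S\). Using incompressibility of \(S\) and of \(T\) (we may first compress \(T\), which does not increase \(\chi_{-}\)) together with irreducibility of \(N\), remove all curves of \(T\cap S\) that are inessential in either surface. Also simplify arcs of \(\partial T\cap\partial S\) inside \(\gamma\) using the conditions in the definition of a decomposition surface. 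Now form the double curve sum \(T\oplus S\), oriented so that it can be pushed off \(S\) to the \(S'_{+}\) side. This gives a surface \(T'\subset N'\) with \(\partial T'\subset\gamma'\) and \([T']=[R'_{+}]\in H_{2}(N',\gamma')\). By the double curve sum lemma stated above,
\[ \chi_{-}(T')=\chi_{-}(T)+\chi_{-}(S)<\chi_{-}(R_{+})+\chi_{-}(S). \]

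\textbf{Bookkeeping step.} It remains to show \(\chi_{-}(R'_{+})=\chi_{-}(R_{+})+\chi_{-}(S)\). Then \(\chi_{-}(T')<\chi_{-}(R'_{+})\), contradicting tautness of \((N',\gamma')\). By construction \(R'_{+}\) is obtained from \(R_{+}\cap N'\) and \(S'_{+}\) by gluing along arcs and curves and removing the annuli \(N_{\epsilon}(S'_{+}\cap R_{-})\). The decomposition-surface conditions rule out disk components of \(S\) with boundary in \(R_{\pm}\), and rule out boundary curves bounding disks in \(R(\gamma)\). These conditions are exactly what guarantee that no disk or sphere components appear, so that \(\chi_{-}\) is additive here.

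\textbf{Main obstacle.} We expect the hardest part to be this Euler characteristic bookkeeping, together with making \(T\cap S\) essential near the sutures \(\gamma\), where boundary arcs must be handled. That is where we would spend most of the care, treating the sutured-annulus and sutured-torus cases of the definition separately.
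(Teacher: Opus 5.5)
The paper gives no proof of this lemma; it only quotes Gabai, who gets this ``upward'' direction from his foliation machinery (a taut foliation of \(N\) with the components of \(R(\gamma)\) as leaves, which are norm-minimizing by Thurston), not from a direct norm comparison. A direct double-curve-sum argument is a reasonable strategy. Yours, however, has a genuine gap exactly at the step you defer as bookkeeping.

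The identity \(\chi_{-}(R'_{+})=\chi_{-}(R_{+})+\chi_{-}(S)\) is false under the hypotheses, and condition (3) does not save it. That condition only excludes disks whose boundary lies entirely in \(R(\gamma)\), and boundary curves bounding disks in \(R(\gamma)\). Product disks are still allowed, and arcs of \(\partial S\cap R_{\pm}\) may cut \(R_{\pm}\) into disks. For example, take \(N=P\times I\) with \(P\) a pair of pants, and \(S=\alpha\times I\) with \(\alpha\) an arc joining two different components of \(\partial P\). Then \(\chi_{-}(R_{+})=1\) and \(\chi_{-}(S)=0\). But \(R'_{+}\) is \(P\) cut along \(\alpha\) with a strip attached, which is an annulus, so \(\chi_{-}(R'_{+})=0\).

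What is additive is \(\chi\) itself: \(\chi(R'_{+})=\chi(R_{+})+\chi(S)\) and \(\chi(T\oplus S)=\chi(T)+\chi(S)\). To get back to \(\chi_{-}\) you would need \(\delta(T')-\delta(T)\leq\delta(R'_{+})-\delta(R_{+})\), where \(\delta\) counts disk components plus twice the sphere components. In words, summing with \(S\) must not create more disks from \(T\) than the decomposition creates from \(R_{+}\). Disks in \(T\oplus S\) can come from disk components of \(S\) and from pieces cut off by arcs of \(T\cap S\), and nothing in your argument controls them. This is where the real difficulty of the upward direction lies. It needs a further idea: a careful choice of \(T\) relative to \(S\), plus another use of tautness of \((N',\gamma')\) to pin down disk components of \(T'\). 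For the same reason you may not simply discard disk components of \(T\): they carry boundary in \(\gamma\) and can be nonzero in \(H_{2}(N,\gamma)\).

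The construction of \(T'\) also needs repair, because an oriented double curve sum cannot be pushed off \(S\) to one side. At each curve of \(T\cap S\), the piece of \(S\) on the positive side of \(T\) is smoothed toward the positive side of \(S\); with the paper's convention this is the \(S'_{-}\) side. The piece on the negative side of \(T\) is smoothed toward \(S'_{+}\). So \(T\oplus S\) lies in \(N'\) only if every component of \(S\setminus T\) sits on a single side of \(T\) along all of its boundary.

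You therefore first have to normalize \(T\) so that it separates \(N\) into a side containing \(R_{+}\) and a side containing \(R_{-}\). This is possible by discarding components of \(T\), which keeps the class and does not increase \(\chi_{-}\). Only then do the pieces of \(S\) adjacent to \(\partial S\cap R_{+}\) land on \(S'_{-}\), and those adjacent to \(\partial S\cap R_{-}\) land on \(S'_{+}\). That is what puts \(\partial T'\) into \(\gamma'\) and gives \([T']=[R'_{+}]\) in \(H_{2}(N',\gamma')\); as you state it, neither claim is justified.
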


There are two types of simplest decomposition surfaces. For these decomposition surfaces, tautness is preserved under the sutured manifold decomposition.
\begin{definition}
Let \((N,\gamma)\) be a sutured manifold. A properly embedded essential annulus A in \(M\) is called a \emph{product annulus} if it does not cobound a solid cylinder and its two boundary components \(\partial _{+}A\) and \(\partial _{-}A\) of \(A\) lies in \(R_{+}\) and \(R_{-}\), respectively.
A properly embedded disk \(D\) in \(M\) is called a \emph{product disk} if its boundary intersects \(s(\gamma)\) at exactly two points.
We call both product annuli and product disks \emph{product decomposition surfaces}.

In the contrast, we say \(A\) is a \emph{non-product annulus} if it is neither a product annulus nor boundary-parallel to \(R_{+}\cup R_{-}\).
\end{definition}

\begin{lemma}[{\cite[Lemma 3.12]{Gabai1983_FoliationsAndTheTopologyOf3Manifolds_PUB}}]
Let \(S\) be a product annulus or a product disk and \((N,\gamma)\stackrel{S}{\rightsquigarrow} (N',\gamma')\) be a sutured decomposition. Then \((N,\gamma)\) is taut if and only if \((N',\gamma')\) is taut.
\end{lemma}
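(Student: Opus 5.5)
The implication ``\((N',\gamma')\) taut \(\Rightarrow\) \((N,\gamma)\) taut'' is a special case of \cite[Lemma 0.4]{Gabai1987_FoliationsAndTheTopologyOf3ManifoldsII_PUB}, stated just above. So the plan is to prove the converse. Assume \((N,\gamma)\) is taut and \(S\) is a product annulus or product disk. We must show that \(N'=N-S\) is irreducible and that \(R'_{\pm}\) are norm-minimizing in \(H_2(N',\gamma')\).

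\emph{Irreducibility.} Let \(\Sigma\subset N'\) be an embedded sphere. Since \(N\) is irreducible, \(\Sigma\) bounds a ball \(B\subset N\). Suppose \(B\) met \(S\). Since \(\Sigma\) is disjoint from \(S\) and \(S\) is connected, \(S\) would lie entirely in \(B\). This is impossible, because \(\partial S\subset\partial N\) while \(B\subset \operatorname{int} N\). Hence \(B\subset N'\), and \(N'\) is irreducible.

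\emph{Tautness of \(R'_{\pm}\).} We treat \(R'_+\); the case \(R'_-\) is symmetric.

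First we compute Euler characteristics.
\begin{itemize}
\item For a product annulus \(A\): \(R'_+\) is obtained from \(R_+\) by cutting along the circle \(\partial_+A\), which leaves \(\chi\) unchanged, and attaching a parallel copy of \(A\), which has \(\chi=0\). Since \(A\) is essential, \(\partial_+A\) is essential in \(R_+\), so no disk components are created. Hence \(\chi_-(R'_+)=\chi_-(R_+)\).
\item For a product disk \(D\): \(R'_+\) is obtained from \(R_+\) by cutting along the arc \(D\cap R_+\), which raises \(\chi\) by one, and then adjusting by a half-disk strip, which is absorbed into the new sutures. I would check carefully that the resulting count gives \(\chi_-(R'_+)\le\chi_-(R_+)-(\text{change})\), with the change exactly compensated by the reverse gluing below.
\end{itemize}

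Next, suppose for contradiction that there is a surface \(T'\subset N'\) with \(\partial T'=\partial R'_+\), \([T']=[R'_+]\) in \(H_2(N',\gamma')\), and \(\chi_-(T')<\chi_-(R'_+)\). Reglue \(N'\) to \(N\). Form \(T\) from \(T'\) by reattaching the pieces of \(S\) that were absorbed into \(R'_+\):
\begin{itemize}
\item in the annulus case, cap off with a band parallel to \(A\) along the two copies of \(\partial_+A\);
\item in the disk case, reattach the strip of \(D\), i.e. a 1-handle, which lowers \(\chi\) by one.
\end{itemize}
The resulting \(T\) satisfies \(\partial T=\partial R_+\) and \([T]=[R_+]\) in \(H_2(N,\gamma)\), since the homology relation in \(N'\) pushes forward to \(N\). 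Moreover \(\chi_-(T)\le\chi_-(T')+(\text{the same correction})<\chi_-(R_+)\). This contradicts the tautness of \(R_+\). We also need \(R'_+\) to have no null-homologous components. A null-homologous component would, after regluing, give a compressible or null-homologous piece of \(R_+\), contradicting tautness in \(N\).

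\emph{Main obstacle.} The delicate step is the \(\chi_-\) bookkeeping, as opposed to \(\chi\). Two things must be controlled. First, \(T'\) may contain disk or sphere components, which \(\chi_-\) ignores. These should be discarded or compressed away first, using irreducibility of \(N'\) and the incompressibility of \(R'_+\) that follows from essentiality of \(S\). Second, in the disk case, regluing the strip could join components so that \(\chi\) and \(\chi_-\) disagree. I would first normalize \(T'\) to have no disk or sphere components. After that, \(\chi_-=-\chi\) on both sides, and the inequality reduces to the additive computation above.
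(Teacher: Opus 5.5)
The paper does not prove this lemma; it quotes Gabai. So I am judging your argument on its own terms.

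Your irreducibility argument and the product-annulus case are essentially fine, with two small fixes.
\begin{itemize}
\item The new boundary circles of \(R'_+\) are a copy of \(\partial_+A\) (where \(R_+\) meets \(S'_-\)) and a copy of \(\partial_-A\) (at the bottom of \(S'_+\)). The annulus you glue back is a ``diagonal'' copy of \(A\) in \(N_\epsilon(A)\) joining these two, not a band along two copies of \(\partial_+A\).
\item You should first reduce to competitors with \(\partial T'=\partial R'_+\). Cap off inessential circles and tube together oppositely oriented parallel circles in the suture annuli; neither increases \(\chi_-\).
\end{itemize}
Your reason why \(R'_\pm\) has no null-homologous components is also not the right one. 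A component of \(R'_+\) is null-homologous in \(H_2(N',\gamma')\) only if it is all of \(R(\gamma')\) on its component of \(N'\). Every component of \(N'\) adjacent to \(S'_+\) or \(S'_-\) contains pieces of both \(R'_+\) and \(R'_-\), so this cannot happen.

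The genuine gap is the product-disk case. You leave it at ``I would check carefully'', and the remedy you propose does not work.
\begin{itemize}
\item Re-attaching the strip lowers \(\chi\) by one, so it can raise \(\chi_-\) by one. Your contradiction therefore needs \(\chi_-(R'_+)\le\chi_-(R_+)-1\).
\item That inequality holds only if the arc \(\alpha_+=D\cap R_+\) is essential in a component of \(R_+\) with \(\chi<0\).
\item It fails if \(\alpha_+\) is a spanning arc of an annulus component of \(R_+\) (as for \(D=\alpha\times I\) in \(A\times I\)), or if \(\alpha_+\) is boundary-parallel in \(R_+\). Then \(R'_+\) acquires a disk component \(E\) and \(\chi_-(R'_+)=\chi_-(R_+)\), so your chain only gives \(\chi_-(T)\le\chi_-(R_+)\).
\item Normalizing \(T'\) to have no disk components, so that \(\chi_-=-\chi\) ``on both sides'', is impossible here. \(R'_+\) itself has the disk component \(E\), and \(T'\) cannot discard disks because \(\partial T'=\partial R'_+\) is prescribed.
\end{itemize}
What is missing is a componentwise argument. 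In these cases at least one of the two arcs along which the strip is re-attached lies on \(\partial E\). Cut \(T'\) along a circle parallel to \(\partial E\), and cap both new boundary circles with parallel copies of \(E\) pushed into a thin collar. This preserves the class in \(H_2(N',\gamma')\), does not increase \(\chi_-\), and makes the component of \(T'\) bounded by \(\partial E\) a disk. Attaching a strip to a disk never increases \(\chi_-\), so \(\chi_-(T)\le\chi_-(T')<\chi_-(R'_+)=\chi_-(R_+)\), which is the contradiction you want. In the remaining case (\(\alpha_+\) essential, \(\chi<0\)) your original count goes through.
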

Based on taut sutured decompositions, Gabai \autocite{Gabai1983_FoliationsAndTheTopologyOf3Manifolds_PUB} constructs a taut foliation on a Haken manifolds. Furthermore, the unwritten works of Gabai and Mosher construct a pseudo-Anosov flow which is almost transverse to that foliation above.
This offers a glimpse into the subtle interplay among flows, sutured manifold, and taut foliation.
\subsection{Semiflows on sutured manifolds}
\label{sec:orgd060fc9}
\begin{definition}[Semiflow]
Let \(N\) be a compact manifold.
A \emph{semiflow} on \(N\) is a continuous map \(\varphi\) mapping a subset \(U\subset N\times \Real\) to \(N\), which satisfies that:
\begin{enumerate}
\item for each \(x\in N\), \(\{t\in \Real:(x,t)\in U\}\) is a closed interval of \(\Real\) and contains \(0\).
\item if \((x,t)\in U\), then \((\varphi(x,t),-t)\in U\),
\item \(\varphi(x,0)=x\) and \(\varphi(\varphi(x,t_{1}),t_{2})=\varphi(x,t_{1}+t_{2})\).
\end{enumerate}
\label{def:partialflow}
\end{definition}
Semiflows generalize flows in closed 3-manifolds to compact ones naturally.
It is introduced by Fenley-Mosher in \autocite{FenleyMosher2001_QuasigeodesicFlowsInHyperbolic3Manifolds_PUB}.
A similar definition is ``pseudo flows'' given by Shannon in \autocite[Section 2.1]{Shannon2025_HyperbolicModelsOfTransitiveTopologicallyAnosovFlowsInDimensionThree_ARXv2}.
By Condition 2 and 3, we know that \(\varphi(x,t)=y\) implies \(\varphi(y,-t)=x\). The (partial) orbit of a point \(x\in M\) is homeomorphic to a closed interval, or a half line, or \(\Real\).
Therefore, a semiflow generates a partition of \(N\) into orbits.
If \(\varphi\) is a (honest) flow in a closed manifold \(M\), and \(N\) is a closed subset of \(M\), then \(N\) has a naturally semiflow by restricting \(\varphi\).

\begin{example}
Let \(N=(S\times I,S\times \{1\},S\times \{0\},\partial S\times I)\) be a product sutured manifold. There is a natural semiflow \(\varphi_{t}\) on \(N\) up to orbit equivalence:
\[ \varphi_{t}((s,u))=(s,u+t)\in S\times I, \text{ where } s\in S,u\in I, t\in [-u,1-u]. \]
We call \(\varphi_t\) the \emph{product flow} on \(N\). See Figure \ref{fig:productflow} for an example.
\end{example}

\begin{figure}[htbp]
\centering
\includegraphics[scale=1.8]{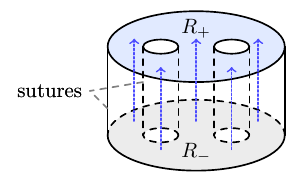}
\caption{\label{fig:productflow}The product flow on \(S\times I\) where \(S\) is a pair of pants.}
\end{figure}

We are interested in the orbit equivalent class of semiflows.
\begin{definition}[Orbit equivalence]
Let \(N_{1}\) and \(N_{2}\) be oriented compact 3-manifolds, which admit semiflows \(\varphi_{t}^{1}\) and \(\varphi_{t}^{2}\), respectively.
The semiflows \(\varphi_{t}^{1}\) and \(\varphi_{t}^{2}\) are \emph{orbit equivalent} if there exists a homeomorphism \(H:N_{1}\to N_{2}\) such that for every \(x_{1}\in N_{1}\), \(H\) sends the orbit \(\mathcal{O}^{1}(x_{1})\) homeomorphically onto the orbit \(\mathcal{O}^{2}(H(x_{1}))\), preserving the orientation of orbits.

If, in addition, \(N_{1}\) and \(N_{2}\) are sutured manifolds, we require that \(H\) preserves the suture structure.
\end{definition}

\begin{definition}[Saturated subset]
Let \(\varphi\) be a semiflow on a compact manifold \(N\). A subset \(W\subset N\) is called (\(\varphi\)-)/saturated/ if for every point \(p\in W\), the entire flowline through \(p\) is contained in \(W\).

If, in addition, \(W\) is a \(I\)-bundle \(F\times I\) and the \(\varphi|_{W}\) is orbit equivalent to a product flow on \(F\times I\), then \(W\) is called \emph{product saturated}.
\label{def:saturated}
\end{definition}

For a semiflow on a sutured manifold, we add compatible condition naturally.
\begin{definition}
Let \((N,R_{+},R_{-},\gamma)\) be a sutured manifold and \(\varphi\) be a semiflow on \(N\).
We call \(\varphi\) is a \emph{compatible semiflow} in \(N\) if the following conditions hold:
\begin{enumerate}
\item Each orbit of \(\varphi\) is transverse to \(R_{+}\) and \(R_{-}\). Further, there exists \(\epsilon>0\) such that for any point \(r\in R_{+}\), \(\varphi_{[-\epsilon,0]}(r)\) exists; similarly, for any point \(r\in R_{-}\), \(\varphi_{[0,\epsilon]}(r)\) exists.
\item \(\gamma\) is a \(\varphi\)-saturated set and the restriction \(\varphi|_{\gamma}\) of \(\varphi\) on \(\gamma\) is transverse to \(\partial \gamma\).
\end{enumerate}

If, in addition,
\begin{enumerate}
\item the restriction \(\varphi|_{A(\gamma)}\) on sutured annuli is product saturated (Definition \ref{def:saturated}) and,
\item the restriction \(\varphi|_{T(\gamma)}\) on sutured tori forms a taut foliation,
\end{enumerate}
then \(\varphi\) is called a \emph{tight semiflow} in \(N\).
\label{def:tight}
\end{definition}
Clearly, the product flow is tight and serves as the natural flow analogue of a product sutured manifold.

Tight flows behave well under decomposing along transverse surface and product saturated surface.
\begin{lemma}
Suppose that \((N,R_{+},R_{-},\gamma)\) is a sutured manifold which admits a tight semiflow \(\varphi\). Let \(S\) be a decomposition surface in \(N\) such that either
\begin{enumerate}
\item \(S\) is a product annulus or product disk and is product \(\varphi\)-saturated, or,
\item \(S\) is positively transverse to \(\varphi\),
\end{enumerate}
Then, there is a natural tight semiflow structure on \(N-S\).
\label{lem:decompose}
\end{lemma}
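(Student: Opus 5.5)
We construct the semiflow on \(N-S\) by restricting \(\varphi\) to \(N\setminus N_{\epsilon}(S)\), and we check the sutured conditions of the decomposition \((N,\gamma)\stackrel{S}{\rightsquigarrow}(N',\gamma')\) one at a time. The two hypotheses on \(S\) lead to different local pictures near \(S\), so we treat them separately. In both cases the main tool is a flow-adapted collar of \(S\), chosen so that \(\partial N_{\epsilon}(S)\) inherits either transversality or saturation from \(S\).

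\textbf{Case (2): \(S\) positively transverse to \(\varphi\).} By compactness and transversality there is an \(\epsilon>0\) such that \(\varphi_{[-\epsilon,\epsilon]}\) is defined on the interior points of \(S\) and embeds \(S\times[-\epsilon,\epsilon]\). We take \(N_{\epsilon}(S)\) to be the flow-box \(\varphi_{(-\epsilon,\epsilon)}(S)\), adjusted near \(\partial S\) as described below. Then \(S'_{+}=\varphi_{-\epsilon}(S)\) and \(S'_{-}=\varphi_{\epsilon}(S)\): flow exits \(N'\) through the first and enters through the second, which matches the orientation conventions for \(R'_{\pm}\).

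Restricting \(\varphi\) to \(N'\) gives a semiflow in the sense of Definition \ref{def:partialflow}. Orbits are cut at the times they meet \(\partial N_{\epsilon}(S)\), and the time domains remain closed intervals containing \(0\) because \(N'\) is closed. Condition (1) of Definition \ref{def:tight} holds on \(S'_{\pm}\) because the flow-box collar provides the required \(\epsilon\)-orbit segments, and it holds on the old \(R_{\pm}\cap N'\) as before.

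The new sutures are \(\overline{N_{\epsilon}(S'_{+}\cap R_{-})}\) and \(\overline{N_{\epsilon}(S'_{-}\cap R_{+})}\), together with \(\gamma\cap N'\).
\begin{enumerate}
\item Where \(\partial S\) meets \(R_{\mp}\), we choose the collar so that the new sutured annulus is the flow-box \(\varphi_{[0,\delta]}\) of an arc or curve. This set is saturated, product saturated, and transverse to its boundary.
\item Where \(\partial S\) meets an old sutured annulus or torus, \(\partial S\) is transverse to the flow restricted to \(\gamma\). Cutting a product-saturated annulus along a transverse arc or curve leaves product-saturated pieces.
\item Cutting a sutured torus carrying a taut flow-foliation along a transverse closed curve yields either a product annulus or a torus component that is still taut.
\end{enumerate}

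\textbf{Case (1): \(S\) a product-saturated annulus or disk.} Here \(S\cong F\times I\) carries the product flow, and \(\partial S\) meets \(R_{\pm}\) in \(F\times\{0,1\}\). We take \(N_{\epsilon}(S)\) to be a saturated thickening \(F\times[-\epsilon,\epsilon]\times I\), obtained by flowing a small transverse neighborhood of \(F\times\{0\}\) in \(R_{-}\). Every orbit of \(N'\) is then an orbit of \(N\) lying in \(N'\), so \(N'\) is saturated. The two copies of \(S\) in \(\partial N'\) are product saturated and become new sutured annuli (or, in the disk case, merge with adjacent sutures). Transversality to \(R'_{\pm}\) and the \(\epsilon\)-condition are inherited from \(R_{\pm}\). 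A sutured annulus of \(N'\) is either old, or a new copy of \(S\) glued along saturated arcs to parts of old product-saturated annuli, so it is product saturated.

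\textbf{Main obstacle.} The delicate step is the corner analysis where \(\partial S\) meets \(\gamma\), especially in case (2) when \(\partial S\) runs across an old sutured annulus or torus. Here we must verify that the rounded sutures remain product saturated and that the tautness of the foliation on \(T(\gamma)\) survives. We plan to handle this with local models: an arc transverse to a product foliation of an annulus, and a closed transversal of a suspension-type foliation on a torus. In these models the cut pieces are visibly product, or still taut because every leaf meets a closed transversal.
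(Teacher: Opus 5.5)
Your construction is the paper's second one. You restrict \(\varphi\) to the complement of a flow-adapted bicollar, blunt the sharp corners at \(S'_{\pm}\cap R_{\mp}\) into product-saturated strips, and in the saturated case treat the two copies of \(S\) as sutures, as in Remark~\ref{rem:productdisk}. Your items 1 and 2 handle the annular sutures correctly.

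The step that fails is item 3, together with your claim that the pieces of a cut sutured torus are ``visibly product''. A closed transversal of a taut foliation on \(T^{2}\) can be parallel to a closed leaf and disjoint from it. Here is an example:
\begin{itemize}
\item Let \(\psi\) be the suspension flow on \(T^{2}\) of a circle diffeomorphism \(f\) whose only fixed points are a repeller at \(0\) and an attractor at \(1/2\), so \(f(x)>x\) on \((0,1/2)\). The horizontal circle meets every leaf, so this foliation is taut.
\item For \(x_{0}\in(0,1/2)\), take an arc in \((0,1/2)\times[0,1]\) from \((x_{0},0)\) to \((f^{-1}(x_{0}),1)\) with strictly decreasing first coordinate. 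It closes up to a closed transversal \(c\) that misses both closed leaves.
\item Let \(N=T^{2}\times[0,1]\), with both boundary tori as sutured tori and the flow \((p,s)\mapsto(\psi_{t}(p),s)\). This semiflow is tight, and \(S=c\times[0,1]\) is a positively transverse decomposition surface.
\item However, the sutured annuli \((T^{2}\setminus c)\times\{0\}\) and \((T^{2}\setminus c)\times\{1\}\) of \(N-S\) each contain two closed orbits. The induced semiflow is therefore compatible but not tight.
\end{itemize}

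So the toral case needs an extra hypothesis. One option is that every closed leaf of \(\varphi|_{T}\) meets \(S\). Then Poincar\'e--Bendixson on each annulus of \(T-S\) shows that no leaf ray is trapped: a trapped ray would limit on an essential closed leaf avoiding \(S\). Every leaf then runs from the \(S'_{-}\)-copy to the \(S'_{+}\)-copy, and the pieces are product saturated. Tautness alone (``every leaf meets a closed transversal'') is not the property you need.

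The paper's proof is silent on toral sutures too; it says ``other cases are direct consequences of definitions''. They never arise in its applications, because decomposing a closed \(M\) along transverse surfaces and product annuli only produces sutured annuli. With toral sutures excluded, or with the extra hypothesis, your argument goes through. You also omit the independence of the choice of collar, which the statement's ``natural'' calls for and which the paper asserts; that part is routine.
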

\begin{proof}
As a compact manifold, \(N-S\) is the metric completion of \(N\setminus S\) and hence inherits a natural semiflow structure from \(N\).
If \(S\cap R_{+}\) or \(S\cap R_{-}\) is nonempty and \(S\) is positively transverse to \(\varphi\), then we blunt the corner around \(S\cap R_{\pm}\) in \(N-S\) to build a product saturated suture there.See Figure \ref{fig:blunt}.
Other cases are direct consequences of definitions.
So, the natural semiflow structure on \(N- S\) is tight.

In both case, we have another way to give a semiflow structure on \(N-S\). When \(S\) is product saturated, choose \(N_{\epsilon}(S)\) as a small product saturated open bicollar neighborhood of \(S\). When \(S\) is positively transverse to \(\varphi\), choose \(N_{\epsilon}(S)\) as a small open bicollar neighborhood of \(S\) which has product flow structure outside \(S\cap R_{+}\) and \(S\cap R_{-}\).
Then the restriction of \(\varphi\) on \(N\setminus N_{\epsilon}(S)\) gives a desired semiflow on \(N- S\).

It is easy to see the semiflow structure on \(N-S\) is independent of the choice of bicollar neighborhood. Also, two ways above give orbit equivalent semiflow on \(N-S\).

\begin{figure}[htbp]
\centering
\includegraphics[width=10cm]{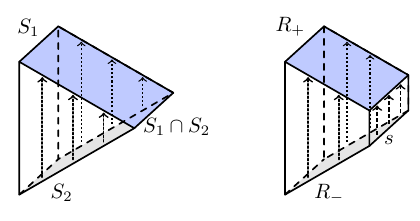}
\caption{\label{fig:blunt}Blunt the sharp corner.}
\end{figure}
\end{proof}

\begin{example}
The second simplest and most frequently encountered example is a solid torus with sutures.

Let \((T,\gamma,R_{\pm})\) be a sutured torus. The sutures consist of an even number of parallel closed curves on \(\partial T\).
\(T\) is taut if and only if the sutures are not meridians.
If \(T\) is taut, \(T\) admits a canonical tight semiflow, namely, the standard pseudo-hyperbolic local model (see Section \ref{sec:pre:pAflow} and Figure \ref{fig:4ST}).

We focus on two cases.
A \emph{4-ST} is a taut sutured torus with four longitudes as sutures.
A \emph{non-longitudinal 2-ST} is a taut sutured torus with two non-longitudinal closed curves as sutures.
\end{example}

\begin{figure}[htbp]
\centering
\includegraphics[width=6.5cm]{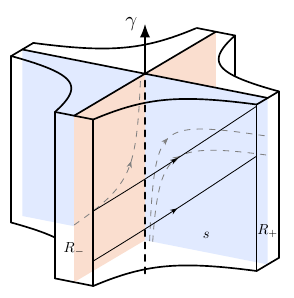}
\caption{\label{fig:4ST}A 4-ST with the standand pseudo-hyperbolic local model.}
\end{figure}
\section{Orbit space and shadows}
\label{sec:orbitspace}
A pseudo-Anosov flow \(\varphi\) on a closed, orientable 3-manifold \(M\) lifts to a flow \(\widetilde{\varphi}\) on the universal covering \(\widetilde{M}\). The \emph{orbit space} \(P_{\varphi}\), which is the quotient of \(\widetilde{M}\) by the orbits of \(\widetilde{\varphi}\), is homeomorphic to a plane, according to \autocites{Barbot1995_CaracterisationDesFlotsDanosovEnDimension3ParLeursFeuilletagesFaibles_PUB}[][]{Fenley1994_AnosovFlowsIn3Manifolds_PUB} for Anosov flows and \autocite{FenleyMosher2001_QuasigeodesicFlowsInHyperbolic3Manifolds_PUB} for pseudo-Anosov case.
It is clear that the deck transformation action of \(\pi_{1}(M)\) on \(\widetilde{M}\) takes orbits to orbits, and thus induces an action \(\pi_{1}(M)\curvearrowright P_{\varphi}\).
A point in \(P_{\varphi}\) is called \emph{singular} if it is the image of a singular closed orbit, and called \emph{regular} or \emph{non-singular} otherwise.
A point in \(P_{\varphi}\) is called a \emph{fixed point} if an element \(g\in \pi_{1}(M)\) fixes it.

By classical stable manifold theory, a transverse pair of foliations \(W^{s}_{\varphi}\) and \(W_{\varphi}^{u}\) in \(M\) encodes the asymptotic properties of \(\varphi\)-orbits.
Their lifts \(\widetilde{W}^{s}_{\varphi}, \widetilde{W}^{u}_{\varphi}\) in \(\widetilde{M}\) is preserved by the action of \(\pi_{1}(M)\) and projects to a transverse pair of \(1\)-dimensional \(\pi_{1}(M)\)-invariant singular foliations \(\mathcal{F}^{s}\) and \(\mathcal{F}^{u}\) in the orbit space \(P_{\varphi}\).
We denote the leaf of \(\mathcal{F}^{s/u}\) traversing the point \(x\in P_{\varphi}\) by \(\mathcal{F}^{s/u}(x)\).

In \autocite{Fenley2012_IdealBoundariesOfPseudoAnosovFlowsAndUniformConvergenceGroupsWithConnectionsAndApplicationsToLargeScaleGeometry_PUB}, Fenley compactified the orbit space \(P_{\varphi}\) by an ideal circle \(\partial P_{\varphi}\) at infinity and proved that the compactification result \(P_{\varphi}\cup \partial P_{\varphi}\) is a topological disk.
The \(G\)-action on \(P_{\varphi}\) extends naturally to a orientation-preserving homeomorphic \(G\)-action on the compactification disk \(P_{\varphi}\cup \partial P_{\varphi}\).
Such \(G\)-action reveals dynamics of the flow. For instance, an element \(g\in \pi_{1}(M)\) fixes a point in orbit space \(P_{\varphi}\) if and only if it represents a power of homotopy class of a closed orbit.

There are some typical objects on the orbit space \(P_{\varphi}\).
\begin{definition}[Perfect fits]
Two leaves \(F\in \mathcal{F}^{s}\) and \(G\in \mathcal{F}^{u}\), form a \emph{perfect fit} if \(F\) has a half ray \(F'\) and \(G\) has a half ray \(G'\), such that
\begin{enumerate}
\item \(F\cap G=\emptyset\), \(F'\) and \(G'\) converge to same ideal point \(p\),
\item there is leaf segments \(U\in \mathcal{F}^{u}\) and \(V\in \mathcal{F}^{s}\) such that \(F,G,U,V,p\) forms a rectangle with one ideal vertex \(R\),
\item the restriction of \(\mathcal{F}^{s}\) and \(\mathcal{F}^{u}\) on \(R\) is product (namely, conjugate to the vertical and horizontal lines on \([0,1]^{2}\setminus \{(0,0)\}\)). In particular, \(R\) contains no singular points.
\end{enumerate}

See the left side of Figure \ref{fig:perfectfits}.
\end{definition}

\begin{definition}[Lozenge]
A \emph{lozenge} is a region whose closure is homeomorphic to a rectangle-with-two-ideal-vertices \(R\) and the restriction of \(\mathcal{F}^{s/u}\) on \(R\) is product.
A lozenge has two \emph{corners} \(p,q\). \(\mathcal{F}^{s}(p)\) and \(\mathcal{F}^{u}(q)\) form a perfect fit while \(\mathcal{F}^{u}(p)\) and \(\mathcal{F}^{s}(q)\) form a perfect fit.
See the right side of Figure \ref{fig:perfectfits}.
\end{definition}

Following the notations in \autocite{BarthelmeFrankelMann2025_OrbitEquivalencesOfPseudoAnosovFlows_PUB}, a point in \(P_{\varphi}\) is called \emph{corner point} if it is a corner of some lozenge, and called \emph{non-corner point} otherwise.

\begin{figure}[htbp]
\centering
\includegraphics[width=14cm]{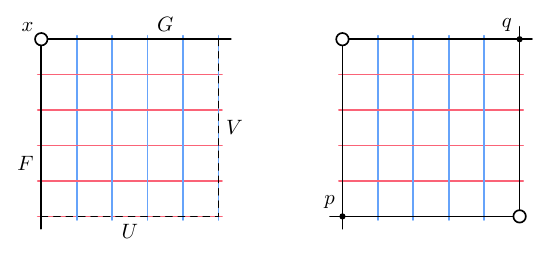}
\caption{\label{fig:perfectfits}A perfect fit and a lozenge.}
\end{figure}

\begin{definition}[scalloped region]
A \emph{scalloped region} is an open set \(U\subset P_{\varphi}\) satisfying following conditions:
\begin{enumerate}
\item The boundary \(\partial U\) consists of four biuinfinite families of leaves, \(l_{k}^{1,s},l_{k}^{2,s}\) in \(\mathcal{F}^{s}\) and \(l_{k}^{1,u},l_{k}^{2,u}\) in \(\mathcal{F}^{u}\), where \(k\in \Integer\).
\item Each \(l_{k}^{i,s}\), \(i=1,2\), contains a fixed point \(p_{k}^{i,s}\) such that \(p_{k}^{1,s},p_{k}^{2,s}\)  are two corners of a lozenge, \(p_{k}^{1,s},p_{k}^{2,s}\) are two corners of a lozenge. The union of these lozenges is equal to \(U\).
Moreover, \(\mathcal{F}^{u}(p_{k}^{i,s})\) accumulates on \(\cup_{j\in \Integer} l_{j}^{1,u}\) as \(k\to \infty\), and accumulates on \(\cup _{j\in \Integer}l_{j}^{2,u}\) as \(k\to u\infty\).
The analogous statement holds for the other two families \(l_{k}^{i,u}\).
\item The restriction of \(\mathcal{F}^{s/u}\) on \(U\) is product.
\end{enumerate}
\end{definition}
We won't discuss the scalloped region in depth in this paper; interested readers are referred to \autocite[Section 2.6]{BarthelmeFrankelMann2025_OrbitEquivalencesOfPseudoAnosovFlows_PUB} for a detailed discussion and vivid figures.
In \autocite{BarthelmeFrankelMann2025_OrbitEquivalencesOfPseudoAnosovFlows_PUB}, the authors axiomatize the \(\pi_{1}(M)\)-action on \(P_{\varphi }\) via conditions imposed on the group action on a bifoliated plane \((P,\mathcal{F}^{s},\mathcal{F}^{u})\).
For our purposes, we fix one element \(g\in G\) and analyse the \(g\)-action on the orbit space \(P_{\varphi}\).
This is analogous to the trichotomy of elements of a hyperbolic 3-manifold group according to the number of fixed points.
\begin{proposition}
Let \(M\) be a closed, orientable 3-manifold and \(\varphi\) is a pseudo-Anosov flow not conjugate to suspension Anosov flow.  Denote by \(P_{\varphi}\cup \partial P_{\varphi}\) Fenley's compactification disk of orbit space.
Let \(\mathrm{Fix}(g)\) denote the fixed point set of \(g\) in \(P_{\varphi}\), and let \(\mathrm{Fix}_{\partial }(g)\) denote the fixed point set of \(g\) in \(\partial P_{\varphi}\).

\begin{enumerate}
\item If \(g\) is elliptic, namely, \(\mathrm{Fix}(g)\) is nonempty, exactly one of the following holds:
\begin{enumerate}
\item \(\mathrm{Fix}(g)\) is a non-corner point \(x\).
In this case, for any power \(g^{l}\), \(\mathrm{Fix}(g^{l})\) equals \(x\).
For some \(k\in \Integer\), \(g^{k}\) fixes prongs of \(x\) and hence \(\mathrm{Fix}_{\partial }(g^{k})\) equals the union of all endpoints of \(\mathcal{F}^{s}(x)\) and \(\mathcal{F}^{u}(x)\).
\item \(\mathrm{Fix}(g)\) contains a corner point \(x\).
In this case, for some power \(g^{k}\), we have:
 \(\mathrm{Fix}(g^{k})\) equals the set of all corners in the maximal chain \(\mathcal{C}\) of lozenges containing \(x\); and \(\mathrm{Fix}_{\partial }(g^{k})\) equals the closure of the set of ideal endpoints of the sides of lozenges in \(\mathcal{C}\).
\end{enumerate}

\item If \(g\) acts freely on \(P_{\varphi}\), namely, \(\mathrm{Fix}(g)=\emptyset\),  then exactly one of the following holds:
\begin{enumerate}
\item \(g\) is hyperbolic; that is, \(\mathrm{Fix}_{\partial }(g)\) is two points in ideal boundary \(\partial P_{\varphi}\). In this case, \(g\) hyperbolic-like dynamics on \(\partial P_{\varphi}\) with one fixed point attracting and the other repelling.
\item \(g\) is parabolic; that is, it fixes exactly one point in \(\partial P_{\varphi}\). In this case, \(g\) has parabolic-like dynamics on \(\partial P_{\varphi}\) and fixes a perfect fit horoball.
\item \(g\) fixes exactly one scallopsed region. In this case,
\(g\) has exactly four fixed points in \(\partial P_{\varphi}\), namely, the four ideal boundary of the scallopsed region.
\end{enumerate}
\end{enumerate}
\label{prop:actionclassification}
\end{proposition}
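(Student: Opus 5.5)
This classification is essentially an assembly of known results on the action of \(\pi_{1}(M)\) on the bifoliated plane \((P_{\varphi},\mathcal{F}^{s},\mathcal{F}^{u})\) and its compactification. The plan is to organize the argument by whether \(\mathrm{Fix}(g)\) is empty and, in the elliptic case, whether the fixed point is a corner. Throughout I use that \(\varphi\) is not a suspension Anosov flow, so \(P_{\varphi}\) is not skew or trivially product. Fenley's results then apply:
\begin{itemize}
\item the compactification \(P_{\varphi}\cup\partial P_{\varphi}\) is a closed disk with a continuous action;
\item the ideal endpoints of leaves are dense in \(\partial P_{\varphi}\);
\item the structure theory of lozenges and chains of lozenges holds.
\end{itemize}

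\textbf{Elliptic case.} Suppose \(g\) fixes \(x\in P_{\varphi}\), so \(g\) is represented by a power of a closed orbit. Then \(g\) permutes the finitely many prongs of \(\mathcal{F}^{s}(x)\) and \(\mathcal{F}^{u}(x)\). Some power \(g^{k}\) fixes each prong and acts on it as a contraction or an expansion. This is standard, since \(g^{k}\) corresponds to going around the periodic orbit.

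\emph{Non-corner fixed point.} Fenley's theorem says that if \(g^{l}\) fixes two distinct points, then they are connected by a chain of lozenges. Since \(x\) is a non-corner, it is the unique fixed point of every power \(g^{l}\). Each ray of \(\mathcal{F}^{s/u}(x)\) has a well-defined ideal endpoint, and this endpoint is fixed by \(g^{k}\). Between two consecutive endpoints, the sector bounded by a stable prong and an unstable prong is attracting toward one endpoint and repelling from the other. I would show this by a ping-pong argument using leaves crossing that sector. This argument shows there are no further fixed points on \(\partial P_{\varphi}\); a non-corner condition rules out a perfect fit in the sector that could produce an extra fixed ideal point.

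\emph{Corner fixed point.} Take the maximal chain \(\mathcal{C}\) of lozenges through \(x\). Fenley's results give that \(g\) preserves \(\mathcal{C}\), that some power fixes every corner, and that the fixed set of that power is exactly the set of corners. On the boundary, every side of a lozenge in \(\mathcal{C}\) is a \(g^{k}\)-invariant half-leaf, so its ideal endpoint is fixed. Fixed sets are closed, so the closure of these endpoints is fixed. The reverse inclusion again comes from the attracting–repelling dynamics in the complementary sectors.

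\textbf{Free case.} Suppose \(\mathrm{Fix}(g)=\emptyset\). By Brouwer's plane translation theorem, \(g\) acts on the closed disk \(P_{\varphi}\cup\partial P_{\varphi}\) with fixed points only on \(\partial P_{\varphi}\), and there is at least one by the Brouwer fixed point theorem. I would then invoke the classification of free elements: either the fixed set is exactly two points with north–south-like dynamics, or one point, or the element preserves a scalloped region. This is in Barthelmé–Frankel–Mann, building on Fenley's work on ideal boundaries and convergence groups.

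The key input is that any invariant structure with at least three boundary fixed points forces either lozenges fixed by \(g\) or a scalloped region. The first option is impossible in the free case, because the corners of an invariant lozenge would be fixed points in \(P_{\varphi}\). For a scalloped region, the four ideal corners are fixed. If exactly one point is fixed, the dynamics are parabolic. Any such \(g\) must preserve a perfect-fit horoball, since otherwise an invariant leaf or ideal polygon would yield another fixed point.

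\textbf{Main obstacle.} The main difficulty is the free case with few fixed points: showing that one fixed ideal point forces a fixed perfect-fit horoball, and that more than two forces exactly a scalloped region. I would rely on the cited convergence-group and Barthelmé–Frankel–Mann results here rather than reprove them. The remaining steps are bookkeeping about ideal endpoints of invariant leaves.
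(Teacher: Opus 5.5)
Your elliptic case follows the same route as the paper. Uniqueness of a non-corner fixed point for every power comes from Fenley's theorem that two points fixed by one element are joined by a chain of lozenges. In the corner case, a power fixing the rays at the corner fixes all corners of the maximal chain. For the boundary fixed sets the paper simply cites Barthelm\'e--Frankel--Mann (Proposition 3.6). Your ping-pong sketch can replace that citation in the non-corner case if you make explicit why a sector contains no perfect fit. If a prong of \(x\) made a perfect fit with a leaf \(\ell\), uniqueness of that perfect fit would make \(\ell\) invariant under \(g^{k}\), giving a second fixed point. For an infinite chain, where taking the closure of the side endpoints adds limit points, your ``reverse inclusion'' is only asserted.

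The genuine gap is case (2). You hand the whole trichotomy to a ``classification of free elements'' in the literature. The paper notes that the existing references treat this only partially, and this is exactly the part it has to prove. The heuristics you offer in its place are not arguments:
\begin{itemize}
\item nothing explains why three fixed ideal points should force a \(g\)-invariant lozenge or a scalloped region;
\item nothing explains why a single fixed ideal point yields a perfect-fit horoball (``otherwise an invariant leaf or ideal polygon would yield another fixed point'' does not construct one);
\item nothing explains why two fixed points must be one attracting and one repelling.
\end{itemize}

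The missing idea is to work in the leaf space. A \(g\)-invariant leaf of \(\mathcal{F}^{s}\) or \(\mathcal{F}^{u}\) always contains a \(g\)-fixed point, since it comes from a periodic orbit. So a free \(g\) acts freely on the non-Hausdorff tree \(\mathcal{L}^{s}\). By Fenley, \(g\) then has a translation axis, and for \(L\) on the axis the translates \(g^{n}(L)\) are nested.

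If the translates accumulate, the limit is a family of pairwise non-separated leaves. Three configurations are ruled out:
\begin{itemize}
\item a single limit leaf would be \(g\)-invariant;
\item a finite family with order preserved would consist of \(g\)-invariant leaves;
\item an order-reversing action would fix a leaf or swap two adjacent ones, and then the unique unstable leaf separating them would be invariant.
\end{itemize}
So the family is bi-infinite and translated by \(g\), and the associated scalloped region has exactly its four ideal vertices as fixed points.

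If the translates escape compact sets, they determine ideal points \(s_{+}\) and \(s_{-}\) as \(n\to\pm\infty\). If \(s_{+}\neq s_{-}\), you get the attracting--repelling pair, and \(g\) shifts the arcs between consecutive translates, so there are no other fixed points. If \(s_{+}=s_{-}\), all \(g^{n}(L)\) share an ideal point, and Fenley's Lemma 3.20 gives the invariant perfect-fit horoball.

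This analysis of nested translates is what produces the three cases and shows they are exhaustive and mutually exclusive. Without it, your case (2) is a citation, not a proof.
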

\begin{proof}
The proof of such classification is fairly standard but scattered throughout the literature (see \autocites[Lemma 4.9]{Fenley2012_IdealBoundariesOfPseudoAnosovFlowsAndUniformConvergenceGroupsWithConnectionsAndApplicationsToLargeScaleGeometry_PUB}[Proposition 3.6]{BarthelmeFrankelMann2025_OrbitEquivalencesOfPseudoAnosovFlows_PUB} for partial discussion). We include a detailed proof for completion.

If \(g\) is elliptic and fixes a corner point \(p\) of some lozenge \(L\), there is some power \(g^{k}\) fixing all leaf rays starting from \(p\). Then, by \autocite[Theorem 3.3]{Fenley1995_QuasigeodesicAnosovFlowsAndHomotopicPropertiesOfFlowLines_PUB} (see also \autocite[Lemma 2.23]{BarthelmeFrankelMann2025_OrbitEquivalencesOfPseudoAnosovFlows_PUB} for self-contained proof), \(g^{k}\) must fix all corner points of the maximal chain of lozenges \(\mathcal{C}\) containing \(L\).
Meanwhile, according to \autocites[Theorem 4.8]{Fenley1999_FoliationsWithGoodGeometry_PUB}[Theorem 2.5]{Fenley2012_IdealBoundariesOfPseudoAnosovFlowsAndUniformConvergenceGroupsWithConnectionsAndApplicationsToLargeScaleGeometry_PUB}, if two points are both fixed by \(g\), then they are connected by a chain of lozenges and hence both corner points.
Therefore, whenever \(g\) fixes a non-corner point, it has no other fixed points.
The conclusion on fixed points of \(g^{k}\) in \(\partial P_{\varphi}\) follows directly from \autocite[Proposition 3.6]{BarthelmeFrankelMann2025_OrbitEquivalencesOfPseudoAnosovFlows_PUB}.
At this point, we finish the proof of elliptic case.

Assume that \(g\) is non-elliptic and hence fixes at least one point in the ideal boundary \(\partial P_{\varphi}\) by Brouwer fixed point theorem.

Recall that \(\mathcal{F}^{s}\) and \(\mathcal{F}^{u}\) are stable and unstable foliations on the orbit space \(P_{\varphi}\).
If \(g\) leaves invariant a leaf in \(\mathcal{F}^{s/u}\), then there is a \(\varphi\)-orbit fixed by \(g\), contradicting non-elliptic assumption.
Therefore, \(g\) acts freely on the leaf space \(\mathcal{L}^{s}\) of \(\mathcal{F}^{s}\), which is a non-Hausdorff tree.
By \autocite[Theorem A]{Fenley2002_FoliationsTopologyAndGeometryOf3Manifolds_PUB}, \(g\) leaves invariant a translation axis for its action in \(\mathcal{L}^{s}\) and acts as a translation on it.
The translation axis consists of exactly those leaves \(L\) of \(\mathcal{L}^s\) so that \(g(L)\) separates \(L\) from \(g^{2}(L)\). This implies that \(\{g^{n}(L)\}_{n\in\Integer}\) is a nested collection of leaves.
If \(g^{n}(L)\) does not escape compact sets in \(P\) as \(n\) tends to infinity, the sequence must converge to a single leaf or a union of non-separated leaves \(\Lambda\) with index set \(\mathcal{I}\).
The single leaf case can't happen since \(g\) is non-elliptic.
As stated in \autocites[Theorem 4.9]{Fenley1999_FoliationsWithGoodGeometry_PUB}[Theorem 2.6]{Fenley2012_IdealBoundariesOfPseudoAnosovFlowsAndUniformConvergenceGroupsWithConnectionsAndApplicationsToLargeScaleGeometry_PUB}, the index set \(\mathcal{I}\) is order isomorphic to either \(\Integer\) or \(\{1,\ldots,k\}\), and \(g\) preserves or reverses the order.
If \(g\) reverses the order, then \(g\) fixes a leaf or swaps two consecutive leaves \(\alpha^{s},\beta^{s}\in \Lambda\). In the latter case, by \autocites[Theorem 4.9]{Fenley1999_FoliationsWithGoodGeometry_PUB}[Theorem 2.6]{Fenley2012_IdealBoundariesOfPseudoAnosovFlowsAndUniformConvergenceGroupsWithConnectionsAndApplicationsToLargeScaleGeometry_PUB}, \(\alpha^{s}\) and \(\beta^{s}\) are separated by a unique unstable leaf \(\gamma^{u}\) and hence \(g\) fixes \(\gamma^{u}\), leading to a contradiction.
Similarly, it is impossible that \(\mathcal{I}\) is a finite set and \(g\) preserves the order.
So, \(\{g^{n}(L)\}_{n\in \Integer}\) either escapes compact sets of \(P_{\varphi}\), or accumulates to a bi-infinite union of non-separated leaves. In the latter case, \(g\) acts on the bi-infinite union by order-preserving translation.
We divide the argument into two cases.

If \(\{g^{n}(L)\}_{n\in \Integer}\) converges to a bi-infinite collection \(\Lambda\) of leaves non-separated from each other, there is a scalloped region \(\mathcal{S}\) associated to \(\Lambda\) according to \autocite[Section 4]{Fenley1998_StructureOfBranchingInAnosovFlowsOf3Manifolds_PUB}.
\(g\) acts as a translation on the four sides of \(\mathcal{S}\), otherwise \(g\) would have a fixed point in \(\mathcal{S}\).
Therefore, \(g\) has exactly four fixed points in \(\partial P_{\varphi}\), namely, the four ideal vertices of \(\mathcal{S}\), and acts by translation on the four complement segments in \(\partial P_{\varphi}\).
This is because that for each ideal point \(p\) not the ideal vertices of \(\mathcal{S}\), there is a leaf \(L_{p}\) separating \(p\) from \(\mathcal{S}\). Since \(g\) acts on \(L_{p}\) by translation and fixes \(\mathcal{S}\), \(g\) cannot fix \(p\).
Consequently, \(g\) fixes exactly one scalloped region and four ideal points, which places us in case (2c).

If \(\{g^{n}(L)\}_{n\in \Integer}\) escapes compact sets, \(\{g^{n}(L)\}_{n\in \Integer^{+}}\) defines a nested escaping sequence of convex polygonal paths and defines a unique ideal point \(s_{+}\) by the definition of ideal boundary.
Similarly, the negative part \(\{g^{n}\}_{n\in \Integer^{-}}\) defines an ideal point \(s_{-}\) in \(\partial P_{\varphi}\).
Clearly, \(g\) fixes \(s_{\pm}\), and the ideal boundary of \(g^{n}(L)\) converges to \(s_{+}\) when \(n\to +\infty\) and to \(s_{-}\) when \(n\to -\infty\).
Moreover, \(g\) maps arcs of \(\partial P_{\varphi}\) between \(g^{k}(L)\) and \(g^{k+1}(L)\) to the arcs between \(g^{k+1}(L)\) and \(g^{k+2}(L)\).

If \(s_{-}\neq s_{+}\), then they form a source/sink pair under \(g\)-action, one repelling and the other attracting. Hence \(g\) has exactly two fixed point in \(\partial P_{\varphi}\) and has hyperbolic-like dynamics in the circle \(\partial P_{\varphi}\).
We are in case (2a) now.

If \(s_{-}=s_{+}\), then \(g\) has parabolic-like dynamic action on \(\partial P_{\varphi}\), with a unique fixed point \(s_{\pm}\).
Furthermore, \(g^{n}(L),n\in \Integer\) share a common ideal point \(s_{\pm}\) and hence form perfect fits pairwisely.
Due to \autocite[Lemma 3.20]{Fenley2012_IdealBoundariesOfPseudoAnosovFlowsAndUniformConvergenceGroupsWithConnectionsAndApplicationsToLargeScaleGeometry_PUB}, \(\{g^{n}(L)\}_{n\in \Integer}\) forms a perfect fit horoball, which is invariant under \(g\).
\end{proof}

If \(\varphi\) is a pseudo-Anosov flow without perfect fits and not conjugate to a suspension Anosov flow, only case (1a) and case (2a) can happen.
To accommodate the blown-up case, a minor adaptation of the classification is required.
\subsection{Blowup settings}
\label{sec:orgc12de66}
Let \(\varphi\) be an almost pseudo-Anosov flow, which is a blowup of \(\varphi^{\natural}\).
We build the orbit space \(P_{\varphi}\) as before.

The blowdown map \(\Phi:\varphi\to \varphi^{\natural}\) can be homotopic to be identity outside a small neighborhood of the blown-up complex; furthermore, \(\Phi\) sends orbits of \(\varphi^{\sharp}\) to orbits of \(\varphi\) and preserves orientation.
Hence, it induces a map \(\Phi:P_{\varphi}\to P_{\varphi^{\natural}}\).
The orbit space of \(\varphi\) differs that of \(\varphi^{\natural}\) by the existence of blown-up segments, namely, the images of blown-up annuli.
The blowdown map \(\Phi\) takes blown-up segments to singular points; a blown-up lozenge can have one of these segments connected to its corner.

The blown-up result of stable and unstable foliation projects to singular foliations \(\mathcal{F}^{s/u}\) on \(P_{\varphi}\).
We note that the interior of a blown-up segment belong to both of \(\mathcal{F}^{s/u}\).
A \emph{stable} or \emph{unstable slice leaf} is refered to be a properly embedded \(\Real\) in \(P_{\varphi}\) that is contained in a leaf of \(\mathcal{F}^{s}\) or \(\mathcal{F}^{u}\) respectively.

\begin{corollary}
Let \(M\) be a closed, orientable 3-manifold and \(\varphi\) be an almost pseudo-Anosov flow without perfect fits and not conjugate to suspension Anosov flow.
Let \(g\in \pi_{1}(M)\) act on \(P_{\varphi}\) by deck transformation.
Then, exactly one of the following holds:
\begin{enumerate}
\item \(g\) fix a regular point in \(P_{\varphi}\)
\item \(g\) fix a singular point \(p\) in \(P_{\varphi^{\natural}}\) and hence all the vertices of blown-up tree associated to \(p\) in \(P_{\varphi}\).
\item \(g\) acts hyperbolically on \(\partial P_{\varphi}\) with one fixed point attracting and the other repelling.
\end{enumerate}
\label{prop:actionclassification_perfectfits}
\end{corollary}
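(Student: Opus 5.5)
The plan is to reduce to Proposition \ref{prop:actionclassification} for the blown-down flow \(\varphi^{\natural}\), using the blowdown map \(\Phi:P_{\varphi}\to P_{\varphi^{\natural}}\), which is \(\pi_{1}(M)\)-equivariant. First I will check that \(\varphi^{\natural}\) has no perfect fits and is not a suspension Anosov flow. Away from blown-up segments, \(\Phi\) is a homeomorphism respecting \(\mathcal{F}^{s/u}\). A perfect fit for \(\varphi^{\natural}\) would pull back to a perfect fit for \(\varphi\), since the rectangle in the definition contains no singular points and so lifts unchanged. The suspension case is excluded because a suspension Anosov flow has no singular orbits to blow up; if nothing is blown up, then \(\varphi=\varphi^{\natural}\). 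So by the remark following Proposition \ref{prop:actionclassification}, every \(g\) is either of type (1a) for \(\varphi^{\natural}\) (it fixes a unique non-corner point) or of type (2a) (hyperbolic on \(\partial P_{\varphi^{\natural}}\)).

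Next I relate ideal boundaries. Since \(\Phi\) only collapses finite trees (the blown-up trees of singular points) and is otherwise a leaf-preserving homeomorphism, it should extend to an equivariant homeomorphism \(\partial P_{\varphi}\to\partial P_{\varphi^{\natural}}\). Ideal points are defined by nested sequences of polygonal leaf paths escaping compact sets, and collapsing bounded trees does not change which sequences escape. I expect this to be the main technical point. I would argue it via Fenley's construction of the ideal circle, noting that each blown-up tree has bounded diameter in any compact exhaustion and that \(\Phi^{-1}\) of a compact set is compact.

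Granting this, the case analysis goes as follows.
\begin{enumerate}
\item \textbf{Type (2a) for \(\varphi^{\natural}\).} The element \(g\) is hyperbolic on \(\partial P_{\varphi}\) with attracting and repelling fixed points. It has no fixed point in \(P_{\varphi}\), since a fixed point upstairs would map to one downstairs. This is case (3).
\item \textbf{Type (1a) with \(x\) regular.} Then \(\Phi^{-1}(x)\) is a single regular point, which is fixed. This is case (1).
\item \textbf{Type (1a) with \(x\) singular and blown up.} Then \(g\) preserves the finite tree \(\Phi^{-1}(x)\). By the local description of the blowup, a power of the return map \(f^{\sharp}\) fixes every vertex and translates along edges, and \(g\) acts as a deck transformation corresponding to the return map. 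Hence \(g\) fixes the vertices, and this is case (2).
\end{enumerate}

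The subtle point in the last case is that \(g\) itself, and not merely a power, fixes all vertices. One way to get this is to choose the blowup combinatorics so that each vertex is the image of a periodic orbit of the same period as \(\gamma\); I would interpret (2) this way, up to passing to the power specified in the definition of the blowup. If \(x\) is singular but was not blown up, it is regular in the sense of being a single fixed point, and it lands in case (1) or (2) by convention.

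Exclusivity follows directly. Cases (1) and (2) have fixed points in \(P_{\varphi}\) while (3) has none. Cases (1) and (2) are distinguished by whether \(\Phi\) of the fixed set is regular or singular, using uniqueness of the fixed point in case (1a).
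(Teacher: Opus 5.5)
Your reduction is the one the paper intends. The paper gives no separate proof: it only remarks that without perfect fits Proposition \ref{prop:actionclassification} leaves cases (1a) and (2a), and that a ``minor adaptation'' through the blowdown map \(\Phi\) handles the blown-up case. Your checks on \(\varphi^{\natural}\), and your treatment of case (2a) and of a regular fixed point, are fine.

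The gap is in your third case. From ``a power of \(f^{\sharp}\) fixes every vertex'' and ``\(g\) acts near \(T=\Phi^{-1}(x)\) as the return map'' you only get two things: \(g\) permutes the vertices of \(T\), and some power \(g^{k}\) fixes all of them. The step ``hence \(g\) fixes the vertices'' does not follow, and it cannot be repaired for \(g\) itself. The blowup in Section \ref{sec:pre:blowup} only makes vertices \emph{periodic} for \(f^{\sharp}\). If the return map of the singular orbit rotates its prongs, \(f^{\sharp}\) may rotate \(T\). For example, take a \(3\)-prong orbit whose return map cycles the prongs and blow it up to a symmetric tripod: the generator fixes the centre and cycles the three outer vertices. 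Your remedy of choosing the blowup combinatorics is not available, because \(\varphi\) is given. The paper itself is consistent with this. In the proof of Theorem \ref{thm:encodeclosedorbit} it uses only that ``the return map of \(T\) is a rotation, so it has a fixed vertex''. In Lemma \ref{lem:singular-invariantline} it needs the extra \(g\)-invariance of the shadows to get every vertex fixed.

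What you can prove, and what suffices for the trichotomy, goes as follows.
\begin{enumerate}
\item \(g\) preserves \(T\), by equivariance of \(\Phi\).
\item Every continuous self-map of a finite tree has a fixed point.
\item \(g\neq 1\) cannot fix an interior point of a blown-up segment. Such a point is a non-periodic orbit, since flowlines on a blown-up annulus run from one boundary orbit to the other. A fixed point of a nontrivial deck transformation, by contrast, corresponds to an orbit line that \(g\) translates along, hence to a closed orbit.
\end{enumerate}
So \(g\) fixes at least one vertex of \(T\), and a power of \(g\) fixes all of them; that is the correct form of item (2).

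Separately, a singular point of \(P_{\varphi^{\natural}}\) that was not blown up stays singular in \(P_{\varphi}\). It belongs to case (2) with a one-point tree. Calling it ``regular in the sense of being a single fixed point'' and letting it fall into case (1) would break the exclusivity you argue for at the end.
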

For the remainder of this paper, whenever we say \(\varphi\) is without perfect fits, we implicitly assume that it is not a suspension Anosov flow.
\subsection{Shadow of almost transverse surfaces}
\label{sec:orgb75d130}
Let \(\varphi\) be an almost pseudo-Anosov flow and let \(S\) be an embedded surface transverse to \(\varphi\).
We assume that \(\varphi\) is \emph{minimal} with respect to the surface \(S\).
By this we mean that blowing down any blown-up annulus would destroy transversity with \(S\).

Let \(\widetilde{S}\) be a lift of \(S\). \(\widetilde{S}\) separates \(\widetilde{M}\) into positive and negative components according to orientation. We say that a subset \(O\subset \widetilde{M}\) \emph{lies above} (\emph{below}, resp.) \(\widetilde{S}\) if \(O\) lies in the positive (negative, resp.) component.
Any orbit of \(\widetilde{\varphi}\) intersects \(\widetilde{S}\) at most once.
Hence, the projection \(\Theta:\widetilde{S}\to \Theta(\widetilde{S})\) is injective, where \(\Theta\) is the canonical projection from \(\widetilde{M}\) to \(P_{\varphi}\).
The image \(\Theta(\widetilde{S})\) is called the \emph{shadow} of \(\widetilde{S}\).
The boundary of a shadow is well-organized.
\begin{proposition}
Assume that \(\varphi\) is a almost pseudo-Anosov flow on a closed manifold \(M\) and \(S\) is an embedded surface transverse to \(\varphi\) minimally.
We fix a universal lift \(\widetilde{S}\) of \(S\) in \(\widetilde{M}\). Then,
\begin{enumerate}
\item \(\Theta(\widetilde{S})\) is an open subset of \(P_{\varphi}\),
\item \(\Theta(\widetilde{S})\) is convex, in the sense that, for any leaf \(L\) of \(\mathcal{F}^{s/u}\), the intersection \(L\cap \Theta(\widetilde{S})\) is connected.
\end{enumerate}
\label{prop:shadow1}
\end{proposition}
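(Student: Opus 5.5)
Openness follows from the local product structure of \(\widetilde{\varphi}\) near transverse points; convexity follows by taking any two shadow points on one leaf and showing the arc between them lies in the shadow.

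\emph{Openness.} Let \(x\in\Theta(\widetilde{S})\) and \(p\in\widetilde{S}\) with \(\Theta(p)=x\). Since \(S\) is transverse to \(\varphi\), there is a flow box \(B\cong D\times(-\epsilon,\epsilon)\) around \(p\). In \(B\), \(\widetilde{S}\cap B\) is a graph over the transverse disk \(D\), and every orbit through \(B\) meets it. The projection \(\Theta\) restricted to \(D\) is a homeomorphism onto an open neighbourhood of \(x\) in \(P_{\varphi}\). This holds because the orbit space is locally modeled by transverse disks, including at points on blown-up segments, where we use a transverse disk to the blown-up annulus. Hence a neighbourhood of \(x\) lies in \(\Theta(\widetilde{S})\). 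Combined with the injectivity of \(\Theta|_{\widetilde{S}}\) already noted, this gives (1).

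\emph{Convexity.} Let \(L\) be a leaf of \(\mathcal{F}^{s}\); the unstable case is symmetric. Suppose \(a,b\in L\cap\Theta(\widetilde{S})\), and let \([a,b]\subset L\) be the leaf arc between them, which passes through any singular point or blown-up segment it meets. The preimage \(\Theta^{-1}([a,b])\) is a connected subset \(W\) of the lifted stable leaf \(\widetilde{W}^{s}\). It is a strip foliated by flow lines, possibly branched along blown-up annuli. \(\widetilde{S}\cap W\) is transverse to the flow in \(W\), so it is a 1-manifold transverse to the orbit foliation of \(W\), meeting each orbit at most once. Let \(J\subset[a,b]\) be the set of points whose orbit meets \(\widetilde{S}\). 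By the openness argument applied inside \(W\), \(J\) is open in \([a,b]\), and it contains \(a\) and \(b\).

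The key step, and the main obstacle, is showing \(J\) is closed, so that \(J=[a,b]\). Let \(c\) be a limit point of \(J\) from one side, with \(\widetilde{S}\)-points \(s_{n}\) over \(c_{n}\to c\). If the \(s_{n}\) stay in a compact set, they accumulate on the orbit of \(c\), so \(c\in J\). Otherwise the \(s_{n}\) escape along orbits, say forward. Then a component of the curve \(\widetilde{S}\cap W\) through \(a\) is asymptotic to the orbit of \(c\).

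The orbit of \(c\) is then a forward-asymptotic orbit that never crosses \(\widetilde{S}\), yet lies, together with nearby orbits, on the negative side of \(\widetilde{S}\). I would rule this out by projecting to \(M\): the flow lines in the stable leaf through \(c\) contract forward, so orbits near \(c\) become forward-asymptotic to orbits crossing \(S\). Since \(S\) is compact and transverse, every point of \(M\) crosses \(S\) within uniformly bounded time \(T\), because the taut transverse surface \(S\) is a cross section of the cut-open flow. I would use that \(S\) embedded and transverse makes the first-return time to \(\widetilde{S}\) along a leaf continuous. Alternatively, I would invoke that \(\widetilde{S}\) separates \(\widetilde{M}\) and that every orbit crossing it does so exactly once in the positive direction. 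Then \(\widetilde{S}\cap W\) is a properly embedded arc separating \(W\), and properness forbids it from accumulating on an interior orbit of \(W\).

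In the blown-up case, minimality of \(\varphi\) relative to \(S\) handles orbits inside blown-up annuli. A blown-up annulus in \(W\) carries orbits asymptotic to its two boundary orbits; because the transversality is minimal, \(\widetilde{S}\) must actually cross that annulus. So its blown-up segment lies in \(J\) once either endpoint does. Hence \(J=[a,b]\), so \(L\cap\Theta(\widetilde{S})\) is connected, which is (2).
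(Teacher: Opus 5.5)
Your openness argument is fine and is what the paper means by ``the local picture of \(\varphi\)''. For convexity the paper gives no argument and cites Fenley [Geometry of foliations and flows I, 4.2]. Your direct argument has a genuine gap at exactly the step you flag: neither mechanism you offer proves that \(J\) is closed.

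The claim that every point of \(M\) crosses \(S\) within uniformly bounded time is false unless \(S\) is a fiber. A transverse non-fiber surface is not a cross section: some orbits, e.g.\ closed orbits in \(M-S\), which are what survive in the guts, never meet it. The corollary after Proposition~\ref{prop:shadow3} gives \(\partial\Theta(\widetilde S)\neq\emptyset\) whenever \(S\) is not a fiber. Likewise, a first-return time is undefined exactly at the orbits in question.

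The properness alternative also fails. The component of \(\widetilde S\cap W\) through \(a\) can be the graph of a function on \([a,c)\) tending to \(\pm\infty\) as \(x\to c\); such an arc is properly embedded and escapes rather than accumulating on the orbit of \(c\). Separation of \(\widetilde M\) does not help either. Take two arcs over \([a,c)\) and \((c',b]\), both tending to \(+\infty\), with every orbit over \([c,c']\) on the negative side. This is compatible with \(\widetilde S\) separating and with each orbit crossing at most once, positively. Asserting that \(\widetilde S\cap W\) is a single separating arc just restates what you want to prove.

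To close the gap you must treat the two escape directions separately, because on a stable leaf they are not symmetric.
\begin{itemize}
\item \emph{Forward escape.} Forward contraction in the stable leaf puts points of \(\widetilde S\) at distance tending to \(0\) from the orbit of \(c\). A compact transverse \(S\) has a uniform \(\delta\)-flow-box neighbourhood, which lifts to \(\widetilde S\), so the orbit of \(c\) would cross \(\widetilde S\), a contradiction. This is the correct use of your ``contract forward'' remark, but it needs the uniform neighbourhood, not a bounded return time.
\item \emph{Backward escape.} Here orbits in the stable leaf diverge and no such contradiction arises; your ``say forward'' skips this case. You need input on the structure of the frontier. The point \(c\) lies on a frontier slice leaf, which by the escape-direction clause of Proposition~\ref{prop:shadow3}(1) must be unstable. \(\Theta(\widetilde S)\) lies on one side of that leaf and \(L\) crosses it at \(c\), so the part of \(L\) beyond \(c\), and hence \(b\), is not in the shadow. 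Proposition~\ref{prop:shadow2} is needed when \(c\) is singular or on a blown-up tree. Alternatively, cite Fenley as the paper does.
\end{itemize}

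Finally, your blown-up paragraph misuses minimality. Minimality only guarantees that \emph{some} lift of \(S\) crosses each blown-up annulus, not the fixed lift \(\widetilde S\). Proposition~\ref{prop:shadow2} shows that blown-up segments can lie in \(\operatorname{fr}(\Theta(\widetilde S))\). The statement you want (a whole blown-up segment is in the shadow once an endpoint is) does hold, but it follows from that local structure, not from minimality.
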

\begin{proof}
The first statement follows from the fact that \(P_{\varphi}\) is Hausdorff and the local picture of \(\varphi\). The second one is due to \autocite[4.2]{Fenley2009_GeometryOfFoliationsAndFlowsI_PUB}.
\end{proof}

The following proposition is a reformulation of \autocite[Proposition 7.2]{LandryMinskyTaylor2025_TransverseSurfacesAndPseudoAnosovFlows_PUB} in the closed case, which characterizes the local figure around a singular point and its associated blown-up tree.
\begin{proposition}
Let \(T\) be a blown-up tree in the orbit space \(P_{\varphi}\).
Then one of the following holds:
\begin{enumerate}
\item \(T\cap \operatorname{fr}(\Theta(\widetilde{S}))\) is empty.
\item \(T\cap \operatorname{fr}(\Theta(\widetilde{S}))\) is the union of finitely many vertices and blown-up segments.
\end{enumerate}

In the latter case, each connected component of \(N_{\epsilon}(T)\cap \Theta(\widetilde{S})\) is either
\begin{enumerate}
\item a pair of adjacent open quadrants and the prong between them, or
\item a blown-up segment together with the two open quadrants on its two sides.
\end{enumerate}
\label{prop:shadow2}
\end{proposition}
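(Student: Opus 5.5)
The plan is to work locally near the singular orbit \(\gamma\) (or blown-up complex) whose lift projects to the tree \(T\), and to transport the question from the orbit space to a transverse disk \(D\) carrying the return map \(f^{\sharp}\) described in Section \ref{sec:pre:blowup}. A small neighborhood \(N_{\epsilon}(T)\) in \(P_{\varphi}\) is identified with the quotient of a flow-box neighborhood of the lifted blown-up complex in \(\widetilde{M}\). In \(D\), the tree \(T^{\sharp}\) decomposes a neighborhood into open quadrants, each adjacent to two prongs (or to a prong and a blown-up edge). The shadow \(\Theta(\widetilde{S})\) meets \(N_{\epsilon}(T)\) in the projection of those local sheets of \(\widetilde{S}\) near the lifted complex. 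These sheets are the lifts of the components of \(S\cap N(\gamma)\); each is a transverse surface piece in the suspension of \(f^{\sharp}\).

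\textbf{Step 1: the dichotomy.} By Proposition \ref{prop:shadow1}, \(\Theta(\widetilde{S})\) is open and leafwise convex. Since every point of \(T\) lies on stable and unstable slice leaves, the frontier meets \(T\) in a closed set saturated by the leaf structure near \(T\). Convexity along a blown-up segment, which lies in both \(\mathcal{F}^{s}\) and \(\mathcal{F}^{u}\), forces the following: if an interior point of a segment lies on the frontier, then the whole open segment does. Likewise, if a point of a prong near a vertex lies on the frontier, convexity along the prong forces that vertex onto the frontier. Because \(T\) has finitely many vertices and edges up to the stabilizer action, and only finitely many near a compact piece, we get that \(T\cap\operatorname{fr}\) is either empty or a finite union of vertices and segments.

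\textbf{Step 2: the local pieces.} Next I describe each component of \(N_{\epsilon}(T)\cap\Theta(\widetilde{S})\). Transversality of \(S\) to \(\varphi\) means a local sheet of \(\widetilde{S}\) cannot contain any lifted orbit of the blown-up complex lying on the frontier. Near a frontier vertex, the sheet therefore projects into a union of quadrants around that vertex. Convexity along the prongs forces this union to be connected across at most one prong, since convexity along a leaf through the vertex rules out wrapping around. This yields case (1): two adjacent quadrants and the prong between them. Near a frontier blown-up segment, the sheet crosses transversely the blown-up annulus over the interior of the segment. Using that flowlines on a blown-up annulus go from one boundary to the other, the sheet's projection is the segment together with the two quadrants on its sides, which is case (2). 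The component cannot extend further, because the endpoints of the segment are frontier vertices.

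\textbf{Main obstacle.} The hardest point is using minimality of \(\varphi\) relative to \(S\) to exclude degenerate configurations. The two configurations to rule out are a single quadrant bounded by two frontier prongs, and a segment meeting the shadow on only one side. In the first, the local sheet would be disjoint from an entire blown-up annulus bordering the quadrant, so that annulus could be collapsed without destroying transversality, contradicting minimality. I would check this directly in the local model, and this is where the closed-case reformulation of \autocite[Proposition 7.2]{LandryMinskyTaylor2025_TransverseSurfacesAndPseudoAnosovFlows_PUB} would be invoked for the detailed picture.
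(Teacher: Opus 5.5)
The paper gives no argument of its own here. The statement is quoted as the closed-case form of Landry--Minsky--Taylor's Proposition 7.2, so insofar as your final step is ``invoke that result,'' you coincide with the paper. The argument you build around that citation, however, does not work on its own.

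Leafwise convexity (Proposition \ref{prop:shadow1}) is a condition on each leaf separately, and it is too weak for either of your steps.

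\begin{itemize}
\item For Step 1, take a suitably chosen thin lens around a proper sub-arc of a blown-up segment \(b\), tapering at its ends. It is open and leafwise convex, yet its frontier contains interior points of \(b\) while part of \(b\) lies inside it.
\item At a frontier vertex \(v\), convexity only says that the shadow contains at most one stable germ and at most one unstable germ at \(v\). It does not exclude a single sector, nor three consecutive sectors with one stable and one unstable prong between them.
\end{itemize}

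The ingredient you need is the first half of Proposition \ref{prop:shadow3}. It is quoted from Fenley and Landry--Minsky--Taylor, not derived from the present statement, so using it is not circular. It says that every component of \(\operatorname{fr}(\Theta(\widetilde{S}))\) is a slice leaf, hence lies in a single leaf of \(\mathcal{F}^{s}\) or of \(\mathcal{F}^{u}\).

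\begin{itemize}
\item A slice leaf through an interior point of \(b\) contains all of \(b\); this gives Step 1.
\item In each bad configuration, the frontier component through \(v\) would contain a purely stable prong and a purely unstable prong, which lie in no common leaf. An example is a stable prong at one end of \(b\), then \(b\), then an unstable prong at the other end.
\end{itemize}

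Minimality is the wrong tool, and the configurations you propose to exclude are misidentified.

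\begin{itemize}
\item A sector bounded by two genuine prongs at a vertex borders no blown-up annulus, so there is nothing to collapse. When \(T\) is a single point, this is every sector.
\item More basically, minimality is a global condition on \(S\): roughly, that \(S\) meets every blown-up annulus. It says nothing about which annuli a particular lift \(\widetilde{S}\) crosses.
\item Consequently, a blown-up segment \(b\) lying in the frontier of one shadow, with the shadow on exactly one side, is not degenerate. It is exactly the second alternative of the dichotomy. The resulting component is of type (1): the quadrant along \(b\), together with a prong at one endpoint of \(b\) and the next quadrant there.
\end{itemize}

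Conversely, in Step 2 you speak of a ``frontier blown-up segment'' that the sheet crosses. A segment crossed by \(\widetilde{S}\) lies in the open set \(\Theta(\widetilde{S})\), not in its frontier.

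What actually must be shown, both for type (2) and to rule out components containing a vertex of \(T\), is this: once \(T\cap \operatorname{fr}(\Theta(\widetilde{S}))\neq\emptyset\), no vertex of \(T\) lies in the shadow. This uses compactness of \(S\).

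\begin{itemize}
\item \(S\) meets each blown-up annulus \(A\) in a compact \(1\)-manifold transverse to the flow on \(A\).
\item Its components are core circles or arcs joining the two boundary orbits; an arc cannot return to the same boundary orbit, and a null-homotopic transverse circle would force a closed orbit inside \(A\).
\item Hence if \(\widetilde{S}\) meets the lift of one vertex orbit, it crosses every incident segment and meets the adjacent vertex orbit.
\item By connectedness of \(T\), this gives \(T\subset\Theta(\widetilde{S})\).
\end{itemize}
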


\begin{proposition}
Let \(l\) be a component of the frontier of \(\Theta(\widetilde{S})\) in \(P_{\varphi}\). Then,
\begin{enumerate}
\item \(l\) is a slice leaf of \(\mathcal{F}^{s/u}\). If it is a slice leaf \(l\) of \(\mathcal{F}^{s}\), then as a sequence of point \(\{p_{n}\}_{n\in \Number}\) tends to a point in \(l\), the preimages \((\Theta|_{\widetilde{S}})^{-1}(p_{n})\) in \(\widetilde{S}\) escape in the positive direction.
Similar conclusion holds for unstable boundary slice leaves.
\item \(l\) is periodic, namely, there is an element \(g\in \pi_{1}(M)\) such that \(g\) fixes \(l\) and hence a unique point in \(l\).
\end{enumerate}
\label{prop:shadow3}
\end{proposition}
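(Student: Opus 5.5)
We must prove Proposition \ref{prop:shadow3}: a frontier component \(l\) of \(\Theta(\widetilde{S})\) is a slice leaf, with the stated escape behaviour, and \(l\) is periodic.

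\textbf{Part (1): \(l\) is a slice leaf.} Take \(p\in l\) and points \(p_n\in\Theta(\widetilde{S})\) with \(p_n\to p\), and set \(x_n=(\Theta|_{\widetilde{S}})^{-1}(p_n)\). Since \(p\notin\Theta(\widetilde{S})\), the orbit \(\Theta^{-1}(p)\) misses \(\widetilde{S}\). By the local product structure of \(\widetilde{\varphi}\) near a point of \(\Theta^{-1}(p)\), the orbits over \(p_n\) stay close to \(\Theta^{-1}(p)\) on compact time intervals. So the \(x_n\) cannot stay in a compact set, since a limit would lie on \(\Theta^{-1}(p)\cap\widetilde{S}\). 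Hence the \(x_n\) escape along the orbits, either in the positive or the negative flow direction.

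The next step uses \(\pi_1\)-compactness of \(S\) together with transversality. The points \(x_n\) lie far out on flow segments that shadow \(\Theta^{-1}(p)\) for a long time. Pushing this to the limit through the local models, the set of orbits near \(p\) that hit \(\widetilde{S}\) is bounded by the stable (resp. unstable) leaf through \(p\). The reason is that orbits in \(\widetilde{W}^s(p)\) are forward asymptotic to \(\Theta^{-1}(p)\), and would be forced to hit \(\widetilde{S}\) far in the future.

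I would then adopt the argument of Fenley \autocite[4.2]{Fenley2009_GeometryOfFoliationsAndFlowsI_PUB} and \autocite[Section 7]{LandryMinskyTaylor2025_TransverseSurfacesAndPseudoAnosovFlows_PUB}. The claim is that if escape is in the positive direction, then the whole local stable leaf segment through \(p\) is contained in the frontier. Otherwise a nearby point of \(\mathcal{F}^s(p)\) lies in the shadow, and its orbit is forward asymptotic to \(\Theta^{-1}(p)\). A compactness argument would then produce an intersection of \(\Theta^{-1}(p)\) with \(\widetilde{S}\), a contradiction.

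Convexity (Proposition \ref{prop:shadow1}) plus openness then imply that the frontier component is a union of stable segments. Connectedness and properness give a properly embedded line in a leaf, that is, a slice leaf. At singular points and blown-up segments, Proposition \ref{prop:shadow2} controls how the line turns: it passes from one prong to an adjacent prong, or along a blown-up segment. The unstable case is symmetric, with the direction of escape reversed.

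\textbf{Part (2): periodicity.} Project to \(M\). The preimages \(x_n\) escape positively, so their images in \(S\) return along \(\varphi\)-orbits that spend longer and longer times near the orbit \(\Theta^{-1}(l)\) projected to \(M\), without crossing \(S\). Take a limit orbit \(\gamma\) in \(M\), an accumulation point of \(\varphi\)-trajectories from \(S\) of unbounded length avoiding \(S\). Then \(\gamma\) avoids \(S\) in forward time.

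Every orbit of an almost pseudo-Anosov flow transverse to a surface that meets every closed orbit hits \(S\) unless it is asymptotic to a closed orbit in \(M-S\). The limit set of such a half-orbit is a compact invariant set disjoint from \(S\). Since \(M-S\) contains no recurrent dynamics except finitely many closed orbits on blown-up annuli and the frontier (using minimality of the blowup and Proposition \ref{prop:shadow2}), this limit set is a closed orbit \(\alpha\). Lifting, the stable leaf of \(\alpha\) through \(l\) is \(g\)-invariant, where \(g\) represents \(\alpha\). The point \(\Theta(\widetilde{\alpha})\) on \(l\) is the unique fixed point on \(l\), since \(g\) contracts or expands along the leaf.

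\textbf{Main obstacle.} The crux is this dynamical claim that forward orbits avoiding \(S\) accumulate on finitely many closed orbits. I would first try a Markov partition or Fried cross-section style argument, which shows that the part of \(M-S\) not swept out is a finite union of closed orbits and annuli. Failing that, I would cite \autocite[Proposition 7.2 and surrounding lemmas]{LandryMinskyTaylor2025_TransverseSurfacesAndPseudoAnosovFlows_PUB} for the closed case.
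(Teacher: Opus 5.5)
Part (1) ultimately rests on Fenley and Landry--Minsky--Taylor, which is exactly what the paper does. Your heuristic about orbits on \(\mathcal{F}^{s}(p)\) being ``forced to hit \(\widetilde{S}\)'' is not a proof, but that is not the real issue. \textbf{The genuine gap is in Part (2).}

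Its key claim is that a forward half-orbit avoiding \(S\) accumulates on a single closed orbit, because \(M-S\) has ``no recurrent dynamics except finitely many closed orbits''. This is false for a non-fibered transverse surface.
\begin{itemize}
\item Every closed orbit whose class pairs to zero with \([S]\) misses \(S\), since all intersections are positive. There can be infinitely many such orbits, forming a nontrivial hyperbolic invariant set in \(M-S\). This is exactly the situation of Theorem~\ref{thm:encodeclosedorbit} and of the growth-rate result in Section~\ref{sec:org4aa82c5}, where the guts semiflow can have exponential growth.
\item Consequently no Markov-partition or cross-section argument can show that the unswept part of \(M-S\) is finitely many closed orbits and annuli.
\item An orbit of \(\widetilde{M}\) that merely misses \(\widetilde{S}\) need not be asymptotic to any closed orbit; a lift of a non-periodic orbit in that invariant set is an example. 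Your argument uses nothing about \(l\) beyond ``its orbits miss \(\widetilde{S}\)'', so it cannot work.
\item The link to \(l\) is also lost. The orbit segments ending at \(x_{n}\) avoid \(\widetilde{S}\) but may cross other lifts \(h\widetilde{S}\), so their projections do cross \(S\). Extracting a limit in \(M\) means renormalizing \(x_{n}\) by elements \(h_{n}\in\pi_{1}(S)\). Since \(x_{n}\) escapes in \(\widetilde{S}\), these \(h_{n}\) leave every finite set, and the limit orbit carries no information about \(l\).
\item Your fallback citation is the wrong result. Landry--Minsky--Taylor's Proposition~7.2 is the local picture near blown-up trees (Proposition~\ref{prop:shadow2} here) and gives no periodicity. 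Their periodicity statement is 8.7, proved with veering triangulations under a transitivity assumption; that is why the paper supplies its own argument.
\end{itemize}

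The paper instead works on the compact surface \(S\), not in \(M-S\), adapting Fenley.
\begin{itemize}
\item It uses the induced singular foliations \(W^{s}_{S},W^{u}_{S}\). It shows that any lift of a closed leaf is either an open blown-up segment or a prong of a periodic point on \(\partial\Theta(\widetilde{S})\), by the argument of Lemma~\ref{lem:elliptic-boundary}.
\item It collapses Reeb annuli and straightens to a geodesic lamination. A minimal sublamination of this lamination is either a closed leaf or the whole lamination.
\item For a regular \(p\in l\), it follows the leaf \(\mathcal{F}^{u}(p)\cap\Theta(\widetilde{S})\) of the induced foliation on \(\widetilde{S}\), which runs into \(l\), and studies the closure of its projection in \(S\).
\item If that projection spirals onto a closed leaf, the local structure near a lift of the closed leaf forces the frontier component it runs into, namely \(l\), to be periodic. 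Otherwise Fenley's non-thick-leaf argument gives periodicity.
\end{itemize}
This is where the frontier property actually enters: through one specific leaf whose recurrence is controlled by compactness of \(S\). A purely dynamical repair of your route would need a theorem that accessible boundary leaves of the maximal invariant set of \(M-S\) are stable leaves of finitely many periodic orbits. That is essentially the content of the proposition itself.
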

The first statement of Proposition \ref{prop:shadow3} follows from \autocite[Proposition 4.1]{Fenley2009_GeometryOfFoliationsAndFlowsI_PUB} or \autocite[7.8]{LandryMinskyTaylor2025_TransverseSurfacesAndPseudoAnosovFlows_PUB}.
If \(\varphi\) is transitive, the second one is proven in \autocite[8.7]{LandryMinskyTaylor2025_TransverseSurfacesAndPseudoAnosovFlows_PUB} using veering triangulation.
In general, one can adapt Fenley's approach \autocite[Section 6]{Fenley1999_SurfacesTransverseToPseudoAnosovFlowsAndVirtualFibersIn3Manifolds_PUB} to non-transitive setting with minor modifications.
We provide a concise adaptation in Appendix \ref{app:A}.

In summary, the shadow of a transverse, connected surface is a simply-connected open set bounded by (infinitely many) slice leaves of \(\mathcal{F}^{s/u}\).

We note that the case \(\partial \Theta(\widetilde{S})\neq \emptyset\) happens frequently.
\begin{corollary}
Assume that \(S\) is an embedded surface which is transverse to the almost pseudo-Anosov flow \(\varphi\). If \(S\) is not a fiber of \(M\), then \(\partial \Theta(\widetilde{S})\neq \emptyset\) and any infinite leaf ray \(L\) in \(\mathcal{F}_{S}^{\pm}\) must limit to a closed leaf and spiral toward it.
\end{corollary}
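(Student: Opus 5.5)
The plan has two parts. First I show the shadow \(\Theta(\widetilde S)\) cannot be all of \(P_\varphi\) unless \(S\) is a fiber. Then I analyse leaf rays of the induced foliations \(\mathcal{F}^{\pm}_{S}=W^{s/u}_\varphi\cap S\) through how their lifts sit inside the shadow.

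\emph{Step 1: \(\partial\Theta(\widetilde S)\neq\emptyset\).} Suppose \(\Theta(\widetilde S)=P_\varphi\). Then every orbit of \(\widetilde\varphi\) meets \(\widetilde S\), so every orbit of \(\varphi\) meets \(S\).
\begin{enumerate}
\item Because \(S\) is positively transverse and orbits meet it in isolated points, an intersection of an orbit with \(S\) persists for nearby orbits.
\item Compactness of \(M\) then gives a uniform \(T>0\) such that every orbit segment of length \(2T\) meets \(S\). To see this, cover \(M\) by finitely many open sets \(U\) for which some \(t_U\in[-T,T]\) satisfies \(\varphi_{t_U}(U)\) crossing \(S\).
\item Applying this to \(\varphi_{kT}(x)\) for all \(k\) shows that every orbit meets \(S\) infinitely often in both time directions, with bounded gaps.
\item Hence \(S\) is a global cross section and the first return map is defined. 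By Fried's theory, \(M\) fibers over the circle with fiber \(S\), a contradiction.
\end{enumerate}
So the shadow has nonempty frontier, and by Proposition \ref{prop:shadow3} this frontier consists of periodic slice leaves.

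\emph{Step 2: where a leaf ray ends.} Let \(L\) be an infinite ray of, say, \(\mathcal{F}^{+}_S\) (the stable one), and lift it to \(\widetilde L\subset\widetilde S\). Its image \(\Theta(\widetilde L)\) lies in one stable leaf \(\lambda\) of \(\mathcal{F}^s\). By the convexity in Proposition \ref{prop:shadow1}, \(\lambda\cap\Theta(\widetilde S)\) is a single open interval, and \(\Theta(\widetilde L)\) is a ray in it. Since \(\widetilde L\) leaves every compact set of \(\widetilde S\) and \(\Theta|_{\widetilde S}\) is a homeomorphism onto the open shadow, \(\Theta(\widetilde L)\) leaves every compact subset of the shadow. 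It therefore either ends on a frontier slice leaf \(l\), which must be an unstable leaf crossing \(\lambda\), or runs out to an ideal point.

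I expect excluding the ideal-point case to be the main obstacle. I would argue using compactness of \(S\) together with the absence of perfect fits. The rays accumulate in \(S\), so by local product structure nearby leaves of \(\mathcal{F}^s\) cross the same frontier leaves. A stable leaf ray in the shadow going to infinity without meeting the frontier would then force adjacent stable and unstable rays sharing an ideal point, which is a perfect fit.

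\emph{Step 3: spiralling.} Suppose \(\Theta(\widetilde L)\) ends at a frontier leaf \(l\), fixed by \(g\) with fixed point \(p\in l\) as in Proposition \ref{prop:shadow3}(2).
\begin{enumerate}
\item I will show \(g\) stabilizes \(\widetilde S\), so \(g\in\pi_1(S)\). The reason is that \(g(\widetilde S)\) is a lift whose shadow has \(l\) as a frontier leaf, approached from the same side with preimages escaping in the same direction (Proposition \ref{prop:shadow3}(1)). Embeddedness of \(S\) then forces \(g(\widetilde S)=\widetilde S\).
\item The stable leaf \(\mathcal{F}^s(p)\) meets the shadow in a \(g\)-invariant interval adjacent to \(l\). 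Its preimage in \(\widetilde S\) is a \(g\)-invariant line, which projects to a closed leaf \(c\) of \(\mathcal{F}^+_S\).
\item The stable leaves crossing \(l\) near \(p\) are pushed by powers of \(g\) toward \(\mathcal{F}^s(p)\). So \(\widetilde L\) is asymptotic to the preimage of \(\mathcal{F}^s(p)\) up to \(g\)-translates, which means \(L\) limits on \(c\) and spirals toward it.
\end{enumerate}
The unstable case is symmetric.
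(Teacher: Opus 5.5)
\textbf{Overall.} Your Step 1 is the same as the paper's argument. The genuine gap is in Step 2. Excluding the case where \(\Theta(\widetilde L)\) runs out to an ideal point is essentially the whole content of the second assertion, and the proposal does not do it.

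\textbf{The extra hypothesis.} Your sketch invokes the absence of perfect fits, which is not a hypothesis of the corollary. Section 3.2 works with an arbitrary almost pseudo-Anosov flow, and the appendix proof of Proposition \ref{prop:shadow3} explicitly deals with the deep regions of perfect fits.

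\textbf{The sketch does not work even granting no perfect fits.} Compactness of \(S\) gives an accumulation point \(b\) of \(L\) and elements \(h_n\in\pi_1(S)\) such that \(h_n(\widetilde L)\) passes near \(\widetilde b\). The \(h_n\) preserve the shadow, so local product structure at best shows that the corresponding ray of \(\mathcal F^s(\Theta(\widetilde b))\) also stays in the shadow and goes to infinity. This only transfers the bad behaviour to the limit leaf \(\mathcal F^+_S(b)\). Nothing in it produces a stable ray and an unstable ray with a common ideal point.

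Note also that "a ray going to infinity inside the shadow" is not locally contradictory. The projections of lifts of closed leaves are full prongs of periodic frontier points, and they do go to infinity. So any exclusion has to use that \(L\) is non-compact, i.e.\ it has to be global on \(S\).

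\textbf{The missing idea.} This is the paper's route: take a minimal sublamination of the closure of \(L\) (via the limit leaf \(\mathcal F^+_S(b)\)). Then use the structure theory from the appendix proof of Proposition \ref{prop:shadow3}, which is Fenley's Theorem 5.9 and Proposition 5.10 adapted. After collapsing Reeb annuli and straightening, a minimal sublamination that is not a closed leaf must be the whole lamination \(\mathcal G^s_S\). That is incompatible with the closed leaves coming from the nonempty frontier. Hence the minimal set is a closed leaf \(c\), and \(L\) accumulates on \(c\).

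Your Step 3 analysis near a periodic frontier point is the right tool to upgrade accumulation on \(c\) to spiralling: the holonomy of \(c\) is attracting in the direction toward the periodic point. But you need the minimal-set input to get to that point.

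\textbf{A smaller gap in Step 3.1.} Proposition \ref{prop:shadow3}(2) only gives some \(g\in\pi_1(M)\) fixing \(l\). Your claim that embeddedness of \(S\) forces \(g\widetilde S=\widetilde S\) is asserted, not argued. Two disjoint lifts, one above the other, can both have \(l\) as a frontier leaf approached from the same side with the same escape direction, without any immediate contradiction. Moreover \(g\) may exchange the two sides of \(l\), so at best a power of \(g\) could work.

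What you actually need is an element of the stabilizer of \(\widetilde S\) that fixes \(l\). That comes from the proof of Proposition \ref{prop:shadow3}, where periodicity is produced by deck transformations of \(\widetilde S\), not from its statement.
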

\begin{proof}
If \(\partial \Theta(\widetilde{S})= \emptyset\), then \(\widetilde{S}\) intersects all orbits of \(\widetilde{\varphi}\). Therefore, \(S\) intersect all orbits of \(\varphi\) in a uniformly bounded time, and is a fiber of \(M\), contradicting the assumption.
Proposition \ref{prop:shadow3} implies that \(\partial \Theta(\widetilde{S})\) contains at least one closed leaf.
Let \(L\) be an arbitrary infinite leaf ray of \(\mathcal{F}_{S}^{+}\). Since \(S\) is compact, it follows that a sequence \(\{p_{i}\}_{i\in \Number}\) of points in \(L\) converges to a point \(b\in S\).
By the local product structure of the flow, \(L\) limits on \(L'=\mathcal{F}_{S}^{+}(b)\).
The closure of \(L'\) in \(\mathcal{F}_{S}^{+}\) has a minimal sublamination, which must contain a closed leaf by Proposition \ref{prop:shadow3}.
Consequently, \(L'\) limits on a closed leaf, and thus so does \(L\).
\end{proof}
\section{Annulus Isotopy theorem}
\label{sec:annulusisotopy}
From now on, we always assume that \(M\) is a closed, oriented 3-manifold.
The rest of this section is to prove the following key theorem.
\begin{theorem}[Annulus Isotopy Theorem]
Assume that \(M\) is atoroidal and admits an almost pseudo-Anosov flow \(\varphi\) without perfect fits.
Let \(S\) be an embedded, compact, oriented, taut surface in \(M\) which is positively transverse to \(\varphi\) minimally.
Suppose that \(A\) is a homotopically non-trivial, oriented annulus with two boundary components lying on opposite sides of \(S\).
In other words, \(A\) is a product annulus in the sutured manifold \((M-S,S_{+},S_{-},\emptyset)\)..

Then, \(A\) is ambiently isotopic to an annulus \(A_{\varphi}\) in \(M-S\), which is either product \(\varphi\)-saturated or transverse to a further blowup of \(\varphi\).
We call that \(A_{\varphi}\) is \emph{in aligned position} with \(\varphi\).
\label{thm:annulusisotopy}
\end{theorem}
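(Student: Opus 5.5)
We lift the problem to the orbit space and argue with shadows, following the outline in the Methods subsection. Let \(g\in\pi_{1}(M)\) be the primitive element represented by the core of \(A\); it is nontrivial because \(A\) is homotopically non-trivial. Fix a lift \(\widetilde{A}\subset\widetilde{M}\) invariant under \(\langle g\rangle\). Its two boundary lines lie on lifts \(\widetilde{S}_{1}\) and \(\widetilde{S}_{2}\) of (components of) \(S\), with \(\widetilde{S}_{1}\) on the negative side of \(\widetilde{A}\) and \(\widetilde{S}_{2}\) on the positive side. Both lifts are \(g\)-invariant, so their shadows \(U_{i}=\Theta(\widetilde{S}_{i})\) are \(g\)-invariant open convex sets bounded by periodic slice leaves (Propositions \ref{prop:shadow1} and \ref{prop:shadow3}).

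The first step is a reduction lemma. Suppose there is a \(g\)-invariant properly embedded line \(\ell\subset U_{1}\cap U_{2}\). Then \(\Theta^{-1}(\ell)\) is a \(g\)-invariant strip of orbits, and the part of it between \(\widetilde{S}_{1}\) and \(\widetilde{S}_{2}\) is a \(g\)-invariant product-saturated strip. Its quotient by \(\langle g\rangle\) is an annulus \(A'\) in \(M-S\) that is product \(\varphi\)-saturated and homotopic to \(A\) rel the two sides. We then promote the homotopy to an isotopy. We may choose \(\ell\) transverse to \(\mathcal{F}^{s/u}\) or along a leaf, and take it to project embeddedly, using that \(g\) is primitive and \(M\) is atoroidal. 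Then \(A\) and \(A'\) are two essential product annuli in the taut sutured manifold \((M-S,S_{\pm})\) with homotopic cores. Standard 3-manifold arguments (innermost disks and boundary compressions, in the style of Waldhausen) isotope one to the other. Atoroidality rules out the case where \(A\cup A'\) bounds something other than a product region.

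The second step splits into cases according to Corollary \ref{prop:actionclassification_perfectfits}. \emph{Hyperbolic case.} \(g\) has an attracting and a repelling fixed point \(s_{\pm}\) on \(\partial P_{\varphi}\). A \(g\)-invariant set whose closure meets \(\partial P_{\varphi}\) only in \(\{s_{+},s_{-}\}\) exists inside each \(U_{i}\): take the orbit of a point and connect successive iterates. We argue that \(s_{\pm}\) lie in the ideal closure of both \(U_{1}\) and \(U_{2}\), and that convexity together with the absence of perfect fits forces \(U_{1}\cap U_{2}\) to contain a \(g\)-invariant arc from \(s_{-}\) to \(s_{+}\). This is done by connecting \(x\) to \(gx\) inside \(U_{1}\cap U_{2}\) and taking the union of the \(g\)-translates. \emph{Regular elliptic case.} \(g\) fixes a regular point \(p\), and some power fixes its four prongs. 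Each \(U_{i}\) is \(g\)-invariant, so either \(p\in U_{i}\), or \(p\) lies on a periodic boundary leaf of \(U_{i}\) with an adjacent quadrant in \(U_{i}\). In each subcase we find a \(g\)-invariant half-leaf through \(p\), or a \(g\)-invariant stable or unstable leaf, meeting both shadows. If \(p\) is on a boundary leaf of both shadows, the orbit of \(p\) itself with a saturated strip gives \(A\), after arguing that \(g\) preserves a common quadrant.

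The main obstacle is the \emph{singular elliptic case}, where \(g\) fixes the blown-up tree \(T\) of a singular point. By Proposition \ref{prop:shadow2}, each \(U_{i}\) meets a neighborhood of \(T\) in pairs of adjacent quadrants or blown-up segments with their quadrants. The sets \(U_{1}\) and \(U_{2}\) may touch \(T\) in disjoint sectors, so that no \(g\)-invariant line lies in \(U_{1}\cap U_{2}\). If they share a sector, we proceed as in the regular case. Otherwise we plan to further blow up the singular orbit: insert new blown-up annuli (edges of \(T\)) separating the sectors used by \(U_{1}\) from those used by \(U_{2}\), chosen compatibly with the flow direction so that \(S\) stays transverse. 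In the blown-up flow we then build \(A_{\varphi}\) as an annulus crossing these new blown-up annuli transversely, descending from \(\widetilde{S}_{1}\) through the new annulus to \(\widetilde{S}_{2}\). We expect the delicate part to be checking the orientation compatibility of the new edges with the \(f^{\sharp}\)-translation directions required in Section \ref{sec:pre:blowup}, and the case analysis on the cyclic order of sectors around \(T\).
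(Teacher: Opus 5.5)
Your architecture is the paper's: reduce to a \(g\)-invariant line in \(U_{1}\cap U_{2}\), split cases by Corollary~\ref{prop:actionclassification_perfectfits}, and blow up further in the singular case. But the elliptic cases are missing the idea that makes them work. The hypothesis that \(\partial A\) lies on \emph{opposite} sides of \(S\) never enters your case analysis, and it must. For an annulus with both boundary circles on the same side, the two shadows are always disjoint: an orbit in \(U_{1}\cap U_{2}\) would cross both lifts positively, which is the argument of Corollary~\ref{cor:nonproductannulus}.

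In the regular case, first note that \(p\notin U_{i}\), since its orbit would meet \(\widetilde{S}_{i}\) in a \(g\)-fixed point while \(g\) acts freely on \(\widetilde{M}\). Nor can \(p\) lie outside \(\overline{U_{i}}\). Otherwise the boundary leaf of \(U_{i}\) separating \(p\) from \(U_{i}\) would be \(g\)-invariant, hence contain a \(g\)-fixed point. That point must be \(p\), because a non-corner fixed point is the only fixed point of \(g\) (Lemma~\ref{lem:elliptic-boundary}). So each \(U_{i}\) is exactly two adjacent quadrants at \(p\).

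The substance is that these two pairs overlap. If they did not, a single leaf \(\ell\) through \(p\) would bound both shadows. Proposition~\ref{prop:shadow3}(1) says a lift escapes upward toward a stable boundary leaf and downward toward an unstable one. So if \(\ell\) is stable, each of \(\widetilde{S}_{1},\widetilde{S}_{2}\) lies on the negative side of the other; if \(\ell\) is unstable, each lies on the positive side of the other. Both contradict what \(\widetilde{A}\) gives, namely \(\widetilde{S}_{2}\) below \(\widetilde{S}_{1}\) and \(\widetilde{S}_{1}\) above \(\widetilde{S}_{2}\). Without this step a disjoint configuration at a regular orbit could not be repaired, since regular orbits are not blown up. Your fallback, ``the orbit of \(p\) with a saturated strip gives \(A\)'', does not work: that orbit lies in neither shadow, so it does not run from \(\widetilde{S}_{1}\) to \(\widetilde{S}_{2}\).

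The same orientation argument is the ``delicate part'' you defer in the singular case. It shows that the boundary leaves \(\lambda_{1}\subset\partial U_{1}\) and \(\lambda_{2}\subset\partial U_{2}\) separating the shadows are of opposite types, one stable and one unstable. Only then is an annulus crossing a new blown-up annulus compatible with the coorientations. You also need \(\lambda_{1}\cap\lambda_{2}\) to be a vertex of \(T\). If an existing blown-up segment \(b\) lay between them, minimality of \(\varphi\) with respect to \(S\) would give a lift \(\widetilde{S}_{3}\) with \(\Theta(\widetilde{S}_{3})\cap T=b\). That lift separates \(\widetilde{S}_{1}\) from \(\widetilde{S}_{2}\), so \(\widetilde{A}\) would cross \(S\). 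With these two facts there are exactly two further blowups at that vertex, each admitting the transverse annulus.

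In the hyperbolic case, ``connecting \(x\) to \(gx\) inside \(U_{1}\cap U_{2}\)'' presupposes that \(U_{1}\cap U_{2}\) has a \(g\)-invariant component, which is the whole content. Two \(g\)-invariant strips can a priori meet in a chain of disks permuted by \(g\), and convexity as you invoke it does not rule this out. The paper rules it out with three facts:
\begin{enumerate}
\item No boundary leaf of \(U_{i}\) has ideal endpoints \(\{s_{-},s_{+}\}\), since such a leaf would be \(g\)-invariant and hence contain a \(g\)-fixed point.
\item Two slice leaves meet at most once, and touch only tangentially at singular points (Lemma~\ref{lem:sliceleaves_intersection}).
\item Each maximal subarc of \(\widetilde{\gamma}_{1}\) outside \(U_{2}\) can be replaced by the arc of a boundary leaf of \(U_{2}\) cobounding a compact disk with it, and that arc lies in \(U_{1}\) by the second fact.
\end{enumerate}
Finally, in your reduction lemma you should check that the saturated strip misses every other lift of \(S\). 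Any such lift would separate \(\widetilde{S}_{1}\) from \(\widetilde{S}_{2}\) and so meet \(\widetilde{A}\).
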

The assumption that \(S\) is taut is redundant. Indeed, by \autocite[Proposition 2.7]{Mosher1992_DynamicalSystemsAndTheHomologyNormOfA3ManifoldI_PUB}, \(\varphi\) is transitive, which implies that \(S\) is taut according to \autocite[Theorem B]{LandryMinskyTaylor2025_TransverseSurfacesAndPseudoAnosovFlows_PUB}.

Let \(\pi:\widetilde{M}\to M\) be the universal covering map of \(M\) and \(\Theta:\widetilde{M}\to P_{\varphi}\) be the canonical projection to orbit space.
Assume that \(S_{1}\) and \(S_{2}\) are the (possibly identical) connected components of \(S\) containing two boundary circles \(\gamma_{1},\gamma_{2}\) of \(A\) respectively, and \(A\) connects the negative side of \(S_{1}\) to the positive side of \(S_{2}\).
We fix a universal lift \(\widetilde{A}\subset \widetilde{M}\) of \(A\) and a generator \(g\) of \(\pi_{1}(A)\cong \Integer\) that fixes \(\widetilde{A}\).
Further, we denote by \(\widetilde{S}_{i},i=1, 2\) the lift of \(S_{i}\) containing the boundary line \(\widetilde{\gamma}_{i}\) of \(\widetilde{A}\).
Clearly, as a deck transformation, \(g\) preserves \(\widetilde{A}\) and \(\widetilde{\gamma}_{i}\).
In summary, we have two shadows of almost transverse surface \(\Theta(\widetilde{S}_{i})\), each of which is \(g\)-invariant and contains a \(g\)-invariant line \(\Theta(\widetilde{\gamma}_{i})\).

We first show that, given a \(g\)-invariant line in \(\Theta(\widetilde{S}_{1})\cap \Theta(\widetilde{S}_{2})\), one can construct a product saturated annulus isotopic to \(A\).

\begin{lemma}
Assume that \(\gamma\) is a \(g\)-invariant line in \(\Theta(\widetilde{S}_{1})\cap \Theta(\widetilde{S}_{2})\).
There is an embedded product saturated annulus \(A'\), so that \(A'\) connects the negative side of \(S_{1}\) to the positive side of \(S_{2}\) and \(\mathrm{int}(A')\) is disjoint from \(S\).
If \(M\) is atoroidal, \(A'\) is ambiently isotopic to \(A\) in \(M-S\).
% similar result for hyperbolic case holds by careful basepoint choice.
\label{lem:invariantline}
\end{lemma}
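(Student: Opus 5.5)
The plan is to lift $\gamma$ back into $\widetilde{M}$, sweep it along flow segments between the two lifted surfaces, and push the resulting strip down to $M$. Since $\gamma\subset\Theta(\widetilde{S}_{1})\cap\Theta(\widetilde{S}_{2})$ and $\Theta$ is injective on each $\widetilde{S}_i$, each point $x\in\gamma$ determines unique points $s_1(x)\in\widetilde{S}_1$ and $s_2(x)\in\widetilde{S}_2$ on the orbit $\Theta^{-1}(x)$. Both maps $s_i=(\Theta|_{\widetilde{S}_i})^{-1}|_\gamma$ are continuous and $g$-equivariant, because $g$ preserves $\widetilde{S}_i$ and $\gamma$.

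First I will fix the order of the two points along the orbit. Since $\widetilde{A}$ leaves $\widetilde{S}_1$ on its negative side and reaches $\widetilde{S}_2$ from its positive side, the region of $\widetilde{M}$ between the two lifts is the one lying below $\widetilde{S}_1$ and above $\widetilde{S}_2$. Each orbit through $\gamma$ crosses $\widetilde{S}_2$ and then $\widetilde{S}_1$ in forward time. By transversality it crosses each surface exactly once, so the segment from $s_2(x)$ to $s_1(x)$ lies in the closure of this region.

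Next I define $\widetilde{A}'=\bigcup_{x\in\gamma}[s_2(x),s_1(x)]$, a $g$-invariant strip $\gamma\times I$ foliated by flow segments. Its quotient $A'=\widetilde{A}'/\langle g\rangle$ is an annulus, and I will check that it is embedded and product saturated.
\begin{itemize}
\item[(a)] Its interior avoids every lift of $S$. If a segment met another lift $h\widetilde{S}$, then $\Theta(h\widetilde{S})$ would contain $x$. Using convexity (Proposition \ref{prop:shadow1}) and the nesting of lifts of an embedded surface, some lift would then separate $\widetilde{S}_1$ from $\widetilde{S}_2$ along the orbit. I expect this to contradict the choice of $\widetilde{S}_1,\widetilde{S}_2$ as the lifts containing the boundary of $\widetilde{A}$, which are adjacent across the component of $\widetilde{M}-\pi^{-1}(S)$ containing $\widetilde{A}$. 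To make this rigorous I may first replace $\gamma$ by a $g$-invariant line in the shadow chosen in that component.
\item[(b)] The projection $\pi|_{\widetilde{A}'}$ is injective modulo $g$. If $h\widetilde{A}'\cap\widetilde{A}'\neq\emptyset$ with $h\notin\langle g\rangle$, then $h$ maps a segment of the strip to a segment, so $h$ preserves $\widetilde{S}_1$. Then $h\gamma$ meets $\gamma$ inside $\Theta(\widetilde{S}_1)$. Projecting to $S_1$, both $\gamma$ and $h\gamma$ cover the closed curve $\pi(s_1(\gamma))$, so if that curve is simple we are done.
\end{itemize}
The genuine issue is self-intersection of this closed curve. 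I would take $\gamma$ to be a line whose image in the annular cover $\Theta(\widetilde{S}_1)/\langle g\rangle$ is simple, which is possible after smoothing, and then use a standard cut-and-paste or innermost argument in $S_1$ to make its projection embedded. This is the step I expect to be the main obstacle.

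Finally, for the isotopy I note that $A'$ and $A$ are both product annuli in $M-S$. Their boundary curves are freely homotopic on $S_1$ and on $S_2$, since both represent $g$, and $A\cup A'$ can be joined along the strips $\widetilde{A}$ and $\widetilde{A}'$, which are homotopic rel the $g$-action. The two boundary pairs are isotopic on $S_i$ by the classical fact that homotopic essential simple closed curves on surfaces are isotopic; the curves are essential because $g\neq 1$ and $S$ is incompressible. After this isotopy, $A$ and $A'$ share boundary and are homotopic rel boundary. In the irreducible, boundary-incompressible manifold $M-S$ with $M$ atoroidal, the torus $A\cup A'$ (after making the two annuli disjoint in their interiors by standard surgery) is compressible or boundary parallel, so it bounds a solid torus in which the annuli are parallel. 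This gives the ambient isotopy from $A$ to $A'$ (Waldhausen-type argument). Atoroidality is used exactly here, to exclude a knotted essential torus separating $A$ from $A'$.
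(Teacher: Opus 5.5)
Your construction of the strip, the separation argument in (a), and the atoroidal isotopy at the end all follow the paper's proof. Part (a) needs no convexity. A lift $h\widetilde{S}$ met at an interior point of a segment is disjoint from $\widetilde{S}_1$ and $\widetilde{S}_2$, and the orbit crosses it from its negative side to its positive side. So it separates $\widetilde{S}_2$ from $\widetilde{S}_1$, and $\widetilde{A}$ would have to cross it.

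\textbf{The gap is the embeddedness step.} You leave it open, and the repair you sketch does not work as stated.
\begin{itemize}
\item The remark about the annular cover is vacuous. $\gamma$ is already an embedded $g$-invariant line, so its image in $\Theta(\widetilde{S}_1)/\langle g\rangle$ is automatically simple. The problem is the other translates $h\gamma$.
\item A cut-and-paste or bigon removal on $\pi(s_1(\gamma))$ inside $S_1$ alone can move the curve off the part of $S_1$ that flows back to $S_2$. The new line then leaves $\Theta(\widetilde{S}_1)\cap\Theta(\widetilde{S}_2)$, its flow segments no longer end on $\widetilde{S}_2$, and the rebuilt annulus is not product saturated.
\end{itemize}
You never check that the modification stays in the intersection of the two shadows. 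That constraint is the whole content of this step.

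\textbf{The missing idea is already implicit in your (b).} By (a), each segment of $\widetilde{A}'$ is the closure of a component of an orbit minus $\pi^{-1}(S)$. So an $h$ carrying one segment of the strip to another preserves both $\widetilde{S}_1$ and $\widetilde{S}_2$, not just $\widetilde{S}_1$. Hence $h\gamma\subset\Theta(\widetilde{S}_1)\cap\Theta(\widetilde{S}_2)$ too. The proof then runs as follows.
\begin{enumerate}
\item $s_1(\gamma)$ and $h\,s_1(\gamma)$ are distinct lifts of a curve homotopic in $S_1$ to the simple curve $\partial A\cap S_1$. Their ideal endpoints are therefore unlinked, so if they meet, they meet at least twice.
\item This gives an innermost bigon whose boundary is a Jordan curve made of arcs of $\gamma$ and $h\gamma$.
\item Each shadow is an open disk, so this Jordan domain lies in $\Theta(\widetilde{S}_1)\cap\Theta(\widetilde{S}_2)$.
\item Argument (a) applies to every point of that intersection, not only to points of $\gamma$. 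So the flow segments from $\widetilde{S}_2$ to $\widetilde{S}_1$ over the bigon fill a ball whose interior misses $\pi^{-1}(S)$.
\item The bigon can therefore be removed equivariantly inside the intersection. Equivalently, cut and paste $A'$ across a ball of flow segments, which keeps $A'$ saturated.
\item Each move lowers the self-intersection number, so finitely many moves make $A'$ embedded.
\end{enumerate}
This is what the paper's argument encodes when it says the bigon $B_2\subset S_2$, its flow image $B_1=\eta(B_2)\subset S_1$, and the flowlines between them bound a ball, so that the cut-and-paste preserves saturation.
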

\begin{proof}
Since \(A\) connects the negative side of \(S_{1}\) to the positive side of \(S_{2}\), \(\widetilde{S}_{1}\) must lie above \(\widetilde{S}_{2}\).
Denote by \(H\subset \widetilde{M}\) be the closed subset bounded by \(\widetilde{S}_{1}\) and \(\widetilde{S}_{2}\).
The intersection \(H\cap \widetilde{\varphi}_{\Real}(\gamma)\) is homeomorphic to \(\Real\times [0,1]\) and \(g\)-invariant.
Therefore, \(A'=\pi(H\cap \widetilde{\varphi}_{\Real}(\gamma))\) is an immersed product saturated annulus in \(M\), which connects the negative side of \(S_{1}\) to the positive side of \(S_{2}\).
The boundary component of \(H\cap \widetilde{\varphi}_{\Real}(\gamma)\) in \(\widetilde{S}_{i}\) projects to an immersed closed curve \(\gamma_{i}'\) in \(S_{i}\).
Along flowlines, one defines a canonical homeomorphism \(\eta:\gamma_{2}'\to \gamma_{1}'\).
If \(\gamma_{2}'\) is not simple, then there is a bigon \(B_{2}\) of \(\gamma_{2}'\) in \(S_{2}\).
Since \(S_{2}\) is incompressible and \(M\) is irreducible, \(\eta(\partial B_{2})\subset \gamma_{1}'\) bounds a disk (bigon) \(B_{1}\) in \(S_{1}\); \(B_{1}\), \(B_{2}\), and the flowlines joining their boundaries bound a 3-ball in \(M\) disjoint from \(S_{1}\cup S_{2}\).
Hence, via a cut-and-paste isotopy of \(A'\), the bigon is removed and \(A'\) remains saturated.
In summary, we obtain an embedded product saturated annulus \(A'\).

We note that, different \(g\)-invariant lines on a fixed lift \(\widetilde{S}\) project to isotopic closed curves on \(S\).
Up to isotopy of \(A\), assume that the boundaries of \(A'\) and \(A\) are disjoint and parallel in \(S_{1}\cup S_{2}\).
Then \(A\cap A'\) is a union of homotopically trivial circles and core circles. We can remove these circles one by one via an isotopy of \(A\) because \(M\) is irreducible and atoroidal.
As a result, \(A\) is parallel to \(A'\).

It remains to prove that the interior of \(A'\) is disjoint from \(S\). Otherwise, assume that \(S_{0}\subset S\) intersects \(\operatorname{int}(A')\).
Then \(S_{0}\) has a lift \(\widetilde{S}_{0}\) lying below \(\widetilde{S}_{1}\) and above \(\widetilde{S}_{2}\), so \(\widetilde{S}_{1}\) and \(\widetilde{S}_{2}\) are in different components of \(\widetilde{M}\setminus \widetilde{S}_{0}\). Consequently, \(\widetilde{A}\) must intersect \(\widetilde{S}_{0}\), contradicting that \(\operatorname{int}(\widetilde{A})\) is disjoint from \(S\).
\end{proof}

Our goal becomes finding a \(g\)-invariant line in \(\Theta(\widetilde{S}_{1})\cap \Theta(\widetilde{S}_{2})\).
However, it does not always occur.
We divide into several cases according to the classification of \(g\)-action in Corollary \ref{prop:actionclassification_perfectfits}.

We first show that it happens when \(g\) act hyperbolically on the orbit space.
\begin{lemma}
If \(g\) acts freely on \(P_{\varphi}\) and is of hyperbolic type, then \(\Theta(\widetilde{S}_{1})\cap \Theta(\widetilde{S}_{2})\) is non-empty and contains a \(g\)-invariant line \(\gamma\).
\label{lem:hyperbolic_invariantline}
\end{lemma}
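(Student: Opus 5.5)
Since \(g\) acts hyperbolically, Corollary \ref{prop:actionclassification_perfectfits} gives exactly two fixed points \(s_{+}, s_{-}\in\partial P_{\varphi}\): \(s_{+}\) attracting and \(s_{-}\) repelling. Every \(g\)-orbit of a point of \(P_{\varphi}\) then converges to \(s_{+}\) in forward time and to \(s_{-}\) in backward time. The plan is to show that each shadow \(\Theta(\widetilde{S}_{i})\) ``contains a neighborhood of the axis'' joining \(s_{-}\) to \(s_{+}\), in a form robust enough that the two shadows must overlap in a \(g\)-invariant strip.

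\textbf{Step 1: both shadows accumulate on the fixed points.} The \(g\)-invariant line \(\Theta(\widetilde{\gamma}_{i})\subset\Theta(\widetilde{S}_{i})\) is a quasi-orbit of \(g\): it is the union of the translates \(g^{n}(I_{i})\) of a compact fundamental arc \(I_{i}\). Hence its two ends converge to \(s_{+}\) and \(s_{-}\), so the closure of each shadow contains both fixed points.

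\textbf{Step 2: the shadows are separated from \(s_{\pm}\) by no frontier leaf.} By Proposition \ref{prop:shadow3}, every frontier component \(l\) of \(\Theta(\widetilde{S}_{i})\) is a periodic slice leaf, fixed by some \(h\) that has a fixed point in \(P_{\varphi}\). The frontier is \(g\)-invariant, so \(g\) permutes its components. Suppose a frontier leaf \(l\) had \(s_{+}\) as an ideal endpoint. Then:
\begin{enumerate}
\item if \(g\) preserves \(l\), then \(g\) fixes a point of \(l\), contradicting freeness;
\item otherwise the leaves \(g^{n}(l)\) all share the ideal point \(s_{+}\), and since no perfect fits exist, two distinct leaves of the foliations cannot share an ideal point.
\end{enumerate}
So no frontier leaf ends at \(s_{\pm}\). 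Using convexity (Proposition \ref{prop:shadow1}), the shadow then contains the whole region between \(\Theta(\widetilde{\gamma}_{i})\) and a small ideal neighborhood of \(s_{+}\), and likewise near \(s_{-}\). Concretely, take a stable or unstable leaf \(L\) on the translation axis of \(g\) in the leaf space (as in the proof of Proposition \ref{prop:actionclassification}); the leaves \(g^{n}(L)\) nest down to \(s_{\pm}\). For large \(|n|\), the leaf \(g^{n}(L)\) meets \(\Theta(\widetilde{\gamma}_{i})\), and the ideal region it cuts off contains no frontier leaf, hence lies in \(\Theta(\widetilde{S}_{i})\). I expect this step to be the main obstacle. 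Ruling out frontier leaves that accumulate toward \(s_{\pm}\) without ending there needs care. I would handle it via \(g\)-equivariance: such leaves would form a \(g\)-invariant family crossing every \(g^{n}(L)\). Convexity would then force them to cut \(\Theta(\widetilde{\gamma}_{i})\), which is impossible because \(\Theta(\widetilde{\gamma}_{i})\) lies inside the open shadow.

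\textbf{Step 3: build the invariant line.} By Step 2, there is \(N\) such that the ideal region \(U_{+}\) cut off by \(g^{N}(L)\) near \(s_{+}\) lies in \(\Theta(\widetilde{S}_{1})\cap\Theta(\widetilde{S}_{2})\), and similarly \(U_{-}\) near \(s_{-}\). Since \(g\) is hyperbolic, a large power \(g^{m}\) carries the complement of \(U_{-}\) into \(U_{+}\). Pick a point \(x\in U_{-}\) with \(g(x)\) still in the intersection, and let \(\alpha\) be an arc from \(x\) to \(g(x)\) inside the intersection, which is connected because both shadows are open and simply connected. Then \(\gamma=\bigcup_{n} g^{n}(\alpha)\) is a \(g\)-invariant properly immersed line in \(\Theta(\widetilde{S}_{1})\cap\Theta(\widetilde{S}_{2})\). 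Here the intersection is \(g\)-invariant, and its ends tend to \(s_{\pm}\), so it is properly immersed.

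\textbf{Step 4: make \(\gamma\) embedded.} Remove self-intersections equivariantly, using that \(\langle g\rangle\) acts freely and properly on the intersection region, which is a \(g\)-invariant open planar set. The quotient by \(\langle g\rangle\) is an annulus-like surface, and there I replace the projected loop by a simple essential loop, then lift it back to a \(g\)-invariant embedded line.
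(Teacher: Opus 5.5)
Your Steps 1 and 4 are fine. The first half of Step 2 is also correct: no frontier leaf of \(\Theta(\widetilde{S}_{i})\) has \(s_{+}\) or \(s_{-}\) as an ideal endpoint, and your argument for this is more explicit than the paper's ``by \(g\)-invariance''.

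\textbf{The gap.} The problem is the conclusion you draw from it, namely that the ideal region \(U_{+}\) cut off by \(g^{N}(L)\) contains no frontier leaf and hence lies in \(\Theta(\widetilde{S}_{i})\). This is false as soon as the shadow has any frontier at all, which happens whenever \(S_{i}\) is not a fiber. Take a frontier leaf \(\beta\). Its ideal endpoints lie in one open arc of \(\partial P_{\varphi}\setminus\{s_{+},s_{-}\}\). Since the shadow is \(g\)-invariant, the translates \(g^{m}(\beta)\) are again frontier leaves, and by the north--south dynamics of \(g\) they converge to \(s_{+}\) in \(P_{\varphi}\cup\partial P_{\varphi}\) as \(m\to\infty\). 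Because \(g^{N}(L)\) separates \(s_{-}\) from \(s_{+}\), the closure of \(U_{+}\) is a neighbourhood of \(s_{+}\). So \(U_{+}\) contains \(g^{m}(\beta)\), together with the half-plane it removes from the shadow, for all large \(m\).

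Hence no neighbourhood of \(s_{\pm}\) lies in either shadow. Each shadow is a strip around \(\Theta(\widetilde{\gamma}_{i})\) with \(g\)-periodic bites on one or both sides accumulating at \(s_{\pm}\), exactly as in Figure~\ref{fig:orbitspace-curves2}.

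Your proposed remedy does not close this. The step ``convexity would force them to cut \(\Theta(\widetilde{\gamma}_{i})\)'' is unfounded: frontier leaves never meet \(\Theta(\widetilde{\gamma}_{i})\), and the translates \(g^{m}(\beta)\) simply accumulate on \(s_{+}\) from beside it. So \(U_{\pm}\) does not exist, and Step 3 has no foundation. Separately, concluding that \(\Theta(\widetilde{S}_{1})\cap\Theta(\widetilde{S}_{2})\) is connected ``because both shadows are open and simply connected'' is not valid: two such sets can have disconnected or empty intersection. Non-emptiness is precisely what the lemma asserts.

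\textbf{What is missing.} You need an analysis of how frontier leaves of the \emph{two different} shadows interact; nothing near \(s_{\pm}\) comes for free. Use the paper's notation \(p=s_{+}\), \(q=s_{-}\).
\begin{enumerate}
\item Every frontier leaf of either shadow has both endpoints in \((p,q)\) or both in \((q,p)\); your Step 2 already gives this.
\item By Lemma~\ref{lem:sliceleaves_intersection}, a frontier leaf of \(\Theta(\widetilde{S}_{1})\) with endpoints in \((p,q)\) and a frontier leaf of \(\Theta(\widetilde{S}_{2})\) with endpoints in \((q,p)\) cut off disjoint regions. They cannot even be tangent at a singular point, since around such a point a frontier leaf bounds only two quadrants on its shadow side.
\item Hence the bites attached to one arc are disjoint from the bites attached to the other, and what remains still contains a \(g\)-invariant line.
\end{enumerate}
The paper reads this off directly when \(\Theta(\widetilde{\gamma}_{1})\) and \(\Theta(\widetilde{\gamma}_{2})\) are disjoint. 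In the crossing case it replaces each maximal arc of \(\Theta(\widetilde{\gamma}_{1})\) lying outside \(\Theta(\widetilde{S}_{2})\) by the corresponding arc of a frontier leaf of \(\Theta(\widetilde{S}_{2})\). It then checks, with the same lemma, that this arc lies in \(\Theta(\widetilde{S}_{1})\), and removes self-intersections as in your Step 4.

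Alternatively, once this disjointness is established, your quotient idea finishes the proof. In the annulus \(P_{\varphi}/\langle g\rangle\), the bites going out one end are disjoint from those going out the other end, so an essential simple closed curve avoids all of them. Its lift is the required \(g\)-invariant line.
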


To this end, we figure out all intersection cases between two slice leaves.
To simplify the notation, for two distinct points \(p,q\in \partial P_{\varphi}\), we denote by \((p,q)\) the open clockwise arc of \(\partial P_{\varphi}\) from \(p\) to \(q\).
\begin{lemma}
Let \(a,b\) be two different slice leaves in \(\mathcal{F}^{\pm}\).
Then \(a\) and \(b\) intersects at at most one point.

Suppose that, in addition, there are two distinct points \(p,q\) in \(\partial P_{\varphi}\) such that \(\partial a\in (p,q)\) and \(\partial b\in (q,p)\).
Denote by \(R_{a},R_{b}\) the complementary regions of \(a,b\) whose boundaries avoid \(p,q\), respectively.
Then, \(R_{a}\cap R_{b}= \emptyset\).
Further, \(\overline{R}_{a}\cap \overline{R}_{b}\neq \emptyset\) if and only if \(a\cap b\) is a singular point \(p\); in this case, \(\overline{R}_{a}\cap \overline{R}_{b}=p\) and we say that \(a\) and \(b\) are \emph{tangent} at \(p\).
\label{lem:sliceleaves_intersection}
\end{lemma}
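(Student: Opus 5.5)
The first claim will follow from the absence of bigons between leaves of a transverse pair of foliations. The second will follow from a separation argument in the compactified disk \(P_{\varphi}\cup\partial P_{\varphi}\).

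\textbf{At most one intersection point.} Suppose \(a\) and \(b\) met in two points \(x\neq y\). If \(a,b\) are slice leaves of the same foliation, say \(\mathcal{F}^{s}\), then the arcs of \(a\) and \(b\) between \(x\) and \(y\) would both lie in one leaf of \(\mathcal{F}^{s}\). Leaves of \(\mathcal{F}^{s}\) are trees: they are properly embedded, possibly singular, and branching only at prongs or blown-up trees. So the two arcs coincide, and the two slice leaves share a segment. Slice leaves are lines in a tree, so this is possible; I would read ``intersect at at most one point'' as counting transverse crossings, or else note that in the intended applications the segment collapses to a point.

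If instead \(a\in\mathcal{F}^{s}\) and \(b\in\mathcal{F}^{u}\), the arcs between \(x\) and \(y\) bound a bigon disk \(D\) in the plane \(P_{\varphi}\). Restrict \(\mathcal{F}^{s}\) to \(D\). It is a singular foliation whose singularities all have at least three prongs, and it is tangent to one boundary arc and transverse to the other. An Euler--Poincaré index count for this foliation on \(D\) gives a contradiction, exactly as in the standard proof that \(\mathcal{F}^{s}\) and \(\mathcal{F}^{u}\) have no bigons. Blown-up segments lie in both foliations, and I would handle them by passing to the blowdown \(\Phi:P_{\varphi}\to P_{\varphi^{\natural}}\), which collapses each blown-up tree to a point.

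\textbf{Separation.} Each slice leaf is a properly embedded line in \(P_{\varphi}\). It has two distinct ideal endpoints (Fenley's compactification), so it separates the disk into two regions. The hypothesis puts both ideal endpoints of \(a\) in the arc \((p,q)\) and both endpoints of \(b\) in \((q,p)\). Then \(R_{a}\), the side of \(a\) avoiding \(p,q\), has ideal boundary inside \((p,q)\), and \(R_{b}\) has ideal boundary inside \((q,p)\).

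Now suppose \(R_{a}\cap R_{b}\neq\emptyset\). Either one of \(R_{a},R_{b}\) contains the other, which is impossible because their ideal arcs are disjoint. Or their boundaries cross: then \(b\) enters \(R_{a}\), and since \(b\) cannot exit through the ideal boundary of \(R_{a}\), it must cross \(a\) twice. This contradicts the first part.

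For the closures, \(\overline{R}_{a}\cap\overline{R}_{b}\) lies in \(a\cap b\), so it is at most a single point \(x\). If \(x\) were regular, the local product structure at \(x\) means two slice leaves meeting there either cross transversally or are locally equal. Crossing would make \(R_{a}\cap R_{b}\) nonempty near \(x\), and local equality would force a shared segment, which we handled above. So \(x\) is singular, or a vertex of a blown-up tree. Conversely, if \(a\cap b\) is a singular point, they can meet there using disjoint pairs of prongs, so that the regions touch only at that point.

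\textbf{Main obstacle.} The hardest step is the careful treatment of slice leaves inside singular leaves and blown-up trees. Two slice leaves of the same singular leaf may share a prong, so I expect to need ``different'' to mean that the two slice leaves do not share an edge. I also expect to need Proposition \ref{prop:shadow2} to control how slice leaves pass through a blown-up tree.
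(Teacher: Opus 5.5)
Your proposal is correct and follows the paper's own route: a Poincar\'e index count on the bigon rules out two intersection points of a stable and an unstable slice leaf; an overlap of \(R_{a}\) and \(R_{b}\) would force a second intersection point; and a touching point must have at least six quadrants, so it is singular. Your caveat is warranted and more careful than the paper, which calls the same-foliation case ``immediate'' even though two slice leaves of one singular leaf can share a prong. In the blown-up plane a stable and an unstable slice leaf can likewise share a blown-up segment, since its interior lies in both foliations. The separation hypothesis excludes shared prongs via the ideal endpoints, but it does not exclude shared blown-up segments. So your blowdown argument proves the lemma exactly under your reading that \(a\) and \(b\) share no edge; otherwise \(\overline{R}_{a}\cap\overline{R}_{b}\) can be a path in a blown-up tree rather than a point.
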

\begin{proof}
If both \(a\) and \(b\) are in \(\mathcal{F}^{+}\) (or both in \(\mathcal{F}^{-}\)), then the conclusion is immediate.
So without loss of generosity, assume that \(a\subset \mathcal{F}^{+},b\subset \mathcal{F}^{-}\).
In nutshell, since two complementary regions of \(a\) are both convex, it follows from Claim 1 in \autocite[Lemma 3.6]{Fenley2012_IdealBoundariesOfPseudoAnosovFlowsAndUniformConvergenceGroupsWithConnectionsAndApplicationsToLargeScaleGeometry_PUB} that \(a\) and \(b\) intersect at at most one point.
We now provide a brief elaboration for completion.
Recall that a \emph{polygonal path} in \(P_{\varphi}\) is a properly embedded, bi-infinite path formed by concatenatening by finitely many leaf segments and leaf rays in \(\mathcal{F}^{\pm}\).
A region \(R\) with polygonal boundaries is called \emph{convex} if any local part at any boundary vertex of its complementary region \(R^{c}\) is not a single quadrant.
By definition, both complementary region of a slice leaf are convex.
If \(a\cap b\) contains at least two points, then there is a compact disk whose boundary consists of a subinterval of \(a\) and a subinterval of \(b\).
By calculating Euler characteristic via Poincaré index formula, existence of such disk contradicts convexness of \(R_{b}\).
Therefore, \(a\) and \(b\) intersect at at most one point.

Suppose that there are two distinct points \(p,q\) in \(\partial P_{\varphi}\) such that \(\partial a\in (p,q)\) and \(\partial b\in (q,p)\).
If \(R_{a}\cap R_{b}\neq \emptyset\), then \(a\cap b\) contains at least two points, which is a contradiction.
Assume that \(\overline{R}_{a}\cap \overline{R}_{b}\) is a point \(p\). Since \(R_{a}\cap R_{b}=\emptyset\) and \(a\) and \(b\) are different, it follows that \(p\) has at least \(6\) quadrants. So \(p\) is a singular point.
\end{proof}

\begin{proof}[Proof of Lemma \ref{lem:hyperbolic_invariantline}]
Let \(p\in \partial P_{\varphi}\) be the attracting point of \(g\)-action and \(q\in \partial P_{\varphi}\) be the repelling point.
According to Corollary \ref{prop:actionclassification_perfectfits}, \(\partial \widetilde{\gamma}_{i}\) is equal to \(\{p,q\}\) by \(g\)-invariance.

We first suppose that \(\widetilde{\gamma}_{1}\) and \(\widetilde{\gamma}_{2}\) are disjoint.
(Althought this proof could be merged into the general case, we present it separately because it is intuitive and enlightening.)
Denote by \(R_{1}\) (resp. \(R_{2}\)) the component of the complement of  \(\widetilde{\gamma}_{1}\) (resp. \(\widetilde{\gamma}_{2}\)) containing the other curve.
Without loss of generosity, assume that \(\partial R_{1}= \widetilde{\gamma}_{1}\cup (p,q)\) and \(\partial R_{2}= \widetilde{\gamma}_{2}\cup (q,p)\).
Clearly, \(R_{1}\cap R_{2}\) is homeomorphic to a \(g\)-invariant open disk and, the intersection of their closures \(\overline{R}_{1}\cap \overline{R}_{2}\) is homeomorphic to a \(g\)-invariant closed disk.
See Figure \ref{fig:orbitspace-curves}.

\begin{figure}[htbp]
\centering
\includegraphics[width=17cm]{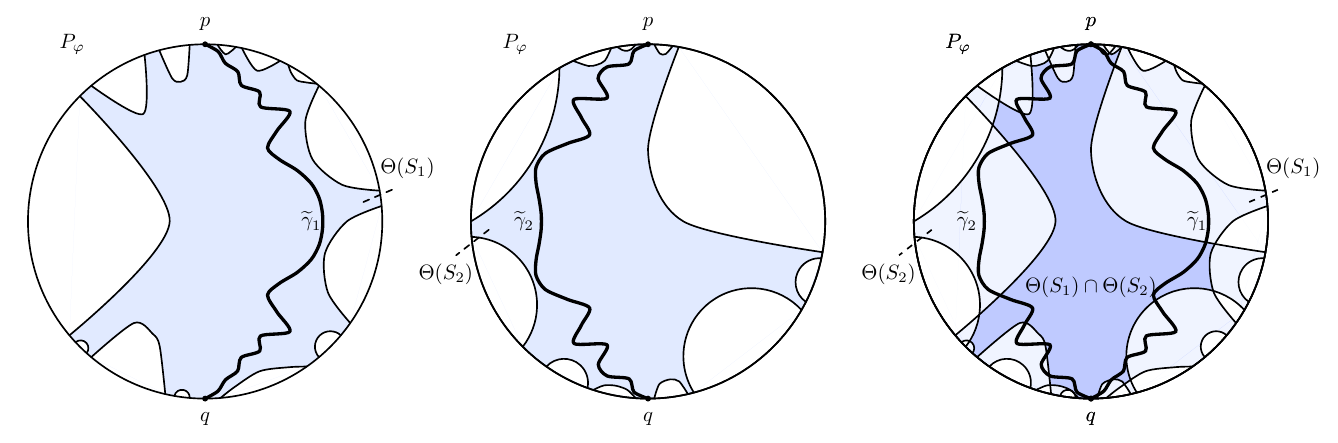}
\caption{\label{fig:orbitspace-curves}The shadows \(\Theta(\widetilde{S}_{1})\) and \(\Theta(\widetilde{S}_{2})\), and their intersection.}
\end{figure}

By Proposition \ref{prop:shadow1} and Proposition \ref{prop:shadow3}, \(\Theta(\widetilde{S}_{i})\) is an open region, bounded by (infinitely many) slice leaves and containing \(\widetilde{\gamma}_{i}\).
In other words,
\[ \Theta(\widetilde{S}_{i})= P_{\varphi}\setminus \bigcup_{k\in \mathcal{I}} \overline{R}_{\widetilde{\beta}^{i}_{k}}, \]
where \(\{\widetilde{\beta}^{i}_{k}\}_{k\in \mathcal{I}}\) is the set of all boundary slice leaves of \(\Theta(\widetilde{S}_{i})\) and \(R_{\widetilde{\beta}^{i}_{k}}\) denotes the unique complementary region of \(\widetilde{\beta}^{i}_{k}\) disjoint from \(\Theta(\widetilde{S}_{i})\).
By \(g\)-invariance of \(\Theta(\widetilde{S}_{i})\), the ideal endpoints of \(\widetilde{\beta}^{i}_{k}\) either both lie in \((p,q)\), both lie in \((q,p)\), or are exactly \(\{p,q\}\).
The last case cannot occur.
Otherwise, if a boundary slice leaf \(l=\widetilde{\beta}_{k}^{i}\) (for some \(i,k\)) has endpoints \(p,q\), then \(l\) is \(g\)-invariant. This is because given two distinct points on \(\partial P_{\varphi}\), there is at most one slice leaf connecting them.
However, \(l\) contains a unique fixed point \(f\) by Proposition \ref{prop:shadow3}, and hence an infinite set \(\{g^{t}(f)\}_{t\in \Integer}\) of fixed points, which is a contradiction.

Set
\[ \mathcal{R}=(\overline{R}_{1}\cap \overline{R}_{2})\setminus \left(( \bigcup_{k\in \mathcal{I}}\overline{R}_{\widetilde{\beta}_{k}^{1}}) \cup (\bigcup_{k\in \mathcal{I}} \overline{R}_{\widetilde{\beta}^{2}_{k}}) \cup \partial P_{\varphi}  \right) \subset \Theta(\widetilde{S}_{1})\cap \Theta(\widetilde{S}_{2}). \]
We aim to prove that \(\mathcal{R}\) is non-empty and contains a \(g\)-invariant bi-infinite line.
Each \(\overline{R}_{\widetilde{\beta}_{k}^{i}}\) is a ``half plane'' disjoint from \(\widetilde{\gamma}_{i}\) for \(i=1,2\). Therefore,
\[ \mathcal{R}=(\overline{R}_{1}\cap \overline{R}_{2})\setminus \left(( \bigcup_{k\in \mathcal{I}_{1}}\overline{R}_{\widetilde{\beta}_{k}^{1}}) \cup (\bigcup_{k\in \mathcal{I}_{2}} \overline{R}_{\widetilde{\beta}^{2}_{k}}) \cup \partial P_{\varphi}  \right) \subset \Theta(\widetilde{S}_{1})\cap \Theta(\widetilde{S}_{2}). \]
where \(\mathcal{I}_{1}=\{k\in \mathcal{I}: \partial \widetilde{\beta}_{k}^{1}\in (p,q) \}\) and \(\mathcal{I}_{2}=\{k\in \mathcal{I}: \partial \widetilde{\beta}_{k}^{2}\in (q,p)\}\).
We claim that \(\widetilde{\beta}_{k}^{1},k\in \mathcal{I}_{1}\) is disjoint from \(\widetilde{\beta}_{j}^{2},j\in \mathcal{I}_{2}\).
Otherwise, by Lemma \ref{lem:sliceleaves_intersection}, \(\widetilde{\beta}_{k}^{1}\) is tangent to \(\widetilde{\beta}_{j}^{2}\) at a singular point \(x\). However, around \(x\), \(\widetilde{\beta}_{k}^{1}\) bounds only two quadrants and so does \(\widetilde{\beta}_{j}^{2}\), which is a contradiction.

\begin{figure}[htbp]
\centering
\includegraphics[scale=0.7]{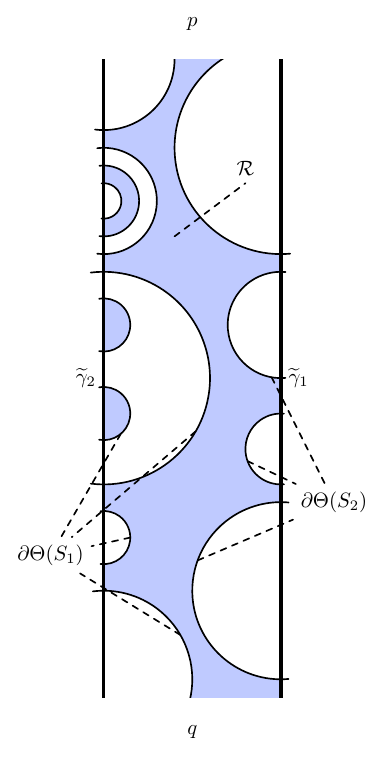}
\caption{\label{fig:orbitspace-curves2}The region \(\mathcal{R}\). It is a \(g\)-invariant infinite strip removing a \(g\)-invariant collection of disjoint closed ``half disks'' on two sides.}
\end{figure}

We note that \(\overline{R}_{\widetilde{\beta}^{i}_{k}}\cap \mathcal{R}\) is a collection of closed disks. Hence, \(\mathcal{R}\) is a \(g\)-invariant infinite strip removing a \(g\)-invariant collection of disjoint closed ``half disks'' on two sides, see Figure \ref{fig:orbitspace-curves2}.
So the interior of \(\mathcal{R}\) is a homeomorphic to an open strip, which is non-empty and contains a \(g\)-invariant line.
We mention one choice of such \(g\)-invariant line.
We can begin with \(\widetilde{\gamma}_{1}\) and replace each subinterval not in \(\Theta(\widetilde{S}_{1})\cap \Theta(\widetilde{S}_{2})\) by a subinterval of \(\partial \Theta(\widetilde{S}_{2})\). We perturb the resulting line so that it lies in \(\Theta(\widetilde{S}_{1})\cap \Theta(\widetilde{S}_{2})\).

In the general case, suppose that \(\widetilde{\gamma}_{1}\) intersects \(\widetilde{\gamma}_{2}\).
We begin with \(\widetilde{\gamma}_{1}\) and modify it step by step. Let \(u\) be a maximal closed subinterval of \(\widetilde{\gamma}_{1}\) not in \(\Theta(\widetilde{S}_{1})\cap \Theta(\widetilde{S}_{2})\).
So \(\Theta(\widetilde{S}_{2})\) has a boundary slice leaf \(\widetilde{\beta}^{2}_{j}\) such that \(u\) is a connected component of \(\overline{R}_{\widetilde{\beta}_{j}^{2}}\cap \widetilde{\gamma}_{1}\).
To simplify the notation, let \(\widetilde{\beta}_{2}= \widetilde{\beta}_{j}^{2}\) and \(D=\overline{R}_{ \widetilde{\beta}^{2}_{j}}\).
\(u\) separates \(D\) into two disks: one is compact in \(P_{\varphi}\), denoted by \(D_{c}\); the other has non-compact interior in \(P_{\varphi}\). Clearly, \(\partial D_{c}\) is the union of \(u\) and a subinterval \(t\) of \(\widetilde{\beta}\).

We claim that \(t\subset \Theta(\widetilde{S}_{1})\). Otherwise, there exists boundary slice leaf \(\widetilde{\beta}_{1}=\widetilde{\beta}_{k}^{1}\) of \(\Theta(\widetilde{S}_{1})\) with \(t\cap \widetilde{\beta}_{1}\neq \emptyset\), because \(\partial t=\partial u\subset \Theta(\widetilde{S}_{1})\).
If \(\widetilde{\beta}_{1}\) intersects the interior of \(D_{c}\), then \(t\cap \widetilde{\beta}_{1}\) contains at least two points, contradicting Lemma \ref{lem:sliceleaves_intersection}.
So \(t\) and \(\widetilde{\beta}_{1}\) intersects at a point \(x\) tangently. But around \(x\), \(t\) bounds exactly two quadrants outside \(D_{c}\). So \(t=\widetilde{\beta}_{1}\), which contradicts that \(\partial t\subset \Theta(\widetilde{S}_{1})\).

So we replace \(u\) by \(t\) and perturb \(t\) such that \(t\subset \Theta(\widetilde{S}_{1})\cap \Theta(\widetilde{S}_{2})\).
One can perform this operation \(g\)-invariantly.
As a result, we obtain an \(g\)-invariant immersed line \(l'\) in \(\Theta(\widetilde{S}_{1})\cap \Theta(\widetilde{S}_{2})\) with ideal boundary \(\{p,q\}\) and possibly infinitely many self-intersections.
Without loss of generosity, assume that all self-intersections of \(l'\) are transverse. Doing cut-and-paste at each self-intersection and discarding all circles, \(l'\) becomes a \(g\)-invariant embedded line in \(\Theta(\widetilde{S}_{1})\cap \Theta(\widetilde{S}_{2})\).
This is what we seek.
\end{proof}

\begin{remark}
If \(\varphi\) is a suspension almost pseudo-Anosov flow (without perfect fits), then the orbit space admit a singular flat metric compatible with the pair of singular foliations.
By attempting to minimize the length of \(\widetilde{\gamma}_{i}\) in its homotopic class, one may obtain a different proof.
In general, however, the orbit space is purely topological and admits no meaningful metric.
\end{remark}

It remains to see what happens when \(g\) acts elliptically (fixes a point).
We begin with a useful lemma.
\begin{lemma}
If \(g\in \pi_{1}(M)\) fixes a non-corner point \(x\in P_{\varphi}\) and fixes \(\Theta(\widetilde{S})\) for some lift \(\widetilde{S}\), then \(x\in \partial \Theta(\widetilde{S})\).
\label{lem:elliptic-boundary}
\end{lemma}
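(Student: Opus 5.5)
Suppose, for contradiction, that \(x\notin \partial \Theta(\widetilde{S})\). Then either \(x\in \Theta(\widetilde{S})\) or \(x\) lies in the open complement of the closure \(\overline{\Theta(\widetilde{S})}\). I will rule out each case separately.

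\emph{Case \(x\in \Theta(\widetilde{S})\).} Since \(\Theta|_{\widetilde{S}}\) is injective, there is a unique point \(\tilde{y}\in\widetilde{S}\) with \(\Theta(\tilde{y})=x\). Because \(g\) preserves both \(\widetilde{S}\) and \(x\), it fixes \(\tilde{y}\). But \(g\) is a nontrivial deck transformation (it fixes a point of \(P_{\varphi}\), so it represents a power of a closed orbit and is nontrivial), and deck transformations act freely on \(\widetilde{M}\). This is a contradiction.

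\emph{Case \(x\notin \overline{\Theta(\widetilde{S})}\).} Here I would use the structure of the frontier. By Propositions \ref{prop:shadow1} and \ref{prop:shadow3}, \(\Theta(\widetilde{S})\) is a convex open region, and its complement is a union of closed ``half planes'' \(\overline{R}_{\beta}\) bounded by periodic boundary slice leaves \(\beta\). The point \(x\) lies in the interior of some such region. Since \(g\) preserves \(\Theta(\widetilde{S})\), it permutes the boundary slice leaves. The plan is to find a boundary leaf separating \(x\) from \(\Theta(\widetilde{S})\) that \(g\) must fix. Consider the leaves \(\mathcal{F}^{s}(x)\) and \(\mathcal{F}^{u}(x)\): some power \(g^{k}\) fixes each prong of \(x\) and acts on each prong as a contraction or an expansion about \(x\) (Proposition \ref{prop:actionclassification}(1a)). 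Choose a prong ray \(r\) of \(\mathcal{F}^{s}(x)\) or \(\mathcal{F}^{u}(x)\) that meets \(\Theta(\widetilde{S})\). Such a ray exists because the prongs go out to ideal points spread around \(x\), and the region \(\Theta(\widetilde{S})\), being nonempty and open, must meet the leaves through \(x\) or the quadrants between them. If the shadow only meets the quadrants, I would instead use a leaf of the opposite foliation crossing the quadrant. By convexity, \(r\cap \Theta(\widetilde{S})\) is a single subinterval, and the point of \(r\) closest to \(x\) on the frontier is a point \(y\ne x\) on a boundary slice leaf. This subinterval is \(g^{k}\)-invariant. But \(g^{k}\) acts on \(r\) without fixed points other than \(x\), by contraction or expansion, so it cannot preserve the endpoint \(y\ne x\). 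This is a contradiction.

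Hence \(x\in\partial\Theta(\widetilde{S})\).

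The delicate step is the second case. I must guarantee a \(g^{k}\)-invariant ray from \(x\) reaching \(\Theta(\widetilde{S})\), or a \(g^{k}\)-invariant boundary slice leaf separating \(x\) from the shadow. A cleaner alternative is the following. The set of boundary leaves separating \(x\) from \(\Theta(\widetilde{S})\) is exactly one leaf \(\beta\), since the complementary half planes are disjoint or tangent (Lemma \ref{lem:sliceleaves_intersection}). That leaf is therefore \(g\)-invariant, so \(g\) fixes its unique periodic point (Proposition \ref{prop:shadow3}). This point differs from \(x\), because \(x\) lies off \(\beta\). So \(g\) has two fixed points, which forces \(x\) to be a corner of a lozenge (Proposition \ref{prop:actionclassification}(1a)), contradicting that \(x\) is a non-corner point. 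I would present this second argument as the main route and keep the prong argument as a backup.
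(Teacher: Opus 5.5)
Your main route is essentially the paper's proof. You rule out \(x\in\Theta(\widetilde{S})\) because \(g\) would fix the unique point of \(\widetilde{S}\) over \(x\); the paper phrases this as the orbit meeting \(\widetilde{S}\) at most once. You then take the unique boundary slice leaf separating \(x\) from the shadow, which is \(g\)-invariant, so \(g\) fixes a point on it, and non-cornerness forces that point to be \(x\).

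Two small points. Nontriviality of \(g\) is an implicit hypothesis of the lemma, not a consequence of \(g\) fixing a point, since the identity fixes every point. The prong-based backup can be dropped, because the existence of a \(g^{k}\)-invariant prong ray meeting the shadow is never justified there.
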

\begin{proof}
The proof is standard.
Let \(\widetilde{\gamma}\) be the orbit corresponding to \(x\).
\(x\) cannot lie in \(\Theta(\widetilde{S})\): otherwise we could choose a point \(h\) in \(\widetilde{\gamma}\cap \widetilde{S}\), and by the \(g\)-invariance of \(x\) and \(\widetilde{S}\), \(g^{k}(p)\) would also belong to their intersection, contradicting that \(\#(\widetilde{\gamma}\cap \widetilde{S})\leq 1\).
By the structure of \(\partial \Theta(\widetilde{S})\), there is a unique boundary component \(l\in \mathcal{F}^{-}\) separating \(x\) and \(\widetilde{\beta}\), hence fixed by \(g\).
Due to Proposition \ref{prop:shadow3}, \(g\) fixes a point \(y\) in \(l\).
Since \(x\) is non-corner, \(x=y\) and hence \(x\in \partial \Theta(\widetilde{S})\).
\end{proof}
In this case, \(S\) contains a closed curve which is homotopic to a power of the closed orbit associated to \(x\).

\begin{lemma}
If \(g\) fixes a unique (non-corner) regular point \(p\) in \(P_{\varphi}\), then \(\Theta(\widetilde{S}_{1})\cap \Theta(\widetilde{S}_{2})\) is non-empty and contains a \(g\)-invariant line \(\gamma\).
\label{lem:regular-invariantline}
\end{lemma}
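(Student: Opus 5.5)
We apply Lemma \ref{lem:elliptic-boundary} to both shadows. Since \(g\) fixes the non-corner point \(p\) and preserves each \(\Theta(\widetilde{S}_{i})\), we get \(p\in\partial\Theta(\widetilde{S}_{1})\cap\partial\Theta(\widetilde{S}_{2})\). By Proposition \ref{prop:shadow3}, \(p\) lies on a boundary slice leaf \(l_{i}\) of \(\Theta(\widetilde{S}_{i})\), and this leaf is \(g\)-invariant. Because \(p\) is regular, the neighbourhood of \(p\) is a union of four quadrants, cut out by the stable leaf \(\mathcal{F}^{s}(p)\) and the unstable leaf \(\mathcal{F}^{u}(p)\). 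Each \(l_{i}\) is one of these two leaves, and \(\Theta(\widetilde{S}_{i})\) lies locally on one side of \(l_{i}\), so near \(p\) it occupies two adjacent quadrants. After passing to a power \(g^{k}\) we may assume every prong of \(p\) is fixed. The approach is to show that the two half-planes spanned by these quadrant pairs overlap in a \(g\)-invariant quadrant-like region near \(p\), and then to produce an invariant line inside it.

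The first key step is to show that the two local sides overlap in at least one quadrant. The half-plane near \(p\) chosen by \(\Theta(\widetilde{S}_{i})\) is determined by the escape direction in Proposition \ref{prop:shadow3}(1). If \(l_{i}\) is stable, the preimages in \(\widetilde{S}_{i}\) escape in the positive direction, so \(\widetilde{S}_{i}\) lies locally below the positive end of the orbit of \(p\). An unstable \(l_{i}\) gives the opposite direction. Since \(\widetilde{S}_{1}\) lies above \(\widetilde{S}_{2}\) and both are \(g\)-invariant, I would check that the two half-planes cannot be complementary, because complementary half-planes would force the product annulus to be inessential or would violate the ordering of \(\widetilde{S}_{1}\) above \(\widetilde{S}_{2}\). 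In the cases \(l_{1}=l_{2}\) on the same side, or \(l_{1}\perp l_{2}\), the half-planes share at least one open quadrant \(Q\) at \(p\). This uses that two distinct half-planes bounded by \(\mathcal{F}^{s}(p)\cup\mathcal{F}^{u}(p)\) meet in a quadrant unless they are opposite. I expect ruling out the opposite configuration rigorously to be the main obstacle. My first attempt would be to compare with the original annulus \(\widetilde{A}\): the \(g\)-invariant lines \(\Theta(\widetilde{\gamma}_{i})\) lie in the respective shadows and have endpoints among the fixed ideal points of \(g^{k}\), which are the prong endpoints. I would then use Lemma \ref{lem:sliceleaves_intersection} to derive a contradiction if the shadows were on opposite sides.

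The next step is to show that the quadrant \(Q\) is, near \(p\), entirely contained in \(\Theta(\widetilde{S}_{1})\cap\Theta(\widetilde{S}_{2})\). Other boundary leaves of the shadows accumulate only away from \(p\): if a sequence of boundary leaves accumulated at \(p\), then by \(g\)-invariance and Proposition \ref{prop:shadow1} we would get a leaf through \(p\) distinct from \(l_{i}\), contradicting the regularity of \(p\). So a small product rectangle \(B\subset Q\) with corner \(p\) lies in both shadows. Since \(g^{k}\) fixes \(p\) and acts on \(Q\) as a hyperbolic map, it contracts along one side and expands along the other. Let \(\gamma\) be the curve formed from \(g^{k}\)-translates of an arc in \(B\) joining the two sides, namely the orbit of a fundamental domain for the action on \(Q\cap\) (a neighbourhood of \(p\)). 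Its two ends converge to the two prongs bounding \(Q\), so it is properly embedded in the union \(\bigcup_{n}g^{kn}(B)\subset Q\) and is \(g^{k}\)-invariant. The translates \(g^{kn}(B)\) stay inside both shadows, because the shadows are \(g\)-invariant.

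The last step is to upgrade \(g^{k}\)-invariance to \(g\)-invariance. When \(k>1\), \(g\) may permute the quadrants, but it must preserve the set of quadrants lying in both shadows. Since \(g\) preserves each \(l_{i}\) and each side of it, and since \(Q\) is essentially the unique common quadrant (or the common quadrants form a \(g\)-invariant union), \(g\) fixes \(Q\). In that case I would replace the constructed curve by a \(g\)-equivariant version: pick the fundamental arc for the \(g\)-action directly, using that \(g\) acts on \(\overline{Q}\) fixing \(p\) and preserving both boundary prongs. If the common region turns out to be a pair of adjacent quadrants, I would run the same construction on the union of those two quadrants together with the prong separating them.
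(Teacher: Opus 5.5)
\textbf{There is a genuine gap at the central step.} Your outline matches the paper's: Lemma~\ref{lem:elliptic-boundary} puts \(p\) on both frontiers, each shadow fills two adjacent quadrants at \(p\), the opposite configuration is excluded, and a \(g\)-invariant line is built in a common quadrant. Your handling of the local structure and of the invariant line is, if anything, more explicit than the paper's.

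However, the step the proof turns on is not proved. That step is ruling out that \(\Theta(\widetilde S_1)\) and \(\Theta(\widetilde S_2)\) occupy complementary pairs of quadrants. You assert it, flag it as the main obstacle, and propose to get it from Lemma~\ref{lem:sliceleaves_intersection} applied to the lines \(\Theta(\widetilde\gamma_i)\). That route cannot work, because it uses only planar information. Two disjoint \(g\)-invariant regions on opposite sides of \(\mathcal{F}^{s}(p)\), each containing a \(g\)-invariant line ending at prong endpoints, is a consistent picture in \(P_\varphi\). Indeed, disjoint shadows genuinely occur in Lemma~\ref{lem:singular-invariantline}. What excludes the configuration here is the vertical order of the lifts in \(\widetilde M\), which the orbit space alone does not see. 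Also, your reading of Proposition~\ref{prop:shadow3}(1) is garbled. What it gives is that the \emph{entire} orbit \(\Theta^{-1}(p)\) lies below \(\widetilde S_i\) when \(l_i\) is stable, and above it when \(l_i\) is unstable.

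\textbf{The missing argument.} In the complementary configuration, \(l_1=l_2=L\) is a single leaf through \(p\), so both boundary leaves have the same type.

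Say \(L=\mathcal{F}^{s}(p)\). Then \(\Theta^{-1}(p)\) lies on the negative side of both \(\widetilde S_1\) and \(\widetilde S_2\), and \(\Theta(\widetilde S_1)\cap\Theta(\widetilde S_2)=\emptyset\). Take \(q\in\Theta(\widetilde S_1)\) close to \(p\).
\begin{enumerate}
\item The orbit of \(q\) misses \(\widetilde S_2\), so it lies in one component of \(\widetilde M\setminus\widetilde S_2\).
\item For \(q\) close enough to \(p\), this orbit enters a small neighbourhood of a point of \(\Theta^{-1}(p)\). That neighbourhood lies in the open negative side of \(\widetilde S_2\), so the whole orbit lies below \(\widetilde S_2\).
\item The orbit meets \(\widetilde S_1\), so \(\widetilde S_1\) has points below \(\widetilde S_2\).
\end{enumerate}
This contradicts the fact that \(\widetilde S_1\) lies above \(\widetilde S_2\), which the existence of \(\widetilde A\) forces. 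If \(L=\mathcal{F}^{u}(p)\), run the same argument with \(q\in\Theta(\widetilde S_2)\) and positive sides.

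This is the orientation argument in the paper's proof. With it supplied, the rest of your proposal goes through. No power \(g^k\) is needed: an orientation-preserving \(g\) that fixes the four-pronged point \(p\) and preserves a pair of adjacent quadrants must fix all four prongs.
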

\begin{proof}
Since \(p\) is non-corner, it must lie on \(\partial \Theta(\widetilde{S}_{i})\) by Lemma \ref{lem:elliptic-boundary}
Consequently, each of \(\Theta(\widetilde{S}_{i})\) occupies exactly two of the four quadrants around \(p\).
Assume that \(\Theta(\widetilde{S}_{1})\) and \(\Theta(\widetilde{S}_{2})\) are separating by \(\mathcal{F}^{+}(p)\). Then by Lemma \ref{prop:shadow3}, the flowline \(\varphi_{\Real}(p)\) lies below \(\widetilde{S}_{1}\) and \(\widetilde{S}_{2}\). Since both \(\widetilde{S}_{1}\) and \(\widetilde{S}_{2}\) are separating in \(\widetilde{M}\), it follows that \(\widetilde{A}\) cannot connect the negative side of \(\widetilde{S}_{1}\) to the positive side of \(\widetilde{S}_{2}\), which is contradiction.
Therefore, \(\Theta(\widetilde{S}_{1})\cap \Theta(\widetilde{S}_{2})\)  occupies at least one quadrant around \(p\).
As a result, there is a \(g\)-invariant line \(\gamma\) lying entirely in such a quadrant (see Figure \ref{fig:productannuli2}).
Moreover, if \(\Theta(\widetilde{S}_{1})\cap \Theta(\widetilde{S}_{2})\) occupies two quadrants around \(p\), we can choose the prong of \(p\) in the intersection as the desired \(g\)-invariant line \(\gamma\), more canonically.

\begin{figure}[htbp]
\centering
\includegraphics[width=13cm]{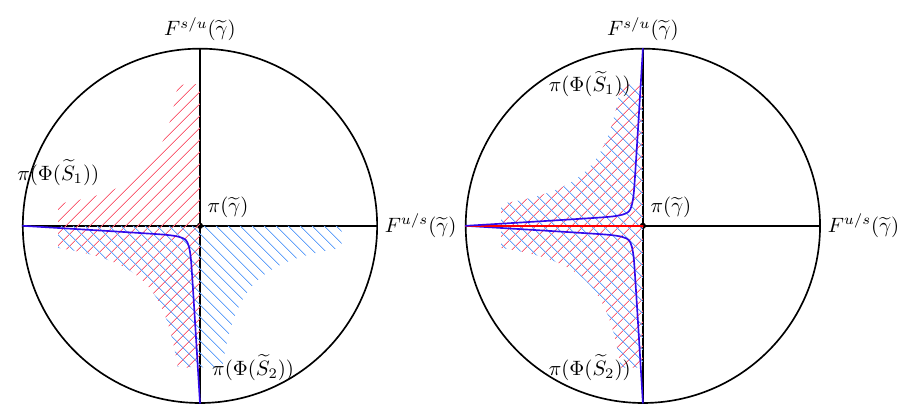}
\caption{\label{fig:productannuli2}The local picture of the product \(\varphi\)-saturated annulus in \(P_{\varphi}\) (thick red and blue lines).}
\end{figure}

Due to the characterization after Lemma \ref{lem:elliptic-boundary}, the local pictures in \(M\) of these choices are illustrated in Figure \ref{fig:productannuli1}.

\begin{figure}[htbp]
\centering
\includegraphics[width=17cm]{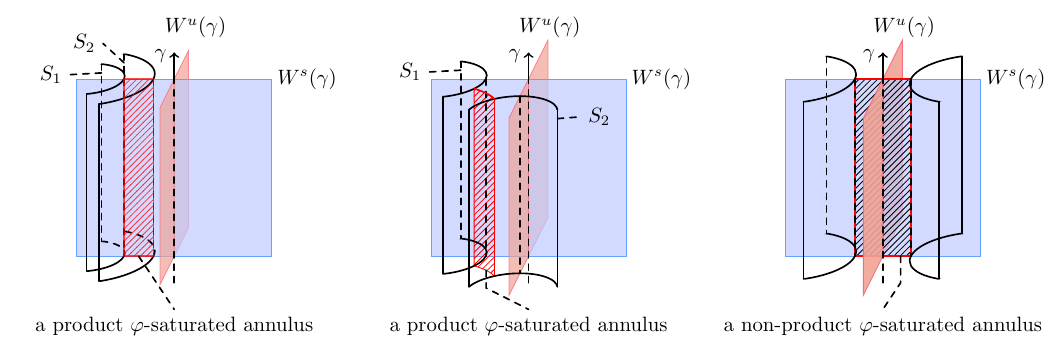}
\caption{\label{fig:productannuli1}The local picture of the (non-)product \(\varphi\)-saturated annulus in \(M\).}
\end{figure}
\end{proof}

The case when \(g\) fixes a singular point is more involved.
\begin{lemma}
If \(g\) fixes a non-corner singular point in \(P_{\varphi}\), then \(A\) is either isotopic to a product saturated annulus or isotopic to a further blow up of \(\varphi\).
\label{lem:singular-invariantline}
\end{lemma}
\begin{proof}
Assume that \(\varphi\) blows down to a pseudo-Anosov flow \(\varphi^{\natural}\). Then \(g\) fixes a unique non-corner singular point \(p_{\natural}\) in \(P_{\varphi^{\natural}}\) and the blown up tree \(T\in P_{\varphi}\) associated to \(p_{\natural}\).

As in Lemma \ref{lem:regular-invariantline}, \(\Theta(\widetilde{S}_{i})\) intersects \(N_{\epsilon}(T)\) with exactly two quadrants \(Q_{i},Q_{i}'\) and a prong or blown-up segment \(\widetilde{r}_{i}\). If \(\{Q_{1},Q_{1}'\}\cap \{Q_{2},Q_{2}'\}\)  is non-empty, we could find a \(g\)-invariant line in \(\Theta(\widetilde{S}_{1})\cap \Theta(\widetilde{S}_{2})\).
We note that \(g\) represents some multiple of the homotopy class of the singular orbit associated to \(p_{\natural}\), and fixes every vertex of \(T\) since \(g\) preserves \(\Theta(\widetilde{S}_{i})\) and \(T\).
From now on, we assume that \(\{Q_{1},Q_{1}'\}\cap \{Q_{2},Q_{2}'\}\) is empty. So \(\Theta(\widetilde{S}_{1})\) and \(\Theta(\widetilde{S}_{2})\) are disjoint.

\(\widetilde{r}_{i}\subset \widetilde{S}_{i}\) is \(g\)-invariant and projects to a simple closed curve \(r_{i}\) in \(S_{i}\) which is isotopic to one component of \(\partial A\).
Our goal is to find an annulus \(A'\) whose boundaries are isotopic to \(r_{1}\cup r_{2}\) and which is transverse to a further blowup of \(\varphi\).

Let \(\lambda_{i}\in \partial \Theta(\widetilde{S}_{i}),i=1,2\) be the unique boundary component which separates \(\Theta(\widetilde{S}_{1})\) and \(\Theta(\widetilde{S}_{2})\). Due to the existence of \(\widetilde{A}\) and orientation arguments, it is impossible that \(\lambda_{1}\) and \(\lambda_{2}\) are both stable or both unstable.
The quadrants between \(\Theta(\widetilde{S}_{1})\) and \(\Theta(\widetilde{S}_{2})\) can be grouped into two fans, each consisting of an odd number of quadrants.
We claim that \(\lambda_{1}\cap \lambda_{2}\) is a vertex of \(T\).
Otherwise, let \(b\subset T\) be a blown-up segment between \(\lambda_{1}\) and \(\lambda_{2}\). Since \(S\) is transverse to \(\varphi\) minimally, there exists \(S_{3}\subset S\) and a lift \(\widetilde{S}_{3}\) such that \(\Theta(\widetilde{S}_{3})\cap T=b\).
Consequently, \(\widetilde{S}_{3}\) is separating, disjoint from \(\widetilde{S}_{i},i=1,2\) and lies between them. \(\widetilde{A}\) must intersect \(\widetilde{S}_{3}\), contradicting that \(\operatorname{int}(\pi(\widetilde{A}))\subset M\setminus S\).

We now have exactly two ways to blow up \(\varphi\) further and minimally, each admitting an annulus \(\widetilde{A}'\) between \(\widetilde{S}_{1}\) and \(\widetilde{S}_{2}\) transverse to the new flow (see Figure \ref{fig:ex-furtherblowup}).

\begin{figure}[htbp]
\centering
\includegraphics[width=17cm]{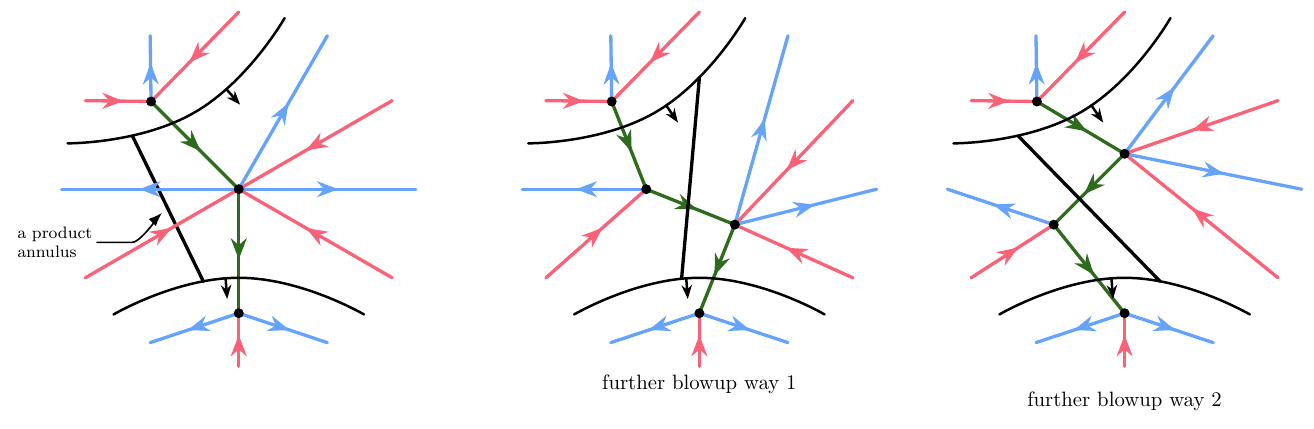}
\caption{\label{fig:ex-furtherblowup}Choose a local section near the closed orbit associated to \(p_{\natural}\); we draw the first return map. The green segments correspond to blown-up annuli; the black lines correspond to the embedded surface and the product annulus. Assume the four quadrants occupied \(Q_{1},Q_{1}',Q_{2},Q_{2}'\) are arranged clockwisely. There is two further blowups: one making \(Q_{1},Q_{2}\) adjacent, the other making \(Q_{1}',Q_{2}'\) adjacent. Each blowup admits a transverse product annulus isotopic to the original annulus.}
\end{figure}

 % We will see in Section TODO that the two blowup ways do not affect the orbit equivalence class of the flow of \(M\setminus (S\cup A')\).

Finally, \(\widetilde{A}'\) projects to an embedded annulus \(A'\) transverse to \(\varphi^{\sharp}\) and by Lemma \ref{lem:invariantline}, \(A'\) is isotopic to \(A\) and disjoint from \(S\).
\end{proof}

We have now dealt with all cases that occur when \(\varphi\) is without perfect fits and hence finished the proof of Proposition \ref{thm:annulusisotopy}.

We state a corollary for later use.
\begin{corollary}
Assume that \(M\) admits an almost pseudo-Anosov flow \(\varphi\) without perfect fits, and \(S\) is an embedded compact oriented taut surface in \(M\) positively transverse to \(\varphi\).
Let \(A\) be a homotopically non-trivial, oriented annulus with boundary components lying on the same side of \(S\). In other words, \(A\) is a non-product annulus in the sutured manifold \((M-S,S_{+},S_{-}, \emptyset)\).

Then, the core circle of \(A\) is isotopic to a multiple of some closed orbit.
\label{cor:nonproductannulus}
\end{corollary}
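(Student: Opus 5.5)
The plan is to reuse the set-up from the proof of Theorem \ref{thm:annulusisotopy}, and to show that the deck transformation \(g\) generating \(\pi_{1}(A)\) must act elliptically on \(P_{\varphi}\). Since \(g\) fixes a point of \(P_{\varphi}\) exactly when it represents a power of the homotopy class of a closed orbit, this gives the conclusion at the level of free homotopy. We then upgrade to isotopy using the structure of the boundary of the shadows.

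The set-up is as follows. Let \(\gamma_{1},\gamma_{2}\) be the boundary circles of \(A\), lying in components \(S_{1},S_{2}\) of \(S\) on the same side, say both on the negative side. Fix a lift \(\widetilde{A}\) and a generator \(g\) of its stabilizer. Let \(\widetilde{S}_{i}\) be the lifts of \(S_{i}\) containing the boundary lines \(\widetilde{\gamma}_{i}\) of \(\widetilde{A}\). Both shadows \(\Theta(\widetilde{S}_{i})\) are \(g\)-invariant, and both \(\widetilde{S}_{i}\) lie above \(\widetilde{A}\). Since \(\operatorname{int}(\widetilde A)\) misses all lifts of \(S\), the region between \(\widetilde{S}_{1}\) and \(\widetilde{S}_{2}\) near \(\widetilde{A}\) lies below both surfaces. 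Moreover, \(\widetilde{S}_{1}\neq\widetilde{S}_{2}\), since otherwise \(A\) would be boundary parallel or compressible; this uses that \(S\) is taut and incompressible and that \(M\) is irreducible.

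\emph{Step 1: the shadows are disjoint.} Any orbit of \(\widetilde{\varphi}\) meets each \(\widetilde{S}_{i}\) at most once and crosses it positively. Suppose an orbit met both \(\widetilde{S}_{1}\) and \(\widetilde{S}_{2}\), say \(\widetilde{S}_{1}\) first. After crossing \(\widetilde{S}_{1}\) it is above \(\widetilde{S}_{1}\), so to reach \(\widetilde{S}_{2}\) from below it would have to pass through the region below \(\widetilde{S}_{2}\) but above \(\widetilde{S}_{1}\). I would argue, via the separation argument in Lemma \ref{lem:invariantline} using \(\widetilde{A}\) and the nesting of lifts of \(S\), that this region is empty. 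Hence \(\Theta(\widetilde{S}_{1})\cap\Theta(\widetilde{S}_{2})=\emptyset\).

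\emph{Step 2: \(g\) is not hyperbolic.} Suppose \(g\) were hyperbolic. By Corollary \ref{prop:actionclassification_perfectfits}, both \(\widetilde{\gamma}_{i}\) have ideal endpoints equal to the attracting and repelling points \(\{p,q\}\). Then the argument in the proof of Lemma \ref{lem:hyperbolic_invariantline} applies verbatim. It uses Lemma \ref{lem:sliceleaves_intersection} and the fact that no boundary slice leaf has endpoints \(\{p,q\}\), and it shows that \(\Theta(\widetilde{S}_{1})\cap\Theta(\widetilde{S}_{2})\) contains a \(g\)-invariant strip. That argument never used the orientation data, so it contradicts Step 1. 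Therefore, by Corollary \ref{prop:actionclassification_perfectfits}, \(g\) fixes either a regular point or a singular point (equivalently, the vertices of a blown-up tree). Hence \(g\) is homotopic to a power of a closed orbit \(\alpha\).

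\emph{Step 3: upgrade to isotopy.} By Lemma \ref{lem:elliptic-boundary} and its blown-up analogue from the proof of Lemma \ref{lem:singular-invariantline}, the fixed point \(x\) lies on \(\partial\Theta(\widetilde{S}_{i})\). The corresponding boundary slice leaf projects to a closed curve in \(S_{i}\) that is homotopic to \(\gamma_{i}\) and lies on the stable or unstable leaf of \(\alpha\). Flowing along that leaf gives an embedded annulus from this curve to \(\alpha^{k}\), which avoids \(S\) by the escaping property in Proposition \ref{prop:shadow3}. Using this annulus, the core of \(A\) is isotopic, not just homotopic, to the \(k\)-fold multiple of \(\alpha\) in a neighbourhood of \(\alpha\). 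Atoroidality and irreducibility are used, as in Lemma \ref{lem:invariantline}, to pass from homotopic to isotopic for the curves in \(S_{i}\).

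The main obstacle is Step 1, that is, rigorously establishing that the two shadows are disjoint from the same-side configuration. My fallback, if a clean separation argument is unavailable, is to run Step 2 on the intersection directly. Using the invariant line produced there, Lemma \ref{lem:invariantline} would yield a flow-saturated annulus joining the negative sides of both \(S_{1}\) and \(S_{2}\). This is impossible, since a saturated annulus between two transverse surfaces must connect opposite sides.
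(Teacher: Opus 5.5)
Your overall route is the paper's. Assuming \(g\) hyperbolic, the paper applies Lemma \ref{lem:hyperbolic_invariantline}, whose proof indeed never uses co-orientations, to get a common point of the two shadows. It notes that the orbit over that point would cross both \(\widetilde S_1\) and \(\widetilde S_2\) positively, which is incompatible with \(\widetilde A\) joining the same sides, and then concludes via Lemma \ref{lem:elliptic-boundary}. Your observation that \(\widetilde S_1\neq\widetilde S_2\), which uses that \(A\) is not boundary parallel, is something the paper leaves implicit. The orientation contradiction needs it.

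The justification you sketch for Step 1, the step carrying the content, is wrong as written. The region above \(\widetilde S_1\) and below \(\widetilde S_2\) is not empty: it is the whole positive side of \(\widetilde S_1\). What is true is that the two positive sides are disjoint. Indeed, \(\operatorname{int}(\widetilde A)\) is connected, misses both planes, and abuts the negative side of each. So \(\widetilde S_2\) lies on the negative side of \(\widetilde S_1\), and vice versa. Hence the positive side of \(\widetilde S_1\) lies in the negative side of \(\widetilde S_2\), and its closure misses \(\widetilde S_2\). An orbit that has crossed \(\widetilde S_1\) can never cross back, so it never reaches \(\widetilde S_2\); the symmetric argument handles an orbit crossing \(\widetilde S_2\) first. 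Your fallback needs the same correction. The saturated strip from Lemma \ref{lem:invariantline} would join opposite sides, and the contradiction comes from this forced relative position of the lifts, not from the strip joining two negative sides.

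In Step 3, the frontier leaf through the fixed point \(x\) is not in the shadow, so nothing in \(\widetilde S_i\) projects onto it. The \(g\)-invariant line you want lies over the prong of \(x\) transverse to that leaf, which is inside \(\Theta(\widetilde S_i)\) by the local structure at a periodic frontier point (Proposition \ref{prop:shadow2} in the singular case). In the regular case, the two shadows occupy complementary pairs of quadrants at \(x\), separated by the stable leaf when \(A\) meets the negative sides. That separation is what places the orbit below both lifts, as in the proof of Lemma \ref{lem:regular-invariantline}. The escaping property alone does not give this.
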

\begin{proof}
As in the proof of Proposition \ref{thm:annulusisotopy}, we fix a lift \(\widetilde{A}\) of \(A\), which connects lifts \(\widetilde{S}_{1}\) and \(\widetilde{S}_{2}\), and an element \(g\in \pi_{1}(M)\) fixes \(\widetilde{A}\), \(\widetilde{S}_{1}\) and \(\widetilde{S}_{2}\).
We claim that \(g\) is not of hyperbolic type, which implies that \(g\) fixes a point in \(P_{\varphi}\) and the core circle of \(A\) is isotopic to a multiple of some closed orbit due to Lemma \ref{lem:elliptic-boundary}.
Otherwise, assume that \(g\) is hyperbolic.
By Lemma \ref{lem:hyperbolic_invariantline}, \(\widetilde{S}_{1}\cap \widetilde{S}_{2}\) is non-empty and we choose \(p \in \widetilde{S}_{1}\cap \widetilde{S}_{2}\). The orbit \(\Theta^{-1}(p)\) intersects \(\widetilde{S}_{1}\) and \(\widetilde{S}_{2}\) positively. Since \(\widetilde{S}_{1},\widetilde{S}_{2}\) are separating, \(\widetilde{A}\) is impossible to connect \(\widetilde{S}_{1}\) and \(\widetilde{S}_{2}\) by orientation arguments.
This completes the proof.
\end{proof}
\subsection{Cases with perfect fits}
\label{sec:org65825c6}
One might ask whether the ``without perfect fits'' assumption can be removed.
In some cases, similar results hold (Lemma \ref{lem:parabolic-invariantline}, Lemma \ref{lem:scalloped-invariantline}).
But in general, when \(g\) fixes a corner of a lozenge chain, a product annulus in the aligned position is not guaranteed.
\begin{lemma}
If \(g\) acts freely on \(P_{\varphi}\) and is of parabolic type, then \(\Theta(\widetilde{S}_{1})\cap \Theta(\widetilde{S}_{2})\) is non-empty and contains a \(g\)-invariant line \(\gamma\).
\label{lem:parabolic-invariantline}
\end{lemma}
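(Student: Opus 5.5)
The plan is to follow the proof of Lemma \ref{lem:hyperbolic_invariantline}, replacing the source/sink pair \(\{p,q\}\) by the unique parabolic fixed point \(s\in\partial P_{\varphi}\) and the perfect fit horoball that \(g\) preserves (Proposition \ref{prop:actionclassification}, case (2b)).

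\textbf{Step 1: ideal endpoints of \(\widetilde{\gamma}_{i}\).} Each \(\widetilde{\gamma}_{i}=\Theta(\widetilde{\gamma}_{i})\) is a \(g\)-invariant properly embedded line. Its two ideal ends are limits of \(g^{\pm n}(x)\), and under a parabolic action both sequences converge to \(s\). So both ends of \(\widetilde{\gamma}_{i}\) land at \(s\). Each \(\widetilde{\gamma}_{i}\) therefore bounds a \(g\)-invariant region \(R_{i}\) whose closure meets \(\partial P_{\varphi}\) only at \(s\), a ``horodisk'' at \(s\). The complementary region contains \(\partial P_{\varphi}\setminus\{s\}\).

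\textbf{Step 2: boundary slice leaves.} As in the hyperbolic case, I will show that no boundary slice leaf \(\widetilde{\beta}^{i}_{k}\) of \(\Theta(\widetilde{S}_{i})\) is \(g\)-invariant. Such a leaf would contain a fixed point of some element by Proposition \ref{prop:shadow3}, and \(g\) acts freely, so \(g\) would translate that fixed point along the leaf, producing infinitely many fixed points on one leaf, which is impossible. Hence the \(g\)-orbit of each \(\widetilde{\beta}^{i}_{k}\) is infinite. Each \(\overline{R}_{\widetilde{\beta}^{i}_{k}}\) is a half plane disjoint from \(\widetilde{\gamma}_{i}\), and it lies either inside the horodisk \(R_{i}\) or outside it.

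The key claim is that only finitely many \(g\)-orbits of these half planes cut into a fundamental domain of the region between the curves. I will obtain this from the proper discontinuity of the deck action on \(\widetilde{S}_{i}\), since \(\widetilde{\gamma}_{i}/g\) is compact in \(S_{i}\). I then form
\[ \mathcal{R}=\Big(\overline{R}_{1}\cap \overline{R}_{2}\Big)\setminus\Big(\textstyle\bigcup_{k}\overline{R}_{\widetilde{\beta}^{1}_{k}}\cup\bigcup_{k}\overline{R}_{\widetilde{\beta}^{2}_{k}}\cup\partial P_{\varphi}\Big). \]
Here \(R_{1}\) and \(R_{2}\) are oriented so that their intersection is the \(g\)-invariant region between the two curves (when they are nested, one region contains the other).

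\textbf{Step 3: the modification argument.} In the general case where \(\widetilde{\gamma}_{1}\) and \(\widetilde{\gamma}_{2}\) intersect, I repeat the step-by-step construction verbatim. For each maximal subinterval \(u\subset\widetilde{\gamma}_{1}\) outside \(\Theta(\widetilde{S}_{2})\), I replace \(u\) by the arc \(t\) of \(\partial\Theta(\widetilde{S}_{2})\) cobounding the compact disk \(D_{c}\). Lemma \ref{lem:sliceleaves_intersection} and the tangency argument give \(t\subset\Theta(\widetilde{S}_{1})\). I do this \(g\)-equivariantly, perturb, and resolve self-intersections by cut-and-paste. The result is an embedded \(g\)-invariant line in \(\Theta(\widetilde{S}_{1})\cap\Theta(\widetilde{S}_{2})\) with both ends at \(s\).

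\textbf{Main obstacle.} The difficulty is ensuring that \(D_{c}\) is compact, that is, that \(u\) really cuts off a bounded disk rather than a region touching \(s\). In the hyperbolic case the ideal endpoints separated the half planes into two sides. Here every relevant ideal point lies in the single arc \(\partial P_{\varphi}\setminus\{s\}\), so I must rule out a boundary leaf \(\widetilde{\beta}^{2}_{j}\) with an ideal endpoint at \(s\) itself.

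My first attempt is as follows. Such a leaf would make a perfect fit with its \(g\)-translates, all asymptotic to \(s\). By convexity (Proposition \ref{prop:shadow1}), \(\Theta(\widetilde{S}_{2})\) would then be trapped outside the horoball, contradicting that it contains \(\widetilde{\gamma}_{2}\), which penetrates the horoball along a ray to \(s\). If this fails, I would instead use the structure of the perfect fit horoball from \cite[Lemma 3.20]{Fenley2012_IdealBoundariesOfPseudoAnosovFlowsAndUniformConvergenceGroupsWithConnectionsAndApplicationsToLargeScaleGeometry_PUB}: take its \(g\)-invariant boundary chain of leaves, push it slightly into both shadows, and use that pushed chain as the invariant line.
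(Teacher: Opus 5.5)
There is a genuine gap. The case \(\widetilde{\gamma}_{1}\cap\widetilde{\gamma}_{2}=\emptyset\) is never finished. In Step 2 you form \(\mathcal{R}\) but give no argument that it contains a \(g\)-invariant line. In the hyperbolic proof that fact came from the half planes of the two shadows having ideal endpoints in the complementary arcs \((p,q)\) and \((q,p)\), so that they cannot meet. As you note yourself, this has no analogue when \(\partial P_{\varphi}\setminus\{s\}\) is a single arc, and you supply no replacement.

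Your ``main obstacle'' paragraph does not close this either; it ends with two speculative attempts.
\begin{itemize}
\item The perfect-fit/convexity sketch does not actually derive a contradiction. It is not shown that the \(g\)-translates of \(\widetilde{\beta}^{2}_{j}\) are the leaves of the horoball preserved by \(g\), nor how convexity traps \(\Theta(\widetilde{S}_{2})\).
\item The fallback of pushing the horoball's boundary chain into both shadows has no justification, since that chain is determined by \(g\) alone and is not tied to \(\Theta(\widetilde{S}_{i})\).
\end{itemize}
The obstacle is also misplaced. By periodicity, once \(\widetilde{\gamma}_{1}\) meets \(\Theta(\widetilde{S}_{2})\), every \(u\) is a compact arc. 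A compact arc with endpoints on \(\widetilde{\beta}^{2}_{j}\) always cuts a compact disk off the closed half plane \(\overline{R}_{\widetilde{\beta}^{2}_{j}}\), wherever the ideal endpoints of \(\widetilde{\beta}^{2}_{j}\) lie.

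The missing idea makes the lemma almost immediate, and you were one step from it when you wrote that each \(\overline{R}_{\widetilde{\beta}^{i}_{k}}\) lies either inside or outside the horodisk \(R_{i}\). It cannot lie inside. A boundary leaf of \(\Theta(\widetilde{S}_{i})\) is disjoint from \(\widetilde{\gamma}_{i}\subset\Theta(\widetilde{S}_{i})\). So if it met \(R_{i}\), it would lie entirely in \(R_{i}\), and both of its ideal endpoints would lie in \(\overline{R_{i}}\cap\partial P_{\varphi}=\{s\}\), which is impossible for a slice leaf. Hence every removed half plane lies on the far side of \(\widetilde{\gamma}_{i}\), and the whole horodisk \(R_{i}\) is contained in \(\Theta(\widetilde{S}_{i})\).

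This is exactly the paper's argument. The component of \(R_{1}\cap R_{2}\) whose closure contains \(s\) is a \(g\)-invariant open disk in \(\Theta(\widetilde{S}_{1})\cap\Theta(\widetilde{S}_{2})\). Its frontier, or a \(g\)-invariant line pushed slightly into it, is the required line. This treats the disjoint and intersecting cases at once. The region between the curves, the modification step, and the perfect fit horoball are all unnecessary. In the nested case you should look inside the smaller horodisk rather than between the two curves.
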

\begin{proof}
Let \(p\) be the fixed point of \(g\) in \(\partial P_{\varphi}\).
Both ideal endpoints of \(\widetilde{\gamma}_{i}\) are \(p\) by \(g\)-invariance. \(\widetilde{\gamma}_{i}\cup \{p\}\) bounds an open disk \(D_{i}\) in \(P_{\varphi}\cup \partial P_{\varphi}\).
Since two ideal endpoints of a slice leaf in \(\mathcal{F}^{\pm}\) cannot be the same, it follows that \(D_{i}\) is contained in \(\Theta(\widetilde{S}_{i})\). There is exactly one component \(C\) of \(D_{1}\cap D_{2}\) whose closure contains \(p\).
Moreover, \(C\) is a \(g\)-invariant open disk in \(\Theta(\widetilde{S}_{1})\cap \Theta(\widetilde{S}_{2})\). So the boundary of \(C\) is a desired \(g\)-invariant line in \(\Theta(\widetilde{S}_{1})\cap \Theta(\widetilde{S}_{2})\).
\end{proof}

\begin{lemma}
If \(g\) acts freely on \(P_{\varphi}\) but fixes a scalloped region, then \(\Theta(\widetilde{S}_{1})\cap \Theta(\widetilde{S}_{2})\) is non-empty and contains a \(g\)-invariant line \(\gamma\).
\label{lem:scalloped-invariantline}
\end{lemma}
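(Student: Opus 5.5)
The plan is to follow the structure of Lemma \ref{lem:hyperbolic_invariantline} and Lemma \ref{lem:parabolic-invariantline}, with the scalloped region \(\mathcal{S}\) fixed by \(g\) playing the role of the axis. By Proposition \ref{prop:actionclassification}(2c), \(g\) fixes exactly four ideal points. These are the ideal vertices of \(\mathcal{S}\). On each of the four complementary arcs of \(\partial P_{\varphi}\), and on each of the four families of boundary leaves \(l^{i,s}_{k},l^{i,u}_{k}\), \(g\) acts by translation.

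The first step is to locate \(\partial\widetilde{\gamma}_{i}\). Since \(\widetilde{\gamma}_{i}\) is \(g\)-invariant and \(g\) is free on \(P_{\varphi}\), the endpoints of \(\widetilde{\gamma}_{i}\) lie in \(\mathrm{Fix}_{\partial}(g)\). They must be two of the four ideal vertices of \(\mathcal{S}\), or a single one. A single endpoint is impossible: a \(g\)-invariant loop based at an ideal point would bound a \(g\)-invariant disk, and an argument as in the parabolic case would force \(g\) to have parabolic dynamics near that point, contradicting the translation picture.

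Next I will show that each shadow \(\Theta(\widetilde{S}_{i})\) contains \(\mathcal{S}\), or at least a \(g\)-invariant sub-strip of it. Every boundary slice leaf \(\beta\) of \(\Theta(\widetilde{S}_{i})\) is periodic by Proposition \ref{prop:shadow3}, so it is fixed by some \(h\) and contains a fixed point of \(h\). Inside \(\mathcal{S}\), the foliations are product and every leaf crosses from one side family to the opposite one. A periodic leaf that meets the interior of \(\mathcal{S}\) would then be a leaf through a corner of one of the lozenges \(p^{i,s}_{k}\). I would argue, via Lemma \ref{lem:sliceleaves_intersection} and the convexity in Proposition \ref{prop:shadow1}, that such a \(\beta\) cannot cut \(\widetilde{\gamma}_{i}\) off from a whole end of \(\mathcal{S}\). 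Indeed, \(g^{n}\beta\) are also boundary leaves, and they translate toward the two ends. So \(\Theta(\widetilde{S}_{i})\) would either be disjoint from the region between consecutive translates, or contain it. Since \(\widetilde{\gamma}_{i}\) runs between two vertices of \(\mathcal{S}\) in the \(g\)-invariant way, the second alternative holds on a \(g\)-invariant strip \(U_{i}\subset\mathcal{S}\) joining the two ideal vertices that are the endpoints of \(\widetilde{\gamma}_{i}\).

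Finally, I will produce the line by the same surgery as in the hyperbolic case. Starting from \(\widetilde{\gamma}_{1}\), I replace each maximal subarc lying outside \(\Theta(\widetilde{S}_{2})\) by the arc of the corresponding boundary leaf of \(\Theta(\widetilde{S}_{2})\), doing this \(g\)-equivariantly. The claim that the replaced arc lies in \(\Theta(\widetilde{S}_{1})\) follows verbatim from Lemma \ref{lem:sliceleaves_intersection}. Cut-and-paste then removes self-intersections. If the endpoint pairs of \(\widetilde{\gamma}_{1}\) and \(\widetilde{\gamma}_{2}\) differ, I instead use \(U_{1}\cap U_{2}\), which is a nonempty \(g\)-invariant open subset of the product region \(\mathcal{S}\). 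In product coordinates it is a \(g\)-invariant region whose closure contains at least one ideal vertex of \(\mathcal{S}\). A \(g\)-invariant line inside it can be taken as the boundary of a \(g\)-invariant neighborhood of that corner, as in Lemma \ref{lem:parabolic-invariantline}.

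I expect the second step to be the main obstacle: controlling the boundary leaves of the shadows inside the scalloped region, where lozenges and non-separated leaves occur and the no-perfect-fits simplifications are unavailable.
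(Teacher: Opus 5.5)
Your second step, which you rightly flag as the crux, does not work as written. The conclusion you draw from it also points the wrong way.

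Suppose a boundary slice leaf \(\beta\) of \(\Theta(\widetilde{S}_{i})\) meets the scalloped region \(\mathcal{S}\). By the product structure it crosses \(\mathcal{S}\), and every \(g^{n}\beta\) is also a boundary leaf crossing \(\mathcal{S}\). Since \(g\) fixes no leaf, it acts on the corresponding leaf space of \(\mathcal{S}\), an open interval, without fixed points, hence monotonically. So \(\Theta(\widetilde{S}_{i})\) lies on the \emph{same} side of every \(g^{n}\beta\). The shadow therefore never contains the region between two consecutive translates, and your ``second alternative'' cannot occur. The fact that \(\widetilde{\gamma}_{i}\) joins two ideal vertices does not produce a strip \(U_{i}\); that inference is a non sequitur.

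There is a further false claim: a periodic leaf meeting \(\mathcal{S}\) need not pass through a lozenge corner. Leaves of \(\mathcal{S}\) exit through the boundary families into the complementary half-planes, which contain many periodic points.

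The missing idea is that the same-side observation is itself a contradiction, so no boundary leaf of either shadow meets \(\mathcal{S}\) at all. The argument runs as follows.
\begin{enumerate}
\item The translates \(g^{n}\beta\) accumulate on one family of boundary leaves, say \(\{l^{1,s}_{k}\}_{k}\).
\item The intersection of the shadow-sides of all \(g^{n}\beta\) then lies in the union of the closed half-planes beyond the \(l^{1,s}_{k}\).
\item These half-planes are pairwise disjoint and permuted freely by \(g\). The nonempty, connected, \(g\)-invariant set \(\Theta(\widetilde{S}_{i})\) cannot lie in their union, which is the contradiction.
\item Consequently every boundary leaf of \(\Theta(\widetilde{S}_{i})\) lies in the closed half-plane beyond some boundary leaf of \(\mathcal{S}\). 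The shadow lies on the side containing \(\mathcal{S}\), because the other side sits in a non-invariant half-plane.
\end{enumerate}
This gives \(\mathcal{S}\subset\Theta(\widetilde{S}_{1})\cap\Theta(\widetilde{S}_{2})\), which is the paper's argument. A \(g\)-invariant line can then be taken inside \(\mathcal{S}\), where \(g\) acts as a product of two fixed-point-free translations of the leaf spaces.

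With this in hand, your first step (including the vague exclusion of a single endpoint) and your third step are unnecessary. The third step is also not ``verbatim'' from the hyperbolic case. That surgery needs each boundary leaf of \(\Theta(\widetilde{S}_{2})\) met by \(\widetilde{\gamma}_{1}\) to cut off a compact subarc, which is exactly the claim left unproved. Finally, the nonemptiness of \(U_{1}\cap U_{2}\) is asserted without argument.
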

\begin{proof}
Denote by \(D\) the scalloped region fixed by \(g\).
We claim that \(D\) is contained in both \(\Theta(\widetilde{S}_{i}),i=1,2\) and hence in \(\Theta(\widetilde{S}_{1})\cap \Theta(\widetilde{S}_{2})\).
In fact, each component of \(\partial \Theta(\widetilde{S}_{i})\) must be disjoint from \(D\). Otherwise, assume that \(l\subset \partial \Theta(\widetilde{S}_{i})\) intersects \(D\). Then, \(g^{k}(l), k\in \Integer\)  are parallel deck translates of \(l\) and are all in \(\partial \Theta(\widetilde{S}_{i})\), which contradicts the structure of scalloped regions.
Therefore, each component \(l\) of \(\partial \Theta(\widetilde{S}_{i})\) is contained in some half disk of \(P_{\varphi}\setminus D\). Further, \(\Theta(\widetilde{S}_{i})\) lies on the side of \(l\) which contains \(D\) and we finish the proof of the claim.
\end{proof}

Unfortunately, a similar result does not hold when \(g\) fixes a corner point of a chain of lozenges.
See Figure \ref{fig:counterexample-perfectfits}. \(\gamma_{0}\) and \(\gamma_{1}\) form a perfect fits; \(\gamma_{0}\) and \(\gamma_{2}\) form a perfect fits.
If \(g\) is a lift of the fundamental group of the product annulus \(A\) between \(S_{1}\) and \(S_{2}\), then \(g\) fixes the chain of lozenges between \(\gamma_{2},\gamma_{0},\gamma_{1}\).
One cannot isotope \(A\) to be a product \(\varphi\)-saturated one, or to be transverse to a further blowup.

\begin{figure}[htbp]
\centering
\includegraphics[scale=1.3]{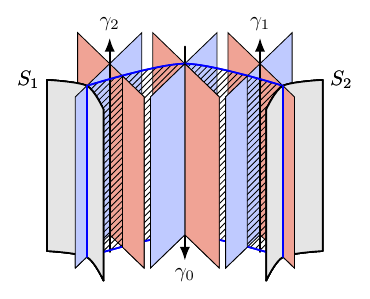}
\caption{\label{fig:counterexample-perfectfits}The local picture of a counterexample when \(g\) fixes a corner point of a chain of lozenges. The product annulus between \(S_{1}\) and \(S_{2}\) is indicated by the blue border and diagonal hatching.}
\end{figure}

\begin{remark}
In \autocite[Section 3]{HuangTaylor2026_PseudoAnosovFlowsHyperbolicGeometryAndTheCurveGraph_ARXv1}, Huang and Taylor prove that for any pseudo-Anosov flow \(\varphi\), a product annulus in \(M-S\) is either isotopic to a product saturated one, or its two boundaries are isotopic to (un)stable curves.
Their proof proceeds similarly to ours, trying to find a \(g\)-invariant line, but with a different classification scheme.
Our argument implicitly leads to their result, directly by Lemma \ref{lem:hyperbolic_invariantline}, \ref{lem:regular-invariantline}, \ref{lem:singular-invariantline}, \ref{lem:parabolic-invariantline}, \ref{lem:scalloped-invariantline}.
The obstruction to the stronger statement in Theorem \ref{thm:annulusisotopy} is product annuli whose fundamental group fixes a chain of lozenges, as mentioned above.
\label{rem:HTwork}
\end{remark}
\section{The semiflow on the guts}
\label{sec:construction}
In this section, we construct the canonical semiflow on the homology guts.
\begin{theorem}
Let \(M\) be a closed 3-manifold and \(\varphi\) be a pseudo-Anosov flow without perfect fits on \(M\).
Let \(\mathcal{C}_{2}(\varphi)\) be the carried cone of \(\varphi\) in \(H_{2}(M)\), where the negative Euler class and the Thurston norm match.
For any primitive element \(z\) in \(\mathcal{C}_{2}(\varphi)\), there is a well-defined tight semiflow on the homology guts \(\operatorname{Guts}(M,z)\) associated to \(z\).
\label{thm:flowinguts}
\end{theorem}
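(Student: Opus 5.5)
The plan is to follow the two-step construction of $\mathrm{Guts}(M,z)$ and make each cut compatible with the flow, using Lemma \ref{lem:decompose} at every stage.

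\textbf{Step 1 (surfaces).} Since $z\in\mathcal{C}_{2}(\varphi)$ is primitive, choose a maximal collection of disjoint, non-parallel taut surfaces representing $z$. By the Transverse Surface Theorem \autocite[Theorem A]{LandryMinskyTaylor2025_TransverseSurfacesAndPseudoAnosovFlows_PUB}, their union $F$ is isotopic to a surface positively transverse to a blowup $\varphi^{\sharp}$ of $\varphi$. We take this blowup minimal with respect to $F$. Before applying the Annulus Isotopy Theorem we must justify atoroidality. If we first reduce to the case where $\varphi$ is transitive and $M$ is atoroidal, using Mosher's result quoted in the preliminaries and the fact that taut tori have norm zero, this case is covered. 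Otherwise we only apply the theorem to annuli whose core is not a torus-type class. Lemma \ref{lem:decompose}(2) then gives a tight semiflow on $(M-F,F_{+},F_{-},\emptyset)$.

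\textbf{Step 2 (annuli).} Choose a maximal collection $\mathcal{A}$ of disjoint product annuli in $M-F$. By Remark \ref{rem:productdisk}, product disks need not be cut separately. We handle the annuli one at a time. Given $A\in\mathcal{A}$, Theorem \ref{thm:annulusisotopy} isotopes it, inside $M-F$, either to a product $\varphi^{\sharp}$-saturated annulus or to an annulus transverse to a further blowup $\varphi^{\sharp\sharp}$. The further blowup only adds blown-up annuli along a singular orbit, so $F$ stays transverse. We then decompose along $A$ using Lemma \ref{lem:decompose}(1) or (2) respectively.

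The main obstacle is doing this simultaneously. Once $A_{1}$ is aligned, the isotopy given by Theorem \ref{thm:annulusisotopy} for $A_{2}$ must be disjoint from $A_{1}$, and the later blowups must not destroy earlier transversality. I would handle this as follows.
\begin{itemize}
\item I would rerun the proof of Theorem \ref{thm:annulusisotopy} with $F$ replaced by $F\cup A_{1}\cup\dots\cup A_{k-1}$.
\item Equivalently, I would choose all the $g_{j}$-invariant lines in the orbit space at once, pairwise disjoint and equivariant. Lemma \ref{lem:invariantline} together with its bigon-removal argument (shadows are convex and boundary slice leaves meet at most once, Lemma \ref{lem:sliceleaves_intersection}) should give embeddedness and disjointness of the resulting saturated annuli.
\item For the transverse case near a singular orbit, both blowups in Figure \ref{fig:ex-furtherblowup} are local and can be performed compatibly for all annuli around the same singular orbit, since those annuli occupy different fans of quadrants.
\end{itemize}

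\textbf{Step 3 (windows).} After all cuts, Lemma \ref{lem:decompose} gives a tight semiflow on each component of $M-(F\cup\mathcal{A})$. We discard the product sutured components (the windows). What remains is $\mathrm{Guts}(M,z)$ by the Agol--Zhang construction, and it carries the restricted tight semiflow. This proves the statement; canonicity is deferred to Theorem \ref{thm:facetinvariance}.
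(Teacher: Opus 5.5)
\textbf{Same route as the paper, with one gap: your justification of atoroidality fails.} Your construction matches the paper's. You make the facet almost transverse by the Landry--Minsky--Taylor theorem with a minimal blowup $\varphi^{\sharp}$. You align a maximal family of product annuli by Theorem~\ref{thm:annulusisotopy}, using a further blowup $\varphi^{\sharp\sharp}$ where needed. You then cut via Lemma~\ref{lem:decompose} and discard windows.

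Theorem~\ref{thm:annulusisotopy} assumes $M$ is atoroidal, but the statement does not. Your justification does not establish this:
\begin{itemize}
\item The Mosher result quoted in the preliminaries goes the other way (atoroidal implies transitive).
\item ``Tori have norm zero'' only says that non-separating tori are invisible to $x$. It does not rule out an incompressible torus.
\end{itemize}
Your fallback, applying the theorem only to annuli whose core is not a ``torus-type class'', is not viable either, for two reasons. First, the guts comes from cutting along a \emph{maximal} family, so you cannot choose which annuli to align. Second, atoroidality is used inside the theorem's own proof: Lemma~\ref{lem:invariantline} needs it to remove the core circles of $A\cap A'$ and conclude that the saturated annulus is isotopic to $A$. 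Lemma~\ref{lem:gutsclassification} uses it as well.

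The fix is that no case split is needed. A pseudo-Anosov flow without perfect fits forces $M$ to be atoroidal; this is a known consequence of the no-perfect-fits hypothesis. The paper's proof records this in its first line, citing Landry, and then proceeds exactly as you do.

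\textbf{Simultaneity.} The paper applies Theorem~\ref{thm:annulusisotopy} to the whole family at once without comment, so here you are being more careful than the paper.

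Your first suggestion does not work as stated. Theorem~\ref{thm:annulusisotopy} needs the cutting surface to be positively transverse to the flow, with open shadows. A saturated annulus has no open shadow, and cutting along it creates sutures. So the proof cannot be rerun with $F\cup A_{1}\cup\dots\cup A_{k-1}$ in place of $F$.

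Your second suggestion is the right one, and for saturated annuli it is easy to complete:
\begin{itemize}
\item A product saturated annulus in $M-F$ is a union of maximal flow segments of $M-F$. So two such annuli that meet share a whole segment, and hence meet on $F$.
\item Disjointness therefore reduces to making their boundary curves on $F$ disjoint.
\item You can do this by bigon removal inside the region of $F$ that flows to the relevant component. This keeps the annuli saturated.
\item Bigons stay inside that region because the relevant components of $\Theta(\widetilde{S}_{1})\cap\Theta(\widetilde{S}_{2})$ are simply connected, being intersections of two simply connected planar domains.
\end{itemize}
The transverse annuli all sit next to singular orbits. The paper later (Section~\ref{sec:singularguts}) places them inside the tubes $P_{\tau}$, which is consistent with your third point.
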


Before the proof, we recall the definition of the carried face (cone) and the guts.
\subsection{Carried cone (face)}
\label{sec:orgdc866ba}
Let \(\varphi\) be a pseudo-Anosov flow without perfect fits. We can define its Euler class \(e_{\varphi}\) as the Euler class of the orthogonal plane bundle.
The subset of \(\mathcal{B}_{\mathrm{Th}}(M)\) where \(-e_{\varphi}\) and the Thurston norm \(x_{M}\) match is a (possibly empty) closed cone \(\mathcal{C}_{2}(\varphi)\) over a closed face \(\sigma_{\varphi}\) of \(\mathcal{B}_{\mathrm{Th}}(M)\).
The closed cone and face are called the \emph{carried cone}  and \emph{carried face} of \(\varphi\), respectively.

In \autocite{Mosher1992_DynamicalSystemsAndTheHomologyNormOfA3ManifoldIi_PUB}, Mosher shows that \(\varphi\) \emph{represents} \(\sigma_{\varphi}\) in the sense that the carried cone \(\mathcal{C}_{2}(\varphi)\) of \(\varphi\) is dual to its Fried cone (the cone of homology directions).
Moreover, he show that any integral class in \(\mathcal{C}_{2}(\varphi)\) is represented by a surface almost transverse to the flow.
Landry improves ``homologous'' to ``isotopic''. Any taut surface \(S\) with \([S]\in \mathcal{C}_{2}(\varphi)\) is isotopic to be almost transverse to the flow. Further, the combinatorial type of the associated blowup is unique \autocite{LandryMinskyTaylor2025_TransverseSurfacesAndPseudoAnosovFlows_PUB}.
\subsection{Guts}
\label{sec:org6a4a6bf}
\begin{definition}[{\cite[Definition 4.10]{Gabai1983_FoliationsAndTheTopologyOf3Manifolds_PUB},\cite[Definition 2.17]{AgolZhang2022_GutsInSuturedDecompositionsAndTheThurstonNorm_ARXv1}}]
Let \((N,\gamma)\) be a taut sutured manifold and let \(S\) be the maximal set of pairwise disjoint non-parallel product annuli and product disks in \(N\).
We define the \emph{sutured guts} \(\mathrm{Guts}(N)\) of \((N,\gamma)\) as the sutured manifold obtained from \((N,\gamma)\) by decomposing along \(S\) and discarding \emph{windows}, namely, product sutured manifold components.

The sutured guts is well-defined up to isotopy class of inclusion map by classical JSJ decomposition theory \autocite{Johannson1979_HomotopyEquivalencesOf3ManifoldsWithBoundaries_PUB} \autocite{JacoShalen1979_SeifertFiberedSpacesIn3Manifolds}.
\end{definition}
 % TODO;: fix all blown-up annuli and blown-up complex.

\begin{remark}
In fact, cutting out a maximal set of pairwise disjoint non-parallel product annuli is sufficient. Let \(D\) be a product disk in \((N,\gamma)\) and let \(B\) be the union of components of \(\gamma\) which intersects \(D\).
Then \(\partial N_{\epsilon}(B\cup D)\) is a product annulus, and decomposing along \(\partial N_{\epsilon}(B\cup D)\) is the same as decomposing along \(D\), after discarding all product components.

When decomposing along a product annulus \(S\), different choices of orientation on \(S\) give same sutured manifold decomposition results.
Sometimes, it is more convenient to regard \(S\) as an unoriented surface and \(S_{+}\cup S_{-}\) belongs to sutures of \(N-S\) directly.
\label{rem:productdisk}
\end{remark}

Homology guts is defined for a compact, oriented, connected, irreducible 3-manifold \(M\) with toral boundary and non-degenerate Thurston norm \(x_{M}\).
As usual, we treat \((M,\partial M)\) as a sutured manifold with \(\gamma=\partial M\).

\begin{definition}[Properly norm-minimizing]
Let \(M\) be a compact 3-manifold whose boundary is a collection of tori.
An oriented surface \(S\) is called \emph{properly norm-minimizing} if \(S\) is norm-minimizing and on each boundary \(P\) of \(M\), components of \(\partial S\cap P\) have the same orientation.

When \(M\) is closed, there is no difference between norm-minimizing and properly norm-minimizing.
\end{definition}
\begin{definition}[Homology guts]
Let \(z\) be a primitive element in \(H_{2}(M,\partial M;\Integer)\).
A \emph{facet surface} \(F(z)\) for \(z\) is the union of a maximal collection of disjoint, non-parallel, properly norm-minimizing surfaces \(S_{1},\ldots ,S_{k}\) in \(M\) such that
\[ [S_{i}]=z \text{ in } H_{2}(M,\partial M;\Integer),\quad \forall i=1,\ldots,k. \]

We define the \emph{homology guts} \(\mathrm{Guts}(M,z)\) of \(M\) associated to \(z\) as the sutured guts of \(M - F(z)\) where \(F(z)\) is an arbitrary facet surface for \(z\).
\end{definition}

Most interestingly, according to results of Agol and Zhang, homology guts are a topological invariant which is intimately related to homology.
\begin{theorem}[{\cite[Theorem 1.1]{AgolZhang2022_GutsInSuturedDecompositionsAndTheThurstonNorm_ARXv1}}]
Let \(M\) be an irreducible, orientable 3-manifold with toral boundaries and non-degenerate Thurston norm and let \(z\) be a primitive element in \(H_{2}(M,\partial M;\Integer)\).
Then, up to the equivalence of guts, the homology guts \(\mathrm{Guts}(M,z)\) does not depend on the selection of facet surfaces and product decomposition surfaces.
\label{thm:AZguts}
\end{theorem}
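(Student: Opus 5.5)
The proof splits into two independence statements. The first is independence of the product decomposition surfaces for a fixed facet surface \(F(z)\). The second, which is harder, is independence of the facet surface itself.

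\textbf{Product decomposition surfaces.} Fix \(F=F(z)\). The sutured manifold \(N=M-F\) is taut, because \(F\) is norm-minimizing. By Remark \ref{rem:productdisk} it suffices to cut along a maximal family \(\mathcal{A}\) of disjoint, non-parallel product annuli. I would identify the union of the windows, together with regular neighborhoods of the annuli in \(\mathcal{A}\), with the \(I\)-bundle part of the characteristic submanifold of the pair \((N,R_{+}\cup R_{-})\) in the sense of Jaco--Shalen and Johannson. The windows are product sutured manifolds, i.e.\ \(I\)-bundles meeting \(R_{\pm}\) in their horizontal boundary. The guts is then the complement of this canonical submanifold. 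Uniqueness of the characteristic submanifold up to isotopy would show that two maximal families give guts that agree up to isotopy, up to the equivalence of guts allowed in the statement. I expect this to follow from the JSJ literature, apart from bookkeeping with sutured annuli and with parallel copies of annuli.

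\textbf{Facet surfaces.} Let \(F\) and \(F'\) be two facet surfaces for \(z\). I would first isotope \(F'\) so that \(F\cap F'\) is a minimal collection of curves, each essential in both surfaces. Every component \(P\) of \(F'\cap N\), with \(N=M-F\), is then a properly embedded surface in the taut sutured manifold \(N\). A norm count using the double curve sum lemma \(\chi_{-}(F\oplus F')=\chi_{-}(F)+\chi_{-}(F')\), together with maximality of \(F\), should force each such \(P\) to be \emph{horizontal}. Here horizontal means that \(P\) is isotopic rel boundary into a window of \(N\) as a section-like surface of the \(I\)-bundle, or else is parallel to \(R_{\pm}\). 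The reason is that if some piece entered the guts essentially, cutting and pasting would produce a norm-minimizing surface of class \(z\) disjoint from \(F\) and not parallel to any of its components, contradicting maximality.

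Once every piece of \(F'\) lies in the windows, decomposing \(N\) further along \(F'\cap N\) only subdivides windows into smaller product pieces. Hence \(\mathrm{Guts}(M-F)=\mathrm{Guts}(M-(F\cup F'))\), where \(F\cup F'\) is resolved via the double curve sum. By symmetry the same equals \(\mathrm{Guts}(M-F')\).

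\textbf{Main obstacle.} The hard step is the horizontality claim: a norm-minimizing surface of class \(z\) meeting \(M-F\) can be isotoped off the guts. My first approach would be a Euler characteristic argument on the pieces of \(F'\cap N\). Each piece should have \(\chi_{-}\) equal to that of the \(R_{\pm}\)-subsurface it cobounds with, so its boundary-compressions and product regions can be analyzed by Gabai's sutured manifold theory, using Lemma 3.12 of Gabai for product decompositions. If this approach stalls, the fallback is to use the disjoint union \(F\sqcup F'\) after cut-and-paste as a single facet candidate, and to compare the two guts through a common refinement.
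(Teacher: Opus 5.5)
The paper does not prove this statement; it quotes it from Agol--Zhang. Still, what it records about their argument shows that your facet step has a genuine gap. The relevant places are Lemma~\ref{lem:gutsclassification}, the proof of Lemma~\ref{lem:choice:facet}, and Remark~\ref{rem:badregions}.

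\textbf{The horizontality claim is false.} Even with \(F\cap F'\) minimal, a component of \(F'\cap(M-F)\) can be a non-product annulus. Its two boundary circles both lie on \(R_{+}\) (or both on \(R_{-}\)), and it runs through a 4-ST or a non-longitudinal 2-ST component of the guts. The proof of Lemma~\ref{lem:choice:facet} has to treat exactly this case separately. Such an annulus is neither a section of a window nor parallel to \(R_{\pm}\).

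Your maximality argument cannot rule it out. Cutting and pasting along a non-product annulus produces only surfaces parallel into \(\partial(M-F)\), possibly together with a null-homologous torus; this is the content of the proof of Lemma~\ref{lem:gutsclassification}. No new non-parallel norm-minimizing surface of class \(z\) appears, so there is no contradiction with maximality.

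\textbf{The common-refinement identity fails.} Consequently \(\mathrm{Guts}(M-F)=\mathrm{Guts}(M-(F\cup F'))\) is false in general. This is the phenomenon of \emph{bad regions} (Agol--Zhang, Definition 3.11): regions of \(M-(F\cup F')\) along whose boundary the coorientations from \(F\) and \(F'\) agree. Each bad region comes with an extra 4-ST guts component of \(M-(F\cup F')\), inside the corresponding component of \(M-F\). One recovers \(\mathrm{Guts}(M-F)\), and symmetrically \(\mathrm{Guts}(M-F')\), only after discarding the bad regions and these 4-STs (Remark~\ref{rem:badregions}). Identifying and removing them is the missing idea. Your fallback through a common refinement runs into the same obstruction.

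\textbf{The double curve sum is not a refinement.} Resolving \(F\cup F'\) by the double curve sum does not refine the decomposition along \(F\). At every double curve, the oriented resolution merges the two mixed quadrants, which lie on opposite sides of \(F\). In addition, \(F\oplus F'\) may acquire null-homologous tori.

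\textbf{Your first part is sound, with one caveat.} The product-annulus independence matches the paper's appeal to JSJ theory. However, only the part of the characteristic \(I\)-bundle whose fibers run from \(R_{-}\) to \(R_{+}\) consists of windows. Seifert-fibered pieces such as 4-STs belong to the guts.
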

\begin{definition}[Equivalence of guts]
Two guts \(G_{1}\) and \(G_{2}\) are equivalent if there is an isotopy \(h_{t}\) of \(M\), such that \(h_{0}= \mathrm{id}_{M}\) and \(h_{1}\) maps \(G_{1}\) to \(G_{2}\) homeomorphically and preserving sutured structure.
\end{definition}

We conclude the introduction by a classification of guts components, according to the existence of non-product annulus.
\begin{lemma}
Assume that \(M\) is atoroidal and \(N\) is a component of \(M-F(z)\) where \(F(z)\) is a facet for \(z\).
Then \(\mathrm{Guts}(N)\) has only one connected component.
If there is a non-product annulus in \(N\), then \(\mathrm{Guts}(N)\) is a 4-ST or a non-longitudinal 2-ST.
If there is no non-product annulus in \(N\), then \(\mathrm{Guts}(N)\) is an acylindrical taut sutured manifold and admits a complete finite-volume hyperbolic metric.
\label{lem:gutsclassification}
\end{lemma}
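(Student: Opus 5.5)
We rely on the JSJ/characteristic-submanifold theory for the taut sutured manifold \(N\), with \(R_{\pm}\) coming from the two sides of facet surface components. Let \(\mathcal{A}\) be a maximal collection of pairwise disjoint, non-parallel product annuli and disks in \(N\); by Remark \ref{rem:productdisk} we may take \(\mathcal{A}\) to consist of annuli. First we show that \(\mathrm{Guts}(N)\) is connected. Suppose two non-window components \(G_{1},G_{2}\) of \(N-\mathcal{A}\) exist. Each meets \(R_{+}\) and \(R_{-}\), since otherwise it would be a product or violate tautness. The decomposition of \(N\) is a graph whose vertices are guts pieces and windows and whose edges are annuli in \(\mathcal{A}\). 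Take the union of \(G_{1}\), a window path to \(G_{2}\), and a surface in \(R_{-}\) of \(G_{1}\) pushed across this chain. This yields a new norm-minimizing surface homologous to \(z\) (a copy of \(S_{\pm}\) surgered along the windows) that is disjoint from \(F(z)\) and not parallel to any of its components. That contradicts maximality of the facet. Concretely, the cleanest route is to argue that \(R_{+}\) (or \(R_{-}\)) pushed through all windows into \(N\) is parallel to \(R_{-}\) unless exactly one gut piece obstructs. Then use \(\chi\)-additivity: \(x(R_{+})=x(R_{-})\), and each gut piece contributes strictly negative \(\chi\) to the difference between \(R_{+}\cap G\) and the relevant cut surface. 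Hence with two gut pieces one could route a surface through only one of them, obtaining a new taut surface in class \(z\) inside \(N\).

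Second, assume \(N\) contains a non-product annulus \(A\), i.e. both boundary circles lie on the same side \(R_{+}\) or \(R_{-}\) and \(A\) is essential and not boundary-parallel. Since \(M\) is atoroidal, the characteristic submanifold of \(N\) contains no essential tori, so the Seifert-fibred pieces meeting \(A\) are solid tori. Let \(V\) be the JSJ solid torus containing \(A\). The frontier of \(V\) consists of annuli, each of which is either product (hence cut away by \(\mathcal{A}\)) or non-product. After cutting \(\mathcal{A}\) and discarding windows, the gut component containing \(V\) is \(V\) itself. This is because \(V\) absorbs everything adjacent through non-product annuli, and connectedness from the first step rules out other pieces. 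Its sutures are parallel curves on \(\partial V\). Tautness forbids meridians. Sutures come in an even number \(2k\). Counting Euler characteristic against the facet maximality (each extra pair of \(R_{\pm}\) bands would produce a further parallel taut surface) limits the possibilities to \(k\le 2\). A 2-ST with longitudinal sutures is a product \(D^{2}\times I\)-type window. So what remains is a 4-ST with longitudinal sutures, or a non-longitudinal 2-ST. The \(4\)-ST must have longitudinal sutures: for a winding number \(>1\) with \(\ge 4\) sutures, there is an annulus in \(V\) that would be product and cut further.

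Third, if no non-product annulus exists, then after cutting all product annuli the gut \(G\) has no essential annuli at all: any essential annulus in \(G\) would be either product (contradicting maximality of \(\mathcal{A}\)) or non-product in \(N\). It also has no essential tori, since \(M\) is atoroidal, and it is taut, irreducible, with nonempty \(R_{\pm}\) of negative Euler characteristic. Thurston's hyperbolization for Haken manifolds, applied to the double of \(G\) along \(R_{\pm}\) with the sutures as cusps, gives a complete finite-volume hyperbolic structure with the sutures as rank-one cusps. The main obstacle is the first step (connectedness together with the bound on suture number). There I would invoke the maximality of \(F(z)\) via a careful cut-and-paste producing a new taut surface in class \(z\) disjoint from and non-parallel to \(F(z)\).
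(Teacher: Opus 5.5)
Your connectedness step and your acylindrical case broadly match the paper. The paper cites Agol--Zhang (Lemma 3.7) for connectedness, and notes that without non-product annuli all JSJ annuli are product, so hyperbolization applies. One slip in Step 1: the rerouted surface is norm-minimizing precisely because \(\chi(R_{+}(G_{1}))=\chi(R_{-}(G_{1}))\) for a taut gut piece. A ``strictly negative'' contribution would destroy tautness. What makes the new surface forbidden is that it separates \(G_{1}\) from \(G_{2}\), so it is not parallel to \(R_{\pm}\).

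The genuine gap is the non-product annulus case, which is the real content of the statement.
\begin{enumerate}
\item You pass to ``the JSJ solid torus \(V\) containing \(A\)'' without ruling out that \(A\) is a vertical annulus in an \(I\)-bundle piece with both lids in \(R_{+}\).
\item You assert that the gut component containing \(V\) is \(V\) itself. This needs every frontier annulus of \(V\) to be a product annulus. Your stated reason points the other way: if \(V\) absorbed neighbours across non-product frontier annuli, the gut component would be larger than \(V\).
\item The bound \(k\le 2\) on pairs of sutures is only asserted. ``Each extra pair of bands would produce a further taut surface'' neither identifies that surface nor shows it is non-parallel to \(F(z)\).
\item For winding number \(>1\), every incompressible annulus in a solid torus is boundary parallel in \(V\). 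So the ``product annulus that would be cut further'' must be shown essential in \(N\), and you do not do this.
\end{enumerate}

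The missing idea, which the paper uses, is to surger the component(s) of \(\partial N\) containing \(\partial A\) along \(A\) itself. This gives a surface \(S\subset N\), homologous in \(N\) to that boundary part, with the same Euler characteristic.

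If no component of \(S\) is null-homologous, then \(S\) is taut and disjoint from \(F(z)\). Maximality of the facet then forces \(S\) to be parallel into \(\partial N\). So \(N\) is a product plus a thickened copy of \(A\), and the guts is that thickened annulus with four sutures parallel to the core of \(A\). This is a 4-ST, longitudinal by construction.

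If some component \(C\) is null-homologous, the remaining components still represent the class, so norm-minimality forces \(\chi_{-}(C)=0\). Since \(C\) carries the essential core of \(A\), it is a torus. By atoroidality it bounds a solid torus in \(N\) with two sutures, non-longitudinal because \(A\) is not boundary parallel.

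This gives the classification directly, with no JSJ input and no suture counting.
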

\begin{proof}
\(\mathrm{Guts}(N)\) is connected by \autocite[Lemma 3.7]{AgolZhang2022_GutsInSuturedDecompositionsAndTheThurstonNorm_ARXv1}.
The rest  follows directly from \autocite[Remark 3.9]{AgolZhang2022_GutsInSuturedDecompositionsAndTheThurstonNorm_ARXv1} and \(M\) is atoroidal. We give a short proof here for completion.

If there is no non-product annulus in \(N\), then all JSJ-annuli are product annuli. So \(\mathrm{Guts}(N)\) is acylindrical and admits a complete finite-volume hyperbolic metric by Thurston's hyperbolization theorem \autocite{Morgan1984_ChapterVThurstonsUniformizationTheoremForThreeDimensionalManifolds_PUB}.

Otherwise, assume that \(N\) contains a non-product annulus \(A\).
Let \(S_{1},S_{2}\) be the component of \(\partial N\) which intersects \(A\). We surgery \(S_{1}\cup S_{2}\) along \(A\) to obtain a subsurface \(S\) in \(N\).
If all components of \(S\) are not null-homologous, then \(S\) is parallel into \(\partial N\) and hence the guts of \(N\) is a 4-ST.
If \(S\) contains a null-homologous component \(C\), then \(C\) must be a torus and the guts of \(N\) is the 3-manifold bounded by \(C\), with two sutures on \(C\).
Since \(N\) is atoroidal, \(C\) bounds a solid torus and thus \(\mathrm{Guts}(N)\) is a non-longitudinal 2-ST.
\end{proof}
\subsection{The semiflow on the homology guts}
\label{sec:org727e907}
In the following, we assume that \(M\) is a oriented, closed 3-manifold which admits a pseudo-Anosov flow \(\varphi\) with perfect fits and \(z\in H_{2}(M,\partial M;\Integer)\) is carried by \(\varphi\), namely, \(z\in \mathcal{C}_{2}(\varphi)\).
Our goal is to construct a natural semiflow on homology guts \(\mathrm{Guts}(M,z)\).

For later convenience, we introduce the transverse facet surface.
\begin{definition}[Transverse facet surface]
A facet surface \(F(z)\) for \(z\) is called \emph{transverse} if each component of \(F(z)\) is almost transverse to \(\varphi\).
We denote by \(F_{\varphi}(z)\) a transverse facet surface for \(z\).
\end{definition}

According to results of Landry, Minsky and Taylor, any facet surface \(F(z)\) is isotopic to a transverse one \(F_{\varphi}(z)\), provided \(z\in \mathcal{C}_{2}(\varphi)\).
\begin{theorem}[{\cite[Theorem A]{LandryMinskyTaylor2025_TransverseSurfacesAndPseudoAnosovFlows_PUB}}, {\cite[Theorem B]{Landry2022_VeeringTriangulationsAndTheThurstonNorm_PUB}}]
Let \(\varphi\) be a pseudo-Anosov flow with no perfect fits on \(M\). Suppose that \(S\) is a properly embedded, oriented surface in \(M\). Then the followings are equivalent:
\begin{enumerate}
\item \(S\) can be isotoped to be almost transverse to \(\varphi\).
\item \(S\) is taut and pairs nonnegatively with the closed orbits of \(\varphi\).
\item \(S\) is taut and \([S]\in \mathcal{C}_{2}(\varphi)\).
\end{enumerate}
\end{theorem}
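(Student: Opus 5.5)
We prove the cycle (1)$\Rightarrow$(2)$\Rightarrow$(3)$\Rightarrow$(1). The first two implications are soft. Nearly all the work is in (3)$\Rightarrow$(1), which upgrades ``homologous to a transverse surface'' to ``isotopic to one''.

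For (1)$\Rightarrow$(2), suppose $S$ is almost transverse to $\varphi$, so $S$ is positively transverse to some blowup $\varphi^{\sharp}$. Every closed orbit of $\varphi$ is homotopic to a closed orbit of $\varphi^{\sharp}$, either the same orbit (away from the blown-up complexes) or a boundary orbit of a blown-up annulus. Each such orbit meets $S$ only positively, so $\langle [S],[\gamma]\rangle\ge 0$ for every closed orbit $\gamma$. For tautness we use that $\varphi$ is transitive (no perfect fits and not a suspension, cf.\ the remark after Theorem~\ref{thm:annulusisotopy}). Through a dense orbit, every point of $S$ lies on a closed transversal, so no component of $S$ is null-homologous. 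Moreover, $\chi_{-}(S)=-e_{\varphi}([S])$, because the normal plane field of $\varphi^{\sharp}$ restricts to $TS$. Thurston's inequality $x([S])\ge |e_{\varphi}([S])|$ then forces equality, so $S$ is taut.

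For (2)$\Leftrightarrow$(3), we invoke Mosher's duality: $\mathcal{C}_{2}(\varphi)$ is the dual cone of the Fried cone $\mathcal{C}_{\mathrm{Fr}}(\varphi)$, and the Fried cone is spanned by the classes of closed orbits. Hence $[S]$ pairs nonnegatively with all closed orbits if and only if $[S]\in\mathcal{C}_{2}(\varphi)$. Tautness is part of both hypotheses.

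For (3)$\Rightarrow$(1), I would pass to the combinatorial model available exactly because there are no perfect fits. By Agol--Gu\'eritaud, $\varphi$ determines a veering triangulation $\tau$ of $M^{\circ}=M\setminus\{\text{singular orbits}\}$, whose dual branched surface $B^{s}$ is (up to homotopy) the stable foliation, and surfaces carried by $B^{s}$ are transverse to the flow. The plan has three steps.
\begin{enumerate}
\item Put $S$ in normal position with respect to $\tau$, extended across the singular orbits by meridional annuli. Use tautness and incompressibility to remove inessential intersections. Near each singular orbit, $S$ meets a small tube in curves that can be made parallel.
\item Use $[S]\in\mathcal{C}_{2}(\varphi)$, equivalently Landry's characterization of the cone of $\tau$-carried classes, to show that each normal piece is a carried piece. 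Any normal piece of the wrong kind would produce a closed orbit, or a flow cycle of $\tau$, pairing negatively with $[S]$.
\item Near singular orbits where $S$ meets the tube in curves not transverse to the flow, perform a dynamic blowup whose blown-up complex realizes those curves. The result is a surface isotopic to $S$ and transverse to $\varphi^{\sharp}$.
\end{enumerate}

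The main obstacle is step 2. One must produce an actual isotopy, not merely a homologous carried surface as Mosher does. The tautness assumption has to be used to rule out normal pieces that cancel homologically but cannot be isotoped away. I would first try an innermost-disk and bigon-removal argument in the universal cover, tracking shadows $\Theta(\widetilde{S})$ as in Proposition~\ref{prop:shadow1}: a non-carried piece would force two lifts whose shadows overlap with inconsistent orientation, and that configuration contradicts tautness.
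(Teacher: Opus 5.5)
The paper does not prove this statement; it quotes it from Landry--Minsky--Taylor (Theorem A) and Landry's homology-to-isotopy theorem (Theorem B). Your implications \((1)\Rightarrow(2)\) and \((2)\Leftrightarrow(3)\) are the standard soft arguments and are fine, with one correction. The inequality \(x\ge -e_\varphi\) is not Thurston's inequality, which concerns taut foliations. It is the pseudo-Anosov flow analogue due to Mosher, and it is exactly what makes \(\mathcal{C}_2(\varphi)\), the locus where \(-e_\varphi\) and \(x\) agree, a face. The paper itself obtains tautness from transitivity together with Landry--Minsky--Taylor's Theorem B.

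The genuine gap is \((3)\Rightarrow(1)\). This is the entire content of the cited theorem, and you leave its key step as something you ``would try''. The mechanisms you suggest do not work.

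(i) The shadow argument is circular. Injectivity of \(\Theta|_{\widetilde S}\) uses that each orbit meets \(\widetilde S\) at most once, i.e.\ that \(S\) is already transverse. The same is true of everything in Propositions \ref{prop:shadow1}--\ref{prop:shadow3}: openness, convexity, and a frontier made of periodic slice leaves. For a surface with non-carried pieces, the projection has none of this structure.

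(ii) A non-carried piece does not produce a closed orbit or flow cycle pairing negatively with \([S]\). Such a cycle crosses \(S\) negatively at that piece, but its algebraic intersection with \([S]\) can still be nonnegative. Moreover, \([S]\in\mathcal{C}_2(\varphi)\) is an isotopy-invariant homological condition, so it cannot detect a local piece. The only hypotheses that can exclude bad pieces are tautness and \(x([S])=-e_\varphi([S])\), combined with complexity-reducing isotopies of \(S\). Your outline specifies neither.

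(iii) Step 1 hides the other main difficulty. Tautness of \(S\) in \(M\) does not make \(S\cap M^\circ\) essential in \(M^\circ\), which normal position requires. Getting there means controlling how \(S\) meets each singular orbit \(\gamma\). All intersections must become positive, and when \(\langle[\gamma],[S]\rangle=0\), \(S\) must be pushed off \(\gamma\) so that it meets the tube in annuli bounded by ladderpole curves. The correct local model is meridian \emph{disks} or such annuli (``relatively carried'', Section \ref{sec:veering}), not meridional annuli. Achieving this by an isotopy, and then showing in step 3 that these annuli are realized by a dynamic blowup, is a substantial part of Landry--Minsky--Taylor's proof. Your proposal only asserts it.
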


\begin{proof}[Proof of Theorem \ref{thm:flowinguts}]
By \autocite[Theorem A]{Landry2022_VeeringTriangulationsAndTheThurstonNorm_PUB}, \(M\) is atoroidal.

Let \(z\) be a primitive element in \(\mathcal{C}_{2}(\varphi)\) and let \(F(z)\) be a facet surface. Then \(F(z)\) is isotopic to a transverse facet surface \(F_{\varphi}(z)\).
By definition, there exists a blowup \(\varphi^{\sharp}\) of \(\varphi\) which is transverse to \(F_{\varphi}(z)\) minimally. Therefore, we give a well-defined semiflow on the sutured manifold \(M-F_{\varphi}(z)\), by simply restricting \(\varphi^{\sharp}\) to \(M-F_{\varphi}(z)\).

Let \(\mathcal{A}\) be a maximal collection of non-parallel product annuli in \(M-F_{\varphi}(z)\).
According to Proposition \ref{thm:annulusisotopy}, we find an isotopy \(h_{t}\) of \(M-F_{\varphi}(z)\) and a further blowup \(\varphi^{\sharp\sharp}\), such that \(h_{t}\) moves \(\mathcal{A}\) to \(\varphi^{\sharp\sharp}\)-aligned position \(\mathcal{A}_{\varphi}\).

Decompose \(\varphi^{\sharp\sharp}\) along \(\mathcal{A}_{\varphi}\) as in Lemma \ref{lem:decompose} and throw away product sutured manifold components. We obtain a well-defined tight semiflow on the guts \(\mathrm{Guts}(M,z)\).
\end{proof}

We conclude this section with a useful and beautiful characterization of the product sutured manifold components in \(M-(F_{\varphi}(z)\cup \mathcal{A}_{\varphi})\).
\begin{lemma}
Let \((N,N_{+},N_{-},\gamma_{N})\) be a product sutured manifold component of \(M-(F_{\varphi}(z)\cup \mathcal{A}_{\varphi})\).
Then \(\varphi^{\sharp\sharp}|_{N}\) is a product flow.

In other words, a component \(N\) in \(M-(F_{\varphi}(z)\cup \mathcal{A}_{\varphi})\) is a window if and only if \(\varphi^{\sharp\sharp}|_{N}\) is product.
\label{lem:windowproduct}
\end{lemma}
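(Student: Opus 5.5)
We need to show that if \(N\cong F\times I\) is a product sutured component, with \(N_{+}=F\times\{1\}\), \(N_{-}=F\times\{0\}\) and \(\gamma_N=\partial F\times I\), then the restricted semiflow \(\varphi^{\sharp\sharp}|_{N}\) is orbit equivalent to the product flow. The converse direction is immediate, since a product flow lives only on an \(I\)-bundle. Throughout, \(N\) is a compact component cut out by \(F_{\varphi}(z)\) and \(\mathcal{A}_{\varphi}\). By Lemma \ref{lem:decompose}, the restricted semiflow is compatible: it is transverse to \(N_{\pm}\), and the sutures are product saturated.

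The plan is to show that every orbit entering through \(N_{-}\) exits through \(N_{+}\) in bounded time, and that every point of \(N\) lies on such an orbit. Once this holds, the map \(N_{-}\times I\to N\), given by flowing for a reparametrized fraction of the exit time, is a homeomorphism. The exit time is continuous, because orbits are transverse to \(N_{\pm}\) and the sutures are saturated with product structure. This map is then an orbit equivalence with the product flow.

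\textbf{Step 1 (no trapped orbits).} Suppose, for a contradiction, that some orbit of \(\varphi^{\sharp\sharp}|_{N}\) is defined for all forward time, or for all backward time. By compactness of \(N\), its \(\omega\)-limit set is a nonempty compact invariant set inside \(N\). I would use the recurrence properties of (almost) pseudo-Anosov flows, namely density of closed orbits in the relevant basic pieces, to produce a closed orbit \(\delta\) of \(\varphi^{\sharp\sharp}\) contained in \(N\). Blown-up annuli contain closed orbits on their boundaries, so the limit set may lie there; this is fine for the argument. Since \(\delta\) avoids \(F_{\varphi}(z)\), it satisfies \(\langle z,[\delta]\rangle=0\).

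\textbf{Step 2 (contradiction with the product structure).} Since \(N\cong F\times I\), the curve \(\delta\) is freely homotopic into \(N_{-}\subset F_{\varphi}(z)\). Lift to the universal cover and let \(g\) represent \(\delta\). The element \(g\) fixes a point of \(P_{\varphi}\) (the orbit of \(\delta\)). Moreover, \(g\) preserves the lifts \(\widetilde{S}_{\pm}\) of the components of \(F_{\varphi}(z)\) containing \(N_{\pm}\), since a loop in \(N_{-}\) homotopic to \(\delta\) lifts into \(\widetilde{S}_{-}\). Hence, by Lemma \ref{lem:elliptic-boundary} (in its blown-up version, in the spirit of Lemma \ref{lem:singular-invariantline}), the fixed point lies on \(\partial\Theta(\widetilde{S}_{-})\). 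The lifted orbit \(\widetilde{\delta}\) lies in the region of \(\widetilde{M}\) between \(\widetilde{S}_{-}\) and \(\widetilde{S}_{+}\), and approaches both lifts asymptotically, in the sense of Proposition \ref{prop:shadow3}.

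I would then compare with the product structure. The lift \(\widetilde{N}\cong \widetilde{F}\times I\) meets the \(g\)-invariant orbit, and \(g\) acts cocompactly on a neighborhood of \(\widetilde{\delta}\) in \(\widetilde{N}\). This should force \(\widetilde{\delta}\) to cross the boundary \(\widetilde{S}_{-}\) or \(\widetilde{S}_{+}\), or to cross a lift of a product annulus. Any such crossing contradicts the fact that \(\delta\) is disjoint from \(F_{\varphi}(z)\cup\mathcal{A}_{\varphi}\).

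An alternative route to the contradiction is the Euler-class computation. The flow is transverse to \(N_{\pm}\) and tangent to \(\gamma_N\), so the relative Euler class of \(\varphi|_N\) equals \(\chi(N_{-})\) when the flow is product. Any closed orbit in \(N\) would force a Lefschetz/index discrepancy when the relative Lefschetz number of the first-return data is computed on the product. I would try the index computation first, since it avoids the orbit-space case analysis.

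\textbf{Step 3 (surjectivity).} Every point \(x\in N\) must lie on an orbit meeting \(N_{-}\). Otherwise \(x\) has a backward-trapped orbit, which is excluded by Step 1 applied to the reversed flow. Continuity of the first-hitting and exit times then gives the homeomorphism described above.

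The main obstacle is Step 1–2: ruling out orbits trapped inside \(N\), including orbits accumulating on blown-up annuli, using only the topological product structure. This is where the orbit-space shadow analysis, or the Lefschetz-index argument, has to do the real work.
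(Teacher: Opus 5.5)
\textbf{There is a genuine gap: the no-trapped-orbit claim (Step 2) is never proved.} Your reduction is fine: if no orbit of \(\varphi^{\sharp\sharp}|_{N}\) is trapped and every suture is product saturated, then flowing out of \(N_{-}\) gives the product structure. But the whole content of the lemma is in Step 2, and there you have no argument. The facts you list produce no contradiction:
\begin{itemize}
\item the element \(g\) carried by a closed orbit \(\delta\subset N\) preserves \(\widetilde S_{\pm}\);
\item \(\delta\) is freely homotopic into \(N_{-}\);
\item its fixed point lies on the frontiers of both shadows (Lemma \ref{lem:elliptic-boundary}).
\end{itemize}
This is exactly the configuration of the closed orbit inside a 4-ST or non-longitudinal 2-ST guts component. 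There the \(R_{\pm}\)-annuli are also homotopic to the orbit and lie on facet surfaces below and above it. So the topology of \(N\) alone cannot rule out \(\delta\); the \emph{sutured} product structure has to be used in a much finer way. Cocompactness of the \(g\)-action near \(\widetilde\delta\) does not force \(\widetilde\delta\) to cross anything.

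The Euler-class/Lefschetz alternative cannot work either. The relative Euler class of a nonsingular flow with fixed boundary behaviour is a homotopy invariant and does not detect closed orbits. \(N\) carries no global return map to which a Lefschetz count applies, and indices can cancel in any case.

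\textbf{An unjustified assumption about the sutures.} You cite Lemma \ref{lem:decompose} to assume all sutures of \(N\) are product saturated. But components of \(\mathcal{A}_{\varphi}\) may be transverse to \(\varphi^{\sharp\sharp}\), and excluding them from \(\gamma_{N}\) is part of what must be proved. Step 1 (producing a closed orbit inside \(N\) from a trapped orbit) is also only sketched, since it needs a closing lemma for the blown-up flow, but that is secondary.

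\textbf{The missing ingredient.} What you need is the rigidity of isotopic transverse surfaces, Theorem \ref{thm:LMTparallel}: two isotopic surfaces transverse to the same blowup cobound an \(I\)-bundle carrying the product flow. The paper's proof is built on it.
\begin{itemize}
\item If \(\gamma_{N}=\emptyset\), then \(S_{\pm}\) are isotopic and cobound \(N\), and the theorem applies directly.
\item Otherwise the paper forms \(S_{0}=(S_{+}\setminus N_{+})\cup N_{-}\cup\gamma_{N}\), pushes it off, and leans and smooths it into a \(\varphi^{\sharp\sharp}\)-transverse surface isotopic to \(S_{+}\).
\item A transverse annulus in \(\gamma_{N}\) is excluded as follows. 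It would make \(\widetilde S_{0}\) occupy one quadrant of \(\widetilde S_{+}\) and one of \(\widetilde S_{-}\) at the fixed singular point. But isotopic transverse surfaces must occupy the same quadrants.
\item Then \(S_{0}\) and \(S_{+}\) cobound a product-flow region \(N'\). \(N\) is a piece of \(N'\) cut along saturated annuli, so \(\varphi^{\sharp\sharp}|_{N}\) is a product flow.
\end{itemize}
Without invoking or reproving this rigidity, your claim that \(N\) contains no closed or trapped orbits is unsupported.
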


We require the following powerful theorem proven by Landry, Minsky and Taylor.
Here we state a slight variance of the theorem.
\begin{theorem}[{\cite[Theorem D]{LandryMinskyTaylor2025_TransverseSurfacesAndPseudoAnosovFlows_PUB}}]
Let \(\varphi\) be a pseudo-Anosov flow on \(M\), and let \(S_{1}\) and \(S_{2}\) be two isotopic embedded subsurfaces in \(M\) which are almost transverse to \(\varphi\).
Let \(N\) be a \(I\)-bundle cobounded by \(S_{1}\) and \(S_{2}\). (Such \(N\) is unique if \(S_{i}\) is not a fiber of \(M\).)
If \(S_{1}\) and \(S_{2}\) are minimally transverse to blowup \(\varphi^{\sharp}_{1}\) and \(\varphi^{\sharp}_{2}\), respectively, then \(\varphi_{1}^{\sharp}\) and \(\varphi_{2}^{\sharp}\) has same combinatorial type.

Furthermore, if the surfaces are transverse to a single blowup \(\varphi^{\sharp}\), then \(N\) is a product sutured manifold \(S_{1}\times I\) endowed with product flow, and hence \(S_{1}\) and \(S_{2}\) are isotopic along flowlines of \(\varphi^{\sharp}\).
\label{thm:LMTparallel}
\end{theorem}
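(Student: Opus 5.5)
The first assertion, that the two blowups have the same combinatorial type, is exactly \autocite[Theorem D]{LandryMinskyTaylor2025_TransverseSurfacesAndPseudoAnosovFlows_PUB}, and I will quote it rather than reprove it. The work is the second assertion. Assume \(S_{1}\) and \(S_{2}\) are both transverse to one blowup \(\varphi^{\sharp}\). I will show that the \(I\)-bundle \(N\) is a union of orbit segments of \(\varphi^{\sharp}\) running from one boundary to the other. The method is the shadow technology of Section \ref{sec:orbitspace}, and I argue one component at a time.

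\textbf{Step 1: lifts and a common stabilizer.} Fix a component of \(N\) and a lift \(\widetilde{N}\subset\widetilde{M}\). Its two boundary components are lifts \(\widetilde{S}_{1},\widetilde{S}_{2}\). Let \(G\) be the image of \(\pi_{1}(S_{1})\cong\pi_{1}(S_{2})\), which preserves \(\widetilde{N}\), \(\widetilde{S}_{1}\) and \(\widetilde{S}_{2}\). After reorienting, \(\widetilde{S}_{1}\) lies above \(\widetilde{S}_{2}\). Each orbit of \(\widetilde{\varphi}^{\sharp}\) meets each \(\widetilde{S}_{i}\) at most once and positively, and each \(\widetilde{S}_{i}\) separates \(\widetilde{M}\). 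So an orbit meeting both surfaces crosses \(\widetilde{S}_{2}\) first, then \(\widetilde{S}_{1}\). The orbit segment between the two crossings lies in \(\widetilde{N}\), and the rest of the orbit lies outside \(\widetilde{N}\).

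\textbf{Step 2: the shadows coincide.} This is the key claim: \(\Theta(\widetilde{S}_{1})=\Theta(\widetilde{S}_{2})\).

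First compare boundary leaves. Let \(l\) be a boundary slice leaf of \(\Theta(\widetilde{S}_{1})\). By Proposition \ref{prop:shadow3}, \(l\) is fixed by some \(g\) and contains a \(g\)-fixed point \(x\). Since \(l\) bounds a region invariant under the stabilizer, \(g\in G\). The same argument as Lemma \ref{lem:elliptic-boundary} shows \(x\notin\Theta(\widetilde{S}_{2})\): otherwise the orbit of \(x\) would meet the \(g\)-invariant surface \(\widetilde{S}_{2}\) infinitely often.

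Next compare escape directions. Proposition \ref{prop:shadow3}(1) says that approaching a stable boundary leaf, the points of \(\widetilde{S}_{1}\) escape in the positive flow direction, and approaching an unstable one they escape in the negative direction. The two surfaces are isotopic and cobound the \(G\)-invariant product \(\widetilde{N}\), so their points stay a bounded distance apart. Hence the escaping ends of the two surfaces must match. I therefore expect each boundary leaf of one shadow to be a boundary leaf of the other. An alternative route to the same conclusion uses the characterization that boundary leaves of \(\Theta(\widetilde{S}_{i})\) correspond to closed curves of \(S_{i}\) homotopic to closed orbits, which is an isotopy invariant.

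Then compare interiors. Suppose, say, \(\Theta(\widetilde{S}_{1})\setminus\Theta(\widetilde{S}_{2})\) were nonempty. Using convexity (Proposition \ref{prop:shadow1}) and Lemma \ref{lem:sliceleaves_intersection}, I would produce a boundary leaf of \(\Theta(\widetilde{S}_{2})\) that meets the interior of \(\Theta(\widetilde{S}_{1})\). That contradicts the previous paragraph.

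I expect the matching of boundary leaves, with the correct escape directions, to be the main obstacle. The delicate cases are blown-up segments and tangencies at singular points, and I would handle them with Proposition \ref{prop:shadow2}.

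\textbf{Step 3: conclusion.} By Step 2, every orbit through \(\Theta(\widetilde{S}_{1})\) meets both lifts. By Step 1 it then crosses \(\widetilde{N}\) in a single segment from \(\widetilde{S}_{2}\) to \(\widetilde{S}_{1}\). Conversely, every point of \(\widetilde{N}\) lies on such a segment: backward flow from an interior point must exit \(\widetilde{N}\), and it can only do so through \(\widetilde{S}_{2}\). Parametrizing these segments linearly gives a \(G\)-equivariant homeomorphism \(\widetilde{N}\cong\widetilde{S}_{1}\times I\) that carries orbits to \(I\)-fibres. Passing to the quotient, \(N\cong S_{1}\times I\) with \(\varphi^{\sharp}|_{N}\) a product flow, so \(S_{1}\) is isotopic to \(S_{2}\) along flowlines. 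Uniqueness of \(N\) when \(S_{i}\) is not a fiber is standard \(I\)-bundle topology in an irreducible manifold.
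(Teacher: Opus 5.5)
Your Step 1 is fine, and you correctly identified Step 2 as the crux. But Step 2 carries the entire content of the statement, and it is not established. The arguments you sketch cannot establish it, because the obstruction is not local.

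\textbf{Why Step 2 fails.} Bounded Hausdorff distance between \(\widetilde{S}_{1}\) and \(\widetilde{S}_{2}\) says nothing about flow time along a single orbit. Suppose some closed orbit lies in \(\operatorname{int}N\), with a lift \(\widetilde{\gamma}\subset\widetilde{N}\) that is invariant under some \(g\in G\), and set \(x=\Theta(\widetilde{\gamma})\). Then \(\widetilde{\gamma}\) is at bounded distance from both lifts but below \(\widetilde{S}_{1}\) and above \(\widetilde{S}_{2}\). Without perfect fits, \(x\) lies on both frontiers by Lemma \ref{lem:elliptic-boundary}. Proposition \ref{prop:shadow3}(1) then forces the frontier leaf of \(\Theta(\widetilde{S}_{1})\) at \(x\) to be stable and that of \(\Theta(\widetilde{S}_{2})\) to be unstable. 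The two shadows therefore overlap in a quadrant at \(x\) but differ, and orbits in that quadrant spend arbitrarily long times in \(\widetilde{N}\).

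Every tool you invoke is consistent with this picture: convexity, Lemma \ref{lem:sliceleaves_intersection}, Proposition \ref{prop:shadow2}, and \(x\notin\Theta(\widetilde{S}_{2})\).
\begin{itemize}
\item Your ``compare interiors'' step has nothing to contradict. \(g\)-invariance only shows that a frontier leaf of one shadow can meet the other shadow along an entire prong at its periodic point, which is exactly this configuration.
\item Your alternative route fails for the same reason. That a curve on \(S_{i}\) is homotopic to a closed orbit is an isotopy invariant. Whether it appears as a closed stable or a closed unstable leaf (orbit below or above the surface) is precisely what must be shown to agree.
\end{itemize}

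\textbf{What is missing is a global input.} In the paper's setting \(M\) is atoroidal, so \(\varphi\) is transitive. Hence \(\varphi^{\sharp}\) is transitive in the open-set sense, since blowing up only alters the flow on a nowhere-dense invariant complex. With this, Step 2 is short.
\begin{itemize}
\item Suppose \(p\in\Theta(\widetilde{S}_{1})\setminus\Theta(\widetilde{S}_{2})\). Each frontier component of \(\Theta(\widetilde{S}_{2})\) is a slice leaf bounding an open half-plane disjoint from \(\Theta(\widetilde{S}_{2})\). So \(U=\Theta(\widetilde{S}_{1})\cap\operatorname{int}(P_{\varphi}\setminus\Theta(\widetilde{S}_{2}))\) is a nonempty open set.
\item Each orbit in \(U\) crosses \(\widetilde{S}_{1}\) but misses \(\widetilde{S}_{2}\), so its backward ray from \(\widetilde{S}_{1}\) never leaves \(\widetilde{N}\).
\item Flowing \(\widetilde{S}_{1}\cap\Theta^{-1}(U)\) slightly backward and projecting gives a nonempty open \(W\subset N\) whose backward rays stay in \(N\).
\item Transitivity carries some orbit from the open set \(M\setminus N\) into \(W\), which is a contradiction.
\item The reverse inclusion is symmetric, using forward rays.
\end{itemize}

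\textbf{A smaller hole in Step 3.} Equal shadows do not by themselves exclude orbits lying entirely in \(\widetilde{N}\), since such orbits lie in neither shadow. Such an orbit lies in a closed complementary half-plane of the common shadow. Every orbit in that half-plane lies on the same side of each \(\widetilde{S}_{i}\) as the orbits on its frontier leaf. By Proposition \ref{prop:shadow3}(1), that means below both lifts if the leaf is stable and above both if it is unstable, so none is trapped.

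\textbf{Two further cautions.} Without transitivity you would have to rely on the cited Theorem D of Landry--Minsky--Taylor itself; quoting it is all the paper does. Also, Propositions \ref{prop:shadow1}--\ref{prop:shadow3} are stated for minimally transverse surfaces, whereas a common blowup \(\varphi^{\sharp}\) need not be minimal for either \(S_{i}\). You would need to check that the frontier description you use survives this.
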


\begin{proof}[Proof of Lemma \ref{lem:windowproduct}]
Let \((N,N_{+},N_{-},\gamma_{N})\) be a product sutured manifold component of \(M-(F_{\varphi}(z)\cup \mathcal{A}_{\varphi})\).
Suppose that \(S_{+}\subset F_{\varphi}(z)\) be the component containing \(N_{+}\) and \(S_{-}\subset F_{\varphi}(z)\) be the component containing \(N_{-}\).
If \(\gamma_{N}=\emptyset\), then the conclusion follows directly from Theorem \ref{thm:LMTparallel} since \(S_{+}\) and \(S_{-}\) are isotopic and cobound \(N\).

Let
\[S_{0}=(S_{+}\setminus N_{+}) \cup (N_{-}\cup \gamma_{N}) \]
and then perturb \(S_{0}\) and push away from \(S_{+}\cup N_{-}\) slightly.
We lean each product saturated annulus component of \(\gamma_{N}\) slightly and smooth the corner around each \(\varphi^{\sharp\sharp}\)-transverse component (see Figure \ref{fig:windows}).
As a result, we obtain a surface \(S_{0}\) which is transverse to \(\varphi^{\sharp\sharp}\) and isotopic to \(S_{1}\).

We claim that no component of \(\gamma_{N}\) is a \(\varphi^{\sharp\sharp}\)-transverse annulus. Otherwise, assume that \(A\subset \gamma_{N}\) is transverse to \(\varphi^{\sharp\sharp}\) and \(g\) is a generator of \(\pi_{1}(A)\). Then \(g\) fixes a singular point \(\widetilde{p}\) and \(A\) is transverse to a further blown up segment in \(\varphi^{\sharp\sharp}\) (see Section \ref{sec:annulusisotopy}).
In particular, considering quadrants of \(\widetilde{p}\), \(\widetilde{S}_{0}\) traverses one quadrant of \(\widetilde{S}_{+}\) and one quadrant of \(\widetilde{S}_{-}\).
However, according to Theorem \ref{thm:LMTparallel}, \(\widetilde{S}_{0}\) and \(\widetilde{S}_{+}\) should cover the same two \(\widetilde{p}\)-quadrants in \(P_{\varphi^{\sharp\sharp}}\) because they are isotopic.

Thus, each component of \(\gamma_{N}\) is a product \(\varphi^{\sharp\sharp}\)-saturated annulus.
By Theorem \ref{thm:LMTparallel}, \(S_{0}\) and \(S_{+}\) bound a product sutured manifold \(N'\) endowed with the product flow.
Clearly, \(N\) is a component of \(N'\) after cutting along \(\gamma_{N}\cap N'\).
So \(\varphi^{\sharp\sharp}|_{N}\) is also a product flow since \(\gamma_{N}\) is product saturated.

\begin{figure}[htbp]
\centering
\includegraphics[width=10cm]{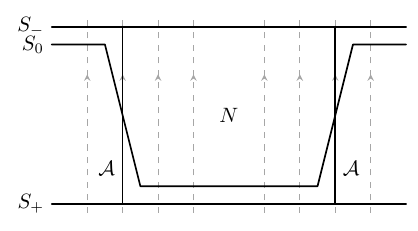}
\caption{\label{fig:windows}}
\end{figure}
\end{proof}
\section{Invariance of the semiflow in guts}
\label{sec:invariancefacet}
We precede this section with a summary of the construction of the tight semiflow on the homology guts \(\mathrm{Guts}(M,z)\).

Let \(M\) be an oriented connected closed 3-manifold and let \(\varphi\) be a pseudo-Anosov flow without perfect fits in \(M\). Denote by \(\mathcal{C}_{2}(\varphi)\) the carried cone of \(\varphi\).
Let \(z\in \mathcal{C}_{2}(\varphi)\) be a primitive element.
The construction of the tight semiflow in \(\mathrm{Guts}(M,z)\) is as follows:
\begin{construction}
\begin{enumerate}
\item Choose a facet \(F(z)\) for \(z\), namely, a maximal collection of non-parallel taut surfaces in \(M\), each of them representing \(z\).
\item Isotope \(F(z)\) to be a transverse facet \(F_{\varphi}(z)\).
\item Select a blowup \(\varphi^{\sharp}\) which is transverse to \(F_{\varphi}(z)\) minimally.
\item Choose a maximal collection \(\mathcal{A}\) of non-parallel product annuli in \(M-F_{\varphi}(z)\).
\item Ambiently isotope \(\mathcal{A}\) to be a collection \(\mathcal{A}_{\varphi}\) of product annuli and select a further blowup \(\varphi^{\sharp\sharp}\), such that each annulus in \(\mathcal{A}_{\varphi}\) is either product \(\varphi^{\sharp\sharp}\)-saturated or transverse to \(\varphi^{\sharp\sharp}\). Here \(\varphi^{\sharp\sharp}\) is minimally transverse to \(\mathcal{A}_{\varphi}\) and \(\mathcal{F}_{\varphi}(z)\).
\item Endow the sutured manifold \(M-(F_{\varphi}(z)\cup \mathcal{A}_{\varphi})\) with a semiflow by restricting \(\varphi^{\sharp\sharp}\).
\item Discard all product flow components in \(M-(F_{\varphi}(z)\cup \mathcal{A}_{\varphi})\). Finally, we get a well-defined tight semiflow on \(\mathrm{Guts}(M,z)\).
\end{enumerate}
\label{cst:flow}
\end{construction}

We say that \(F_{\varphi}(z)\) and \(\mathcal{A}_{\varphi}\) are in \emph{aligned position} with \(\varphi\).

In Theorem \ref{thm:AZguts}, Agol and Zhang prove that, up to equivalence of guts, \(\mathrm{Guts}(M,z)\) does not depend on the choice of the facet \(F(z)\) and the maximal collection \(\mathcal{A}\) of non-parallel product annuli.
In this section, we prove that up to orbit equivalence outside half annuli, the tight semiflow on \(\mathrm{Guts}(M,z)\) does not depend on choices in the steps above.
To put it precisely,
\begin{theorem}
Let \(M\) be an oriented connected closed 3-manifold and let \(\varphi\) be a pseudo-Anosov flow without perfect fits in \(M\). Suppose that \(z\in \mathcal{C}_{2}(\varphi)\) is primitive.

Then, up to orbit equivalence outside half annuli, the semiflow on \(\mathrm{Guts}(M,z)\) is independent of choices in Construction \ref{cst:flow}.
Precisely speaking, it doesn't depend on choices of the facet \(F(z)\), the product annuli collection \(\mathcal{A}\), the isotopy choice of \(F_{\varphi}(z)\) and \(\mathcal{A}_{\varphi}\), and the blowup ways of \(\varphi^{\sharp}\) and \(\varphi^{\sharp\sharp}\).
\label{thm:facetinvariance}
\end{theorem}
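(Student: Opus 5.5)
We will compare two runs of Construction \ref{cst:flow} and show that the resulting semiflows on $\mathrm{Guts}(M,z)$ agree after removing half annuli (Definition \ref{def:halfannulus}). The strategy is to change one choice at a time. For each change we either find an isotopy along flowlines, or we localize the difference to a neighborhood of a singular orbit where only the blowup pattern differs. Following the Methods section, we split $\mathrm{Guts}(M,z)$ into its regular part (hyperbolic acylindrical components) and its singular part (4-STs and non-longitudinal 2-STs), as in Lemma \ref{lem:gutsclassification}, and treat the two parts separately.

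\textbf{Step 1 (facet and its transverse isotopy).} Take two facets $F(z)$ and $F'(z)$ with transverse representatives. By Agol--Zhang (Theorem \ref{thm:AZguts}) and the standard maximality argument, each component of one facet is isotopic to a component of the other, possibly after adding parallel copies. Parallel copies only add product regions. By Lemma \ref{lem:windowproduct} and Theorem \ref{thm:LMTparallel}, these regions carry product flow and are discarded as windows, so adding copies does not change the guts semiflow. Theorem \ref{thm:LMTparallel} also says that the two minimal blowups have the same combinatorial type. Moreover, when both surfaces are transverse to a single blowup, they are isotopic along flowlines. So we first pass to a common blowup of the same combinatorial type; the flow on a blown-up annulus is unique up to orbit equivalence. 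Then we push $F'_\varphi(z)$ onto $F_\varphi(z)$ along flowlines. This gives an orbit equivalence of $M-F_\varphi(z)$ with $M-F'_\varphi(z)$, as in Lemma \ref{lem:decompose}.

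\textbf{Step 2 (annuli $\mathcal{A}$ and their aligned position).} By JSJ uniqueness, two maximal collections of product annuli are isotopic in $M-F_\varphi(z)$ up to parallel copies, and parallel copies again only create windows. It remains to compare two aligned positions of the same annulus $A$. We go through the cases of the proof of Theorem \ref{thm:annulusisotopy}.

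In the hyperbolic and regular-elliptic cases, the annulus is determined by a $g$-invariant line in $\Theta(\widetilde S_1)\cap\Theta(\widetilde S_2)$. Any two such lines are $g$-equivariantly isotopic inside that intersection: for hyperbolic $g$ the intersection is an open strip, and for regular-elliptic $g$ it is a union of quadrants meeting at $p$. This isotopy lifts to an isotopy of saturated annuli through the region $H$ of Lemma \ref{lem:invariantline}, which is an orbit equivalence of the complements. The only exception is the regular-elliptic case in which the line is chosen to be a prong. There the two choices differ by a thin product region that is a half annulus adjacent to the periodic orbit, and this difference is exactly what ``outside half annuli'' absorbs.

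In the singular case (Lemma \ref{lem:singular-invariantline}) there are exactly two further blowups, shown in Figure \ref{fig:ex-furtherblowup}. The pieces they produce lie in the singular guts, a 4-ST or 2-ST. We will check directly that the two resulting semiflows on that solid torus agree after deleting the half annuli created by the extra blown-up segment.

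\textbf{Step 3 (blowup choices).} The remaining freedom is the choice of $\varphi^\sharp$ and $\varphi^{\sharp\sharp}$ within a fixed combinatorial type, together with the non-uniqueness of blowups noted in Section \ref{sec:pre:blowup}. Outside blown-up complexes the flows are conjugate. On each blown-up annulus the flow is unique up to orbit equivalence. So after cutting out the half annuli, these local models can be matched.

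The main obstacle will be the singular part. There, bad regions (Remark \ref{rem:badregions}) and the two non-equivalent further blowups really do change the flow near the singular orbit. The plan is to describe the semiflow on a 4-ST or non-longitudinal 2-ST explicitly through the first return map on a transverse disk, as in Figure \ref{fig:ex-furtherblowup}. We would then show that every admissible choice yields the standard pseudo-hyperbolic model of Figure \ref{fig:4ST}, with extra half annuli attached along the sutures.
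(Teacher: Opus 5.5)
\textbf{The gap is in Step 1, the independence from the choice of facet.} You claim that each component of one facet is isotopic to a component of the other after adding parallel copies. This is false in general, and neither Theorem \ref{thm:AZguts} nor maximality gives it.

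\begin{itemize}
\item Agol--Zhang's theorem says only that the \emph{guts} are equivalent.
\item Maximality of $F(z)$ says only that a taut surface in class $z$ \emph{disjoint} from $F(z)$ is parallel to one of its components.
\item A component $S$ of a second facet $F'(z)$ can cross $F(z)$ essentially. Scharlemann's double curve sum makes $S\oplus kF(z)$ disjoint from $F(z)$, so the components of $S\oplus kF(z)$ are parallel to components of $F(z)$. But $S$ itself need not be.
\end{itemize}

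Your own appeal to bad regions (Remark \ref{rem:badregions}) contradicts Step 1. Bad regions are components of $M-(F_1\cup F_2)$ for two facets in minimal position. If the facets were componentwise isotopic, they would be disjoint in minimal position, and no bad regions could exist. As a result, Theorem \ref{thm:LMTparallel}, which requires isotopic surfaces, cannot push $F'_\varphi(z)$ onto $F_\varphi(z)$ along flowlines. The hardest part of the statement is therefore left unproved.

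\textbf{What is needed is a comparison through $M-(F_1\cup F_2)$.} This is what the paper does in Lemma \ref{lem:choice:facet}. Put two maximal transverse facets in minimal position. Pass to the truncated regular guts, equipped with a mixed flow built from a Fried blowup and the $\varphi_i^{\sharp\sharp}$, so that both facets are honestly transverse to a single flow. Then show that cutting $M-F_1$ further along each component $C$ of $F_2\setminus F_1$ does not change the semiflow on the truncated guts:
\begin{itemize}
\item If $\chi(C)<0$, then $C$ lies in a component of $F_2\oplus kF_1$ that is isotopic along flowlines to a component of $F_1$, sweeping out a product region.
\item If $C$ is a non-product annulus, it lies in a sutured torus plus windows.
\item If $C$ is a product annulus, Theorem \ref{thm:annulusisotopy} and Lemma \ref{lem:windowproduct} show that it cobounds product solid tori with an aligned copy.
\end{itemize}
The essential crossings of facets (the bad regions) are confined to the singular guts, which Proposition \ref{prop:singulargutsinvariance} handles separately.

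\textbf{A smaller problem occurs in Step 2.} Maximal collections of product annuli are not unique up to isotopy and parallel copies. Inside a window $S\times I$, vertical annuli over any maximal curve system form a maximal collection. So JSJ uniqueness gives you only uniqueness of the guts. You need an argument like Lemma \ref{lem:choice:productannuli}: cut along both collections, and check that each piece of one collection lying in a guts component of the other is either a product annulus parallel to it or a product disk. Product disks are then converted to product annuli via Remark \ref{rem:productdisk}.
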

\subsection{Notation}
\label{sec:org467a29d}
We summarize the notation for the guts.
For a sutured manifold \(N\), let \(\mathrm{Guts}(N)\) denote its sutured guts.
Given a closed manifold \(M\) and a homology class \(z\in H_{2}(M)\), write \(\mathrm{Guts}(M,z)\) for the homology guts of \(M\) associated to \(z\).
When we need to specify a choice of facet and product annuli, we use \(\mathrm{Guts}(M,F(z)\cup \mathcal{A})\) for the sutured guts of \(M-F(z)\cup \mathcal{A}\).
In the absence of ambiguity, we abbreviate this to \(\mathrm{Guts}(F(z)\cup \mathcal{A})\).
The singular and regular guts, introduced in Section \ref{sec:singularguts}, are denoted \(\mathrm{Guts}_{s}\) and \(\mathrm{Guts}_{r}\) respectively.
Finally, we write \(\mathrm{Guts}_{\varphi}\) when we wish to emphasize that the guts carry the semiflow induced by \(\varphi\).
\subsection{Half annuli}
\label{sec:orgc6dc509}
\begin{definition}[Half annulus]
Let \(N\) be a sutured manifold and admit a compatible semiflow \(\phi\).
A \emph{half annulus} in \(N\) is an immersed, \(\phi\)-saturated annulus \(H\), so that
\begin{enumerate}
\item the interior of \(H\) is embedded,
\item one component  of \(\partial H\) covers a closed \(\phi\)-orbit \(\gamma\); the other component is a closed curve on \(\partial N\),
\item either each point in \(H\) converges to \(\gamma\) in forward time, or each point converges to \(\gamma\) in backward time.
\end{enumerate}
In other words, \(H\) is orbit equivalent to the half of a blown-up annulus (cutting along a transverse core circle).
We denote by \(\mathcal{H}(N)\) the union of half annuli in \(N\).

We say \((N_{1},\phi_{1})\) and \((N_{2},\phi_{2})\) are \emph{orbit equivalent outside half annuli} if there is a homeomorphism from \(N_{1}\setminus \mathcal{H}(N_{1})\) to \(N_{2}\setminus \mathcal{H}(N_{2})\), which maps \(\phi_{1}\)-orbits to \(\phi_{2}\)-orbits.
\label{def:halfannulus}
\end{definition}
We introduce ``outside half annuli'' to eliminate the flexibility of blowups.
In fact, any two blowups of \(\varphi\) are orbit equivalent outside their singular orbits and blown complexes.

If we view the guts \(N= \mathrm{Guts}(z)\) as a submanifold of \(M\) and in aligned position with \(\varphi\), then \(K\cap N\) are half annuli for any blown-up annulus \(K\). The converse is not always true.
We will see in Section \ref{sec:4ST}, if a component \(C\) of \(\mathrm{Guts}(z)\) is a 4-ST, then the semiflow on \(C\) contains four half annuli.
\subsection{Preparation}
\label{sec:org989ec85}
\subsubsection{Parallel}
\label{sec:org530853f}
Note that there is no affect if we add more parallel surfaces to a facet \(F(z)\).
In fact, the guts \(\mathrm{Guts}(F'(z))\) is the same as \(\mathrm{Guts}(F(z))\) where \(F'(z)\) is the added object. After isotoped to the aligned position, extra components in \(M-F_{\varphi}'(z)\) are all product flows by Lemma \ref{lem:windowproduct}.
So henceforth, we also refer to the added object as a \emph{facet}.

We state a simple lemma without proof.
\begin{lemma}
Let \((N,N_{+},N_{-},\gamma_{N})\) be a sutured manifold and let \(\psi\) be a tight semiflow on \(N\).
Let \(K\) be a connected component of \(N_{+}\), and endow \(K\times I\) with the product sutured structure and product flow.
Then attaching \(K\times \{0\}\) to \(N_{+}\) gives a semiflow on \(N\cup_{N_{+}} K\times I\)  which is orbit equivalent to \(N\).
Similar result holds for components of \(N_{-}\).

Let \(T\) be a product sutured manifold and homeomorphic to a solid torus.
Endow \(T\) with the product flow. Choose a glue map \(f:\mu\to \gamma\) which maps a suture \(\mu\) of \(T\) to a sutured annulus \(\gamma\) of \(N\) homeomorphically and preserving flow segments.
Then gluing \(T\) to \(N\) by \(f\) gives a semiflow on \(N\cup_{f}T\) which is orbit equivalent to \(N\).
\label{lem:addparallel}
\end{lemma}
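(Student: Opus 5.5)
The plan is to build an explicit homeomorphism $H\colon N\cup_{N_{+}} K\times I\to N$ which preserves sutures and carries orbits to orbits, and to do the same for the solid torus gluing. In both cases the added piece is a product, so we will absorb it into a collar coming from the tightness of $\psi$.

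\emph{Attaching $K\times I$ along $K\subset N_{+}$.} By condition (1) of Definition~\ref{def:tight}, there is $\epsilon>0$ such that $\psi_{[-\epsilon,0]}(r)$ is defined for every $r\in K$, and each such orbit segment is transverse to $R_{+}$. Hence
\[ C=\psi_{[-\epsilon,0]}(K)\cong K\times[-\epsilon,0] \]
is a product-saturated collar of $K$ in $N$. The flow on $C\cup_{K} (K\times I)$ is then the product flow on $K\times[-\epsilon,1]$. We reparametrize the interval factor by an orientation-preserving homeomorphism $[-\epsilon,1]\to[-\epsilon,0]$ that is the identity near $-\epsilon$. This gives a homeomorphism $C\cup K\times I\to C$ which preserves the vertical orbit segments, is the identity on $\partial C\setminus K$, and sends $K\times\{1\}$ to $K$. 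We extend it by the identity on $N\setminus C$.

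The one point needing care is $\partial K$, where $K$ meets the sutures $\gamma$. In the glued manifold, $\partial K\times I$ is adjoined to the adjacent sutured annuli. Condition (2) of Definition~\ref{def:tight} says that $\gamma$ is saturated and that $\psi|_{\gamma}$ is transverse to $\partial\gamma$, and tightness gives a product structure on $A(\gamma)$. Together these let us choose the collar $C$ so that its intersection with $\gamma$ is exactly the vertical product collar $\partial K\times[-\epsilon,0]$ inside the sutured annulus. The same reparametrization then maps $\gamma\cup(\partial K\times I)$ onto $\gamma$, so the sutured structure is preserved. The case of a component of $N_{-}$ is symmetric: we use $\psi_{[0,\epsilon]}$ instead.

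\emph{Gluing the product solid torus $T$ along a suture.} We may take $T=D\times I$ with the product flow, where $D$ is an annulus and the sutured annulus $\mu=\partial_{\mathrm{out}}D\times I$ is attached by $f$ to a sutured annulus $\gamma$ of $N$, matching flow segments. Since $A(\gamma)$ is product saturated, $\gamma\cong c\times I$ with vertical orbits. Using a saturated bicollar of $\gamma$ we obtain a product neighbourhood $U\cong(c\times[0,\delta])\times I$ of $\gamma$ in $N$, on which the orbits are the $I$-fibres. Then $U\cup_{f}T$ is again of the form (annulus)$\times I$ with vertical orbits. We fix a homeomorphism of this planar annulus factor onto $c\times[0,\delta]$ that is the identity near $c\times\{\delta\}$, and take its product with $\mathrm{id}_{I}$. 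This fibre-preserving map takes $U\cup T$ to $U$, sends the new suture $\partial_{\mathrm{in}}D\times I$ to $\gamma$, and sends $R_{\pm}$ to $R_{\pm}$. We extend it by the identity elsewhere.

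\textbf{Main obstacle.} The only nontrivial step is producing the saturated product collars: the collar near $K$ in the first case, and $U$ near $\gamma$ in the second. Each has to be compatible with the corners where $R_{\pm}$ meets $\gamma$. I would construct them from the uniform $\epsilon$ in Definition~\ref{def:tight}(1) together with the transversality of $\psi|_{\gamma}$ to $\partial\gamma$, shrinking $\epsilon$ until the collars near the corner circles match the product structure on $A(\gamma)$.
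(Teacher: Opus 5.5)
The paper states this lemma without proof, so there is no route to compare against. Your collar-absorption strategy is the natural one, and the first half is sound from the axioms. In fact no choice is needed at the corners:
\begin{itemize}
\item $(r,t)\mapsto\psi(r,t)$ is injective on $K\times[-\epsilon,0]$, because an orbit meets $R_{+}$ only at its terminal point;
\item its image meets $\gamma$ exactly in $\psi_{[-\epsilon,0]}(\partial K)$, because $\gamma$ is saturated and $\mathrm{int}(K)\cap\gamma=\emptyset$;
\item invariance of domain makes the image a neighbourhood of $K$ in $N$, which is what you need for the extension by the identity to be continuous.
\end{itemize}

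The gap is in the second half, at the step you flag yourself: the saturated product neighbourhood $U$ of the sutured annulus cannot be obtained the way you propose. The uniform $\epsilon$ of Definition~\ref{def:tight}(1) only controls orbit segments of time-length $\epsilon$ ending on $R_{+}$ or starting on $R_{-}$. The orbits in $\gamma$ can be far longer than $2\epsilon$. Transversality of $\psi|_{\gamma}$ to $\partial\gamma$ is a statement inside $\gamma$. Neither says anything about points of $\mathrm{int}(N)$ near the middle of $\gamma$, so shrinking $\epsilon$ cannot produce $U$.

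In fact $U$ does not follow from Definitions~\ref{def:partialflow} and~\ref{def:tight} at all. Take the product flow on $S\times I$ and interior points $s_{n}$ converging to a point of a boundary circle $c$. Cut each orbit $\{s_{n}\}\times I$ into $\{s_{n}\}\times[0,\tfrac12]$, the singleton orbits $\{(s_{n},u)\}$ for $\tfrac12<u<\tfrac34$, and $\{s_{n}\}\times[\tfrac34,1]$. This is still a tight semiflow (take $\epsilon=\tfrac14$). The closure of its singleton orbits meets the suture $c\times I$. After gluing $T$ along $c\times I$, that closure no longer meets any suture. An orbit equivalence preserves sutures and singleton orbits, so the two semiflows are not orbit equivalent.

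What the second half really uses is a flow-box property along the compact orbits of $\gamma$. Backward orbits of points of $R_{+}$ close to $\gamma\cap R_{+}$ must stay close to orbits of $\gamma$ and exit through $R_{-}$, with continuous hitting time. This holds in every place the paper applies the lemma, because there $\psi$ is the restriction of an honest flow on $M$ (such as $\varphi^{\sharp\sharp}$) to a submanifold with saturated sutures and transverse $R_{\pm}$. The flow-box theorem then gives $U$. You should invoke that standing hypothesis explicitly rather than try to derive $U$ from the $\epsilon$-condition. With $U$ in hand, the rest of your argument goes through.
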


\begin{corollary}
Assume that \(F(z)\) is a facet for \(z\).
Let \(F_{\varphi}^{1}(z)\) and \(F_{\varphi}^{2}(z)\) be two transverse facets associated to \(F(z)\).
Then the semiflows on \(M-F_{\varphi}^{1}(z)\) and \(M-F_{\varphi}^{2}(z)\) are orbit equivalent.
\label{lem:choice:transversefacet}
\end{corollary}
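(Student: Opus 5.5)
The plan is to compare the two transverse facets componentwise and use Theorem \ref{thm:LMTparallel}. Write \(F(z)=S_{1}\cup\cdots\cup S_{k}\), and let \(S_{i}^{1}\subset F^{1}_{\varphi}(z)\) and \(S_{i}^{2}\subset F^{2}_{\varphi}(z)\) be the almost transverse representatives isotopic to \(S_{i}\). By the first part of Theorem \ref{thm:LMTparallel}, the minimal blowups \(\varphi^{\sharp}_{1}\) and \(\varphi^{\sharp}_{2}\) transverse to \(F^{1}_{\varphi}(z)\) and \(F^{2}_{\varphi}(z)\) have the same combinatorial type. I will therefore fix one blowup \(\varphi^{\sharp}\) to which both facets are minimally transverse. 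This requires identifying \(\varphi_{1}^{\sharp}\) with \(\varphi_{2}^{\sharp}\) up to orbit equivalence isotopic to the identity; as remarked in Section~2, the flow on blown-up annuli is unique up to orbit equivalence.

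The idea is to slide the components of \(F^{1}_{\varphi}(z)\) one at a time along flowlines of \(\varphi^{\sharp}\) onto those of \(F^{2}_{\varphi}(z)\). I would first handle the simplest case, where \(S_{i}^{1}\) and \(S_{i}^{2}\) are disjoint and no other component of either facet lies between them. By the second part of Theorem \ref{thm:LMTparallel}, they cobound an \(I\)-bundle \(N_{i}\cong S_{i}^{1}\times I\) carrying the product flow. Replacing \(S_{i}^{1}\) by \(S_{i}^{2}\) then changes \(M-F^{1}_{\varphi}(z)\) by removing a product collar \(N_{i}\) from one complementary component and adding it to the adjacent one. Lemma \ref{lem:addparallel} says that attaching or removing such a product collar along an \(R_{\pm}\) component does not change the orbit equivalence class. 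So the semiflow on the complement is unchanged up to orbit equivalence.

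In general \(S_{i}^{1}\) may intersect \(S_{i}^{2}\) or other components. To get around this, I would introduce an auxiliary surface \(S_{i}'\), obtained by pushing \(S_{i}^{1}\) along the flow of \(\varphi^{\sharp}\) far in the negative or positive direction within the region before it meets any other component, or alternatively a flow-parallel copy disjoint from both. I would also use the remark at the start of the subsection: adding parallel copies to a facet does not change the guts, and by Lemma \ref{lem:windowproduct} it only creates extra product-flow components. The resulting chain is: pass from \(F^{1}_{\varphi}\) to \(F^{1}_{\varphi}\cup F^{2}_{\varphi}\), putting the union in general position via a flow isotopy. Then each complementary region between a component of \(F^{1}\) and its isotopic partner in \(F^{2}\) is a product-flow window by Theorem \ref{thm:LMTparallel}, and deleting either family amounts to absorbing windows via Lemma \ref{lem:addparallel}.

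The main obstacle is making the two facets disjoint by moves that preserve transversality. If \(S_{i}^{1}\) and \(S_{j}^{2}\) cross, their union is not an admissible facet. I would handle this by using the flow to isotope \(F^{2}_{\varphi}(z)\) so that it is disjoint from \(F^{1}_{\varphi}(z)\). The approach would use shadows, following Proposition \ref{prop:shadow1}: two isotopic transverse lifts have equal shadows, so the flow provides a canonical product region between them. I would also need to argue that the combined isotopy never passes a component across another non-parallel component, which holds because the components are disjoint and non-parallel and hence ordered consistently along flowlines. I expect this ordering and disjointness argument to be the delicate step.
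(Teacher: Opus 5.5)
Your proposal follows essentially the paper's route: Theorem~\ref{thm:LMTparallel} makes corresponding components of \(F_{\varphi}^{1}(z)\) and \(F_{\varphi}^{2}(z)\) isotopic along flowlines, so the two complements differ by adding and removing product-flow regions, and Lemma~\ref{lem:addparallel} absorbs these. The paper's proof only asserts this and never treats the case where corresponding components intersect, so the disjointness and ordering step you flag as delicate is also left implicit there; one way to close it is that the side of a lift on which another lift lies is preserved under isotopy, so interpolating in flow time between corresponding lifts (which have equal shadows) keeps all lifts disjoint and gives an isotopy of the facet along flowlines, and the time-one map of its ambient extension is the required orbit equivalence.
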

\begin{proof}
There is a correspondence between components of \(F_{\varphi}^{1}(z)\) and \(F_{\varphi}^{2}(z)\), such that, corresponding components are isotopic along flowlines.
So as sutured manifolds with semiflows, \(M-F_{\varphi}^{1}(z)\) is obtained from \(M-F_{\varphi}^{2}(z)\) by adding and removing some product flows.
Consequently, \(M-F_{\varphi}^{1}(z)\)  is orbit equivalent to \(M-F_{\varphi}^{2}(z)\) by Lemma \ref{lem:addparallel}.
\end{proof}
So we can freely isotope a transverse facet slightly, without change the semiflow.
\subsubsection{Veering triangulation}
\label{sec:veering}
We will need the language of veering triangulation.
A key advantage is that we can place the (almost) transverse surfaces in a regulated and rigid position, simplifying both proofs and visualizations.
We refer the reader to \autocite{Landry2022_VeeringTriangulationsAndTheThurstonNorm_PUB,LandryMinskyTaylor2025_TransverseSurfacesAndPseudoAnosovFlows_PUB} for a comprehensive introduction to veering triangulations.

In short, a veering triangulation is an ideal triangulation of a torally bounded 3-manifold \(W\) satisfying some co-orientation condition on faces and edges.

A \emph{veering tetrahedron} \(\Delta\) is an oriented ideal triangulation with extra data as follows:
\begin{enumerate}
\item Two faces are cooriented outward, called \emph{top faces}. Two faces are cooriented inward, called \emph{bottom faces}.
\item A dihedral angle between a top face and a bottom face is labelled \(0\). Other dihedral angles are labelled \(\pi\). The edges of \(\pi\)-angle are called the \emph{top and bottom edges} respectively.
\item The equatorial edges are assigned right or left veering in an alternating fashion as in Figure \ref{fig:veeringtetrahedron}. Some literatures use red and blue colors. We adopt both assignment styles.
\end{enumerate}

\begin{figure}[htbp]
\centering
\includegraphics[scale=1.1]{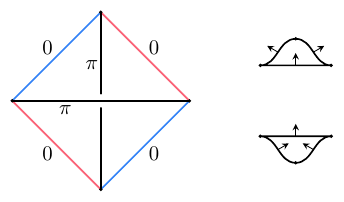}
\caption{\label{fig:veeringtetrahedron}}
\end{figure}

A \emph{veering triangulation} \(\tau\) on a compact oriented 3-manifold \(W\) is an ideal triangulation on \(\mathrm{int}(W)\), such that,
\begin{enumerate}
\item all ideal tetrahedra are veering tetrahedra,
\item the coorientations on faces are compatible the orientation of \(W\), and
\item the sum of dihedral angles around an edge is \(2\pi\). In other words, around an edge, there are one top \(\pi\)-edge, one bottom \(\pi\)-edge, and some \(0\)-edges.
\end{enumerate}

Usually, we view the veering structure as a truncated ideal triangulation on the compact manifold \(W\) and the truncated links of ideal vertices constitute a triangulation on \(\partial W\) (see Figure \ref{fig:veeringboundarypattern}).

\begin{figure}[htbp]
\centering
\includegraphics[scale=1.1]{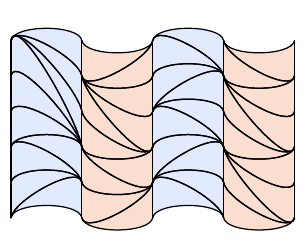}
\caption{\label{fig:veeringboundarypattern}An example of the induced triangulation on the boundary.}
\end{figure}

The 2-skeleton \(\tau^{(2)}\) is a branched surface in \(W\).
A \emph{branched surface} \(B\) with branch locus \(\mathrm{bl}(B)\) is a compact space locally modelled on left-hand side of Figure \ref{fig:branchedsurface}.
A \emph{standard neighborhood} of a branched surface \(B\) is a small tubular neighborhood \(N_{\epsilon}(B)\) foliated in a standard way by intervals.
The boundary \(\partial N_{\epsilon}(B)\) decomposes into two parts: the \emph{horizontal boundary} \(\partial _{h}N_{\epsilon}(B)\) and the \emph{vertical boundary} \(\partial _{v}N_{\epsilon}(B)\) (see right-hand side of Figure \ref{fig:branchedsurface}).
We refer to this \(I\)-bundle-like foliation as the \emph{vertical foliation} and say that \(B\) is \emph{cooriented} if this foliation is oriented.
If \(\Lambda\) is a 2-dimensional lamination, or simply an embedded surface in \(N_{\epsilon}(B)\), that is transverse to the vertical foliation, we say that \(B\) \emph{carries} \(\Lambda\). If additionally \(B\) is cooriented and \(\Lambda\) is oriented, we require that \(\Lambda\) is positively transverse to the vertical foliation.
We refer the reader to \autocite{FloydOertel1984_IncompressibleSurfacesViaBranchedSurfaces_PUB} and \autocite{Oertel1984_IncompressibleBranchedSurfaces_PUB} for more details on branched surfaces.

As in Figure \ref{fig:veeringboundarypattern}, \(\tau\) induces a triangulation on \(\partial T_{\tau}=\partial (M\setminus \mathrm{int}(T_{\tau}))\). Each triangle is a tip of a truncated veering tetrahedron and is a disk with two cusps inherited from veering structure.
A triangle is called \emph{upward} if it has two sides cooriented outward, and called \emph{downward} otherwise. 
\autocite{FuterGueritaud2013_ExplicitAngleStructuresForVeeringTriangulations_PUB} shows that the union of all upward triangles constitutes a collection of annuli, called \emph{upward ladders}. Similarly, the union of downward ones constitutes \emph{downward ladders}.
The upward and downward ladders appears alternatively and meet along a collection of curves, which is called \emph{ladderpole curves}. The slope of the ladderpole curves is called the \emph{ladderpole slope}.

\begin{figure}[htbp]
\centering
\includegraphics[scale=1.2]{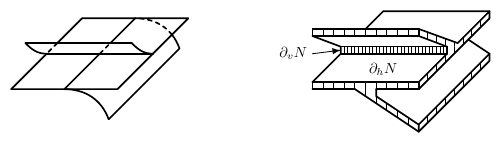}
\caption{\label{fig:branchedsurface}}
\end{figure}

For a closed, oriented 3-manifold \(M\), we say that \(M\) admits a veering triangulation if there exists a collection of disjoint simple closed curves \(\{\gamma_{i}\}_{i\in I}\) such that  \(M\setminus \cup_{i\in I} N_{\epsilon}(\gamma_{i})\) admits a veering triangulation.
In this case, we embed truncated \(\tau\) into \(M\) naturally (not properly).
We call the union of toral neighborhoods \(N_{\epsilon}(\gamma_{i})\) a \emph{tube system} \(T_{\tau}\) for \(M\) and \(\tau\) is a veering triangulation of \(M\) \emph{relative to} \(T_{\tau}\).
\begin{definition}
A oriented embedded surface \(S\subset M\) is \emph{relatively carried by} \(\tau\) if
\begin{enumerate}
\item \(S \setminus \mathrm{int}(T_{\tau})\) is carried by the branched surface \(\tau^{(2)}\) in \(M\setminus \mathrm{int}(T_{\tau})\), and
\item each component of \(S\cap T_{\tau}\) is either a meridional disk or an annulus.
\end{enumerate}
\end{definition}
In fact, by \autocite[Lemma 3.3]{Landry2018_TautBranchedSurfacesFromVeeringTriangulations_PUB}, boundaries of annular components in \(S\cap T_{\tau}\) are ladderpole curves.
A component of \(T_{\tau}\) is called \emph{ladderpole tube} if \(S\cap T_{\tau}\) are ladderpole curves, and called \emph{meridian tube} otherwise.

Let \(S\) be a surface in \(M\) relatively carried by \(\tau\).
We note that the position is not combinatorially unique in general. We mention one important case.
Suppose that \(A\) is an annulus component of \(S\cap T_{\tau}\) whose boundary components are in adjacent ladderpole curves (the boundary of a ladder).
In \autocite{LandryMinskyTaylor2025_TransverseSurfacesAndPseudoAnosovFlows_PUB}, such \(A\) is called a \emph{removable annulus}. This is because we may apply an \emph{annulus move} as defined in \autocite[Section 4.5]{Landry2022_VeeringTriangulationsAndTheThurstonNorm_PUB} to push \(A\) out of \(T_{\tau}\) and eliminate this component.

Let \(\varphi\) be a pseudo-Anosov flow without perfect fits on \(M\) and \(\left\{ \gamma_{i} \right\}_{i\in I}\) be the set of its singular orbits.
Then \(T_{\varphi}=\cup_{i\in I}N_{\epsilon}(\gamma_{i})\) is a tube system and \(M^{\circ}=M\setminus T_{\varphi}\) admits a veering triangulation \(\tau\).
We can assume that \(\tau^{(2)}\) is a smooth cooriented branched surface in \(M\), which is positively transverse to the flowlines of \(\varphi\), according to \autocite[Theorem 5.1]{LandryMinskyTaylor2022_FlowsGrowthRatesAndTheVeeringPolynomial_PUB}.
An embedded, oriented surface \(S\) is isotoped to be almost transverse to \(\varphi\) if and only if \(S\) is isotoped to be relatively carried by \(\tau\) \autocites[Theorem B]{Landry2022_VeeringTriangulationsAndTheThurstonNorm_PUB}[Theorem A]{LandryMinskyTaylor2025_TransverseSurfacesAndPseudoAnosovFlows_PUB}.
The combinatorial position is not unique, but different positions are isotopic along flowlines (rather than combinatorially), due to Theorem \ref{thm:LMTparallel}.

One can isotope any transverse facet to be relatively carried by \(\tau\), denote by \(F_{\tau}(z)\).
\subsection{Choices of aligned positions}
\label{sec:choice:alignedposition}
Fix a transverse facet \(F_{\varphi}(z)\) and a maximal collection \(\mathcal{A}\) of non-product annuli.
Fix a further blowup \(\varphi^{\sharp\sharp}\) associated to \(\mathcal{A}\).
We isotope \(\mathcal{A}\) to aligned position \(\mathcal{A}_{\varphi}\), and the semiflow on \(\mathrm{Guts}(F_{\varphi}(z)\cup \mathcal{A}_{\varphi})\) is obtained by restricting \(\varphi^{\sharp\sharp}\).
\begin{lemma}
The choice of aligned position \(\mathcal{A}_{\varphi}\) does not affect the semiflow on \(\mathrm{Guts}(F_{\varphi}(z)\cup \mathcal{A}_{\varphi})\).
In other words, if \(\mathcal{A}_{\varphi}^{1}\) and \(\mathcal{A}_{\varphi}^{2}\) are two \(\varphi^{\sharp\sharp}\)-aligned position of \(\mathcal{A}_{\varphi}\), then \(\mathrm{Guts}(F_{\varphi}(z)\cup \mathcal{A}_{\varphi}^{1})\) and \(\mathrm{Guts}(F_{\varphi}(z)\cup \mathcal{A}_{\varphi}^{2})\) are orbit equivalent.
\label{lem:choice:alignedposition}
\end{lemma}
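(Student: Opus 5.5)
The plan is to work one annulus at a time and reduce the comparison to the situation of Corollary~\ref{lem:choice:transversefacet}: two aligned positions of the same annulus differ only by sliding along flowlines and by adding or removing product flow pieces. Lemma~\ref{lem:addparallel} then gives the orbit equivalence.

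Fix an annulus \(A\in\mathcal{A}\), and let \(A^{1}_{\varphi}, A^{2}_{\varphi}\) be its two \(\varphi^{\sharp\sharp}\)-aligned positions. They are isotopic in \(M-F_{\varphi}(z)\). Choose lifts \(\widetilde{A}^{j}\) bounded by the same lifts \(\widetilde{S}_{1},\widetilde{S}_{2}\) and invariant under the same generator \(g\). The type of \(g\) in Corollary~\ref{prop:actionclassification_perfectfits} depends only on \(A\), so both positions fall in the same case of Section~\ref{sec:annulusisotopy}. I would split into two cases.

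\textbf{Saturated case.} Suppose \(A^{1}_{\varphi}\) and \(A^{2}_{\varphi}\) are product saturated. Then each is \(\pi(H\cap\widetilde{\varphi}_{\Real}(\ell_{j}))\) for a \(g\)-invariant line \(\ell_{j}\subset\Theta(\widetilde{S}_{1})\cap\Theta(\widetilde{S}_{2})\).
\begin{enumerate}
\item Show that \(\ell_{1}\) and \(\ell_{2}\) are \(g\)-equivariantly isotopic inside \(\Theta(\widetilde{S}_{1})\cap\Theta(\widetilde{S}_{2})\).
\begin{enumerate}
\item In the hyperbolic case, the relevant region \(\mathcal{R}\) from the proof of Lemma~\ref{lem:hyperbolic_invariantline} is an open strip on which \(g\) acts by translation. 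Any two \(g\)-invariant properly embedded lines joining \(p\) to \(q\) in it are \(g\)-equivariantly isotopic; this holds because the quotient strip\(/g\) is an annulus, and essential simple closed curves in an annulus are isotopic.
\item In the regular or singular elliptic case, the lines lie in the union of the quadrants shared by the two shadows, which is also a \(g\)-invariant simply connected region. The same annulus argument applies there.
\end{enumerate}
\item Suspend this isotopy through \(H\). This gives an isotopy of \(A^{1}_{\varphi}\) to \(A^{2}_{\varphi}\) through saturated annuli, and it moves the boundary curves along \(S_{i}\) by isotopies transverse to the flow.
\item Conclude that \(M-(F_{\varphi}(z)\cup A^{1}_{\varphi})\) and \(M-(F_{\varphi}(z)\cup A^{2}_{\varphi})\) are orbit equivalent. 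The conjugating homeomorphism is built by flowing orbit segments along the isotopy. Equivalently, the saturated region swept between the two annuli, intersected with \(H\), is a product flow solid torus glued along sutured annuli, so Lemma~\ref{lem:addparallel} applies.
\end{enumerate}

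\textbf{Transverse case.} Suppose both positions are transverse to the further blowup over a singular fixed point, as in Lemma~\ref{lem:singular-invariantline}. Since \(\varphi^{\sharp\sharp}\) is fixed, the blowup direction is fixed, and both annuli cross the same blown-up segment between the same quadrants. Each transverse annulus in \(\widetilde{M}\) is then a \(g\)-invariant section of the flow over a neighborhood of that segment, lying between \(\widetilde{S}_{1}\) and \(\widetilde{S}_{2}\). Two such annuli are isotopic along flowlines by the argument of Theorem~\ref{thm:LMTparallel}, applied in the cut manifold. The region between them is a product flow, and Lemma~\ref{lem:addparallel} again gives the orbit equivalence.

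\textbf{Disjointness from the other annuli.} Carry out these isotopies one annulus at a time. The swept region between \(A^{1}_{\varphi}\) and \(A^{2}_{\varphi}\) may meet other annuli of \(\mathcal{A}_{\varphi}\). Since the annuli are pairwise non-parallel and disjoint, an intersecting annulus would have to be parallel to \(A\), so the swept product region can be chosen disjoint from the others. Finally, discarding windows (Lemma~\ref{lem:windowproduct}) removes any leftover product pieces, so the guts are orbit equivalent.

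\textbf{Main obstacle.} I expect the hardest step to be the equivariant isotopy of invariant lines within \(\Theta(\widetilde{S}_{1})\cap\Theta(\widetilde{S}_{2})\), together with keeping the isotopy of the boundary curves on \(S_{i}\) embedded and away from the other annuli. For the former, I would first try to show that the intersection, modulo \(g\), is an annulus whose core is the projection of any invariant line. The convexity in Proposition~\ref{prop:shadow1} and the slice-leaf intersection analysis of Lemma~\ref{lem:sliceleaves_intersection} should rule out additional topology.
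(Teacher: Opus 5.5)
Your route differs from the paper's, and as written it has a genuine gap in the saturated case, which is the part you rely on most. An equivariant isotopy \(\ell_t\) of invariant lines does not suspend to an isotopy through \emph{embedded} annuli. Let \(U\) be the component of \(\Theta(\widetilde S_1)\cap\Theta(\widetilde S_2)\) containing \(\ell_1\). The quotient \(U/\langle g\rangle\) is only an annular \emph{cover} of an open subsurface \(\Omega\subset S_1\); here \(\Omega\) is the image of \(U\), the points whose backward orbit first meets \(S\) on \(S_2\). The stabilizer of \(U\) can be much larger than \(\langle g\rangle\), for instance when \(A\) lies on the frontier of a window with non-cyclic fundamental group. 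So \(\pi(H\cap\widetilde\varphi_{\Real}(\ell_t))\) is in general only an immersed annulus. This is exactly why Lemma~\ref{lem:invariantline} needs bigon removal. Consequently your steps 2 and 3, including ``flowing orbit segments along the isotopy'', are not justified.

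The repair is to push the homotopy down to \(\Omega\). There the images \(c_1,c_2\) of \(\ell_1,\ell_2\) are homotopic essential simple closed curves, hence isotopic in \(\Omega\). The suspension of a simple closed curve of \(\Omega\) is an embedded saturated annulus. A compactly supported isotopy of \(\Omega\) then suspends to an orbit-preserving ambient isotopy of \(M-F_{\varphi}(z)\).

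Step 1 also has a hole that you only flag: you need \(\ell_1,\ell_2\) in the \emph{same} component of the intersection of shadows. That intersection can be disconnected, so ``the intersection mod \(g\) is an annulus'' is the wrong target. In the hyperbolic case the claim does hold, but it must be argued. If \(\ell_1\cap\ell_2=\emptyset\), the complementary half-plane of any boundary leaf entering the strip between them lies in that strip. That leaf would then join \(p\) to \(q\) and be \(g\)-invariant, which is excluded as in Lemma~\ref{lem:hyperbolic_invariantline}. In the elliptic case the isotopy must stay inside the intersection of shadows, not merely inside the shared quadrant sector. Otherwise the suspended annuli leave \(H\).

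The other two steps are also incomplete. Theorem~\ref{thm:LMTparallel} is a statement about isotopic transverse surfaces in \(M\), not about annuli with boundary on \(S\), so it cannot simply be ``applied in the cut manifold''. You need the reduction from the proof of Lemma~\ref{lem:windowproduct}. Disjoint \(A^1_\varphi,A^2_\varphi\) cobound a solid torus because \(M\) is atoroidal. Surgering the facet along them gives a \(\varphi^{\sharp\sharp}\)-transverse surface isotopic to a facet component, and then Theorem D applies.

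In your disjointness paragraph, non-parallelism only prevents another annulus \(B\) from lying \emph{inside} the swept solid torus. \(B\) can still cross \(A^2_\varphi\). So replacing one annulus at a time toward the prescribed \(\mathcal A^2_\varphi\) can pass through non-disjoint collections. You need an intermediate aligned position, e.g.\ from the cut-and-paste \(A\oplus A'\).

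That reduction is in fact the paper's entire proof, and it handles saturated and transverse annuli uniformly. Disjoint aligned \(A,A'\) cobound a product sutured solid torus \(T\). The argument of Lemma~\ref{lem:windowproduct} (i.e.\ Theorem D) shows \(\varphi^{\sharp\sharp}|_T\) is a product flow, and Lemma~\ref{lem:addparallel} finishes. Intersecting annuli are first replaced by cut-and-paste. Once repaired, your orbit-space route gives a slightly stronger conclusion in the saturated case: an orbit-preserving ambient isotopy, with the product structure read off from the shadows rather than from Theorem D. It costs considerably more work than the topological argument.
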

\begin{proof}
We first show that different aligned positions of a single product annulus induce orbit equivalent flows.
Assume that \(A\) be a component of \(\mathcal{A}_{\varphi}\) and let \(A^{c}\) denote the union of the other components.
Let \(A'\) be a product annulus in \(M-F_{\tau}(z)\), ambiently isotopic to \(A\).
Assume that \(A'\) is in aligned position and disjoint from \(A^{c}\).
Then \(A\) and \(A'\) bound a product sutured torus \(T\) since \(M\) is atoroidal. Moreover, \(\varphi^{\sharp\sharp}|_{T}\) is a product flow by the proof of Lemma \ref{lem:windowproduct}.
The result follows directly from Lemma \ref{lem:addparallel}.
If \(A\) and \(A'\) are not disjoint, we throw away homotopically trivial components in the cut-and-paste \(A\oplus A'\).
It yields two annuli \(A_{0}\) and \(A_{0}'\), disjoint and ambiently isotopic to \(A\) and \(A'\). \(A_{0}\) remains in aligned position.
Then the semiflows induced by \(A\) and \(A'\) are both orbit equivalent to the one induced by \(A_{0}\), by Lemma \ref{lem:addparallel}.

In general, any component of \(\mathrm{Guts}(F_{\varphi}(z)\cup \mathcal{A}_{\varphi}^{1})\) differs from some component of \(\mathrm{Guts}(F_{\varphi}(z)\cup \mathcal{A}_{\varphi}^{2})\) by adding and removing product flows along the boundary. So they are orbit equivalent by Lemma \ref{lem:addparallel}.
\end{proof}
Now we can choose the aligned position freely.
\subsection{Singular graph and singular guts}
\label{sec:singularguts}
To prove the remaining independence, we introduce the regular and singular guts.
\begin{definition}[Singular graph]
Let \(F_{\tau}(z)\) be a transverse facet relatively carried by \(\tau\) and \(T\) be a ladderpole tube.
We define the \emph{singular graph} \(G_{T}(F_{\tau}(z))\) associated to \(T\) and \(F_{\tau}(z)\) as follows.
The vertices of \(G_{T}(F_{\tau}(z))\) is the set of ladderpole curves \(\mathrm{ld}_{T}\) in \(T\). Two vertices are connected by an edge if they are boundaries of an annular component of \(F_{\tau}(z)\cap T\).

Because the combinatorial positions are not unique, we add parallel components to \(F_{\tau}(z)\) such that the singular graph to every tube is maximal.
We say that the resulting \(F_{\tau}(z)\) is \emph{maximal} with respect to the veering triangulation \(\tau\).
\end{definition}
We always assume \(F_{\tau}(z)\) is maximal with respect to \(\tau\).
If an edge connects two adjoint vertices, then it corresponds to a removable annulus.

Differents facets may induce different singular graphs. But non-isolated vertices and ``outmost edges'' depend only on the homology class \(z\).
\begin{lemma}
Let \(F_{1}\) and \(F_{2}\) be two maximal transverse facets of \(z\) with respect to \(\tau\) and let \(T\) be a ladderpole tube of \(F_{1}\). Then \(T\) is also a ladderpole tube of \(F_{2}\).
If \(a\) is a non-isolated vertex of \(G_{T}(F_{1})\), then \(a\) is also non-isolated in \(G_{T}(F_{2})\)
\label{lem:singularinvariance1}
\end{lemma}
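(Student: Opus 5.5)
We reduce both assertions to a statement about the closed orbit \(\gamma\) at the core of \(T\) that depends only on \(z\), not on the facet. Let \(g\in\pi_{1}(M)\) represent \(\gamma\) and fix a lift, so that \(g\) fixes a non-corner singular point \(p\in P_{\varphi}\). No perfect fits means Corollary \ref{prop:actionclassification_perfectfits} puts us in case (2). Recall that \(T\) is a ladderpole tube of \(F_{1}\) exactly when some component \(S\) of \(F_{1}\) meets \(T\) in annuli bounded by ladderpole curves. Equivalently, by the correspondence in \autocite{LandryMinskyTaylor2025_TransverseSurfacesAndPseudoAnosovFlows_PUB} between annuli in tubes and blown-up segments, \(S\) contains a closed curve homotopic to a power of \(\gamma\), namely the projection of a \(g\)-invariant prong or blown-up segment lying in some shadow \(\Theta(\widetilde{S})\) with \(p\in\partial\Theta(\widetilde{S})\) (Lemma \ref{lem:elliptic-boundary}). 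A meridian tube instead corresponds to the surface passing through \(\gamma\) transversely, i.e.\ \(p\) lying in the interior of a shadow.

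\textbf{Step 1: the tube type depends only on \(z\).} The key input is that the intersection number \(\langle z,[\gamma]\rangle\) is a homological invariant. If \(T\) is a meridian tube for some component \(S\) of \(F_{1}\), then \(S\) meets \(\gamma\) positively, so \(\langle z,[\gamma]\rangle>0\). If \(T\) is a ladderpole tube, then \(S\cap T\) is a union of annuli and \(S\) can be pushed off \(\gamma\), so \(\langle z,[\gamma]\rangle=0\). Since every component of \(F_{2}\) also represents \(z\), the same dichotomy applies to \(F_{2}\). Hence \(T\) is ladderpole for \(F_{1}\) if and only if \(\langle z,[\gamma]\rangle=0\), if and only if \(T\) is ladderpole for \(F_{2}\).

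\textbf{Step 2: non-isolated vertices.} A vertex \(a\) is a ladderpole curve, corresponding to a prong, i.e.\ a vertex direction of the blown-up tree at \(p\). It is non-isolated in \(G_{T}(F_{1})\) iff some component \(S\) of \(F_{1}\) has a lift whose shadow is bounded near \(p\) by a leaf through the prong \(a\). By Proposition \ref{prop:shadow2}, such a shadow occupies the two quadrants adjacent to \(a\), with the prong (or segment) between them.

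To pass to \(F_{2}\), we use the Agol–Zhang machinery together with double curve sums. Isotope \(F_{2}\) to be transverse and relatively carried, and take \(S\oplus S'\) for components \(S\subset F_{1}\) and \(S'\subset F_{2}\). Both surfaces are carried by the same branched surface \(\tau^{(2)}\), so the sum stays carried, remains taut, and represents a multiple of \(z\) (Lemma on \(\chi_{-}\) additivity). Its components then lie in the carried cone and are taut representatives of \(z\).

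By maximality of \(F_{2}\) with respect to \(\tau\) (adding parallel copies, which preserves the guts), every such component is parallel to a component of \(F_{2}\). The sum agrees with \(S\) near the annulus at \(a\) whenever \(S'\) avoids that region of \(T\). By Theorem \ref{thm:LMTparallel}, parallel transverse surfaces cover the same quadrants at \(p\), so some component of \(F_{2}\) also has shadow containing the prong \(a\). Therefore \(a\) is non-isolated in \(G_{T}(F_{2})\).

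\textbf{Main obstacle.} The hard step is controlling the cut-and-paste inside \(T\). We must ensure that the annuli of \(S\) and \(S'\) in \(T\) can be made disjoint or nested, so that the resulting annulus still ends at \(a\). I would argue in the orbit space: the shadows of lifts at \(p\) are unions of adjacent quadrant pairs, and two shadow boundary leaves cannot cross transversally near \(p\) (Lemma \ref{lem:sliceleaves_intersection}). This nesting property is what should make the sum preserve the prong \(a\).
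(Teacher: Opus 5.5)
Your Step 1 is the paper's argument: a meridian tube would force \(\langle z,[\gamma]\rangle>0\), and this pairing is the same for every component of \(F_1\) and of \(F_2\).

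Step 2, however, has a genuine gap where you invoke maximality. Maximality of \(F_2\), both as a facet and with respect to \(\tau\), only constrains surfaces that are \emph{disjoint} from \(F_2\). Your surface \(S\oplus S'\) is built from \(S\) and a single component \(S'\subset F_2\), and nothing makes it disjoint from \(F_2\). It still carries every intersection of \(S\) with the other components of \(F_2\). Even pushing it off \(S'\) is obstructed in general: \(S\) is non-separating, so one region of \(S'\setminus S\) can be forced to opposite sides of \(S'\) at different double curves. Hence ``every such component is parallel to a component of \(F_2\)'' is unjustified, and it is false in general. Facets are not unique, and a taut representative of \(z\) that essentially crosses \(F_2\) need not be isotopic to any component of \(F_2\). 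Also, the components of \(S\oplus S'\) represent classes summing to \(2z\), not \(z\) each.

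The missing ingredient is \autocite[Lemma 3.2]{Scharlemann1989_SuturedManifoldsAndGeneralizedThurstonNorms_PUB}: sum \(S_1\) with \(k\) parallel copies of the whole facet. The double curve sum \(S_1\oplus kF_2\) can be isotoped slightly off \(F_2\) while staying relatively carried by \(\tau\). The paper argues by contradiction. If \(a\) were isolated in \(G_T(F_2)\), the component of \(S_1\oplus kF_2\) meeting \(a\) could be added to \(F_2\) as a parallel component, contradicting maximality.

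What you call the main obstacle is automatic. The cut-and-paste only reconnects annuli along curves in the interior of \(T\) and never moves their boundary curves on \(\partial T\), so the sum still meets the ladderpole curve \(a\). No orbit-space nesting argument is needed. Your final detour through Theorem \ref{thm:LMTparallel} and quadrants at \(p\) is also unnecessary. It is delicate besides, because the combinatorial position relative to \(\tau\) is not determined by the flowline isotopy class. Once you have a relatively carried surface disjoint from \(F_2\) that meets \(a\), you simply add it to \(F_2\).
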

\begin{proof}
Let \(\gamma\) be the (singular) closed orbit contained in \(T\). If \(T\) is not a ladderpole tube for \(F_{2}\), then \(\gamma\) and \(z\) have positive intersection number.
Contradiction.

Let \(a\) be a non-isolated vertex of \(G_{T}(F_{1})\), and \(S_{1}\subset F_{1}\) be the component meeting \(a\). Assume that \(a\) is isolated in \(G_{T}(F_{2})\).
By \autocite[Lemma 3.2]{Scharlemann1989_SuturedManifoldsAndGeneralizedThurstonNorms_PUB}, the cut-and-paste \(S_{1}\oplus kF_{2}\) is isotoped slightly to be disjoint from \(F_{2}\) and preserves relatively carried by \(\tau\). \(S_{1}\oplus kF_{2}\) still meets \(a\).
One can add the parallel component \(S_{1}\oplus kF_{2}\) to \(F_{2}\), contradicting maximality of \(F_{2}\).
\end{proof}
We arrange the vertices clockwisely on a circle.
Each vertex has a well-defined coorientation induced by the pseudo-Anosov flow, clockwisely or counterclockwisely.
If two vertices are connected by an edge, they must have opposite coorientation.
We denote the non-isolated vertices of \(G_{T}(F_{\tau})\) by \(v_{1},v_{2},\ldots,v_{n},v_{n+1}=v_{1}\) clockwisely.
\begin{lemma}
Let \(T\) be a ladderpole tube.
\begin{enumerate}
\item If more than one components of \(F_{\tau}\cap T\) meet \(v_{i}\), then between two adjacent components, a product annulus connects them near \(v_{i}\).
\item If \(v_{i}\) and \(v_{i+1}\) have opposite coorientation, then there is an edge connecting \(v_{i}\) and \(v_{i+1}\).
\item If \(v_{i}\) and \(v_{i+1}\) have same coorientation, then no edge connects \(v_{i}\) and \(v_{i+1}\).
In this case, let \(S_{i}\) be the component which meets \(v_{i}\) and is closest to \(v_{i+1}\) and let \(S_{i+1}\) be the component which meets \(v_{i+1}\) and is closest to \(v_{i}\). Then, a product annulus in \(T\) connects \(S_{i}\) and \(S_{i+1}\).
\end{enumerate}
\label{lem:singularinvariance2}
\end{lemma}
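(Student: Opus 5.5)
We work locally in a neighborhood of the singular orbit \(\gamma\) at the core of the ladderpole tube \(T\). We use the transverse cross-section \(D\) to \(\gamma\) and the first return map of the blown-up flow on \(D\), as in Figure \ref{fig:ex-furtherblowup}. The components of \(F_{\tau}\cap T\) that meet the ladderpole curves appear in \(D\) as arcs near the prongs. The ladderpole curves correspond to the prongs of \(\gamma\), so the vertices \(v_{i}\) correspond to prongs. The coorientation of a vertex records on which side of the prong the transverse surface passes, that is, which pair of adjacent quadrants its shadow occupies (Proposition \ref{prop:shadow2}).

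For a product annulus joining two surface pieces \(S,S'\) near \(\gamma\), we fix lifts \(\widetilde S,\widetilde S'\) invariant under the element \(g\) represented by (a power of) \(\gamma\). Lemma \ref{lem:invariantline}, Lemma \ref{lem:regular-invariantline} and Lemma \ref{lem:singular-invariantline} reduce the existence of such an annulus in \(M-F_{\tau}\) to two conditions. First, \(\widetilde S\) must lie above \(\widetilde S'\) (the orientation condition). Second, no other lift of a component of \(F_{\tau}\) may separate the two. The annulus is then either a saturated annulus over a shared quadrant, or a transverse annulus after a further blowup across adjacent quadrants.

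\textbf{Item (1).} Several components meeting \(v_{i}\) are parallel copies near \(v_{i}\). Their shadows all occupy the same two quadrants adjacent to the prong \(v_{i}\) and are stacked along the flow. Two consecutive ones share a quadrant and have nothing between them. So the \(g\)-invariant prong (or a line in the common quadrant) gives a saturated product annulus between them near \(v_{i}\), as in Lemma \ref{lem:regular-invariantline}.

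\textbf{Items (2) and (3).} Both rely on the alternation rule that an edge can only join vertices of opposite coorientation.

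For (3), the vertices \(v_{i},v_{i+1}\) have the same coorientation, so no edge joins them. Take \(S_{i}\) and \(S_{i+1}\) as specified. Their local pictures near \(\gamma\) are on the sides facing each other, and the coorientations make one lie above the other. Because \(v_{i}\) and \(v_{i+1}\) are consecutive non-isolated vertices, no other surface passes between them. Only isolated vertices lie in the intervening fan of quadrants, and no surface meets those. Hence the "fan" argument in the proof of Lemma \ref{lem:singular-invariantline} applies: the fan is nonempty, and no blown-up segment in between is crossed by a third surface. This gives a product annulus inside \(T\) connecting \(S_{i}\) and \(S_{i+1}\), after a further blowup if needed.

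For (2), the vertices have opposite coorientation. Suppose no edge joined them. Then the same construction produces a pair whose orientations face each other the wrong way. Using the maximality of \(F_{\tau}\) with respect to \(\tau\), we then build a new relatively carried surface. We take the two annular pieces near \(v_{i}\) and \(v_{i+1}\) together with the ladder region between them, glue them by an annulus in \(T\) with boundary on \(v_{i}\) and \(v_{i+1}\), and perform a cut-and-paste \(S\oplus kF_{\tau}\) as in Lemma \ref{lem:singularinvariance1}. The result is a parallel component realizing the edge \(v_{i}v_{i+1}\), which contradicts the maximality of the singular graph.

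\textbf{Main obstacle.} I expect item (2) to be the hardest step. Verifying that the new surface stays taut, relatively carried and disjoint from \(F_{\tau}\) requires the combinatorics of ladders on \(\partial T\), namely the annulus move across the ladders between \(v_{i}\) and \(v_{i+1}\). The first thing to try is Scharlemann's double-curve-sum lemma together with the Euler class count \(\langle e,\gamma\rangle=0\) for ladderpole tubes. The aim is to show that the added annulus does not change \(\chi_{-}\).
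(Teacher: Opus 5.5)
\textbf{Items (1) and (3): a gap, and the missing observation.} The paper's proof rests on one elementary observation that you do not use. Since \(F_{\tau}\) meets \(\partial T\) in parallel ladderpole curves, \(\partial T - F_{\tau}\) is a union of annuli lying in \(M-F_{\tau}\), each with both boundary curves on \(F_{\tau}\). Such an annulus is product exactly when the coorientations at its two ends agree. In (1), take the annulus of \(\partial T - F_{\tau}\) between two adjacent curves at \(v_{i}\); in (3), take the annulus between \(S_{i}\) and \(S_{i+1}\). Each has equal coorientations at its two ends, so it runs from the positive side of one surface to the negative side of the other. It is therefore the required product annulus, and nothing more is needed.

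Your orbit-space route cannot replace this as written. Lemmas \ref{lem:invariantline}, \ref{lem:regular-invariantline} and \ref{lem:singular-invariantline} are isotopy statements whose input is an existing product annulus \(A\). Both facts you extract from them are deduced there from \(\widetilde A\): that \(\widetilde S\) lies above \(\widetilde S'\), and that no other lift of \(F_{\tau}\) lies between them (equivalently, that the new annulus has interior disjoint from \(F_{\tau}\)). The fan argument in Lemma \ref{lem:singular-invariantline} likewise uses \(\widetilde A\) to exclude a blown-up segment between \(\lambda_{1}\) and \(\lambda_{2}\).

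You assert the first fact (``the coorientations make one lie above the other''). You justify the second only inside \(T\), whereas separation of lifts in \(\widetilde M\) is a global condition. Both would follow from an arc in \(M-F_{\tau}\) joining the appropriate sides of \(S_{i}\) and \(S_{i+1}\), that is, from the \(\partial T\) annulus itself; at that point the detour is superfluous.

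Two further points are asserted rather than derived. Your dictionary ``vertex \(=\) prong, coorientation \(=\) pair of quadrants'' is not established. And in (1), components meeting \(v_{i}\) need not end at the same second vertex, so there is no reason their shadows occupy the same two quadrants.

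\textbf{Item (2): same strategy as the paper.} Your anticipated obstacle mostly dissolves once the construction is made precise. The new surface is the surgery \(F_{\tau}'\) of \(F_{\tau}\) along the annulus of \(\partial T - F_{\tau}\) between \(v_{i}\) and \(v_{i+1}\). This annulus is non-product because the coorientations are opposite, which is also what makes the surgery orientation-compatible. Annulus surgery preserves \(\chi\) and the homology class, so no norm estimate is needed. Scharlemann's Lemma 3.2 then makes \(F_{\tau}'\oplus kF_{\tau}\) disjoint from \(F_{\tau}\) and still relatively carried.

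What your sketch omits is that the component \(S\) joining \(v_{i}\) to \(v_{i+1}\) must be homologically nontrivial before it can be added to the facet, since cut-and-paste can produce null-homologous pieces. The paper gets this from Landry's Lemma 5.2: surfaces carried by \(\tau\) are not null-homologous.
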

\begin{proof}
\(\partial T-F_{\tau}\) is a union of annuli. If two adjacent components \(C_{1},C_{2}\) meet \(v_{i}\), then the component of \(\partial T-F_{\tau}\) between them is the required product annulus.

Assume that \(v_{i}\) and \(v_{i+1}\) have opposite coorientation and assume that no edge connects them.
The component of \(\partial T-F_{\tau}\) between \(v_{i}\) and \(v_{i+1}\) is a non-product annulus \(A\).
Denote by \(F_{\tau}'\) the surgery of \(F_{\tau}\) along \(A\).
By Lemma \autocite[Lemma 3.2]{Scharlemann1989_SuturedManifoldsAndGeneralizedThurstonNorms_PUB}, \(F_{\tau}'\oplus kF_{\tau}\) is isotoped slightly to be disjoint from \(F_{\tau}\) and preserves relatively carried by \(\tau\).
A component \(S\) of \(F_{\tau}'\oplus kF_{\tau}\) connects \(v_{i}\) and \(v_{i+1}\) and is carried by \(\tau\), so it is not homologically trivial by \autocite[Lemma 5.2]{Landry2022_VeeringTriangulationsAndTheThurstonNorm_PUB}.
One can add \(S\) to \(F_{\tau}\), contradicting the assumption that \(F_{\tau}\) is maximal and no edge connects \(v_{i}\) and \(v_{i+1}\).

For the third statement, the component of \(\partial T-F_{\tau}\) between \(v_{i},v_{i+1}\) is the required product annulus.
\end{proof}

Let \(\mathcal{A}_{T}\) denote the union of product annuli arising in case 1 and 3, and let \(\mathcal{S}_{T}\) be the set of ladderpole annuli appearing in case 2.
By Lemma \ref{lem:singularinvariance2}, the sutured submanifold \(P_{T}\) bounded by \(\mathcal{A}_{T}\) and \(\mathcal{S}_{T}\) is either a sutured torus or empty, and depends only on the homology class \(z\).
(For a meridian tube, both \(\mathcal{A}_{T}\) and \(\mathcal{S}_{T}\) are defined to be emptyset.
Let \(T_{\tau}\) be the tube system. Define
\[ \mathcal{A}_{\tau} = \bigcup_{T\subset T_{\tau} \text{ is a tube}} \mathcal{A}_{T},\]
\[ \mathcal{S}_{\tau} = \bigcup_{T\subset T_{\tau} \text{ is a tube}} \mathcal{S}_{T},\]
\[ P_{\tau} = \bigcup_{T\subset T_{\tau} \text{ is a tube}} P_{T}.\]
The annuli in case 1 are product saturated, denoted by \(\mathcal{A}_{\tau}^{\parallel}\), while those from case 3 are isotoped to be transverse to a further blowup, denoted by \(\mathcal{A}_{\tau}^{\pitchfork}\)

Let \(F_{\tau}(z)\) be a maximal tranvserse facet and let \(\mathcal{A}_{\varphi}\) be a maximal collection of product annuli in aligned position with \(\varphi^{\sharp\sharp}\).
Due to Section \ref{sec:annulusisotopy} and Section \ref{sec:choice:alignedposition}, we assume that transverse components of \(\mathcal{A}_{\varphi}\) are in \(P_{\tau}\), and hence disjoint from \(\mathcal{A}_{\tau}\).
Product saturated components \(\mathcal{A}_{\varphi}^{\parallel}\) of \(\mathcal{A}_{\varphi}\) are disjoint from \(\mathcal{A}_{\tau}^{\pitchfork}\) but may intersect \(\mathcal{A}_{\tau}^{\parallel}\).
We choose a small product saturated neighborhood \(N\) of \(\mathcal{A}_{\varphi}^{\parallel}\cup \mathcal{A}_{\tau}^{\parallel}\). Then \(\partial N\) is a collection of product saturated annuli satisfying that \(\partial N\) is disjoint from \(\mathcal{A}_{\tau}\) and \(\partial N\cup \mathcal{A}_{\tau}^{\parallel}\) is maximal.
The semiflow on \(\mathrm{Guts}(F_{\tau}(z)\cup \partial N\cup \mathcal{A}_{\tau})\) is orbit equivalent to the semiflow on \(\mathrm{Guts}(F_{\tau}(z)\cup \mathcal{A}_{\varphi})\).
Therefore, we always assume that \(\mathcal{A}_{\varphi}\) is disjoint from \(\mathcal{A}_{\tau}\).
Moreover, we can assume that product saturated components of \(\mathcal{A}_{\varphi}\) are in \(M\setminus P_{\tau}\), while transverse components are in \(P_{\tau}\).

Each component of \(\mathcal{A}_{\tau}\) is parallel to a component of \(\mathcal{A}_{\varphi}\) by maximality of \(\mathcal{A}_{\varphi}\).
Let \(\mathcal{A}_{\tau}'\) denote the parallel copy of \(\mathcal{A}_{\tau}\) in \(\mathcal{A}_{\varphi}\). It is clear that \(\mathcal{A}_{\tau}'\cup \mathcal{S}_{\tau}\) bounds a sutured manifold \(P_{\tau}'\).
Set \(X=M- (F_{\tau}(z)\cup \mathcal{A}_{\tau}')\).
Let the \emph{singular part} of \(X\) be the union of those components of \(X\) contained in \(P_{\tau}'\); let the \emph{regular part} be the union of remaining components.
After further cutting along \(\mathcal{A}_{\varphi}\) and discarding product components in the singular part (resp. the regular part), we obtain the \emph{singular guts} \(\mathrm{Guts}_{s}(F_{\tau}(z)\cup \mathcal{A}_{\varphi})\) (resp. the \emph{regular guts} \(\mathrm{Guts}_{r}(F_{\tau}(z)\cup \mathcal{A}_{\varphi})\)).
As a sutured manifold, the singular guts coincides with the (horizontal) guts of \(P_{\tau}\); consequently \(\mathrm{Guts}_{s}(F_{\tau}(z)\cup \mathcal{A}_{\varphi})\) depends only on \(z\) (again as a sutured manifold).

A local example is illustrated schematically in Figure \ref{fig:singulargraph}.
Let \(T\) be a ladderpole tube, whose boundary is represented by a purple circle.
A maximal facet \(F(z)\) intersects \(T\) with some ladderpole annuli, drawn as black arcs.
The black arrows indicate the coorientations of the taut surfaces.
The singular graph associated to \(F(z)\) and \(T\) is shown in Figure \ref{fig:singulargraph}(b); it has four isolated vertices and eight non-isolated ones.
The 
In Figure \ref{fig:singulargraph}(c), the parallel copy \(\mathcal{A}_{\tau}'\) of \(\mathcal{A}_{\tau}\) in \(\mathcal{A}_{\varphi}\) is drawn in red arcs.
Figure \ref{fig:singulargraph}(d) shows how \(\mathcal{A}_{\tau}'\) becomes transverse to a further blowup \(\varphi^{\sharp\sharp}\).
The sutured tori \(P_{\tau}'\), bounded by \(\mathcal{A}_{\tau}'\) and \(F(z)\), is shadowed by gray lines.
After removing \(F(z)\) and \(P_{\tau}'\), we obtain the regular part, shown in Figure \ref{fig:singulargraph}(e).
Inside \(T\), the regular part is of two possible types, \emph{blunt} or \emph{sharp}.
A sharp component contains a closed orbit, and two half annuli that separate it into two parts.
A blunt component contains a closed orbit and at most one half annulus.
In Section \ref{sec:facetchoice}, we will truncate all sharp components to obtain the truncated regular guts (See Figure \ref{fig:singulargraph}(f)).

\begin{figure}[htbp]
\centering
\includegraphics[scale=0.8]{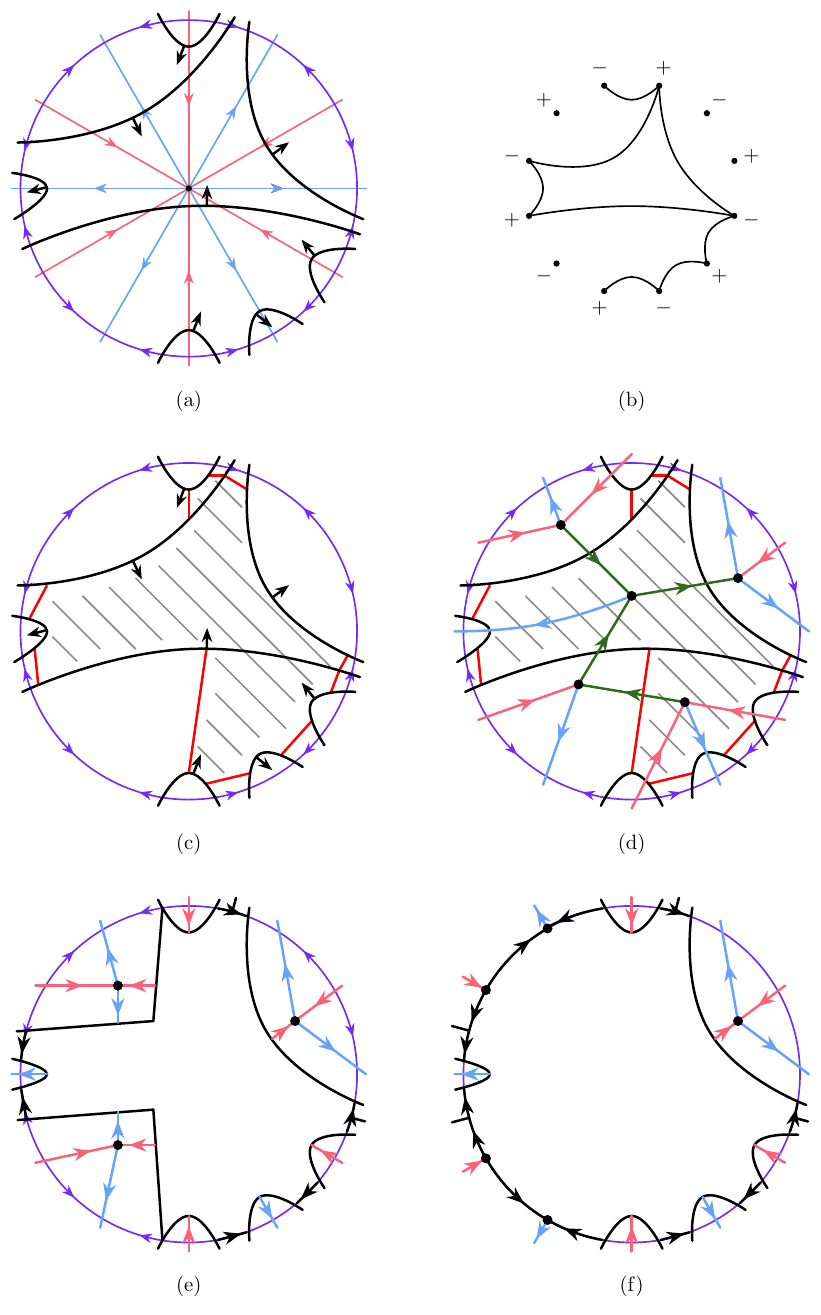}
\caption{\label{fig:singulargraph}}
\end{figure}
\subsection{4-ST and 2-ST}
\label{sec:4ST}
In preceding section, we separate the homology guts into two parts, the singular one and the regular one.
We will see the semiflow on the singular guts is simple and unique.
\begin{proposition}
Let \(M\) be an oriented connected closed 3-manifold and let \(\varphi\) be a pseudo-Anosov flow without perfect fits in \(M\). Suppose that \(z\in \mathcal{C}_{2}(\varphi)\) is primitive.

Then, up to orbit equivalence outside half annuli, the semiflow on the singular guts \(\mathrm{Guts}_{s}(M,z)\) is independent of choices mentioned in Theorem \ref{thm:facetinvariance}.
\label{prop:singulargutsinvariance}
\end{proposition}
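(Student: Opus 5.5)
The plan is to identify the semiflow on each component of the singular guts explicitly with a standard local model, and then check that every choice only changes half annuli. By the discussion in Section \ref{sec:singularguts}, the singular guts, as a sutured manifold, is the horizontal guts of \(P_{\tau}\). It therefore decomposes into pieces \(P_{T}\), one for each ladderpole tube \(T\), and \(P_{T}\) depends only on \(z\). By Lemma \ref{lem:gutsclassification}, each nonempty piece is a 4-ST or a non-longitudinal 2-ST containing the singular orbit \(\gamma_{T}\) in the core of \(T\). So it suffices to work tube by tube: for one ladderpole tube \(T\), show that the semiflow obtained by restricting \(\varphi^{\sharp\sharp}\) to the component of \(\mathrm{Guts}_{s}\) inside \(P_{T}'\) is orbit equivalent, outside half annuli, to the canonical tight semiflow on that sutured solid torus, namely the standard pseudo-hyperbolic local model.

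First I would describe the restricted flow concretely using Lemma \ref{lem:singularinvariance2}. The boundary of \(P_{T}'\) consists of ladderpole annuli in \(\mathcal{S}_{T}\), which lie in \(F_{\tau}(z)\) and are transverse, together with the annuli \(\mathcal{A}_{T}'\).
\begin{enumerate}
\item Annuli coming from case 1 are product saturated.
\item Annuli coming from case 3 are transverse to the further blowup chosen in Lemma \ref{lem:singular-invariantline}.
\end{enumerate}
Take a transverse disk \(D\) to \(\gamma_{T}\) and look at its first return map. The flow inside \(P_{T}'\) is then the suspension of the first return map of the blown-up tree \(T^{\sharp\sharp}\), restricted to a sector bounded by arcs of \(\partial D\cap F_{\tau}\) and \(\partial D\cap \mathcal{A}_{T}'\). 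From this picture I would read off two facts.
\begin{enumerate}
\item The quadrants of \(\gamma_{T}\) that the sector meets are determined by the cyclic coorientation pattern of the non-isolated vertices \(v_{1},\dots,v_{n}\). By Lemma \ref{lem:singularinvariance1}, this pattern depends only on \(z\).
\item The only part of the suspension that can vary is the blown-up complex. Its edges meet the guts in annuli that accumulate on a closed orbit of the tree in forward or backward time and end on \(\partial P_{T}'\), so they are half annuli in the sense of Definition \ref{def:halfannulus}.
\end{enumerate}
After removing these half annuli, the remaining flow is the suspension of the unblown return map on the complement of the prongs, cut to a union of quadrants. This is orbit equivalent to the pseudo-hyperbolic model on a 4-ST or 2-ST with its prongs deleted.

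Next I would run through the choices listed in Theorem \ref{thm:facetinvariance}.
\begin{enumerate}
\item Choice of transverse facet isotopy and aligned position of \(\mathcal{A}\): handled by Corollary \ref{lem:choice:transversefacet} and Lemma \ref{lem:choice:alignedposition}.
\item Choice of facet \(F(z)\): this only changes isolated vertices and parallel copies. By Lemma \ref{lem:addparallel}, the effect is adding or removing product flows along \(R_{\pm}\) and along the sutures.
\item Choice of blowups \(\varphi^{\sharp}\) and \(\varphi^{\sharp\sharp}\): the combinatorial type of \(\varphi^{\sharp}\) is fixed by Theorem \ref{thm:LMTparallel}. The remaining freedom is the two blowups in Figure \ref{fig:ex-furtherblowup} and the flows on blown-up annuli, all of which live in the half annuli removed above.
\end{enumerate}

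The main obstacle is the second fact above. I have to check that, in every configuration, the intersection of the blown-up complex with the singular guts consists exactly of half annuli. In particular, no full blown-up annulus, meaning one both of whose boundaries are closed orbits, survives inside the guts. Such an annulus would not be removed by passing to ``outside half annuli''. My first attempt would be minimality: any blown-up edge with both endpoints inside \(P_{T}'\) either separates two transverse boundary pieces, in which case the argument in Lemma \ref{lem:singular-invariantline} puts a lift of some \(S_{3}\subset S\) between them and so a surface inside the guts, a contradiction; or else it could be blown down without destroying transversality, contradicting that \(\varphi^{\sharp\sharp}\) is minimally transverse to \(\mathcal{A}_{\varphi}\) and \(F_{\varphi}(z)\). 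After that, the identification with the model, including its four (respectively two) half annuli, is a direct check in the return-map picture.
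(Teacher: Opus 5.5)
Your plan is sound, but it identifies the semiflow on a singular guts component by a different mechanism than the paper.

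\textbf{The paper's route.} The paper never opens up the tube. For a 4-ST component \(C\) inside a component \(N\) of \(M-F_{\tau}(z)\), it proceeds as follows:
\begin{enumerate}
\item It realizes a non-product annulus of \(C\) as a \(\varphi^{\sharp\sharp}\)-saturated annulus \(A'\) through a closed orbit \(\gamma\), as in Lemma \ref{lem:singular-invariantline}, and uses Lemma \ref{lem:windowproduct} to place \(A'\) inside \(C\).
\item It surgers the two facet components along \(A'\) to get a transverse surface \(S_{0}\) parallel to \(\partial N\).
\item It cuts along the four product-saturated annuli between \(S_{0}\) and \(S_{1}\cup S_{2}\), and identifies the resulting 4-ST with \(C\) up to product flows, using connectedness of \(\mathrm{Guts}(N)\) and Lemma \ref{lem:addparallel}.
\end{enumerate}
The 2-ST case replaces \(A'\) by the mapping torus of the star of prongs.

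\textbf{Comparison.} Both routes end at the same statement: \(C\setminus\mathcal{H}(C)\) is four (resp. two) copies of \([0,1)\times S^{1}\times[0,1]\) with product flow. That is all that orbit equivalence outside half annuli requires.
\begin{itemize}
\item The paper uses only sutured-manifold uniqueness and the window lemma. It therefore needs no knowledge of where \(C\) sits in the tube, or of the shape of the further blowup.
\item You locate \(C\) in \(P_{T}'\) via Lemma \ref{lem:singularinvariance2} and read the flow off the blown-up return map. This makes visible that all choice-dependence lives in the blown-up complex.
\item Your minimality step supplies a justification for the claim, stated without argument in the paper, that \(C\) contains exactly one closed orbit.
\end{itemize}

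\textbf{Corrections to your write-up.}
\begin{itemize}
\item The half annuli of \(C\) are not only pieces of blown-up annuli. The unblown stable and unstable prongs of the vertex orbit are half annuli too, and all of \(\mathcal{H}(C)\) (including the orbit) must be deleted. Your comparison with the model minus its prongs implicitly does this, but your fact (2) suggests otherwise.
\item In the minimality step only your second alternative is needed. Every blown-up annulus of \(\varphi^{\sharp\sharp}\) meets some surface of \(F_{\varphi}(z)\cup\mathcal{A}_{\varphi}\) in a core circle, so no complementary component contains a whole one. An edge cut by a transverse annulus of \(\mathcal{A}_{\varphi}\) inside \(P_{\tau}\) is not a contradiction; it simply yields two half annuli in different components.
\item Since \(C\setminus\mathcal{H}(C)\) is a disjoint union of product pieces whose number is fixed by the sutured type, your fact (1) and the case-by-case run through the choices are redundant.
\item Read the ``sector'' picture with the \(R_{\pm}\) pieces as transverse annuli, not suspended arcs. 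In the cyclic cover of the tube they are graphs over return-map-invariant regions, and orbits must enter and exit \(C\) through them.
\end{itemize}
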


\begin{proof}
As a sutured manifold, the singular guts \(\mathrm{Guts}_{s}(z)\) depends on \(z\) and is the (horizontal) guts of some sutured tori \(P_{\tau}\).
So, \(\mathrm{Guts}_{s}(z)\) consists of 4-STs and non-longitudinal 2-STs by Lemma \ref{lem:gutsclassification}.

Now we characterize the semiflow on 4-STs and non-longitudinal 2-STs.
Let \(C\) be a 4-ST component of \(\mathrm{Guts}_{s}(z)\), and let \(N\) be the component of \(M-F_{\tau}(z)\) that contains \(C\).
Then \(C\) contains a non-product annulus \(A\).
As in Lemma \ref{lem:singular-invariantline}, we find a \(\varphi^{\sharp\sharp}\)-saturated annulus \(A'\) that is isotopic to \(A\) and contains a closed orbit \(\gamma\).
\(A'\) is still disjoint from \(F_{\tau}(z)\) and hence lies in \(N\). Since the flow on the windows is product by Lemma \ref{lem:windowproduct}, \(A'\) is contained in \(C\).
Assume that \(A'\) connects two components \(S_{1},S_{2}\) of \(F_{\tau}(z)\). We surgery \(S_{1}\cup S_{2}\) along \(A'\) and obtain a \(\varphi^{\sharp\sharp}\)-transverse surface \(S_{0}\).
\(S_{0}\) is parallel to a component \(F_{\tau}(z)\) (along flowliens), so we assume that \(S_{0}\) is a component of \(\partial N\).
There are four product \(\varphi^{\sharp\sharp}\)-saturated annuli \(\{B_{i}\}_{1\leq i\leq 4}\) between \(S_{0}\) and \(S_{1}\cup S_{2}\).
Decompose along \(\cup _{i}B_{i}\) and throwing away windows, we obtain a 4-ST which contains \(\gamma\).
This 4-ST is \(C\) up to adding or removing some product flows on the boundary like Lemma \ref{lem:addparallel}, because \(\mathrm{Guts}(N)=C\) is connected.
In particular, \(C\) contains exactly one closed orbit and four half annuli \(\mathcal{H}(C)\), and \(C\setminus \mathcal{H}(C)\) consists of four copies of \([0,1)\times S^{1}\times [0,1]\) with the product flow.
So the semiflow in the 4-ST component is unique up to orbit equivalence outside half annuli, regardless of the choices in Theorem \ref{thm:facetinvariance}.

Let \(C\) be a non-longitudinal 2-ST component of \(\mathrm{Guts}_{s}(z)\).
The annulus \(A'\) in preceding paragraph may not exist.
Nevertheless, we can still construct a \(\varphi^{\sharp\sharp}\)-saturated subset \(S'\), which is the mapping torus of a star-shape tree and contains a closed orbit \(\gamma\).
\(S'\) must be disjoint from \(F_{\varphi}(z)\), so \(S'\subset \mathrm{Guts}(N)\).
As before, we surgery along \(S'\) to obtain a \(\varphi^{\sharp\sharp}\)-transverse surface \(S_{0}\) and there are two product saturated annuli between \(S_{0}\) and \(F_{\varphi}(z)\) near \(\gamma\).
Decomposing along these two annuli and throwing away windows part yields a non-longitudinal 2-ST which contains \(\gamma\).
This 2-ST is \(C\) up to adding and removing product flows on the boundary.
In particular, \(C\) contains one closed orbit and two half annuli \(\mathcal{H}(C)\).
So the semiflow in the non-longitudinal 2-ST component is unique up to orbit equivalence outside half annuli, regardless of the choices in Theorem \ref{thm:facetinvariance}.

In summary, the semiflow on \(\mathrm{Guts}_{s}(z)\) depends only on \(z\), up to orbit equivalence outside half annuli.
\end{proof}

If, in addition, the blowup satisfies Condition \hyperref[LH]{LH}, semiflows on the singular guts are orbit equivalent (without outside the half annuli).
Unfortunately, the similar result doesn't hold for the regular guts.

\textbf{Local Hyperbolic Condition (LH):} each closed orbit in the blown-up complex is a pseudo-hyperbolic orbit. \label{LH}

\begin{proposition}
Suppose that the choice of \(\varphi^{\sharp\sharp}\) in Construction \ref{cst:flow} satisfies Condition \hyperref[LH]{LH}.

Then, up to orbit equivalence, the semiflow on the singular guts \(\mathrm{Guts}_{s}(z)\) is independent of other choices in Construction \ref{cst:flow}.
\label{prop:extra:singulargutsinvariance}
\end{proposition}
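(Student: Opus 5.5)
The plan is to reuse the proof of Proposition \ref{prop:singulargutsinvariance} and upgrade it: show that under Condition \hyperref[LH]{LH} the half annuli of each singular gut component are rigid enough that the homeomorphism ``outside half annuli'' extends across them. As in that proof, $\mathrm{Guts}_{s}(z)$ is, as a sutured manifold, the horizontal guts of $P_{\tau}$ and depends only on $z$. So it suffices to treat one component $C$, a 4-ST or a non-longitudinal 2-ST, and to show that the restriction of $\varphi^{\sharp\sharp}$ to $C$ is orbit equivalent to the standard pseudo-hyperbolic local model on that sutured solid torus (Figure \ref{fig:4ST}). This model depends only on the sutured structure.

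\textbf{Step 1: a normal form for $C$.} From the proof of Proposition \ref{prop:singulargutsinvariance} we know that $C$ contains exactly one closed orbit $\gamma$ and four (resp.\ two) half annuli $\mathcal{H}(C)$. We also know that $C\setminus\mathcal{H}(C)$ is a union of copies of $[0,1)\times S^{1}\times[0,1]$ carrying the product flow. Under \hyperref[LH]{LH}, the closed orbit $\gamma$ of the blown-up complex inside $C$ is pseudo-hyperbolic. Hence it has a neighborhood $U$ mapped by a bi-Lipschitz orbit-preserving map to a local model $N_{n,k,\lambda}$. Inside $U$, the half annuli are exactly the local stable and unstable prong sheets of $\gamma$ that lie in $C$. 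This is where \hyperref[LH]{LH} is used: without it, $\gamma$ could be a non-hyperbolic periodic orbit of the blowup (for instance, with a degenerate return map on the edges of $O^{\sharp}$), and the flow near $\gamma$ would not be determined.

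\textbf{Step 2: the local model fills $C$.} Take a transverse disk $D$ to $\gamma$ in $C$ (a meridian disk). By Step 1 the first return map on $D\cap C$ is a partially defined map with a unique fixed point. Its prongs are the traces of $\mathcal{H}(C)$, and on each complementary sector it is conjugate to the product (translation-to-boundary) dynamics, because $C\setminus\mathcal{H}(C)$ is product. I would build the conjugacy sector by sector. On each sector, conjugate the restricted semiflow to the model's sector using the product structure, so that orbits are matched by their entry point on $R_{-}$ (or exit point on $R_{+}$). Then check that these conjugacies extend continuously across the prongs. Continuity along a prong follows from the local hyperbolic normal form in $U$: points approaching a half annulus spend longer and longer time near $\gamma$ with uniform contraction and expansion rates, exactly as in $N_{n,k,\lambda}$. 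Gluing gives an orbit equivalence $C\to N_{n,k,\lambda}$ (restricted to the sutured solid torus). By its construction, it sends sutured annuli to sutured annuli and $R_{\pm}$ to $R_{\pm}$.

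\textbf{Step 3: independence.} By Lemma \ref{lem:addparallel}, changing the facet, the annulus collection, the aligned positions, or the blowup only adds or removes product pieces along $\partial C$. Such changes preserve the orbit equivalence class, and Step 2 identifies every resulting $C$ with the same model determined by the sutures.

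\textbf{Main obstacle.} I expect the main difficulty to be the extension across the half annuli in Step 2. The sector conjugacies are defined by independent product parametrizations, and making them agree along a common half annulus is the issue. The model also has the prong number $n$ and the rotation $k$, so one must check that $\gamma$'s model matches the one forced by the sutures: four longitudes, or the slope of the non-longitudinal 2-ST. I would first fix the conjugacy on a neighborhood of $\gamma\cup\mathcal{H}(C)$ using the \hyperref[LH]{LH} chart. I would then extend it outward along flow lines to $R_{\pm}$ via the product structure, which avoids any matching problem away from $\gamma$.
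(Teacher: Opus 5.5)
Your proposal is correct and follows essentially the paper's route. The paper derives the statement from the characterization in Proposition~\ref{prop:singulargutsinvariance} together with Lemmas~\ref{lem:extra:4-ST} and~\ref{lem:extra:2-ST}, which show that a tight semiflow on a 4-ST (resp.\ non-longitudinal 2-ST) whose only bi-infinite flowline is a pseudo-hyperbolic closed orbit is unique up to orbit equivalence: it fixes an orbit equivalence on the LH chart around the orbit, extends it along the stable/unstable half annuli via contracting return maps on transverse arcs, and then over the remaining product solid tori, exactly the order in your ``main obstacle'' paragraph. Comparing every component with a fixed standard model rather than comparing two flows directly is only cosmetic, but note that the LH chart covers only a neighborhood of \(\gamma\), so the half annuli out to \(\partial C\) are reached by the flow-line extension, and that your first ``sector by sector, then check continuity across the prongs'' version would not work as stated, since independently chosen product parametrizations of the sectors need not be compatible with the spiraling of orbits near \(\gamma\).
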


The blowup construction in \autocite[Section 3.1]{LandryMinskyTaylor2025_TransverseSurfacesAndPseudoAnosovFlows_PUB} satisfies Condition \hyperref[LH]{LH} automatically.
In the simplest blowup of a quadrant, the local return map is topologically conjugate to
\[ \begin{pmatrix}
1/2&0\\0&2
\end{pmatrix} \]
a linear map. Then we replace the fixed point by the interval of unit tangent vectors pointing into the quadrant.
Both fixed points created by blowing up are pseudo-hyperbolic orbit, with one coordinate given by the slope.
Repeating for new fixed points gives a blowup with more fixed points.
So all closed orbits are pseudo-hyperbolic.

It's interesting to know whether Proposition \ref{prop:extra:singulargutsinvariance} holds without Condition \hyperref[LH]{LH}.

\begin{lemma}
Suppose that \((T,R_{+},R_{-},\gamma)\) is a 4-ST. Then up to orbit equivalence, there is only one semiflow \(\varphi\) satisfying the following conditions:
\begin{enumerate}
\item \(\varphi\) is tight.
\item \(\varphi\) contains exactly one bi-infinite flowline \(\lambda\) and \(\lambda\) is a pseudo-hyperbolic closed orbit.
\end{enumerate}
\label{lem:extra:4-ST}
\end{lemma}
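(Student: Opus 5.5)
We need a plan for proving uniqueness of a tight semiflow on a 4-ST with exactly one bi-infinite flowline, which is a pseudo-hyperbolic closed orbit.

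The plan is to decompose the 4-ST into a neighbourhood of the closed orbit \(\lambda\) and a complementary collar, and then show that on each piece the flow is rigid. Concretely, let \(\psi\) be any semiflow satisfying (1) and (2). Since \(\lambda\) is pseudo-hyperbolic, it has a neighbourhood \(U\) and a map to a standard local model \(N_{n,k,\lambda}\). Because \(T\) is a solid torus, \(\lambda\) must be homotopic to a power of the core. Using the sutured structure (four longitudinal sutures, \(R_{\pm}\) each two annuli), the model should be the \(2\)-pronged-per-side model with \(4\) prongs, i.e. the local hyperbolic model whose stable and unstable prongs are each two, and the prongs are not rotated (\(k=0\)). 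I would pin down \(n\) and \(k\) by an Euler-characteristic/index count on a transverse disk. Flowing \(R_{-}\) forward and \(R_{+}\) backward gives the local stable and unstable sets of \(\lambda\), and these must meet \(\partial T\) compatibly with the four longitudinal sutures. After that, the model \(U\) is unique up to orbit equivalence.

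Next, I would analyse \(T\setminus \lambda\). Every point other than those on \(\lambda\) lies on a compact or half-infinite orbit, since \(\lambda\) is the only bi-infinite one. Tightness gives that each orbit starting on \(R_{-}\) either exits through \(R_{+}\) or accumulates forward on \(\lambda\). The set of points whose forward orbit accumulates on \(\lambda\) is the stable set \(W^{s}(\lambda)\), which by the local model is two half annuli issuing from \(\lambda\) and ending on \(R_{-}\). Similarly, \(W^{u}(\lambda)\) consists of two half annuli ending on \(R_{+}\). These four half annuli cut \(T\setminus\lambda\) into four pieces. On each piece every orbit is a compact arc from \(R_{-}\) (or from a suture) to \(R_{+}\), so each piece is a product \(I\)-bundle with product flow. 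I would argue this by a standard argument: the first-hit map \(R_{-}\to R_{+}\) is a continuous injective open map on the complement of the half-annulus endpoints, which trivialises the flow as a product. The sutured annuli are product saturated by tightness, so they fit in as the vertical boundary.

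Finally, I would glue. Two such flows \(\psi_1,\psi_2\) have orbit-equivalent neighbourhoods of \(\lambda\) and orbit-equivalent product pieces. To build the orbit equivalence, I would first fix the conjugacy on \(U\) and on the four half annuli. I would then extend it over each product piece by matching the cross-section \(\partial U\cap(\text{piece})\) together with \(R_{\pm}\), which is possible because product flows are determined by a transverse section and the identification of boundary data is through orbit-preserving homeomorphisms of annuli and rectangles.

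The main obstacle I expect is the second step. This requires showing rigorously that orbits not on \(W^{s/u}(\lambda)\) leave \(T\) in both directions in finite time, with no orbit accumulating on \(\lambda\) except along the stable and unstable half annuli. Here I would use compactness together with the hypothesis that \(\lambda\) is the unique bi-infinite orbit: an omega-limit set of a forward-infinite orbit is a compact invariant set containing a bi-infinite orbit, hence \(\lambda\), and the pseudo-hyperbolic local model then forces the orbit into \(W^{s}(\lambda)\). The uniform time bound \(\epsilon\) in tightness condition (1) handles the boundary behaviour.
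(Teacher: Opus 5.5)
Your proposal is correct and follows essentially the same route as the paper: a standard neighbourhood of \(\lambda\), the \(\omega\)-limit argument showing that every orbit off \(W^{s}(\lambda)\cup W^{u}(\lambda)\) is a compact segment, the resulting four product quadrants, and an orbit equivalence built outward from the local model. The paper makes your gluing step concrete, which is what secures continuity along \(W^{s}\cup W^{u}\): it shrinks \(N\) so that \(N\cap(W^{s}\cup W^{u})\) is connected, extends \(f\) along the half annuli by conjugating the contracting return maps on transverse arcs, and then extends over the eight compact product pieces of \(\varphi_{\mathbb{R}}(N)\) and the four outer product solid tori; it simply asserts that \(\lambda\) is a regular four-prong orbit, and this is more naturally derived from how the half annuli must land in the alternating \(R_{\pm}\) annuli than from an index count on a transverse disk.
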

\begin{proof}
By definition, let \(N\) be the closed neighborhood of \(\lambda\) which is orbit equivalent to a standard neighborhood of a standard pseudo-hyperbolic orbit.
Let \(W^{s}(\lambda)\) be the set of points in \(T\) whose forward rays converges to \(\lambda\) and let \(W^{u}(\lambda)\) be the set of points in \(T\) whose backward rays converges to \(\lambda\).
For each point \(x\in W^{s}(\lambda)\setminus \lambda\), the backward direction of \(x\) must intersect \(R_{+}\) because \(\lambda\) is the only bi-infinite flowline.
Similarly, the forward direction of \(x\in W^{u}(\lambda)\setminus \lambda\) intersects \(R_{-}\).
Due to continuity of \(\varphi\) and the compactness of \(N\), we choose a smaller \(N\), so that:
\begin{enumerate}
\item for each point \(x\in N\setminus (W^{s}\cup W^{u})\), the forward direction of \(x\) intersects \(R_{-}\) and the backward direction of \(x\) intersects \(R_{+}\), and
\item \(N\cap (W^{u}(\lambda)\cup W^{s}(\lambda))\) has only one connected component.
\end{enumerate}

All flowlines in \(T\setminus (W^{s}(\lambda)\cup W^{u}(\lambda))\) are compact segments. Otherwise, assume that a point \(x\notin T\setminus (W^{s}\cup W^{u})\) has a well-defined forward ray. There exists a sequence of time \(t_{i}\to \infty\) such that \(\varphi_{t_{i}}(x)\) converges to a point \(x_{0}\) and the flowline traversing \(x_{0}\) is bi-infinite.
So \(x_{0}\in \lambda\) and \(\varphi_{t_{i}}(x)\in N\) for sufficiently large \(t_{i}\).
However, each point in \(N\) is contained in either \(W^{s}(\lambda)\cup W^{u}(\lambda)\) or a compact flowline. Hence, the claim holds.

Consequently, \(\lambda\) is a regular pseudo-hyperbolic closed orbit with four prongs and \(W^{s}(\lambda)\cup W^{u}(\lambda)\) separates \(N\) and \(T\) into four quadrants.
For any subset \(U\subset  T\), let
\[ \varphi_{\Real}(U):=\{x\in T: \exists t\in \Real, \varphi(x)\in U \}. \]
Then \(\varphi_{\Real}(N)\) is a solid torus and \(\partial \varphi_{\Real}(N)\cap R_{+}\) is an annular neighborhood of \(\partial W^{s}(\lambda)\)

Suppose that \(\varphi_{1}\) and \(\varphi_{2}\) are two semiflows in \(T\) satisfying the conditions, with associated objects \(N_{i},W^{s}(\lambda_{i}),W^{u}(\lambda_{i})\) for \(i=1,2\) as defined in previous paragraphs.
We choose an orbit equivalence \(f:N_{1}\to N_{2}\) since both of them are neighborhoods of a standard regular pseudo-hyperbolic orbit.
We will extend \(f\) to \(W^{s}(\lambda_{1})\), \(W^{u}(\lambda_{1})\), and to \(\varphi_{\Real}(N_{1})\), and to \(\mathrm{cl}(T\setminus \varphi_{\Real}(N_{1}))\).

The metric completion of \(W^{s}(\lambda_{i})\setminus \lambda_{i}\) is two closed annuli.
Let \(A_{1}\) be one component of metric completion of \(W^{s}(\lambda_{1})\setminus \lambda_{1}\) and let \(A_{2}\) be the component of metric completion of \(W^{s}(\lambda_{2})\setminus \lambda_{2}\) which intersects \(\mathrm{int}(f(N_{1}\cap A_{1}))\).
For \(i=1,2\), choose any separating arc \(\alpha_{i}\) in \(A_{i}\) transverse to \(\varphi_{i}|_{A_{i}}\), so that \(\alpha_{2}\) contains \(f(\alpha_{1}\cap N)\) as a subinterval.
Then \(\varphi_{i}\) induces a \emph{contracting} return map \(r_{i}\) on \(\alpha_{i}\), namely, \(r_{i}^{k}(\alpha_{i})\) is a strictly decreasing collection of closed subintervals in \(\alpha_{i}\) and \(\cap _{k=1}^{\infty}r_{i}^{k}(\alpha_{i})\) is a point \(\lambda_{i}\cap \alpha_{i}\).
Extend \(f\) to \(f:A_{1}\to A_{2}\) as follows. First, extend the map to \(\alpha_{1}\) via the relation \(f(r_{1}^{-1}(x))=r_{2}^{-1}(f(x))\) iteratively; then, to the whole \(A_{1}\) by choosing an arbitrary homeomorphism on \(A_{1}\setminus (\alpha_{1}\cup N_{1})\) that preserves the flow structure.
Now, the domain of \(f\) becomes \(N_{1}\cup W^{s}(\lambda_{1}) \cup W^{u}(\lambda_{1})\).

Write \(\widetilde{N}_{i}=N_{i}\cup W^{s}(\lambda_{i})\cup W^{u}(\lambda_{i})\).
For \(i=1,2\), the metric completion of \(\varphi_{\Real}(N_{i})\setminus \widetilde{N}_{i}\) has eight components \(\{C^{(k)}_{i}\}\). Each component \(C_{i}^{(k)}\) is a solid torus and inherits from \(\varphi_{i}\) a product flow. 
Further, each \(C_{1}^{(k)}\) corresponds to a unique component \(C_{2}^{(k)}\) which satisfies
\[ f(C_{1}^{(k)}\cap \widetilde{N}_{1})=C_{2}^{(k)}\cap \widetilde{N}_{2}.  \]
On each component \(C_{1}^{(k)}\), \(f\) is well-defined on \(C_{1}^{(k)}\cap \widetilde{N}_{1}\) and preserves the flowlines on \(C_{1}^{(k)}\cap \widetilde{N}_{1}\). Since \(\varphi_{1}|_{C_{1}^{(k)}}\) and \(\varphi_{2}|_{C_{2}^{(k)}}\) are product flows, \(f\) can be extended to \(C_{1}^{(k)}\) preserving flowlines.
More precisely, up to homeomorphism, we write \(C_{1}^{(k)}=C_{2}^{(k)}= S^{1}\times I\times I\) and require that \(S^{1}\times \{0\}\times I\) is \(C_{i}^{(k)}\cap (W^{s}\cup W^{u})\), \(S^{1}\times I\times \{0\}\) is \(C_{i}^{(k)}\cap N_{i}\), and \(x\times \{0\}\times I,x\in S^{1}\) are flowlines,.
More precisely, up to homeomorphism, we write \(C_{1}^{(k)}=C_{2}^{(k)}= S^{1}\times I\times I\). Further, we require that, \(S^{1}\times \{0\}\times I\) corresponds to \(C_{i}^{(k)}\cap (W^{s}\cup W^{u})\) and \(S^{1}\times I\times \{0\}\) corresponds to \(C_{i}^{(k)}\cap N_{i}\); meanwhile, \(x\times \{i\}\times I,x\in S^{1},i\in I\) are flowlines and \(f\) is identity on \(S^{1}\times \{0\}\times I\) and \(S^{1}\times I\times \{0\}\).
We extend \(f\) directly by
\[ f(x,i,j)=(x,i,j) \]
which preserves flowlines clearly.
Note that the extension of \(f\) is not unique.

The complement of \(\varphi_{\Real}(N_{i})\) in \(T\) is four solid tori, each of which inherits a product flow from \(N_{i}\).
As in preceding paragraph, we extend \(f\) to \(T\setminus \varphi_{\Real}(N_{i})\) and hence the whole \(T\).
Consequently, \(\varphi_{1}\) and \(\varphi_{2}\) are orbit equivalent.
\end{proof}
\begin{remark}
The orbit equivalence constructed above may not preserve a pre-assigned (un)stable foliation structure. This is due to the existence of compact flowlines: the (un)stable manifold associated to a compact flowline is not canonically defined.
\end{remark}

Via the similar proof, the semiflow on a non-longitudinal 2-ST is also unique, provided some necessary assumptions and up to orbit equivalence.
\begin{lemma}
Suppose that \((T,R_{+},R_{-},\gamma)\) is a non-longitudinal 2-ST. Then up to orbit equivalence, there is only one semiflow \(\varphi\) satisfying the following conditions.
\begin{enumerate}
\item \(\varphi\) is tight.
\item \(\varphi\) contains exactly one bi-infinite flowline \(\lambda\) and \(\lambda\) is a pseudo-hyperbolic closed orbit.
\end{enumerate}
\label{lem:extra:2-ST}
\end{lemma}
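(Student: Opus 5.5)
I will follow the proof of Lemma \ref{lem:extra:4-ST} step by step, changing only the local model. That model is now a neighborhood \(N\) of a pseudo-hyperbolic orbit \(\lambda\) whose prongs are permuted by the return map. The first step is to pin down this model. The sutures of a non-longitudinal 2-ST are two parallel curves of slope \(p/q\) with \(q\geq 2\), and \(\lambda\) is isotopic to the core of \(T\). I want to show that \(\lambda\) has \(2q\) prongs in total (\(q\) stable, \(q\) unstable) and that the first return map rotates them by a fixed amount determined by \(p/q\).

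The argument for this is the same as in the 4-ST case. Points of \(W^{s}(\lambda)\setminus\lambda\) flow backward into \(R_{+}\), and points of \(W^{u}(\lambda)\setminus\lambda\) flow forward into \(R_{-}\). As before, compactness forces every orbit off \(W^{s}\cup W^{u}\) to be a compact segment from \(R_{+}\) to \(R_{-}\). Therefore \(W^{s}(\lambda)\) meets \(R_{+}\) in curves isotopic to the cores of \(R_{+}\), and \(W^{u}(\lambda)\) meets \(R_{-}\) likewise. The number of prongs and the way the return map permutes them are then read off from the slope of the sutures. So \(\lambda\) is a standard \(N_{n,k,\lambda}\) model with \(n,k\) fixed by the sutured structure. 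Once this is established, for two such semiflows \(\varphi_{1},\varphi_{2}\) I choose an orbit equivalence \(f:N_{1}\to N_{2}\) between the standard models, as in the 4-ST proof.

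Next I extend \(f\) over \(W^{s}(\lambda_{1})\) and \(W^{u}(\lambda_{1})\). Because the prongs are permuted, the metric completion of \(W^{s}(\lambda_{i})\setminus\lambda_{i}\) is now a single annulus (or a few annuli) winding \(q\) times around \(\lambda_{i}\), rather than two annuli. On it I take a transverse arc \(\alpha_{i}\), chosen compatibly with \(f\) on \(N_{1}\). The return map \(r_{i}\) on \(\alpha_{i}\) is contracting onto \(\alpha_{i}\cap\lambda_{i}\), where I count the return along the annulus, which is the \(q\)-th power of the return near \(\lambda_{i}\). I extend \(f\) along \(\alpha_{1}\) by the conjugation \(f\circ r_{1}^{-1}=r_{2}^{-1}\circ f\), and then over the rest of the annulus flow-equivariantly. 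The unstable side is handled the same way.

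Finally, the metric completion of \(\varphi_{\Real}(N_{i})\setminus(N_{i}\cup W^{s}\cup W^{u})\) splits into finitely many solid tori carrying product flows. So does the closure of \(T\setminus\varphi_{\Real}(N_{i})\), which has two components, one for each complementary region between the sutures. On each piece \(f\) is already defined on the part of the boundary lying in \(N_{i}\cup W^{s}\cup W^{u}\). I extend it via the \(S^{1}\times I\times I\) coordinates exactly as in the 4-ST proof. The main obstacle is the first step: verifying that the tightness conditions and the non-longitudinal slope force a unique combinatorial model for \(N\), namely the prong count and the rotation number. It is also where the non-longitudinal hypothesis is used. For that step I would try an Euler characteristic and index count on \(R_{\pm}\) combined with the winding of the sutures.
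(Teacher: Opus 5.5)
Your route is the paper's. The paper proves this lemma only by saying it follows ``via the similar proof'' from Lemma~\ref{lem:extra:4-ST}, and your transplant is that proof: fix the standard model at \(\lambda\), conjugate the contracting return maps on the stable and unstable annuli, then extend over the remaining product solid tori.

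\textbf{Where the gap is.} The gap is the first step, which the paper also passes over; its 4-ST proof simply asserts ``Consequently, \(\lambda\) is a regular pseudo-hyperbolic closed orbit with four prongs''. Compactness only tells you that orbits off \(W^{s}\cup W^{u}\) are compact. It does not yield your ``therefore'': that \(W^{s}(\lambda)\) meets the entry annulus \(E\) in curves isotopic to its core, or that \(\lambda\) is a core of \(T\). (By Definition~\ref{def:tight} the entry annulus is \(R_{-}\), not \(R_{+}\), but nothing hinges on the label.)

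\textbf{Why your proposed count cannot close it.} Let \(n\) be the number of stable prongs, \(k\) the rotation, and \(g=\gcd(n,k)\). Then \(W^{s}(\lambda)\setminus\lambda\) consists of \(g\) half-annuli. The sutures are isotopic, through the adjacent product pieces, to corner curves of \(\partial N\), and these wind \(n/g\) times around \(\lambda\). So the suture slope only detects \(n/g\), not \(n\). Consider the configuration \(n=2q\), \(g=2\): two parallel essential stable curves in \(E\), with the annulus between them spanning a product piece attached to \(V=\varphi_{\Real}(N)\) along two sector annuli. It has the same invariants as the genuine model:
\begin{itemize}
\item all Euler characteristics are \(0\);
\item the fixed-point index at \(\lambda\) is \(+1\), since the prongs are rotated nontrivially;
\item the sutures wind \(q\) times.
\end{itemize}

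\textbf{What does exclude it: \(\pi_{1}(T)\cong\Integer\).} Decompose \(T\) into \(V\), which retracts to \(\lambda\), and the product pieces \(a\times I\), where \(a\) runs over components of \(E\setminus W^{s}\). These are glued along \(2g\) sector annuli, one for each side of each stable curve, each with core \(\lambda^{n/g}\) in \(V\). The argument has three steps.
\begin{enumerate}
\item \emph{Inessential stable curves.} If some stable curve were inessential in \(E\), it would make \(\lambda\) null-homotopic in \(T\), so every stable curve would be inessential. Let \(X_{0}\) be the outer piece, \(Y\) the rest, and \(h\ge 1\) the number of outermost stable curves. Mayer--Vietoris for \(T=Y\cup X_{0}\) gives \(\operatorname{rank}H_{1}(T)\geq h+\operatorname{rank}H_{1}(Y)\), which forces \(h=1\). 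Then \(H_{1}(T)\cong H_{1}(Y)\oplus\Integer[s]\) for a suture circle \(s\), so \(s\) generates \(H_{1}(T)\) and the sutures are longitudinal. This contradiction is exactly where non-longitudinality is used.
\item \emph{Essential stable curves, \(g\geq 2\).} The stable curves are therefore parallel essential curves. If \(g\geq 2\), the piece between two of them closes a cycle in the dual graph. Then \(\pi_{1}(T)\), a graph of infinite cyclic groups with injective edge maps, surjects onto \(\Integer\) with the infinite-order element \(\lambda\) in the kernel, so it is not cyclic.
\item \emph{Conclusion.} Hence \(g=1\), and both pieces are collars on longitudinal annuli. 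So \(T\cong V\), \(\lambda\) is a core, and \(n=q\). The sutures are isotopic to the corner curves of \(\partial N\), which determines \(k \bmod q\) from the slope.
\end{enumerate}
With the local model pinned down this way, the rest of your plan goes through as written. The same argument also supplies the missing justification in the 4-ST case.
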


Proposition \ref{prop:extra:singulargutsinvariance} follows directly from Lemma \ref{lem:extra:4-ST}, Lemma \ref{lem:extra:2-ST} and the characterization of the semiflows on sutured tori in Proposition \ref{prop:singulargutsinvariance}.
\subsection{Choices of blowup way}
\label{sec:org43f9ed4}
Recall that we choose a further blowup \(\varphi^{\sharp\sharp}\), such that \(F(z)\) and \(\mathcal{A}\) can be isotoped to be in aligned position.
As mentioned in Section \ref{sec:pre:blowup} and Lemma \ref{lem:singular-invariantline}, the choice of \(\varphi^{\sharp\sharp}\) is flexible.
But we have,
\begin{lemma}
Fix a maximal transverse facet \(F_{\tau}(z)\) and a maximal collection \(\mathcal{A}\) of product annuli.
Suppose that \(\varphi_{1}^{\sharp\sharp}\) and \(\varphi_{2}^{\sharp\sharp}\) are two further blowup such that \(\mathcal{A}\) can be isotoped to aligned position.
For \(i=1,2\), \(\varphi_{i}^{\sharp\sharp}\) induces a semiflow \(\phi_{i}\) on \(\mathrm{Guts}(F_{\tau}(z)\cup \mathcal{A})\).
Then, \(\phi_{1}\) and \(\phi_{2}\) are orbit equivalent outside half annuli.
\label{lem:choice:blowup}
\end{lemma}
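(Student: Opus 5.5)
The argument compares the two blowups on the set where they agree and isolates the places where they genuinely differ. By the definition of dynamic blowup, both \(\varphi_{1}^{\sharp\sharp}\) and \(\varphi_{2}^{\sharp\sharp}\) blow down to \(\varphi\). The blowdown maps are flow conjugacies away from the blown-up complexes of the finitely many singular orbits in \(\Pi\). Hence \(\varphi_{1}^{\sharp\sharp}\) and \(\varphi_{2}^{\sharp\sharp}\) are orbit equivalent on the complement of small neighborhoods of the blown-up complexes. Moreover, by Theorem \ref{thm:LMTparallel}, the blowups minimally transverse to \(F_{\tau}(z)\) all have the same combinatorial type. So the only freedom is the additional blowups introduced in Lemma \ref{lem:singular-invariantline} to make the transverse annuli \(\mathcal{A}^{\pitchfork}\) aligned, together with the non-uniqueness of the flow on the blown-up annuli themselves.

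The plan is to split \(\mathrm{Guts}(F_{\tau}(z)\cup\mathcal{A})\) into its singular and regular parts, as in Section \ref{sec:singularguts}.

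\textbf{Singular part.} Proposition \ref{prop:singulargutsinvariance} already shows that each 4-ST or non-longitudinal 2-ST component carries a unique semiflow outside half annuli. This holds for any choice of \(\varphi^{\sharp\sharp}\), so there is nothing further to prove there.

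\textbf{Regular part.} I would first use Lemma \ref{lem:choice:alignedposition} and the discussion after Lemma \ref{lem:singularinvariance2}. Together these allow us to assume that the transverse annuli of \(\mathcal{A}_{\varphi}\) lie inside \(P_{\tau}'\) and that the product saturated ones lie outside it. The two blowup choices of Lemma \ref{lem:singular-invariantline} differ only in which pair of quadrants \(Q_{1},Q_{2}\) or \(Q_{1}',Q_{2}'\) is made adjacent. That blowup lives in a neighborhood of the singular orbit inside \(P_{\tau}'\), which is precisely the singular part. Consequently the regular components meet the blown-up complex only along the blunt or sharp pieces shown in Figure \ref{fig:singulargraph}(e). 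Intersecting a blown-up annulus with a regular component gives half annuli, by the remark following Definition \ref{def:halfannulus}.

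Next I would construct the homeomorphism. On each regular component \(C\), take the conjugacy obtained by composing the blowdown of \(\varphi_{1}^{\sharp\sharp}\) with the inverse of the blowdown of \(\varphi_{2}^{\sharp\sharp}\). It is defined off the blown-up complexes and sends orbits to orbits. It remains to check that it extends across the pieces of blown-up complex in \(C\) that are not half annuli, namely the closed orbits at vertices of the blown-up tree. Outside the half annuli, these closed orbits are uniquely determined as limits of the orbits in the adjacent quadrants. They therefore match up under the conjugacy, and continuity follows from the product structure of the quadrants on each side.

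The main obstacle is the sharp components. Their closed orbit is separated by two half annuli, and in \(\varphi_{1}^{\sharp\sharp}\) and \(\varphi_{2}^{\sharp\sharp}\) the combinatorics of the blown-up tree near that orbit might differ, for instance with different numbers of vertices adjacent to \(C\). I would resolve this by arguing that the quadrants of the singular point occupied by \(C\) are fixed by \(z\) through \(F_{\tau}(z)\) and \(\mathcal{A}_{\tau}\), using Lemma \ref{lem:singularinvariance1} and Lemma \ref{lem:singularinvariance2}. The further blowup only inserts segments between quadrants that do not belong to \(C\). Thus, inside \(C\), removing \(\mathcal{H}(C)\) leaves a copy of the same union of quadrants with their product flows, glued along the same orbit. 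This yields the desired orbit equivalence outside half annuli.
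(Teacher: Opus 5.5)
You follow the same architecture as the paper: the singular guts are handled by Proposition \ref{prop:singulargutsinvariance}, and the regular guts by the conjugacy $\Pi=\pi_2^{-1}\pi_1$ away from the blown-up complexes. The step that carries the regular part fails, however. You assert that the further blowup of Lemma \ref{lem:singular-invariantline} lives inside $P_\tau'$ and only inserts segments between quadrants not belonging to any regular component $C$. That is false.

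\begin{itemize}
\item The transverse annuli parallel to the case-3 annuli $\mathcal{A}_\tau^{\pitchfork}$ belong to $\mathcal{A}_\tau'$, so they lie on $\partial P_\tau'$.
\item The newly inserted blown-up annulus meets such a transverse annulus in a core circle. Hence one endpoint orbit of the new edge, together with half of the new blown-up annulus, lies on the regular side of $\partial P_\tau'$. It cannot sit in a window, since windows carry product flows by Lemma \ref{lem:windowproduct}.
\item The two admissible choices (pairing $Q_1,Q_2$ or $Q_1',Q_2'$) use different new edges and different transverse annuli. So the regular components $C_1$ and $C_2$ agree only on $M\setminus T_\tau$; inside $T_\tau$ they are different sets, and $\Pi(C_1)\neq C_2$.
\item $\Pi$ does not even extend continuously over the new vertex orbit. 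A sector that has only a corner at that vertex for one blowup borders the entire new edge for the other, so points approaching the orbit through that sector are sent near a whole blown-up segment.
\end{itemize}

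Your ``limits of orbits in adjacent quadrants'' extension therefore breaks exactly where $\varphi_1^{\sharp\sharp}$ and $\varphi_2^{\sharp\sharp}$ differ.

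The missing idea is to stop trying to extend $\Pi$ and instead compare images, as the paper does.
\begin{itemize}
\item First normalize so that both blowdown maps are the identity on $M\setminus T_\tau$ and $\mathcal{A}_1$, $\mathcal{A}_2$ share their product saturated components. You only used this implicitly.
\item Match each component $C_1$ of the regular guts with the unique $C_2$ satisfying $C_2\cap(M\setminus T_\tau)=C_1\cap(M\setminus T_\tau)$.
\item Observe that the components of $C_2^{\circ}\mathbin\triangle\Pi(C_1^{\circ})$ are product flows or degenerate product flows, i.e.\ $D\times I$ with $\partial D\times I$ collapsed.
\item Absorb these pieces as in Lemma \ref{lem:addparallel}.
\end{itemize}
This absorption near the blunt and sharp pieces is what produces the orbit equivalence outside half annuli. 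A direct extension of $\Pi$ cannot produce it.
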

\begin{proof}
By Proposition \ref{prop:singulargutsinvariance}, restricting to singular guts, \(\phi_{1}\) and \(\phi_{2}\) are orbit equivalent outside half annuli.
We handle the regular guts in the following.

Recall that \(T_{\tau}\) is the tube system of \(\varphi\) and contains all singular orbits of \(\varphi\).
Isotope \(\mathcal{A}\) to \(\varphi_{i}^{\sharp\sharp}\)-aligned position \(\mathcal{A}_{i}\).
Without loss of generosity, assume that
\begin{enumerate}
\item the blowdown map \(\pi_{i}: (M,\varphi_{i}^{\sharp\sharp})\to (M,\varphi)\) is identity on \(M\setminus T_{\tau}\).
\item product saturated components are in \(M\setminus T_{\tau}\) and transverse components are in \(T_{\tau}\) (see Section \ref{sec:singularguts}).
\item \(\mathcal{A}_{1}\) and \(\mathcal{A}_{2}\) use same product saturated components, and all transverse components are in the singular guts.
\end{enumerate}

Let \(X_{i}\) be the blown-up complex of \(\varphi_{i}^{\sharp\sharp}\). Set \(\Pi=\pi_{2}^{-1}\pi_{1}\).
\(\Pi\) maps \(M\setminus X_{1}\) to \(M\setminus X_{2}\) homeomorphically, and conjugate \(\varphi_{1}^{\sharp\sharp}\) to \(\varphi_{2}^{\sharp\sharp}\) outside the blown-up complex.
By our assumption, \(\Pi\) is identity on \(M\setminus T_{\tau}\).

Write \(\mathrm{RG}_{i}= \mathrm{Guts}_{r}(F_{\tau}(z)\cup \mathcal{A}_{i})\) endowed with the semiflow \(\phi_{i}\).
For a component \(C\) of the guts, let \(C^{\circ}\) denote \(C\) removing all half annuli.
Let \(C_{1}\) be a component of \(\mathrm{RG}_{1}\).
If \(C_{1}\) is disjoint from \(T_{\tau}\), then \(C_{1}\) is identical with a component \(C_{2}\) of \(\mathrm{RG}_{2}\), since \(\mathcal{A}_{1}\) and \(\mathcal{A}_{2}\) use same product saturated components.
Otherwise, assume \(C_{1}\cap T_{\tau}\) is nonempty. Components of \(C_{1}\cap T_{\tau}\) are of two possible types, \emph{blunt or sharp} (see Figure \ref{fig:singulargraph}(e)).
A sharp component contains a closed orbit, and two half annuli that separate it into two parts
A blunt component contains a closed orbit and at most one half annulus.
There is a unique component \(C_{2}\) of \(\mathrm{RG}_{2}\) so that \(C_{2}\cap (M\setminus T_{\tau})=C_{1}\cap (M\setminus T_{\tau})\).
Components of the symmetric difference \(C_{2}^{\circ}\mathbin\triangle \Pi(C_{1}^{\circ})\) are either a product flow, or a degenerate one.
A degenerate product flow is obtain from \(D\times I\) by collapsing \(\partial D\times I\), where \(D\) is an arbitrary set.
Therefore, \(C_{2}^{\circ}\) is orbit equivalent to \(\Pi(C_{1}^{\circ})\) and hence \(C_{1}^{\circ}\), similar to Lemma \ref{lem:addparallel}.
\end{proof}
\subsection{Choices of product annuli}
\label{sec:orgb6cd628}
Suppose \(F_{\varphi}(z)\) is a transverse facet and transverse to a blowup \(\varphi^{\sharp}\) minimally.
Let \(\mathcal{A}\) be a maximal collection of non-parallel product annuli.
Fix a further blowup \(\varphi^{\sharp\sharp}\). We isotope \(\mathcal{A}\) to aligned position \(\mathcal{A}_{\varphi}\).
In Lemma \ref{lem:choice:alignedposition}, we show that different aligned position choices give same semiflow on \(\mathrm{Guts}(F_{\varphi}(z)\cup \mathcal{A})\).
\begin{lemma}
Fix a maximal transverse facet \(F_{\tau}(z)\).
If \(\mathcal{A}^{1}\) and \(\mathcal{A}^{2}\) are two maximal collections of product annuli in \(M-F_{\tau}(z)\).
Then the semiflows on \(\mathrm{Guts}(F_{\tau}(z)\cup \mathcal{A}^{1})\) and \(\mathrm{Guts}(F_{\tau}(z)\cup \mathcal{A}^{2})\) are orbit equivalent outside half annuli.
\label{lem:choice:productannuli}
\end{lemma}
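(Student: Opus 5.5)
The plan is to first isolate the singular guts and then compare the two maximal collections on the regular part. By Proposition \ref{prop:singulargutsinvariance}, the semiflows on \(\mathrm{Guts}_{s}(F_{\tau}(z)\cup \mathcal{A}^{1})\) and \(\mathrm{Guts}_{s}(F_{\tau}(z)\cup \mathcal{A}^{2})\) are already orbit equivalent outside half annuli. The only issue is therefore the regular guts. Following the normalization of Section \ref{sec:singularguts}, I would put both \(\mathcal{A}^{1}_{\varphi}\) and \(\mathcal{A}^{2}_{\varphi}\) in aligned position with one common blowup \(\varphi^{\sharp\sharp}\); Lemma \ref{lem:choice:blowup} allows this at the cost of orbit equivalence outside half annuli. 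I would also arrange that both collections contain the parallel copy \(\mathcal{A}_{\tau}'\), that their transverse components lie in \(P_{\tau}'\), and that their product saturated components lie in \(M\setminus P_{\tau}'\). In the regular part, every annulus of \(\mathcal{A}^{1}_{\varphi}\) and \(\mathcal{A}^{2}_{\varphi}\) is then product \(\varphi^{\sharp\sharp}\)-saturated.

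The key step is to make \(\mathcal{A}^{1}_{\varphi}\) and \(\mathcal{A}^{2}_{\varphi}\) disjoint while keeping them saturated. Each saturated product annulus is determined by a \(g\)-invariant line in \(\Theta(\widetilde{S}_{1})\cap\Theta(\widetilde{S}_{2})\), as in Lemma \ref{lem:invariantline}. Two such saturated annuli meet along whole flow segments, and these project to intersections of the invariant lines in \(P_{\varphi}\). I would perform a cut-and-paste \(A\oplus A'\) in the orbit space, that is, at the level of invariant lines, and do it \(\pi_{1}\)-equivariantly. This keeps the resulting annuli saturated and removes the trivial pieces, exactly as in the proof of Lemma \ref{lem:choice:alignedposition}. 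If that fails, I would take a small product saturated neighborhood \(N\) of \(\mathcal{A}^{1}_{\varphi}\cup\mathcal{A}^{2}_{\varphi}\) and use \(\partial N\), as was done for \(\mathcal{A}_{\varphi}^{\parallel}\cup\mathcal{A}_{\tau}^{\parallel}\) in Section \ref{sec:singularguts}. Either way, I obtain a collection \(\mathcal{B}\) of saturated product annuli disjoint from both collections.

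Next comes the JSJ-type comparison. By the Agol--Zhang uniqueness (Theorem \ref{thm:AZguts}) and the uniqueness of the characteristic submanifold, every annulus of \(\mathcal{A}^{2}\) is isotopic into a window of \(M-(F_{\tau}(z)\cup\mathcal{A}^{1})\) or is parallel to an annulus of \(\mathcal{A}^{1}\). Since the two collections are now disjoint, each annulus of \(\mathcal{A}^{2}_{\varphi}\) lies in a window of the decomposition along \(\mathcal{A}^{1}_{\varphi}\), or in a product region between parallel annuli. Lemma \ref{lem:windowproduct}, together with its proof applied to regions cobounded by parallel saturated annuli, shows that these regions carry product flow. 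Cutting the product flow along a saturated annulus produces only product pieces. Hence the guts components of the two decompositions differ only by attaching or removing product flow blocks along \(R_{\pm}\) or along sutured annuli, and Lemma \ref{lem:addparallel} gives an orbit equivalence on the regular guts.

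The main obstacle is the disjointness step. Cut-and-paste of two saturated annuli must stay saturated and must not create transverse pieces. It must also be compatible near the tube system \(T_{\tau}\), where the blunt and sharp regular components and their half annuli live. Near \(T_{\tau}\) I expect to lose control of half annuli, which is why the conclusion only holds outside half annuli; so I would do the surgery away from \(T_{\tau}\) and handle the pieces inside \(T_{\tau}\) using the blunt/sharp analysis from Lemma \ref{lem:choice:blowup}.
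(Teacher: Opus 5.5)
Your reduction to the regular guts and your normalization (one common blowup, with product saturated components outside $T_{\tau}$) match the paper. The key step, however, does not work. The cut-and-paste in Lemma \ref{lem:choice:alignedposition} preserves isotopy classes only because there $A$ and $A'$ are isotopic. If $A\in\mathcal{A}^{1}_{\varphi}$ and $A'\in\mathcal{A}^{2}_{\varphi}$ cross essentially, the pieces of $A\oplus A'$ lie in new isotopy classes, so after the surgery you no longer have either collection. No other method can make them disjoint either, because two maximal collections can genuinely cross. For example, inside a product region $\Sigma\times I$ of $M-F_{\tau}(z)$ they may restrict to $\mathcal{P}_{1}\times I$ and $\mathcal{P}_{2}\times I$ for two different pants decompositions $\mathcal{P}_{1},\mathcal{P}_{2}$ of $\Sigma$. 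So the sentence ``since the two collections are now disjoint, each annulus of $\mathcal{A}^{2}_{\varphi}$ lies in a window of the decomposition along $\mathcal{A}^{1}_{\varphi}$'' has nothing under it. Being isotopic into the $I$-bundle part of the JSJ picture does not place an annulus inside a single product component of $M-(F_{\tau}(z)\cup\mathcal{A}^{1}_{\varphi})$.

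The missing idea is to stop seeking disjointness and compare both guts with a common refinement. Since both collections are saturated, you may cut the regular part $R$ along the saturated complex $\mathcal{A}^{1}_{\varphi}\cup\mathcal{A}^{2}_{\varphi}$, discard product pieces, and show that each $\mathrm{Guts}_{r}(F_{\tau}(z)\cup\mathcal{A}^{i}_{\varphi})$ is orbit equivalent to the result. This is what the paper does. For $A\in\mathcal{A}^{2}_{\varphi}$, each component $D$ of $A-\mathcal{A}^{1}_{\varphi}$ falls into one of three cases:
\begin{enumerate}
\item $D$ lies in a window, where it only subdivides a product flow;
\item $D$ lies in the guts component and is a saturated product annulus, hence parallel to $\mathcal{A}^{1}_{\varphi}$ by maximality;
\item $D$ lies in the guts component and is a saturated product disk, which is handled by Remark \ref{rem:productdisk}.
\end{enumerate}
Product disks are the generic outcome when annuli cross, and your JSJ step never treats them.

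Your $\partial N$ fallback is in fact a clean way to run this comparison while avoiding product disks. The components of $\partial_{v}N$ are saturated annuli disjoint from $\mathcal{A}^{1}_{\varphi}\cup\mathcal{A}^{2}_{\varphi}$, and cutting along $\partial_{v}N$ agrees with cutting along the union up to product pieces. For $i=1$ and $i=2$ separately, apply your window-or-parallel argument to the components of $\partial_{v}N$ lying in $R-\mathcal{A}^{i}_{\varphi}$, and finish with Lemma \ref{lem:addparallel}. The argument has to be phrased this way, not as making $\mathcal{A}^{1}_{\varphi}$ and $\mathcal{A}^{2}_{\varphi}$ disjoint.
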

\begin{proof}
It suffices to show the semiflows on the regular guts are orbit equivalent outside half annuli.
Let \(R\) be the regular part of \(M-F_{\tau}(z)\).
As before, different blowups are assumed to be identical in \(M\setminus T_{\tau}\).
Up to preposing the orbit equivalence \(\Pi\) in Lemma \ref{lem:choice:blowup}, the semiflow on \(R\) outside half annuli is unique. We fix a semiflow \(\phi\) on \(R\) induced by some blowup \(\varphi^{\sharp\sharp}\).
Choose the aligned position \(\mathcal{A}_{\varphi}^{1}\) and \(\mathcal{A}_{\varphi}^{2}\) so that, components in \(M\setminus T_{\tau}\) are all product saturated.

Recall that the regular guts \(\mathrm{Guts}_{r}(F_{\tau}(z)\cup \mathcal{A}^{i}_{\varphi})\) is obtained by cutting \(R\) along \(\mathcal{A}^{i}_{\varphi}\) and removing all product flows.
Define \(\mathrm{Guts}_{r}(F_{\tau}(z)\cup \mathcal{A}^{1}_{\varphi}\cup \mathcal{A}^{2}_{\varphi})\) by cutting \(R\) along \(\mathcal{A}_{1}\cup \mathcal{A}_{2}\) and removing all product flows.
We will show both \(\mathrm{Guts}_{r}(F_{\tau}(z)\cup \mathcal{A}_{i})\) are orbit equivalent to \(\mathrm{Guts}_{r}(F_{\tau}(z)\cup \mathcal{A}_{1}\cup \mathcal{A}_{2})\).

Let \(N\) be a component of the regular part.
By \autocite[Lemma 3.7]{AgolZhang2022_GutsInSuturedDecompositionsAndTheThurstonNorm_ARXv1}, \(\mathrm{Guts}(N)\) has only one connected component since \(N\) is horizontally prime.
Therefore, \(N\) is the result of gluing \(\mathrm{Guts}(N)\) and some windows along sutures.
Any product annulus in \(N\) is isotopic into the windows part \(W\) by maximality. 
Let \(A\) be a product annulus in \(\mathcal{A}_{\varphi}^{2}\) and \(D\) is a component of \(A-\mathcal{A}_{\varphi}^{1}\).
If \(D\) is in the windows part of \(N-\mathcal{A}_{\varphi}^{1}\), cutting along \(D\) won't affect the semiflow on guts since \(D\) is product saturated.
Assume that \(D\) is in the guts component.
If \(D\) is a product annulus, then \(D\) is parallel to \(\mathcal{A}_{\varphi}^{1}\).
If \(D\) is a product disk, we perform surgery along \(D\) as in Remark \ref{rem:productdisk} and discard all homotopically trivial components.
This yields product annuli \(D'\) in the guts component and \(\mathrm{Guts}(N,\mathcal{A}_{\varphi}^{1}\cup D)\) is orbit equivalent to \(\mathrm{Guts}(N,\mathcal{A}_{\varphi}^{1}\cup D')\).
In both cases, cutting along \(D\) doesn't affect the semiflow on guts.
Therefore, both \(\mathrm{Guts}_{r}(F_{\tau}(z)\cup \mathcal{A}_{i})\) are orbit equivalent to \(\mathrm{Guts}_{r}(F_{\tau}(z)\cup \mathcal{A}_{1}\cup \mathcal{A}_{2})\) outside half annuli.
\end{proof}
\subsection{Choices of facet}
\label{sec:facetchoice}
\begin{lemma}
Let \(z\in \mathcal{C}_{2}(\varphi)\) be a primitive homology class.
Then different choices of the facet \(F(z)\) for \(z\) does not alter the semiflow on \(\mathrm{Guts}(M,z)\) up to orbit equivalence outside half annuli.
\label{lem:choice:facet}
\end{lemma}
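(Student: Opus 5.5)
Given two facets \(F^{1}(z)\) and \(F^{2}(z)\) for \(z\), we will pass through a common refinement and compare each to it. First isotope both to transverse facets and then, following Section \ref{sec:veering}, to maximal transverse facets \(F^{1}_{\tau}(z)\) and \(F^{2}_{\tau}(z)\) relatively carried by \(\tau\). By Corollary \ref{lem:choice:transversefacet}, Lemma \ref{lem:choice:alignedposition}, Lemma \ref{lem:choice:blowup} and Lemma \ref{lem:choice:productannuli}, the semiflow on the guts is insensitive, outside half annuli, to all other choices. So it suffices to compare \(\mathrm{Guts}_{\varphi}(F^{1}_{\tau}(z))\) with \(\mathrm{Guts}_{\varphi}(F^{2}_{\tau}(z))\).

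By Proposition \ref{prop:singulargutsinvariance}, the singular guts already agree outside half annuli. Lemma \ref{lem:singularinvariance1} and Lemma \ref{lem:singularinvariance2} show that the ladderpole tubes, the non-isolated vertices, the annuli \(\mathcal{A}_{\tau}\), \(\mathcal{S}_{\tau}\) and the region \(P_{\tau}\) depend only on \(z\). It therefore remains to treat the regular part. Here we truncate each sharp component inside \(T_{\tau}\), as indicated in Figure \ref{fig:singulargraph}(f), to obtain a \emph{truncated regular guts}. Since a sharp piece is split by its two half annuli into pieces that are product or degenerate product flows, truncation does not change the orbit equivalence class outside half annuli.

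For the comparison itself, use the double curve sum, in the spirit of Agol--Zhang's proof of Theorem \ref{thm:AZguts}. By \autocite[Lemma 3.2]{Scharlemann1989_SuturedManifoldsAndGeneralizedThurstonNorms_PUB}, for large \(k\) the surfaces \(F^{1}_{\tau}\oplus kF^{2}_{\tau}\) can be isotoped slightly off \(F^{2}_{\tau}\) while remaining relatively carried by \(\tau\). Branched-surface sums stay carried by \(\tau^{(2)}\), hence transverse to \(\varphi\). Thus \(F^{3}=F^{2}_{\tau}\cup(F^{1}_{\tau}\oplus kF^{2}_{\tau})\), after discarding parallel copies, is again a facet containing \(F^{2}_{\tau}\). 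By maximality of \(F^{2}_{\tau}\), every new component is parallel to some component of \(F^{2}_{\tau}\). Theorem \ref{thm:LMTparallel} then says it is isotopic to that component along flowlines of a single blowup, and cobounds with it a product flow. By Lemma \ref{lem:windowproduct} and Lemma \ref{lem:addparallel}, \(\mathrm{Guts}_{\varphi}(F^{3})\) is orbit equivalent to \(\mathrm{Guts}_{\varphi}(F^{2}_{\tau})\). Doing the same with the roles of the two facets exchanged, and noting that \(F^{1}_{\tau}\oplus kF^{2}_{\tau}\) and \(kF^{1}_{\tau}\oplus F^{2}_{\tau}\) are both carried by the same branched surface, reduces the problem to comparing two facets that are componentwise flow-parallel. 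For such facets, \(M-F\) changes only by adding or removing product flow blocks, and Lemma \ref{lem:addparallel} finishes the argument.

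The main obstacle is the last step. Near the tubes \(T_{\tau}\), the sum \(F^{1}_{\tau}\oplus kF^{2}_{\tau}\) may meet a ladderpole tube in annuli joining different vertices than those of \(F^{2}_{\tau}\). In that case "parallel" holds only up to isotopy, not along flowlines of one fixed blowup. We will handle this by using the maximality of the singular graphs together with Lemma \ref{lem:singularinvariance1}, so that every edge produced by the sum is already an edge of the maximal graph, with removable annuli pushed out by annulus moves. The remaining discrepancy is then confined to blunt or sharp components inside \(T_{\tau}\). There the truncated regular guts differ only by product, degenerate product, or half-annulus pieces, exactly as in the proof of Lemma \ref{lem:choice:blowup}.
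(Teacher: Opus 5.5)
Your reductions match the paper's: pass to transverse facets relatively carried by \(\tau\), dispose of the other choices with the earlier lemmas, handle the singular guts by Proposition~\ref{prop:singulargutsinvariance}, and truncate the sharp components. The comparison step itself, however, has a genuine gap.

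\textbf{Why the comparison fails.} By your own maximality argument, \(F^{3}=F^{2}_{\tau}\cup(F^{1}_{\tau}\oplus kF^{2}_{\tau})\) is just \(F^{2}_{\tau}\) plus flow-parallel copies. So the orbit equivalence of \(\mathrm{Guts}_{\varphi}(F^{3})\) with \(\mathrm{Guts}_{\varphi}(F^{2}_{\tau})\) is automatic and never involves \(F^{1}_{\tau}\); the same holds with the roles exchanged.

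Your next sentence is the unjustified step. You claim that, because \(F^{1}_{\tau}\oplus kF^{2}_{\tau}\) and \(kF^{1}_{\tau}\oplus F^{2}_{\tau}\) are carried by a common branched surface, the problem reduces to two componentwise flow-parallel facets. Being carried by the same branched surface does not make surfaces parallel. Moreover, the two facets you would be comparing are \(F^{2}_{\tau}\) and \(F^{1}_{\tau}\) up to parallel copies, so the claim says the original facets are componentwise isotopic. That cannot hold in general, because taut representatives of \(z\) may be forced to intersect. This is exactly why Agol--Zhang's theorem and the bad-region analysis of Remark~\ref{rem:badregions} are needed. The difficulty near \(T_{\tau}\) that you single out is secondary.

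\textbf{What is missing.} You need a comparison through the common refinement \(M-(F_{1}\cup F_{2})\), with the double curve sum used for a different purpose. The paper shows that cutting \(M-F_{1}\) further along each piece \(C\) of \(F_{2}\setminus F_{1}\) does not change the truncated regular guts:
\begin{enumerate}
\item If \(\chi(C)<0\), then \(C\) lies in a component \(S\) of \(F_{2}\oplus kF_{1}\) pushed off \(F_{1}\). Maximality of \(F_{1}\) gives an isotopy \(\mu_{t}\) along flowlines from \(S\) to a component of \(F_{1}\). Then \(C\cup\mu_{I}(\partial_{u}C)\cup\mu_{1}(C)\) bounds a product flow, so cutting along \(C\) amounts to cutting along the product-saturated annuli \(\mu_{I}(\partial_{u}C)\).
\item Non-product annular pieces lie in a sutured torus plus windows, i.e.\ in the singular part.
\item Product annular pieces are moved to aligned position by Theorem~\ref{thm:annulusisotopy} and cobound product solid tori by Lemma~\ref{lem:windowproduct}.
\end{enumerate}
Running this for both indices identifies each \(\mathrm{RG}_{i}^{\circ}\) with \(\mathrm{RG}_{1,2}^{\circ}\).

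You also need one flow to which \(F_{1}\) and \(F_{2}\) are simultaneously transverse. The two facets may require different blowups at the singular orbits, since their singular graphs can differ away from the outermost edges. Hence \(M-(F_{1}\cup F_{2})\) does not inherit a semiflow from a single \(\varphi^{\sharp\sharp}\), and your remark that carried sums are automatically transverse to \(\varphi\) skips this. The paper resolves it with the Fried blowup \(\varphi^{\circ}\) on \(M-T_{\tau}\) and the mixed flow \(\varphi^{\mathrm{m}}\). This is the real reason for truncating the sharp components.
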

It suffices to show that different facet choices induce the same semiflow on regular guts.

Let \(F_{1}(z)\) and \(F_{2}(z)\) be two maximal transverse facets in minimal position.
We fix a maximal collection \(\mathcal{A}_{i}\) of product annuli in \(M-F_{i}(z)\) for \(i=1,2\).
Assume that all transverse components are in \(T_{\tau}\), and all product saturated components are in \(M-T_{\tau}\).
Write \(\mathrm{RG}_{i}= \mathrm{Guts}_{r}(F_{i}(z)\cup \mathcal{A}_{i})\) endowed with the semiflow \(\phi_{i}\) induced by restricting \(\varphi_{i}^{\sharp\sharp}\).
As in the proof of Lemma \ref{lem:choice:blowup}, components of \(\mathrm{RG}_{i}\cap T_{\tau}\) are of two possible types, blunt or sharp.
Moreover, we can modify \(\Pi=\pi_{2}^{-1}\pi_{1}\) to be an orbit equivalent map outside half annuli between \(\mathrm{RG}_{1}\cap T_{\tau}\) and \(\mathrm{RG}_{2}\cap T_{\tau}\), where \(\pi_{i}: \varphi_{i}^{\sharp\sharp}\to \varphi_{i}\) is the blowdown map.
So without loss of generosity, we assume that the semiflows on \(\mathrm{RG}_{1}\cap T_{\tau}\) and \(\mathrm{RG}_{2}\cap T_{\tau}\) are the same.

\begin{figure}[htbp]
\centering
\includegraphics[scale=1.4]{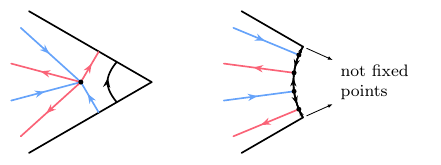}
\caption{\label{fig:cuttail}Truncate the tail of a sharp component.}
\end{figure}

We introduce the \emph{truncated regular guts}.
In a nutshell, we truncate each sharp component, see Figure \ref{fig:cuttail}.
As a sutured manifold, the truncated regular guts is homeomorphic to the regular guts.

Fix a Fried's blowup \(\varphi^{\circ}\) on \(M^{\circ}=M-T_{\tau}\subset M\).
We regard it as a ``flow'' on \(M\) but only defined on \(M^{\circ}\).
Then \(F_{1}(z)\) and \(F_{2}(z)\) are transverse to \(\varphi^{\circ}\).
Since \(\varphi_{i}^{\sharp\sharp}\) and \(\varphi^{\circ}\) are identical outside a small neighborhood \(U\) of \(\mathrm{cl}(T_{\tau})\), we consider the ``mix'' \(\varphi^{\mathrm{m}}\) of them.
Precisely speaking, \(\varphi^{\mathrm{m}}\) is the same to \(\varphi_{i}^{\sharp\sharp}\) and \(\varphi^{\circ}\) outside \(U\).
On blunt components \(\varphi^{\mathrm{m}}=\varphi_{i}^{\sharp\sharp}\), while on sharp components \(\varphi^{\mathrm{m}}=\varphi^{\circ}\).
Therefore, the domain of \(\varphi^{\mathrm{m}}\) is \(M- (U\cap F_{i}(z))-P_{\tau}\).
We define the \emph{truncated regular guts} \(\mathrm{RG}_{i}^{\circ}\) to be \(\mathrm{RG}_{i}\) removing all sharp components in \(T_{\tau}\), which admits a compatible semiflow (not tight) by restricting \(\varphi^{\mathrm{m}}\) directly.
In other words, we truncate the tails of all sharp components (see Figure \ref{fig:cuttail}).
Figure \ref{fig:singulargraph}(f) gives a concrete local example.
Clearly, the truncated regular guts is orbit equivalent to the regular guts outside half annuli, blown-up annuli, and isolated product flows.
Hence, it suffices to show \(\mathrm{RG}_{1}^{\circ}\) and \(\mathrm{RG}_{2}^{\circ}\) are orbit equivalent outside half annuli and blown-up annuli.

Now we define the truncated regular guts \(\mathrm{RG}_{1,2}^{\circ}\) associated to \(F_{1}(z)\cup F_{2}(z)\).
Recall that \(F_{\varphi}^{1}(z)\) and \(F_{\varphi}^{2}(z)\) share the same \(P_{\tau}\) (see Section \ref{sec:singularguts}).
As a sutured submanifold, we define \(\mathrm{RG}_{1,2}^{\circ}\) to be the guts components of \(M-F_{\varphi}^{1}(z)\cup F_{\varphi}^{2}(z)\) that are not contained in \(P_{\tau}\).
\begin{lemma}
The truncated regular guts \(\mathrm{RG}_{1,2}^{\circ}\) admits a semiflow by restricting the mix \(\varphi^{\mathrm{m}}\) of \(\varphi^{\circ}\) and \(\varphi^{\sharp\sharp}_{i}\).
\end{lemma}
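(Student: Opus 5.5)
The plan is to show that \(\varphi^{\mathrm{m}}\) is defined on all of \(\mathrm{RG}_{1,2}^{\circ}\) and satisfies the conditions of a compatible semiflow there. I would first recall that, outside the neighborhood \(U\) of \(\mathrm{cl}(T_{\tau})\), the flows \(\varphi_{1}^{\sharp\sharp}\), \(\varphi_{2}^{\sharp\sharp}\) and \(\varphi^{\circ}\) all coincide. Hence \(\varphi^{\mathrm{m}}\) is unambiguous there, and both \(F_{\varphi}^{1}(z)\) and \(F_{\varphi}^{2}(z)\) are positively transverse to it. Because the two facets are in minimal position and both are carried by \(\tau\), I would take their union, after the standard cut-and-paste \(F_{1}\oplus F_{2}\) via \autocite[Lemma 3.2]{Scharlemann1989_SuturedManifoldsAndGeneralizedThurstonNorms_PUB}. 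This can be replaced by a union of disjoint surfaces, each relatively carried by \(\tau\), and hence each transverse to \(\varphi^{\mathrm{m}}\) away from \(U\). Consequently, outside \(U\), the complement \(M-(F_{\varphi}^{1}(z)\cup F_{\varphi}^{2}(z))\) inherits a compatible semiflow by Lemma \ref{lem:decompose}.

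Next I would analyse the pieces inside \(U\cap T_{\tau}\) tube by tube. By Lemma \ref{lem:singularinvariance1}, the two facets have the same ladderpole tubes and the same non-isolated vertices. By Lemma \ref{lem:singularinvariance2} and the discussion in Section \ref{sec:singularguts}, they also share the same \(P_{\tau}\), which is discarded by definition of \(\mathrm{RG}_{1,2}^{\circ}\). The remaining components of \((M-F^{1}\cup F^{2})\cap T_{\tau}\) outside \(P_{\tau}\) are of two kinds. Each one is either a sub-piece of a blunt component of some \(\mathrm{RG}_{i}\), or lies in the tail of a sharp component. On the blunt pieces I would use \(\varphi_{i}^{\sharp\sharp}\); this is consistent because we have already arranged that the semiflows on \(\mathrm{RG}_{1}\cap T_{\tau}\) and \(\mathrm{RG}_{2}\cap T_{\tau}\) coincide. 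On the sharp pieces I would use the Fried blowup \(\varphi^{\circ}\), which is defined away from \(T_{\tau}\). Since the sharp tails have been truncated, the domain never needs \(\varphi^{\circ}\) inside the removed tube. Gluing these pieces along the common region outside \(U\) gives a continuous partial flow, and conditions 1--3 of Definition \ref{def:partialflow} hold piecewise.

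Finally I would check compatibility with the sutured structure of \(\mathrm{RG}_{1,2}^{\circ}\). Transversality to \(R_{\pm}\) comes from transversality of both facets. The sutures are either product saturated annuli, or boundary pieces where a sharp tail was cut. Along the latter, I would smooth corners as in Figure \ref{fig:blunt}, as in the proof of Lemma \ref{lem:decompose}.

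The main obstacle is the interface between blunt and sharp regions inside a single tube. Some component of \(M-(F^{1}\cup F^{2})\) might meet both a region assigned \(\varphi_{1}^{\sharp\sharp}\) and one assigned \(\varphi^{\circ}\), and the two flows disagree near the blown-up annuli. To handle this, I would try to show that every such interface lies along a half annulus or a surface component carried by \(\tau\). The combinatorial tool would be the singular graph: an edge of \(G_{T}(F^{1})\) and an edge of \(G_{T}(F^{2})\) cannot interleave because the facets are in minimal position. With interfaces confined to half annuli or carried surfaces, the choice of flow on each side does not affect the orbits crossing into the interior.
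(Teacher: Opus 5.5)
There is a genuine gap: your proposal never deals with product annuli. By definition, \(\mathrm{RG}_{1,2}^{\circ}\) consists of the \emph{guts} components of \(M-(F_{\varphi}^{1}(z)\cup F_{\varphi}^{2}(z))\) not contained in \(P_{\tau}\). That is, it is what remains after decomposing along a maximal family of product annuli and discarding windows. These annuli are a priori in arbitrary position with respect to the flow. So showing that \(\varphi^{\mathrm{m}}\) is defined on the complement of the two facets, which is all your first two paragraphs do, does not yet give a semiflow on the guts. Your third paragraph simply asserts that the sutures are product saturated annuli or truncation boundaries, and that is precisely what has to be proved.

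The missing idea, which is the heart of the paper's argument, is that the Annulus Isotopy Theorem (Theorem \ref{thm:annulusisotopy}) still applies here, with the Fried blowup \(\varphi^{\circ}\). Every product annulus of \(M-(F^{1}\cup F^{2})\) is isotopic either to a product \(\varphi^{\circ}\)-saturated annulus in \(M-P_{\tau}\) or to an annulus inside \(P_{\tau}\). The paper then proceeds in order:
\begin{enumerate}
\item cut along a maximal collection of the saturated annuli;
\item discard the product-flow components (as in Lemma \ref{lem:windowproduct}) and everything in \(P_{\tau}\);
\item only then replace the flow near blunt components by \(\varphi_{i}^{\sharp\sharp}\) and remove the sharp tails.
\end{enumerate}
The result is visibly the restriction of \(\varphi^{\mathrm{m}}\).

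Two smaller points. First, the cut-and-paste \(F^{1}\oplus F^{2}\) in your first step is unnecessary, and it actually changes the object. Resolving a double curve merges two of the four local complementary regions, so the complement of the resolved surface is not \(M-(F^{1}\cup F^{2})\). The paper simply uses that each \(F^{i}\) is honestly transverse to \(\varphi^{\circ}\), so the complement of their union inherits a semiflow directly.

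Second, the blunt/sharp interface you single out as the main obstacle needs no combinatorial input in the paper's route. After the earlier normalization that the semiflows on \(\mathrm{RG}_{1}\cap T_{\tau}\) and \(\mathrm{RG}_{2}\cap T_{\tau}\) agree, \(\varphi^{\mathrm{m}}\) is prescribed separately on blunt and on sharp components. These pieces are glued along the region outside \(U\), where all the flows coincide, and the sharp tails are removed. Your proposed non-interleaving of singular-graph edges under minimal position is not established anywhere, and the argument does not rest on it.
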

\begin{proof}
Since both \(F_{\varphi}^{1}\) and \(F_{\varphi}^{2}\) are (honestly) transverse to \(\varphi^{\circ}\), \(M-F_{\varphi}^{1}(z)\cup F_{\varphi}^{2}(z)\) inherits a ``semiflow'' from \(\varphi^{\circ}\).
The annulus isotopy theorem Proposition \ref{thm:annulusisotopy} still holds here.
A product annulus in \(M-F_{\varphi}^{1}(z)\cup F_{\varphi}^{2}(z)\) is isotopic to either a product \(\varphi^{\circ}\)(\(\varphi\))-saturated annulus in \(M-P_{\tau}\) or an annulus in \(P_{\tau}\).
Cut along a maximal collection of product saturated annuli and throw away all product flow components and all components in \(P_{\tau}\).
Finally, we replace the semiflow near blunt components by the restriction of \(\varphi_{i}^{\sharp\sharp}\), and remove the tails of sharp components.
Then we obtain a semiflow on \(\mathrm{RG}_{1,2}^{\circ}\), which is clearly the restriction of \(\varphi^{\mathrm{m}}\).
\end{proof}

Now we show that both of \(\mathrm{RG}_{1}^{\circ}\) and \(\mathrm{RG}_{2}^{\circ}\) are orbit equivalent to \(\mathrm{RG}_{1,2}^{\circ}\).
Write \(F_{i}=F_{\varphi}^{i}\). Endow the regular part \(X_{1}\) of \(M- F_{1}\)  with the restriction of \(\varphi^{\mathrm{m}}\).
Since \(F_{1}\) and \(F_{2}\) represent same homology class \(z\) (up to scalar), there exists \(k>0\) such that \(F_{2}\oplus kF_{1}\) is isotoped slightly to be disjoint from \(F_{1}\).
We throw away all homologically trivial components and denote the rest by \(F_{2,1}\).

Let \(C\) be a component of \(F_{2}\setminus F_{1}\).
If \(\chi(C)<0\), then \(C\) belongs to a component \(S\) of \(F_{2,1}\).
By the maximality, an isotopy \(\mu_{t},t\in I\) takes \(S\) to a component \(S_{1}\) of \(F_{1}\) along flowlines.
There are two types of \(\partial C\). 
Under \(\mu\), some components of \(\partial C\) pass from one side to the other side, called \emph{unfixed boundary circles}.
Let \(\partial _{u}C\) be the union of unfixed boundary circles. Then \(\mu_{I}(\partial _{u}C)\) is a collection of product saturated annuli and \(C\cup \mu_{I}(\partial _{u}C)\cup \mu_{1}(C)\) bounds a product flow in \(M\).
Consequently, \(C\cup \mu_{I}(\partial _{u}(C))\cup \mu_{1}(C)\)  bounds a product flow in \(X_{1}\).
Therefore, cutting along \(C\) is equivalent to cutting along \(\mu_{I}(\mathcal{C})\) and does not affect the semiflow on the truncated guts up to orbit equivalence.

Assume that \(C\) is a non-product annulus. Then the component of \(M-F_{1}\) containing \(C\) is a sutured torus \(U\) plusing windows \(W\). \(U\) is either a 4-ST or a non-longitudinal 2-ST.
Notice that \(C\) is \(\varphi^{\circ}\)(\(\varphi\))-transverse.
If \(C\subset W\), then cutting along \(C\) does not affect the semiflow on the guts since \(W\setminus C\) remains product.
Otherwise, \(C\setminus W\) is a transverse annulus in \(U\).
The conclusion follows from the fact that cutting along a transverse annulus in \(U\) won't change the semiflow on \(U\) up to orbit equivalence.

Assume that \(C\) is a product annulus. Then by Theorem \ref{thm:annulusisotopy} and Lemma \ref{lem:windowproduct}, there exists a further blowup \(\varphi^{\sharp\sharp}\) such that,
\begin{enumerate}
\item \(C\) is ambiently isotopic to an annulus \(C_{\varphi}\) in \(\varphi^{\sharp\sharp}\)-aligned position, and
\item \(C\cup C_{\varphi}\) and \(F_{1}\) cobound solid tori with product flow.
\end{enumerate}
Therefore, cutting along \(C\) is equivalent to cutting along \(C_{\varphi}\) and does not affect the semiflow on the truncated guts up to orbit equivalence.

In summary, both of \(\mathrm{RG}_{i}^{\circ}\) are orbit equivalent to \(\mathrm{RG}_{1,2}^{\circ}\).
Combining the discussion of singular guts components, it follows that the semiflows on \(\mathrm{Guts}(F_{1})\) and \(\mathrm{Guts}(F_{2})\) are orbit equivalent outside half annuli.
We complete the proof of Lemma \ref{lem:choice:facet}.

Theorem \ref{thm:facetinvariance} follows directly from combining Lemma \ref{lem:choice:blowup}, Lemma \ref{lem:choice:productannuli} and Lemma \ref{lem:choice:facet}.

\begin{remark}
The reason we split the proof into the singular guts and regular guts cases is that, 
\emph{bad regions} appear in singular guts \autocite[Definition 3.11]{AgolZhang2022_GutsInSuturedDecompositionsAndTheThurstonNorm_ARXv1}.
Precisely speaking, assume that \(F_{1}\) and \(F_{2}\) are two facet surface representing a primitive class \(z\in H_{2}(M,\partial M)\) and are in minimal position.
Let \(P\) be a component \(K\) of \(M- (F_{1}\cup F_{2})\). If the coorientation of \(P\cap F_{1}\) is the same as the coorientation of \(P\cap F_{2}\), we call \(P\) is a \emph{bad boundary} and \(J\) is a \emph{bad region}.
Bad regions encodes the difference between the guts of \(M-(F_{1}\cup F_{2})\) and \(\mathrm{Guts}(F_{i})\). In fact, for each bad region, there is a 4-ST guts component of \(M-(F_{1}\cup F_{2})\) in the same component of \(M-F_{i}\). If we remove the bad regions and their corresponding 4-STs, the remaining part is equivalent to \(\mathrm{Guts}(F_{i})\).

In the flow settings, there is no ``well-defined'' semiflow on bad regions. Fortunately, all bad regions are contained in the singular guts.
If \(K\) is a bad region, \(K\) contains a non-product annulus connecting two components of \(F_{i}\).  By Corollary \ref{cor:nonproductannulus}, one may assume that \(K\) contains a closed orbit \(\gamma\) and \(W^{s}(\gamma)\) or \(W^{u}(\gamma)\) (depending on the coorientation of \(K\cap F_{i}\)) intersects \(K\) along four simple closed curves.
This only happens when \(\gamma\) is a blowup of some singular closed orbit, and such component must be in the singular guts.
\label{rem:badregions}
\end{remark}
\section{Invariance on the Thurston open face}
\label{sec:invarianceopenface}
Let \(M\) be an oriented connected closed 3-manifold and let \(\varphi\) be a pseudo-Anosov flow without perfect fits in \(M\). Suppose that \(z\in \mathcal{C}_{2}(\varphi)\) is primitive.
In preceding section, we show that the semiflow on \(\mathrm{Guts}(M,z)\) is invariant with respect to choices in construction.
We denote by \(\mathrm{Guts}_{\varphi}(M,z)\) the guts with the semiflow induced by \(\varphi\).

In \autocite{AgolZhang2022_GutsInSuturedDecompositionsAndTheThurstonNorm_ARXv1}, Agol and Zhang show that the guts are an invariant associated to an open face (cone) of the Thurston norm unit ball.
\begin{theorem}[{\cite[Corollary 4.4]{AgolZhang2022_GutsInSuturedDecompositionsAndTheThurstonNorm_ARXv1}}]
Assume that \(M\) be an oriented connected closed 3-manifold with non-degenerate Thurston norm.
Let \(y,z\) be two elements in an open face of the Thurston norm unit ball.
Then the guts \(\mathrm{Guts}(M,y)\) is equivalent to \(\mathrm{Guts}(M,z)\).
\end{theorem}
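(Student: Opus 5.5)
The statement is Agol--Zhang's theorem that \(\mathrm{Guts}(M,y)\) is equivalent to \(\mathrm{Guts}(M,z)\) whenever \(y,z\) lie in the same open face. I would reduce it to the invariance under choices (Theorem \ref{thm:AZguts}) together with a cut-and-paste argument.

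\textbf{Reduction to rational multiples.} Since \(y\) and \(z\) lie in a common open face \(F^{\circ}\), the Thurston norm is linear on the closed cone over \(F\). For positive integers \(a,b\), the class \(w=ay+bz\) is again in the open cone, and \(x(w)=a\,x(y)+b\,x(z)\). Homology guts depend only on the primitive class, so it suffices to show \(\mathrm{Guts}(M,y)\simeq\mathrm{Guts}(M,w)\) for such a \(w\). Applying the same statement with the roles of \(y\) and \(z\) exchanged then gives \(\mathrm{Guts}(M,y)\simeq \mathrm{Guts}(M,w)\simeq\mathrm{Guts}(M,z)\).

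\textbf{Building a facet for \(w\) from facets for \(y\) and \(z\).} Take facet surfaces \(F(y)\) and \(F(z)\) in minimal position, with all intersection curves essential. The double curve sum \(S=aF(y)\oplus bF(z)\), built from parallel copies, is taut because norms add and the earlier lemma on double curve sums gives \(\chi_-(S)=a\chi_-(F(y))+b\chi_-(F(z))\). By Scharlemann's lemma (as used in Lemma \ref{lem:singularinvariance1}), after a slight isotopy \(S\) is disjoint from \(F(y)\). Hence \(F(y)\cup S\) extends to a maximal collection of disjoint taut surfaces, possibly of different classes lying in the closed face.

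The key topological claim is that cutting \(M-F(y)\) further along \(S\) only adds product regions. Each component of \(S\) lying in a component \(N\) of \(M-F(y)\) is a taut surface in the taut sutured manifold \(N\) representing a class in the same open face. Such a component is therefore homologous to a union of pieces of \(\partial N\) and a horizontal surface. Following Agol--Zhang, I would show that this is a horizontal decomposition: it cuts off only product regions plus the guts, using that the guts of a horizontally prime sutured manifold are connected (\autocite[Lemma 3.7]{AgolZhang2022_GutsInSuturedDecompositionsAndTheThurstonNorm_ARXv1}). This gives
\[
\mathrm{Guts}(M-F(y))=\mathrm{Guts}(M-(F(y)\cup S)).
\]
By symmetry, the right-hand side also equals \(\mathrm{Guts}(M,w)\), since \(F(w)\) can be taken to contain \(S\) and is disjoint from the \(y\)-surfaces after cut-and-paste.

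\textbf{Main obstacle.} The hard step is controlling the pieces of \(M-(F(y)\cup F(z))\) where the coorientations disagree, namely the bad regions of Remark \ref{rem:badregions}. These can produce extra 4-ST guts that are not present in either \(\mathrm{Guts}(M,y)\) or \(\mathrm{Guts}(M,w)\). My first attempt would be to use openness of the face, so that no surface of class on a boundary face is needed. I would then run Agol--Zhang's bad-region bookkeeping: each bad region pairs with exactly one 4-ST, and removing all such pairs leaves the guts equivalent to both \(\mathrm{Guts}(M,y)\) and \(\mathrm{Guts}(M,w)\). Equivalence of guts via an ambient isotopy would then follow from uniqueness of JSJ-type characteristic submanifolds.
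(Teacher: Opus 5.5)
Your reduction to comparing $y$ with $w=ay+bz$ is fine, but the construction after it does not work. The first problem is the disjointness step. Scharlemann's trick, as used in Lemma \ref{lem:singularinvariance1}, moves a surface off a facet \emph{of the same class}. For a surface of class $w$ (or $w$ plus multiples of $y$) that is not proportional to $y$, there is no such statement, and it is false in general. If the open face is fibered and $y,z$ are not proportional, then $F(y)$ is a single fiber and $M-F(y)\cong F(y)\times I$. Every closed incompressible surface in $F(y)\times I$ is parallel to $F(y)$, so no taut surface of class $w$ can be disjoint from $F(y)$.

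The second problem is the key claim $\mathrm{Guts}(M-F(y))=\mathrm{Guts}(M-(F(y)\cup S))$. Even if such an $S$ existed, this claim is essentially the whole theorem, and the reason you give for it is false. In a component $N$ of $M-F(y)$, $\partial N$ consists of copies of components of $F(y)$. So anything homologous in $N$ to boundary components plus a horizontal surface represents a multiple of $y$ in $H_2(M)$, whereas $[S]=w$ does not. Thus $S$ is a genuinely non-horizontal decomposing surface in $N$. Nothing in your sketch explains why decomposing along it preserves the guts, or where openness of the face is used. The conclusion does fail across faces: a class on the boundary of a fibered face has nonempty guts, while classes in the open fibered face have empty guts. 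The bad-region bookkeeping of Remark \ref{rem:badregions} does not fill this gap, since it compares two facets of the \emph{same} class, not facets of different classes.

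The missing idea is the branched-surface argument that the paper reproduces in Section \ref{sec:invarianceopenface}, following Agol--Zhang.
\begin{enumerate}
\item Openness is used to place $y$ and $z$ in the interior of a segment $(u,v)$ in the face.
\item Lemma \ref{lem:AZsurfaces} gives surfaces $S_u,S_v$ such that every $aS_u\oplus bS_v$ is norm-minimizing, and neither $S_u\setminus S_v$ nor $S_v\setminus S_u$ has a piece of nonnegative Euler characteristic.
\item Oertel's branched surface $B$ then carries every $S=aS_u\oplus bS_v$ with $a,b>0$, with positive weight on every sector.
\item Splitting $N(B)$ along $2S$ gives a product $I$-bundle $L_B(S)$, and $\partial_v N(B)$ consists of product annuli.
\item A facet $F_S\supset S$ meets the \emph{fixed} sutured manifold $M-N(B)$ in a maximal collection of horizontal surfaces. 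A facet for $a'u+b'v$ is obtained by changing only the part inside $N(B)$.
\end{enumerate}
Hence both guts are the guts of $M-N(B)$ cut along the same horizontal collection. The surfaces for the two classes are never made disjoint; they are carried simultaneously by $B$. Positivity of both weights is exactly where the open-face hypothesis enters.
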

An analogous result holds for the canonical flow on it.
\begin{theorem}
Assume that \(M\) be an oriented connected closed 3-manifold which admits a pseudo-Anosov flow \(\varphi\) without perfect fits.
Suppose that \(y,z\in \mathcal{C}_{2}(\varphi)\) are primitive and in the same open face.
Then, the guts \(\mathrm{Guts}_{\varphi}(M,y)\) is orbit equivalent to \(\mathrm{Guts}_{\varphi}(M,z)\) outside half annuli.
\label{thm:openfaceinvariance}
\end{theorem}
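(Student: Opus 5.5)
Write $w=y+z$ (or, more generally, any positive integral combination $ay+bz$), and reduce the statement to comparing $\mathrm{Guts}_{\varphi}(M,y)$ with $\mathrm{Guts}_{\varphi}(M,w)$; by symmetry the same comparison handles $z$, and the theorem follows by transitivity of orbit equivalence outside half annuli. Since $y,z$ lie in the same open face $F^{\vee}$ of $\mathcal{C}_{2}(\varphi)$, the Thurston norm is linear on the segment, so $x(w)=x(y)+x(z)$. Choose transverse facets $F_{\tau}(y)$ and $F_{\tau}(z)$, both relatively carried by the veering triangulation $\tau$ and maximal in the sense of Section \ref{sec:singularguts}, in minimal position. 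The double curve sum $F_{\tau}(y)\oplus F_{\tau}(z)$ is taut with class $w$ (essential intersection curves, additivity of $\chi_{-}$). Following Agol--Zhang's argument for Corollary 4.4, the key topological input is that a facet for $w$ can be taken to contain $F(y)\oplus F(z)$, and that decomposing $M-F(w)$ is equivalent (after cutting product annuli and discarding windows) to decomposing $M-(F(y)\cup F(z))$. I will redo this step in the flow setting: because both facets are carried by the branched surface $\tau^{(2)}$, the cut-and-paste sum is again carried by $\tau^{(2)}$ (as in the use of \autocite[Lemma 3.2]{Scharlemann1989_SuturedManifoldsAndGeneralizedThurstonNorms_PUB} in Lemma \ref{lem:singularinvariance1}), hence transverse to a blowup of $\varphi$, and the sutured manifolds $M-F(w)$ and $M-(F(y)\cup F(z))$ differ only by product regions on which, by Lemma \ref{lem:windowproduct} and Theorem \ref{thm:LMTparallel}, the flow is a product flow.

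Next I will compare $\mathrm{Guts}_{\varphi}(M,y)$ with the guts of $M-(F_{\tau}(y)\cup F_{\tau}(z))$ carrying the restricted (mixed) semiflow, by the same mechanism as the proof of Lemma \ref{lem:choice:facet}. Split into regular and singular parts. For the regular part, take a component $C$ of $F_{\tau}(z)\setminus F_{\tau}(y)$ inside $M-F_{\tau}(y)$. If $\chi(C)<0$, then $C$ lies in a taut surface which, by maximality of $F(y)$ in the open face, must be flow-parallel (Theorem \ref{thm:LMTparallel}) to pieces of $F_{\tau}(y)$ together with product saturated annuli, so cutting along it only adds product flows (Lemma \ref{lem:addparallel}). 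If $C$ is a product annulus, apply Theorem \ref{thm:annulusisotopy} and Lemma \ref{lem:windowproduct} to isotope it to an aligned position cobounding product-flow solid tori. If $C$ is a non-product annulus, it lies in a 4-ST or non-longitudinal 2-ST and is transverse there. The regular guts then agree, after truncating sharp components exactly as in Section \ref{sec:facetchoice}.

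For the singular part, I use that the singular graph data $\mathcal{A}_{\tau},\mathcal{S}_{\tau},P_{\tau}$ depend only on which vertices of each ladderpole tube are non-isolated and on their coorientations. Lemma \ref{lem:singularinvariance1} shows that ladderpole tubes and non-isolated vertices are determined by the sign of pairing of singular orbits with the class. That sign is constant on an open face, because the face of the dual Fried cone on which a class pairs to zero is determined by the open face. Hence $P_{\tau}$ is the same for $y$, $z$, and $w$. Proposition \ref{prop:singulargutsinvariance} then gives the semiflow on each 4-ST and 2-ST as one closed orbit plus half annuli and product pieces, which is unique outside half annuli.

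The main obstacle is the first step. I must show that the non-product pieces of $F(z)$ lying in $M-F(y)$ contribute nothing new, which amounts to the Agol--Zhang equality of guts along the open face, now realized by flow-parallelism rather than by isotopy. The delicate point is excluding bad regions outside $P_{\tau}$. Here I would argue as in Remark \ref{rem:badregions}: a bad region contains a non-product annulus, so by Corollary \ref{cor:nonproductannulus} it contains a blown-up singular orbit, and therefore lies in the singular part.
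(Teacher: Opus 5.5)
There is a genuine gap, and it sits exactly at the step you flag as the main obstacle.

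In the regular part you treat a component $C$ of $F_{\tau}(z)\setminus F_{\tau}(y)$ with $\chi(C)<0$ by appealing to ``maximality of $F(y)$ in the open face''. No such maximality exists. A facet is maximal only among disjoint taut surfaces representing the single class $y$.

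In Lemma \ref{lem:choice:facet} the corresponding step works only because $F_{2}\oplus kF_{1}$ represents a multiple of the \emph{same} class. Its components can therefore be added to $F_{1}$, and Theorem \ref{thm:LMTparallel}, which compares \emph{isotopic} surfaces, then makes them flow-parallel to components of $F_{1}$. In your setting the surface containing $C$ (e.g.\ a component of $F(z)\oplus kF(y)$) represents a class like $z+ky$, which is not a multiple of $y$. It cannot be added to $F(y)$, and Theorem \ref{thm:LMTparallel} says nothing about it.

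So the claim that cutting $M-F_{\tau}(y)$ further along pieces of $F_{\tau}(z)$ only adds product flows is unproved, and it is essentially the content of the theorem. Remark \ref{rem:badregions} and Corollary \ref{cor:nonproductannulus} control only non-product annular pieces, not these.

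Your first step has the same weakness. $F(y)\oplus F(z)$ represents $k_{y}y+k_{z}z$, and nothing forces its components to each represent a multiple of one primitive class. A facet for $w$ may also require further components disjoint from the sum, and these could split the guts. So ``differ only by product regions'' is asserted rather than shown.

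The missing idea is the branched-surface device the paper uses:
\begin{enumerate}
\item Pick $u,v$ with $y,z\in(u,v)$ and surfaces $S_{u},S_{v}$ from Lemma \ref{lem:AZsurfaces}, so that every $aS_{u}\oplus bS_{v}$ is norm-minimizing.
\item Build Oertel's branched surface $B$ carrying all of these sums.
\item Splitting $N(B)$ along $2(aS_{u}\oplus bS_{v})$ gives an $I$-bundle. Because $S_{u}\setminus S_{v}$ has no disk components, $\partial_{v}N(B)$ consists of genuine product annuli.
\item A facet for any class in the segment can be taken to be $aS_{u}\oplus bS_{v}$ together with one \emph{common} maximal family of horizontal surfaces in $M-N(B)$.
\item Choose $N(B)$ so its vertical foliation matches the mixed flow $\varphi^{\mathrm{m}}$. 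Then the facets for $y$ and $z$ agree outside $N(B)$, and everything discarded inside $N(B)$ is a product flow or part of the singular guts.
\end{enumerate}
The truncated regular guts for $y$ and $z$ are then literally the same restriction of $\varphi^{\mathrm{m}}$. This replaces any cross-class maximality argument.

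There is also a secondary gap in your singular part. The sign of $\langle[\gamma],\cdot\rangle$ only decides whether a tube is a ladderpole tube. Lemma \ref{lem:singularinvariance1} compares two facets of the same class; it does not show that non-isolated vertices are determined by the pairing. To see that $P_{\tau}$ is constant on the open face you need the argument of Lemma \ref{lem:singularinvariance3}. For $z\in(y,u)$, take the component through a vertex $v$ of $kaS_{y}\oplus kbS_{u}$. It is relatively carried, hence taut, and extends to a facet for $z$.
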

We prove this theorem below.

Recall that in Section \ref{sec:singularguts}, for a ladderpole tube \(T\), we define the singular graph \(G_{T}(F_{\tau}(z))\) and singular guts components associated to a homology class \(z\in \mathcal{C}_{2}(\varphi)\).
We show that the non-isolated vertices set of the singular graph is also an invariant over an open face.
\begin{lemma}
Let \(y,z\in \mathcal{C}_{2}(\varphi)\) are primitive and in the same open face and let \(T\) be a tube.
If \(T\) is a ladderpole tube of \(y\), then \(T\) is also a ladderpole tube of \(z\).
Further, if \(a\) is a non-isolated vertex of \(G_{T}(F_{\tau}(y))\), then \(a\) is also non-isolated in \(G_{T}(F_{\tau}(z))\).
\label{lem:singularinvariance3}
\end{lemma}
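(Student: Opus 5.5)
The plan is to run the proofs of Lemma \ref{lem:singularinvariance1} and Lemma \ref{lem:singularinvariance2} again, with the homological input ``same class \(z\)'' replaced by ``same open face''. The tool that makes this work is the linearity of the Thurston norm on a closed face. Let \(\gamma\) be the singular orbit in \(T\). By Mosher duality, the carried cone \(\mathcal{C}_{2}(\varphi)\) is dual to the Fried cone \(\mathcal{C}_{\mathrm{Fr}}(\varphi)\), so \(\langle z,[\gamma]\rangle \geq 0\) and \(\langle y,[\gamma]\rangle\geq 0\). Recall that \(T\) is a ladderpole tube of \(y\) exactly when \(\langle y,[\gamma]\rangle=0\); otherwise \(F_{\tau}(y)\) meets \(T\) in meridian disks, which are positively transverse to \(\gamma\). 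The set \(\{w\in \mathcal{C}_{2}(\varphi): \langle w,[\gamma]\rangle =0\}\) is the intersection of the cone with a supporting hyperplane, so it is a closed face of \(\mathcal{C}_{2}(\varphi)\). If a closed face contains a point of an open face, it contains that entire open face. Since \(y\) lies in this closed face, so does \(z\). Hence \(\langle z,[\gamma]\rangle=0\), and \(T\) is also a ladderpole tube of \(z\).

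For non-isolated vertices, take \(a\) non-isolated in \(G_{T}(F_{\tau}(y))\), and let \(S\subset F_{\tau}(y)\) be a component with an annular component of \(S\cap T\) whose boundary contains \(a\). Because \(y,z\) lie in the same open face, there are positive integers \(m,n\) with \(nz-my\) again in that same open face (up to making it primitive). Equivalently, \(nz = my + w\), where \(w\) lies in the closed face, and \(x\) is additive on \(my\) and \(w\). Choose a taut surface \(R\) representing \(w\) (or rather \(nz-my\)) and isotope it to be relatively carried by \(\tau\) using \autocite[Theorem A]{LandryMinskyTaylor2025_TransverseSurfacesAndPseudoAnosovFlows_PUB}. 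Then form the double curve sum \(mS'\oplus R\), where \(S'\) is a copy of \(F_{\tau}(y)\) containing the edge at \(a\).

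The argument has two parts. First, because everything is carried by the branched surface \(\tau^{(2)}\), the cut-and-paste is again carried by \(\tau\), and each component is taut with no homologically trivial components (by \autocite[Lemma 5.2]{Landry2022_VeeringTriangulationsAndTheThurstonNorm_PUB}). Second, inside \(T\) the annulus ending at \(a\) survives the sum: both surfaces meet \(T\) only in ladderpole annuli, so the sum of carried surfaces only adds weights on the ladderpole curves and cannot cancel an annular piece at \(a\). The resulting surface has class \(nz\), so it is a union of norm-minimizing surfaces each representing \(z\) (using \autocite[Lemma 3.2]{Scharlemann1989_SuturedManifoldsAndGeneralizedThurstonNorms_PUB}). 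After isotopy it is disjoint from \(F_{\tau}(z)\), and it still meets \(a\) with an annular component. Since \(F_{\tau}(z)\) is maximal with respect to \(\tau\), this component is parallel to some component of \(F_{\tau}(z)\), and so \(a\) is non-isolated in \(G_{T}(F_{\tau}(z))\), exactly as in Lemma \ref{lem:singularinvariance1}.

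The main obstacle is the second part: showing that the vertex \(a\) remains the endpoint of an annular component after the cut-and-paste, and that carried surfaces sum to a carried surface whose pieces have class \(z\) rather than some other class in the face. I would handle this first through weights on the branched surface. Carried surfaces correspond to nonnegative integer weight systems, and the sum corresponds to adding weights. The weight on the sectors adjacent to \(a\) inside the tube is positive for \(mS'\), so it stays positive in the sum. Splitting the summed surface into class-\(z\) pieces then follows from the facet maximality argument already used in Lemma \ref{lem:singularinvariance2}.
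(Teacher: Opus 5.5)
Your proposal is correct and follows the paper's proof. The first part is the same closed-face argument for \(\{x\in\mathcal{C}_{2}(\varphi):\langle[\gamma],x\rangle=0\}\). For the second, the paper writes \(z=ay+bu\) with \(u\in\mathcal{C}_{2}(\varphi)\) and \(z\in(y,u)\) (your \(w=nz-my\) is a multiple of such a \(u\)), double-curve-sums a relatively carried surface through \(a\) with a relatively carried representative of \(u\), and extends the piece meeting \(a\) to a facet for \(z\). It then relies on the facet-independence from Lemma \ref{lem:singularinvariance1} instead of pushing that piece off a fixed \(F_{\tau}(z)\).

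Three justifications should be corrected. The persistence of \(a\) is immediate because the double curves avoid \(\partial T\), so \((mS\oplus R)\cap\partial T=(mS\cap\partial T)\cup(R\cap\partial T)\). The splitting of the class-\(nz\) sum into class-\(z\) subsurfaces comes from labelling complementary regions by \(\mathbb{Z}/n\) (Thurston's argument), not from Scharlemann's lemma or facet maximality. Finally, disjointness from \(F_{\tau}(z)\) needs the Scharlemann sum with \(kF_{\tau}(z)\), not a mere isotopy.
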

\begin{proof}
Let \(\gamma\) be the singular closed orbit contained in \(T\), and denote by \([\gamma]\) the homology class of \(\gamma\).
Since \(T\) is a ladderpole tube of \(y\), the (homological) intersection number \(\left< [\gamma],y \right>=0\). So
\[ \{x\in H_{2}(M): \left< [\gamma],x \right>=0\}\cap \mathcal{C}_{2}(\varphi) \]
is a closed face of \(\mathcal{C}_{2}(\varphi)\) containing \(y\).
Hence, \(\left< [\gamma],z \right>=0\) and \(T\) is also a ladderpole tube of \(z\).

For simplicity, we denote by \(V(x)\) the non-isolated vertices set of singular graph associated to \(x\) and \(T\).
We prove that \(V(y)\subset V(z)\) and hence \(V(y)=V(z)\) by a symmetric argument.
Choose \(u\in \mathcal{C}_{2}\) such that \(z\) is in the segment \((y,u)\).
Suppose that \(z=ay+bu\), \(a,b\in \Rational\).
Let \(v\in V(y)\) and let \(S_{y}\) be any taut surface relatively carried by \(\tau\) such that \([S_{y}]\in \Integer y\) and \(S_{y}\) meets \(v\).
Choose any relatively carried taut surface \(S_{u}\) with \([S_{u}]=u\).
Then \(S=kaS_{y}\oplus kbS_{u}\) is an embedded surface representing powers of \(z\), where \(ka,kb\in \Integer\).
Let \(S_{0}\) be the component of \(S\) that meets \(v\).
Then \(S_{0}\) is relatively carried by \(\tau\) and hence taut \autocite[Theorem B]{Landry2022_VeeringTriangulationsAndTheThurstonNorm_PUB}. (\(S\) may contain homologically trivial component.)
One extends \(S_{0}\) to be a facet \(F(z)\) and hence \(v\in V(z)\).
Therefore, \(V(y)\subset V(z)\).
\end{proof}
So by Proposition \ref{prop:singulargutsinvariance}, we have
\begin{corollary}
Let \(y,z\in \mathcal{C}_{2}(\varphi)\) are primitive and in the same open face.
Then the singular guts \(\mathrm{Guts}_{s}(y)\) associated to \(y\) is orbit equivalent to \(\mathrm{Guts}_{s}(z)\) outside half annuli.
\end{corollary}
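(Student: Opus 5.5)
The corollary follows by combining Lemma~\ref{lem:singularinvariance3} with the description of the singular guts in Section~\ref{sec:singularguts} and Proposition~\ref{prop:singulargutsinvariance}. The plan is to show that \(y\) and \(z\) produce the same sutured tori \(P_{\tau}\), hence the same singular guts as sutured manifolds. Then the semiflow on each component is pinned down up to orbit equivalence outside half annuli by the local model argument already used in Proposition~\ref{prop:singulargutsinvariance}.

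\textbf{Step 1: the sutured tori \(P_{\tau}\) agree.} Fix the veering triangulation \(\tau\) with tube system \(T_{\tau}\), and take maximal transverse facets \(F_{\tau}(y)\) and \(F_{\tau}(z)\) relatively carried by \(\tau\). By Lemma~\ref{lem:singularinvariance3}, a tube is a ladderpole tube for \(y\) if and only if it is one for \(z\), and on each ladderpole tube \(T\) the sets of non-isolated vertices coincide: \(V(y)=V(z)\). The coorientation of each vertex is determined by the flow and not by the surface. Lemma~\ref{lem:singularinvariance2} describes \(\mathcal{A}_{T}\) and \(\mathcal{S}_{T}\) purely in terms of the cyclically ordered non-isolated vertices and their coorientations: consecutive vertices with opposite coorientation give a ladderpole annulus, and consecutive vertices with the same coorientation give a transverse product annulus. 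Hence \(P_{T}(y)\) and \(P_{T}(z)\) are isotopic inside \(T\), by an isotopy supported near \(\partial T\). One technical point is that case~1 of Lemma~\ref{lem:singularinvariance2} (several sheets meeting one vertex) only adds product-saturated parallel annuli. These change \(P_{T}\) only by product collars, which Lemma~\ref{lem:addparallel} absorbs.

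\textbf{Step 2: the singular guts agree as sutured manifolds.} The singular guts are the horizontal guts of \(P_{\tau}\). By Step~1 they coincide for \(y\) and \(z\): each is a disjoint union, over ladderpole tubes, of 4-STs and non-longitudinal 2-STs, by Lemma~\ref{lem:gutsclassification}.

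\textbf{Step 3: the semiflows agree outside half annuli.} For each component, the proof of Proposition~\ref{prop:singulargutsinvariance} shows that the semiflow induced by \(\varphi^{\sharp\sharp}\) has a fixed form:
\begin{itemize}
\item on a 4-ST, exactly one closed orbit \(\gamma\) (a blowup of the singular orbit in \(T\)) and four half annuli, with complement four copies of \([0,1)\times S^{1}\times[0,1]\) carrying the product flow;
\item on a 2-ST, the analogous picture with two half annuli.
\end{itemize}
This description uses only the tube \(T\), the closed orbit it contains and the sutured structure, never the particular class. So matching components for \(y\) and \(z\) are orbit equivalent outside half annuli, and the orbit equivalences assemble componentwise.

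The main obstacle is Step~1. One must check that equality of the non-isolated vertex sets really forces the same \(\mathcal{S}_{T}\) and \(\mathcal{A}_{T}\) up to isotopy, in particular that the edges of the singular graph joining consecutive opposite-cooriented vertices are forced by maximality for both classes. Lemma~\ref{lem:singularinvariance2}(2) gives exactly this, so I expect it to go through once maximality of both facets with respect to \(\tau\) is arranged.
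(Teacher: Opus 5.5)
Your proposal is correct and follows the paper's own route: the paper deduces this corollary from Lemma~\ref{lem:singularinvariance3} (same ladderpole tubes and same non-isolated vertex sets, hence the same \(P_{\tau}\) and the same singular guts as sutured manifolds) together with the characterization, in Proposition~\ref{prop:singulargutsinvariance}, of the semiflow on 4-STs and non-longitudinal 2-STs outside half annuli. Your Step~1 spells out, via Lemma~\ref{lem:singularinvariance2}, the step the paper leaves implicit when it asserts that \(P_{T}\) is determined by the non-isolated vertices and their coorientations.
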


To prove Theorem \ref{thm:openfaceinvariance}, it suffices to show that the regular guts \(\mathrm{Guts}_{r}(y)\) is orbit equivalent to \(\mathrm{Guts}_{r}(z)\) outside half annuli.
To this end, we employ branched surfaces to characterize facets associated to \(y,z\) in a unified frame.
We refer readers to Section \ref{sec:veering} for a short introduction to branched surfaces.

We quote a simple lemma from \autocite{AgolZhang2022_GutsInSuturedDecompositionsAndTheThurstonNorm_ARXv1}.
\begin{lemma}[{\cite[Proposition 4.2]{AgolZhang2022_GutsInSuturedDecompositionsAndTheThurstonNorm_ARXv1}}]
Let \(M\) be a closed 3-manifold with non-degenerate Thurston norm.
If \(u\) and \(v\) are in the same closed Thurston face, then there exist embedded norm-minimizing surfaces \(S_{u}\) and \(S_{v}\) with \([S_{u}]=u\) and \([S_{v}]=v\), such that, for any positive integers \(a\) and \(b\), \(aS_{u}\oplus bS_{v}\) is norm-minimizing.
\label{lem:AZsurfaces}
\end{lemma}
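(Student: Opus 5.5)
The plan is to take $S_u$ and $S_v$ to be taut representatives in minimal position, and check that the double curve sum has the expected Euler characteristic with no sphere components. Linearity of the Thurston norm on the closed face then gives norm-minimality. Throughout I use that $M$ is irreducible, which I take from the standing setting (it is closed with non-degenerate Thurston norm, and in our application atoroidal with a pseudo-Anosov flow). Then taut surfaces have no sphere components and are incompressible.

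\emph{Step 1 (choice of surfaces).} Among all pairs of taut embedded surfaces representing $u$ and $v$ in general position, pick a pair $S_u,S_v$ with $|S_u\cap S_v|$ (number of curves) minimal. I claim every intersection curve is essential in both surfaces.
\begin{itemize}
\item Suppose a curve is inessential in $S_u$ and take an innermost one $c$, bounding a disk $D\subset S_u$ whose interior misses $S_v$. Then $c$ bounds a disk $D'\subset S_v$, since otherwise $D$ compresses $S_v$, contradicting incompressibility of the taut surface $S_v$.
\item The sphere $D\cup D'$ bounds a ball by irreducibility.
\item Isotoping $S_v$ across this ball removes $c$ (and possibly more curves) without changing tautness. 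This contradicts minimality.
\item Symmetrically, no curve is inessential in $S_v$.
\end{itemize}

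\emph{Step 2 (Euler characteristic count).} Fix positive integers $a,b$, take $a$ parallel copies of $S_u$ and $b$ of $S_v$, and form $\Sigma=aS_u\oplus bS_v$.
\begin{itemize}
\item Cut-and-paste along circles preserves Euler characteristic, so $\chi(\Sigma)=a\chi(S_u)+b\chi(S_v)$.
\item Every piece of $S_u$ or $S_v$ cut along the (essential) double curves has $\chi\le 0$, since $S_u,S_v$ have no sphere or disk components.
\item Each component of $\Sigma$ is a union of such pieces glued along circles, so it has $\chi\le 0$. In particular $\Sigma$ has no sphere components, and $\chi_-(\Sigma)=-\chi(\Sigma)$.
\end{itemize}
This is exactly the lemma on double curve sums stated in the preliminaries, applied to the copies.

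\emph{Step 3 (norm-minimality).} Since $u,v$ lie in a common closed face, $x$ is linear there, so
\[ x(au+bv)=a\,x(u)+b\,x(v)=-a\chi(S_u)-b\chi(S_v)=\chi_-(\Sigma). \]
Hence $\Sigma$ realizes the norm of its class $au+bv$. If $\Sigma$ has null-homologous components, discarding them does not increase $\chi_-$, and the remaining surface still represents $au+bv$, so it is taut. This removal is the (standard) reason the statement says norm-minimizing rather than literally taut.

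\emph{Main obstacle.} I expect the only delicate point to be Step 1: guaranteeing that the innermost-disk isotopy keeps both surfaces embedded and taut, and does not create new intersections. The standard ball-swap argument handles this because the ball meets $S_v$ only in $D'$ after choosing $c$ innermost. Step 2 then needs only the observation that pieces bounded by essential curves are never disks.
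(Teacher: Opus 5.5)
Your argument is correct. The paper gives no proof of this lemma (it only cites Agol--Zhang), and your route is the standard proof of the cited proposition: taut representatives with all double curves essential in both surfaces, additivity of $\chi$ under the double curve sum with no sphere components, and linearity of $x$ on the cone over a closed face. One imprecision, in your closing remark: the ball bounded by $D\cup D'$ meets $S_v$ only in $D'$ because $S_v\setminus D'$ leaves the ball near $c$ and no component of a taut surface can lie inside a ball, not because $c$ is innermost. In any case, a disk swap rather than an isotopy already contradicts minimality.
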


Let \(y,z\in \mathcal{C}_{2}(\varphi)\) be in the same open face. Then there exists \(u,v\in \mathcal{C}_{2}(\varphi)\)  such that both \(y\) and \(z\) are contained in the open segment \((u,v)\).
By Lemma \ref{lem:AZsurfaces}, choose embedded norm-minimizing surfaces \(S_{u}\) and \(S_{v}\) such that \(aS_{u}\oplus bS_{v}\) is norm-minimizing for any \(a,b\in \Integer_{+}\). In particular, \(S_{u}\setminus S_{v}\) and \(S_{v}\setminus S_{u}\) has no component of nonnegative Euler characteristic.

We construct a taut oriented branched surface \(B\) that carries \(S_{u}\) and \(S_{v}\) simultaneously as in \autocite[Proposition 8]{Oertel1985_HomologyBranchedSurfaces_PUB}.
Roughly speaking, replace each circle of \(S_{u}\cap S_{v}\) to an annulus of contact.
Then \(B\) carries \(aS_{u}\oplus bS_{v}\) for any positive integers \(a,b\).
Let \(N(B)\) denote the \(I\)-fibered neighborhood of \(B\) (see Figure \ref{fig:proof:branchedsurface}).
Interestingly, we can transform facets associated to different \(aS_{u}\oplus vS_{v}\) to each other via \(N(B)\).

\begin{figure}[htbp]
\centering
\includegraphics[scale=1]{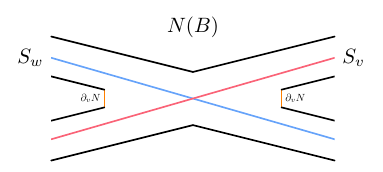}
\caption{\label{fig:proof:branchedsurface}}
\end{figure}

Assume that \(S=aS_{u}\oplus bS_{v}\) for some positive integers \(a\) and \(b\). Then as in \autocite[Section 6]{TollefsonWang1994_BranchedSurfacesAndThurstonsNormOnHomology_PUB}, isotope \(2S\) so that it contains the relative interior of \(\partial _{h}N(B)\).
We split \(N(B)\) along \(2S\) and obtain a honest \(I\)-bundle \(L_{B}(S)\), viewed as a product sutured manifold.
We claim that no component of \(L_{B}(S)\) is \(P\times I\) which satisfies: \(P\) is a punctured disk in the interior of \(2S\) and there is a boundary curve \(\alpha\) of \(P\) such that \(\alpha\times\{0\}\) and \(\alpha\times \{1\}\) bound disks in \(2S\) containing \(P\times \{0\}\) and \(P\times \{1\}\) respectively.
Otherwise, by the construction, a component of \(S_{u}\cap S_{v}\) bounds disks in \(S_{v}\) and \(S_{u}\). So there is a disk component in \(S_{v}\setminus S_{u}\), contradicting the construction of \(S_{v}\) and \(S_{u}\).
In particular, \(\partial _{v}(N)\) is a union of product annuli in \(M-2S\).
So if we decompose \(M\) along \(2S\) and along \(\partial _{v}N(B)\) further, we obtain \(L_{B}(S)\) again, together with \(M-N(B)\).

Let \(F_{S}\) be a facet that represents \(au+bv\) and contains \(S\). We isotope \(2F_{S}\) such that \(2F_{S}\) intersects \(\partial _{v}N(B)\) minimally.
For each component \(Z\) of \(F_{S}\), \(Z\cap L_{B}(S)\) is a horizontal taut surface parallel to \(R_{+}(L_{B}(S))\). In particular, \(Z\cap \partial _{v}N(B)\) is a collection of circles parallel to the sutures.
Hence, \(F_{S}\cap (M\setminus N(B))\) is a maximal collection of horizontal surfaces in \(M-N(B)\).
Otherwise, if there is a missing horizontal surface \(F_{0}\), we can add parallels of \(R_{+}(L_{B}(S))\) to \(F_{0}\) and get a surface in \(M\) that represents \(au+bv\).
This contradicts the maximality of \(F_{S}\).
We note that a surface \(Q\) in \(M-2S\subset M\) is homologous to \(S\) if and only if \(Q\cap (M-N(B))\) is a horizontal surface in \(M-N(B)\).

Suppose that \(S'=a'S_{u}\oplus b'S_{v}\) is an another surface. Then \(N(B)\) carries \(S'\). We decompose \(N(B)\) along \(2S'\) and \(\partial_{v}N(B)\) to obtain \(L_{B}(S')\) and \(M-N(B)\).
Recall that \(F_{S}\) is a facet containing \(S\), and \(F_{S}\cap (M-N(B))\) is a maximal collection \(H_{S}\) of horizontal surfaces in \(M-N(B)\), where \(2S\cap (M-N(B))\) is the \(R_{\pm}\)-part of \(M-N(B)\).
If we glue parallels of \(S'\cap N(B)\) to \(H_{S}\), the result \(F_{S'}\) is a facet that represents \(a'u+b'v\) and contains \(S'\).
In summary, by changing the portions in \(N(B)\), we modify a facet to another facet that represents a different homology class in the same open face.
Figure \ref{fig:proof:branchedsurface1} concretely illustrates how different \(aS_{u}\oplus bS_{v}\) differ inside \(N(B)\).

\begin{figure}[htbp]
\centering
\includegraphics[scale=0.8]{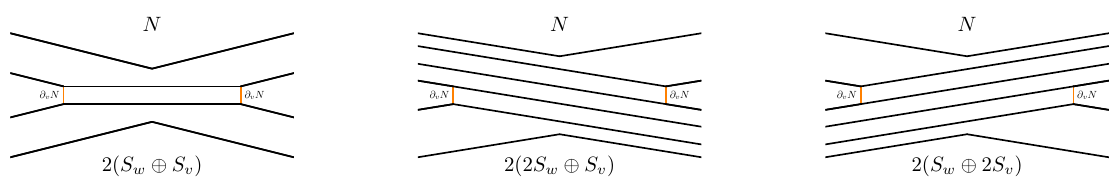}
\caption{\label{fig:proof:branchedsurface1}}
\end{figure}

Let \(T_{\tau}\) be a tube system and \(\tau\) is the associated veering triangulation.
Suppose that \(y,z\in \mathcal{C}_{2}(\varphi)\) are in the open segment \((u,v)\), and that \(S_{u},S_{v}\) are the surfaces constructed above.
As in Lemma \ref{lem:choice:blowup}, we assume that the blunt part of \(\mathrm{Guts}_{r}(u)\) and \(\mathrm{Guts}_{r}(v)\) are identical.
So there is a mixed flow \(\varphi^{\mathrm{m}}\) whose restriction yields the semiflow on the truncated regular guts \(\mathrm{Guts}^{\circ}_{r}(u)\) and \(\mathrm{Guts}_{r}^{\circ}(v)\) simultaneously.
We regard \(\varphi^{\mathrm{m}}\) as embedded in \(M\). Then \(S_{u}\) and \(S_{v}\) is genuinely transverse to \(\varphi^{\mathrm{m}}\) (though \(\varphi^{\mathrm{m}}\) is not defined on the whole \(M\)).
Choose \(N(B)\) small such that the verticle \(I\)-foliation on \(N(B)\) matches \(\varphi^{\mathrm{m}}\).

Suppose that \(F(y)\) is a facet that represents \(y=au+bv\) and contains \(S_{y}=aS_{u}\oplus bS_{v}\).
Set \(z=a'u+b'v\) and let \(S_{z}=a'S_{u}\oplus b'S_{v}\).
We transform the facet \(F(y)\) to be a facet \(F(z)\) via modifying portions in \(N(B)\).
Fix a collection of product annulus \(\mathcal{A}^{s}\) inside \(T_{\tau}\) to separate singular guts and regular guts of \(F(y)\) (see Section \ref{sec:singularguts}).
Due to Lemma \ref{lem:singularinvariance3}, \(\mathcal{A}^{s}\) also separate singular guts and regular guts of \(F(z)\).
Besides \(\mathcal{A}^{s}\), we fix a maximal collection \(\mathcal{A}^{p}\) of product \(\varphi^{\mathrm{m}}\)(\(\varphi\))-saturated annulus for \(F(y)\) outside \(N(B)\cup T_{\tau}\).
If we cut along \(N(B)\), \(\partial _{v}N(B)\), \(\mathcal{A}^{s}\), \(F(y)\), \(\mathcal{A}^{p}\) in sequence, and discard components in \(T_{\tau}\) and product flows, then we obtain the truncated regular guts \(\mathrm{Guts}_{r}^{\circ}(y)\).
Similarly, if we cut along \(N(B)\), \(\partial _{v}N(B)\), \(\mathcal{A}^{s}\), \(F(z)\), \(\mathcal{A}^{p}\) in sequence, and discard product flows and singular guts components, we obtain the truncated regular guts \(\mathrm{Guts}^{\circ}_{r}(z)\).
By construction, \(F(y)\) and \(F(z)\) are identical outside \(N(B)\), and the discarded components in \(N(B)\) are either as product flows or as parts of singular guts.
Recall that flows on both \(\mathrm{Guts}_{r}^{\circ}(y)\) and \(\mathrm{Guts}_{r}^{\circ}(z)\) are the restriction of \(\varphi^{\mathrm{m}}\) and we can reproduce the semiflow on regular guts \(\mathrm{Guts}_{r}\) from truncated version \(\mathrm{Guts}^{\circ}_{r}\) uniquely up to orbit equivalence outside half annuli.
Therefore, the semiflow on \(\mathrm{Guts}_{r}(y)\) and \(\mathrm{Guts}_{r}(z)\) are orbit equivalent outside half annuli.
This completes the proof of Theorem \ref{thm:openfaceinvariance}.
\section{Applications}
\label{sec:application}
\subsection{Cluster property}
\label{sec:org85181c5}
Let \(M\) be a closed 3-manifold which admits a pseudo-Anosov flow \(\varphi\) without perfect fits.
Recall that \(\varphi\) represents a (possibly empty) closed cone \(\mathcal{C}_{2}(\varphi)\subset H_{2}(M;\Real)\).
Denote by \(\mathcal{C}_{\mathrm{Fr}}(\varphi)\subset H_{1}(M;\Real)\) the closed dual cone of \(\mathcal{C}_{2}(\varphi)\), i.e., the minimal cone containing homology classes of closed orbits of \(\varphi\).
Each closed face \(F\) of \(\mathcal{C}_{\mathrm{Fr}}(\varphi)\) corresponds to a unique open face \(F^{\vee}\) of \(\mathcal{C}_{2}(\varphi)\) and vice verse.

We show that the guts associated to \(F^{\vee}\) encodes exactly closed orbits whose homology classes are in \(F\).
\begin{theorem}
Let \(F\) be a closed face of \(\mathcal{C}_{\mathrm{Fr}}(\varphi)\) and let \(F^{\vee}\) be its dual open face in \(\mathcal{C}_{2}(\varphi)\).
Assume that \(\gamma\) is a closed orbit of \(\varphi\).
Then \([\gamma]\in F\) if and only if \(\gamma\) can be isotopic into \(\mathrm{Guts}(F^{\vee})\).
Equivalently, \([\gamma]\in F\) if and only if \(\pi_{1}(\gamma)\) and \(\pi_{1}(\mathrm{Guts}(F^{\vee}))\), as conjugate classes, have nonempty intersection.
\label{thm:encodeclosedorbit}
\end{theorem}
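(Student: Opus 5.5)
The plan is to prove the two directions separately, working throughout with a primitive class \(z\in F^{\vee}\). By Theorem \ref{thm:openfaceinvariance} and the Agol--Zhang invariance, \(\mathrm{Guts}(F^{\vee})=\mathrm{Guts}(M,z)\) is well defined, and we may realize it by Construction \ref{cst:flow}: a transverse facet \(F_{\varphi}(z)\) (maximal with respect to \(\tau\)), a maximal aligned collection \(\mathcal{A}_{\varphi}\), and a blowup \(\varphi^{\sharp\sharp}\). The duality between \(F\) and \(F^{\vee}\) gives the key numerical fact: for a closed orbit \(\gamma\), \([\gamma]\in F\) if and only if \(\langle[\gamma],z\rangle=0\). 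Indeed, \(z\) lies in the open face dual to \(F\), so \(z\) pairs positively with every class of \(\mathcal{C}_{\mathrm{Fr}}(\varphi)\setminus F\) and vanishes on \(F\).

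\emph{The direction \((\Leftarrow)\).} Suppose \(\gamma\) is isotopic into \(\mathrm{Guts}(z)\subset M-F_{\varphi}(z)\). Then \(\gamma\) is disjoint from a surface representing \(z\), so \(\langle[\gamma],z\rangle=0\), and hence \([\gamma]\in F\). For the \(\pi_1\) formulation, suppose a power of \(\gamma\) is conjugate into \(\pi_{1}(\mathrm{Guts})\). Then some multiple of \([\gamma]\) is represented by a loop disjoint from \(F_{\varphi}(z)\), so again the pairing vanishes. Since \(\mathcal{C}_{\mathrm{Fr}}\) is a cone, \([\gamma]\in F\).

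\emph{The direction \((\Rightarrow)\).} Suppose \(\langle[\gamma],z\rangle=0\). Since \(F_{\varphi}(z)\) is positively transverse to \(\varphi^{\sharp}\), every intersection of \(\gamma\) with the facet has positive sign. Zero algebraic intersection therefore forces \(\gamma\cap F_{\varphi}(z)=\emptyset\). If \(\gamma\) is singular and blown up, we replace it by a vertex orbit of its blown-up complex; the ladderpole-tube discussion of Lemma \ref{lem:singularinvariance1} covers this case. Thus \(\gamma\) is an honest \(\varphi^{\sharp\sharp}\)-orbit lying in \(M-F_{\varphi}(z)\). It lies in the interior or in a saturated sutured annulus, but it cannot meet \(R_\pm\).

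Next we cut along \(\mathcal{A}_{\varphi}\). Each annulus of \(\mathcal{A}_{\varphi}\) is either product saturated or transverse to \(\varphi^{\sharp\sharp}\). A bi-infinite closed orbit cannot cross a transverse annulus: such an annulus separates pieces lying over \(S_+\) and \(S_-\) locally, and an orbit crossing it would be forced to hit \(F\). A closed orbit can also only lie inside a saturated annulus, never cross it. So after a small perturbation off saturated annuli, \(\gamma\) lies in one component of \(M-(F_{\varphi}(z)\cup\mathcal{A}_{\varphi})\).

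By Lemma \ref{lem:windowproduct}, windows carry product flows, in which every orbit is a compact segment. Hence \(\gamma\) is not in a window, and so \(\gamma\) lies in \(\mathrm{Guts}(z)\), possibly on a half annulus boundary or inside a 4-ST or 2-ST. Finally, being isotopic into the guts implies nonempty intersection of conjugacy classes trivially. The converse of that equivalence was handled above.

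The main obstacle is the claim that \(\gamma\) cannot cross a transverse annulus of \(\mathcal{A}_{\varphi}\). I would settle it using Lemma \ref{lem:singular-invariantline}: transverse annuli live in \(P_{\tau}\) near singular orbits, and an orbit meeting one also meets the facet pieces bounding that region with positive sign. A second delicate point is the singular case, where the orbit in the guts is only a blown-up copy (a vertex orbit) of \(\gamma\). It is still isotopic to \(\gamma\) in \(M\), since the blowdown map is homotopic to the identity.
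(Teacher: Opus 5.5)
Your proposal is correct. The direction from \([\gamma]\in F\) to ``isotopic into the guts'' follows the paper's argument. Zero pairing with \(z\) plus positive transversality puts \(\gamma\) (or, in the singular case, a blown-up vertex orbit) in \(M-F_{\varphi}(z)\). Aligned annuli do not cut closed orbits, and windows carry product flows by Lemma \ref{lem:windowproduct}.

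The converse is where you take a different route. The paper appeals to Agol--Zhang's rank theorem: the image of \(H_{1}(\mathrm{Guts}(F^{\vee}))\to H_{1}(M)\) is the span \(V\) of \(F\). It then argues that a closed-orbit class outside \(F\) lies outside that image, tacitly using \(V\cap\mathcal{C}_{\mathrm{Fr}}(\varphi)=F\). You use only two facts. First, the guts misses a surface representing \(z\), so everything carried by it pairs to zero with \(z\). Second, by face duality, a class \(z\) in the relative interior of \(F^{\vee}\) cuts out exactly \(F\) from \(\mathcal{C}_{\mathrm{Fr}}(\varphi)\). Your argument is more elementary and self-contained. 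It also handles the \(\pi_{1}\) formulation directly: a nontrivial power of \(\gamma\) conjugate into \(\pi_{1}\) of a guts component already forces \(\langle[\gamma],z\rangle=0\). The paper leaves this point implicit. The paper's route gives the extra information that the guts sees all of \(V\) homologically, but the theorem does not need it.

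Two details need tightening.

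\emph{The singular case.} Lemma \ref{lem:singularinvariance1} is not relevant here. Also, ``the blowdown is homotopic to the identity'' only gives a homotopy, and only to a power of \(\gamma\), unless you choose a vertex fixed by the first-return map. The paper supplies this step. The return map acts on the finite blown-up tree by an automorphism, and it cannot invert an edge: that would create a periodic point in the interior of an edge, but periodic points are only vertices. So it fixes a vertex. The orbit of that vertex meets each meridian disk of the tube exactly once, and is therefore isotopic to \(\gamma\), not merely homotopic.

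\emph{The transverse annuli.} The paper only asserts this step. Your phrase ``an orbit meeting one also meets the facet'' is not literally true. The accurate statement is that an orbit crossing such an annulus either meets \(F_{\varphi}(z)\) or lies in the interior of the blown-up annulus created for it. Orbits of the latter kind are not closed, so no closed orbit in \(M-F_{\varphi}(z)\) is cut. Similarly, a product-saturated annulus carries a product flow, so a closed orbit cannot even lie inside one, and no perturbation is needed.
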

\begin{proof}
Assume that \(\gamma\) is a closed orbit of \(\varphi\) and \([\gamma]\in F\). Suppose that \(\gamma\) is regular.
We choose any transverse facet \(F_{\varphi}(z)\) for \(z\in \iota(F)\). By our construction, there is a further blowup \(\varphi^{\sharp\sharp}\) and a maximal collection \(\mathcal{A}\) of product annuli in \(\varphi^{\sharp\sharp}\)-aligned position, so that, throwing away product flows yields the guts \(\mathrm{Guts}(\iota(F))\).
Since each component of \(F_{\varphi}(z)\) are positively transverse to \(\varphi\), it follows that \(\gamma\subset M-F_{\varphi}(z)\).
No closed orbit in \(M-F_{\varphi}(z)\) will be cut by a product \(\varphi^{\sharp\sharp}\)-saturated annulus or a \(\varphi^{\sharp\sharp}\)-transverse annulus.
Each product flow \(W\) contains no closed orbit so \(\gamma\) is not in \(W\).
Consequently, \(\gamma\) lies in \(\mathrm{Guts}(\iota(F))\).

Suppose that \(\gamma\) is singular.
We claim that \(\varphi^{\sharp\sharp}\) contains at least one closed orbit isotopic to \(\gamma\).
Let \(f:(M,\varphi^{\sharp\sharp})\to (M,\varphi)\) be the semi-conjugate map. Then \(f^{-1}(\gamma)\) is either a singular orbit or a blown-up complex.
Choose any transverse section of the blown-up complex \(T\) so that \(f(T)\) intersects \(\gamma\) once.
The return map of \(T\) is a rotation, so it have a fixed vertex in \(T\).
The blown up closed orbit \(\gamma'\) that corresponds to the fixed point is isotopic to \(\gamma\).
As in preceding paragraph, \(\gamma'\) will be contained in \(\mathrm{Guts}(\iota(F))\).

Let \(V\) be the linear subspace of \(H_{1}(M;\Real)\) spanned by \(F\) and let \(k= \dim V\).
By \autocite[Theorem 1.3]{AgolZhang2022_GutsInSuturedDecompositionsAndTheThurstonNorm_ARXv1}, the rank of \(i:H_{1}(\mathrm{Guts}(\iota(F)))\to H_{1}(M)\) is \(k\), and thus the image of \(i\) is \(V\).
Assume that \(\gamma\) is a closed orbit and \([\gamma]\notin F\). Then \([\gamma]\) is not in the image of \(i\) and hence \(\gamma\) cannot be isotoped into \(\mathrm{Guts}(\iota(F))\).
\end{proof}
In particular, we have
\begin{corollary}
Suppose that \(\varphi\) represents a top-dimensional closed cone \(\mathcal{C}_{2}(\varphi)\).
Then a closed orbit of \(\gamma\) is homologically trivial if and only if \(\gamma\) can be isotoped into \(\mathrm{Guts}(K)\) where \(K\) is the top open face of \(\mathcal{C}_{2}(\varphi)\).
\label{cor:homologicallytrivialorbits}
\end{corollary}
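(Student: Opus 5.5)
This is a direct specialization of Theorem \ref{thm:encodeclosedorbit}. The plan is to take \(F\) to be the minimal closed face of \(\mathcal{C}_{\mathrm{Fr}}(\varphi)\), namely the face containing \(0\) and no other point, and then identify its dual open face.

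First, I would show that \(\mathcal{C}_{\mathrm{Fr}}(\varphi)\) is a pointed cone whose vertex \(\{0\}\) is a closed face. Since \(\mathcal{C}_{2}(\varphi)\) is top-dimensional, its dual \(\mathcal{C}_{\mathrm{Fr}}(\varphi)\) contains no line: a nonzero \(w\) with \(\pm w\in\mathcal{C}_{\mathrm{Fr}}\) would pair to zero with every element of the full-dimensional cone \(\mathcal{C}_{2}(\varphi)\), forcing \(w=0\). Hence \(F=\{0\}\) is a closed face of \(\mathcal{C}_{\mathrm{Fr}}(\varphi)\).

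Next, under the duality between closed faces of \(\mathcal{C}_{\mathrm{Fr}}(\varphi)\) and open faces of \(\mathcal{C}_{2}(\varphi)\), the face \(\{0\}\) corresponds to the set of \(z\in\mathcal{C}_{2}(\varphi)\) that pair to zero exactly on \(\{0\}\). These are the classes pairing strictly positively with every nonzero element of \(\mathcal{C}_{\mathrm{Fr}}(\varphi)\). By duality this set is the interior of \(\mathcal{C}_{2}(\varphi)\), which is the top open face \(K\). So \(F^{\vee}=K\).

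Finally, I would apply Theorem \ref{thm:encodeclosedorbit} with \(F=\{0\}\). A closed orbit \(\gamma\) satisfies \([\gamma]\in F\) exactly when \([\gamma]=0\) in \(H_{1}(M;\Real)\). The theorem then says that this holds if and only if \(\gamma\) can be isotoped into \(\mathrm{Guts}(K)\). For the forward direction, the theorem's proof uses nothing about \(F\) beyond \(\langle[\gamma],z\rangle=0\) for \(z\in K\), and the closed-orbit regular and singular cases go through verbatim. For the converse, the rank statement of Agol and Zhang says that the image of \(H_{1}(\mathrm{Guts}(K))\to H_{1}(M;\Real)\) is the span of \(F\). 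Here that span is \(0\), so any curve isotoped into the guts is rationally null-homologous.

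The only subtle point is the meaning of "homologically trivial". The argument yields triviality in \(H_{1}(M;\Real)\), i.e. torsion in integral homology. I would state the corollary with real coefficients, consistent with the paper's convention that \(\mathcal{C}_{\mathrm{Fr}}\) lives in \(H_{1}(M;\Real)\), and remark on this explicitly. That, together with checking the face duality at the vertex, is the main step to get right.
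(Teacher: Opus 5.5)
Your proposal is correct and follows the paper's argument. The paper states the corollary as an immediate specialization of Theorem~\ref{thm:encodeclosedorbit} to the vertex face \(F=\{0\}\) of the Fried cone, which is pointed because \(\mathcal{C}_{2}(\varphi)\) is full-dimensional, and whose dual open face is the top face \(K\). Your verification of this face duality, and your caveat that the argument only gives triviality in \(H_{1}(M;\Real)\) (so torsion orbit classes would also qualify), are accurate and make explicit points the paper leaves implicit.
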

Moreover, if two pseudo-Anosov flows without perfect fits represent an open face simultaneously, then their closed orbits with certain homology classes will lie in same guts.
So classifying pseudo-Anosov flows without perfect fits may reduce to classifying semiflows on the guts and gluing flexibility.

Another corollary is about the cluster property proposed by Yi Liu.
Assume in addition that \(M\) is a hyperbolic 3-manifold.
We recall some hyperbolic geometric facts first.
The fundamental group of any hyperbolic 3-manifold is word-hyperbolic (or simply hyperbolic) in the sense of Gromov \autocite{Gromov1987_HyperbolicGroups_PUB}.
After choosing an orthonormal frame of reference at a basepoint, the fundamental group can be uniquely identified with a cocompact Kleinian group, namely, a cocompct discrete subgroup of \(\mathrm{PSL}(2,\Complex)\).
Any finitely generated subgroup of a cocompact Kleinian group is either \emph{geometrically finite} or \emph{geometrically infinite}.
Each geometrically finite subgroup is a quasiconvex subgroup of the nonelementary word-hyperbolic fundamental group, in the sense of geometric group theory \autocite[Proposition 4.4.2]{AschenbrennerFriedlWilton2015_3ManifoldGroups_PUB}.
Each geometrically infinite subgroup is not quasiconvex, but a virtual fiber, 
We refer the reader to \autocite[Section 4.1 and 4.4]{AschenbrennerFriedlWilton2015_3ManifoldGroups_PUB} for an informative survey on hyperbolic 3-manifolds and all direct references to the facts.

We prove a lemma which is well-known to experts but requires technically deep consequences of hyperbolization.
\begin{lemma}
Let \(P\) be an nonempty open face of the Thurston norm unit ball \(\mathcal{B}_{\mathrm{Th}}(M)\), and let \(Q\) be a component of \(\mathrm{Guts}(P)\).
Fix basepoints \(x_{0}\in M\), \(x_{Q}\in Q\) and a path from \(x_{0}\) to \(x_{Q}\).

Then \(\pi_{1}(Q,x_{0})\) is an infinite-index quasi-convex subgroup of \(\pi_{1}(M,x_{0})\).
\end{lemma}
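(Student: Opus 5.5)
The plan is to show three things in turn: $\pi_1(Q)$ injects into $\pi_1(M)$, it has infinite index, and it is geometrically finite. Quasi-convexity then follows from the tameness/covering theorem dichotomy recalled above.

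\emph{Injectivity.} $Q$ arises from $M$ by cutting along a facet surface $F(z)$ for some primitive $z$ in the cone over $P$, then along product annuli and disks. Each of these is an incompressible surface in a taut sutured manifold; taut decompositions along such surfaces keep the boundary pieces incompressible. Hence the inclusion $Q\hookrightarrow M-F(z)\hookrightarrow M$ is $\pi_1$-injective, and $\partial Q$ is incompressible in $Q$. So $\pi_1(Q,x_0)$, with the chosen path, is a finitely generated subgroup of $\pi_1(M)$. It is nontrivial as long as $Q$ is not a ball, which tautness rules out.

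\emph{Infinite index.} Since $Q$ lies in $M-F(z)$, its fundamental group sits inside $\ker(\pi_1(M)\to\Integer)$, where the map is given by the class dual to $z$. To see this, note that every loop in $Q$ misses $F(z)$, so its algebraic intersection with $z$ is zero. Because $z\neq 0$ and $M$ is closed, this homomorphism is nontrivial with infinite image, so its kernel has infinite index. Consequently $\pi_1(Q)$ has infinite index as well.

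\emph{Geometric finiteness.} This is the main obstacle. If $\pi_1(Q)$ were geometrically infinite, then by the covering theorem (Canary, Thurston) together with tameness (Agol, Calegari--Gabai), $\pi_1(Q)$ would be a virtual fiber subgroup: it would be commensurable with the fundamental group of a fiber $\Sigma$ of a finite cover $\hat M\to M$. Then $\pi_1(Q)$ would be (virtually) a closed surface group, and the cover of $\hat M$ corresponding to it would be $\Sigma\times\Real$.

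I would derive a contradiction from the boundary structure of $Q$. Since $Q$ is compact with nonempty incompressible boundary and $\pi_1$-injects, the compact core of that cover would be $\Sigma\times I$. This forces $Q$ to be an $I$-bundle (a product or twisted $I$-bundle) over a surface. A guts component, however, is never a product sutured manifold, because windows have been discarded. A twisted $I$-bundle is excluded as well, because $R_\pm(Q)$ are taut and oriented consistently.

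The step that needs care is matching the sutured structure with the $I$-bundle structure. Specifically, I must show that the $R_+$ and $R_-$ parts are exactly the horizontal boundary. I plan to argue this using the fact that both $R_+$ and $R_-$ are norm-minimizing and carry the class of the fiber, and then invoke Lemma~\ref{lem:gutsclassification} (acylindricity, or the solid torus cases). For the solid torus cases, $\pi_1(Q)\cong\Integer$ is cyclic, hence geometrically finite and quasi-convex directly.

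Geometrically finite subgroups are quasi-convex by the cited fact, and this completes the argument.
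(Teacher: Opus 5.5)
Your injectivity and infinite-index steps are fine; the latter is exactly the paper's kernel-of-$\zeta$ argument. One correction there: $\partial Q$ need not be incompressible in $Q$, since a 4-ST or a handlebody guts component has compressible boundary. The $\pi_1$-injectivity instead comes from the incompressibility of $F(z)$ together with the fact that $Q$ is glued to windows along essential annuli.

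The genuine gap is in the geometric finiteness step. Once $\pi_1(Q)$ is a virtual fiber subgroup and $Q$ is an $I$-bundle over a closed surface $\Sigma_0$, your structural contradiction does not go through, because being homeomorphic to $\Sigma_0\times I$ does not make $Q$ a product sutured manifold. Nothing in your argument excludes a non-product sutured structure on $\Sigma_0\times I$. For example, the sutures on the two boundary components could have cores that together fill $\Sigma_0$. Any product annulus would be isotopic to a vertical $\alpha\times I$ with $\alpha$ missing all suture cores, so such a structure has no product annulus. Hence neither maximality nor Lemma~\ref{lem:gutsclassification} rules it out. Your plan to identify $R_\pm$ with the horizontal boundary via ``norm-minimizing in the fiber class'' also fails as stated. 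When $\gamma(Q)\neq\emptyset$, $R_\pm$ have boundary on the sutures and represent relative classes in $H_2(Q,\gamma)$, not the fiber class. The missing input is the one hypothesis you never use: $z$ is not a fibered class. For fibered $P$ the guts is empty, and that is the real source of the contradiction.

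The fix uses what you already proved. Let $H=\pi_1(Q)$ and $\zeta\colon\pi_1(M)\to\Integer$ the map dual to $z$, so $H\subset\ker\zeta$. If $H$ is a virtual fiber subgroup, take a finite-index $G'$ with $H\cap G'$ normal in $G'$ and $G'/(H\cap G')\cong\Integer$. Then $\zeta|_{G'}$ kills $H\cap G'$, so it factors through this $\Integer$, and it is nonzero. Hence $\ker(\zeta|_{G'})=H\cap G'$, which is finitely generated and of finite index in $\ker\zeta$. So $\ker\zeta$ is finitely generated, and Stallings' theorem makes $z$ fibered, a contradiction. This is the paper's second argument.

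The paper's first argument avoids $I$-bundles entirely. An embedded virtual-fiber facet component would be a fiber or a null-homologous semifiber (Hempel), so every facet component is quasi-Fuchsian. Then the component $N\supset Q$ of $M-F(z)$ has geometrically finite fundamental group of infinite covolume. Every finitely generated subgroup of such a group, in particular $\pi_1(Q)$, is geometrically finite.
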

\begin{proof}
If \(P\) is a fibered face, then \(\mathrm{Guts}(P)\) is emptyset and there are nothing to prove.
Let \(z\neq 0\in P\) and choose a facet \(F(z)\) associated to \(z\).

Each component of \(F(z)\) is quasi-Fuchsian (or equivalently geometrically finite \autocite[Lemma 4.66]{Ohshika2001_DiscreteGroups_PUB}). Otherwise, there is a component \(S\) which is embedded and a virtual fiber \autocite[Theorem 4.1.2]{AschenbrennerFriedlWilton2015_3ManifoldGroups_PUB}.
By \autocite[Theorem 10.5]{Hempel2004_3Manifolds_PUB}, \(S\) is either a (honest) fiber or a semifiber which is homologically trivial. Both cases contradict that \(S\) represents a non-fibered class \(z\neq 0\).
Let \(N\) be the component of \(M- F(z)\) that contains \(Q\). Then \(\pi_{1}(N,x_{0})\) is a geometrically finite subgroup of \(\pi_{1}(M,x_{0})\) of infinite index with non-empty domain of discontinuity \autocite{Thurston1982_GeometryAndTopologyOf3Manifolds}. (In fact, \(N\) admits a infinite-volume complete hyperbolic structure.)
Thurston shows that any finitely generated subgroup of \(\pi_{1}(N,x_{0})\) is still geometrically finite \autocite[Theorem 4.111]{Kapovich2010_HyperbolicManifoldsAndDiscreteGroups_PUB}.
\(\pi_{1}(Q)\) is finitely generated since \(Q\) is compact. So \(\pi_{1}(Q)\) is geometrically finite and hence quasi-convex.

We give an another way to prove the result, which is similar to \autocite[Lemma 9.3]{Liu2020_VirtualHomologicalSpectralRadiiForAutomorphismsOfSurfaces_PUB}.
Let \(H=\pi_{1}(Q)\).
Since \(H\) is finitely generated, it remains to prove that \(H\) is not virtual fiber.
Then \(H\) is contained in the kernel of the surjective composite homomorphism of groups:
\[\zeta: \pi_{1}(M,x_{0})\to H_{1}(M;\Integer)\xlongrightarrow{z}\Integer,  \]
(see the proof of Theorem \ref{thm:encodeclosedorbit}).
So \(H\) is of infinite index.
If \(H\) is not quasiconvex, then it is a virtual fiber, namely, there is a finite-index subgroup \(G'\) of \(G=\pi_{1}(M,x_{0})\) such that \(H\cap G'\) is normal in \(G'\) and \(G'/(H\cap G')\) is infinite cyclic. (\(H\cap G'\) corresponds to a fiber in the finite cover of \(M\) with respect to \(G'\))
\(\zeta\) factor through the infinite cyclic quotient \(G'\to G'/(H\cap G')\cong \Integer\) . However, \(G'\to G'/(H\cap G')\) corresponds to a fibration, so the kernel of \(\zeta|_{G'}\) is finitely generated.
This is due to a well-known fibering criterion in 3-manifold topology given by Stallings \autocite{Stallings1962_FiberingCertain3Manifolds_PUB}.
Therefore, \(\ker \zeta\) is finitely generated, but \(\zeta\) is not a fibered class, which is a contradiction.
\end{proof}

\begin{remark}
A related concept is the pared guts associated to a pared 3-manifold.
We refer the reader to \autocite{Agol2010_MinimalVolumeOrientableHyperbolic2Cusped3Manifolds_PUB} for an introduction to pared guts.
There is subtle difference between pared guts and sutured guts.
The pared guts is always an acylindrical pared manifold and hence admits a complete hyperbolic structure of finite volume with geodesic boundary \autocite{Morgan1984_ChapterVThurstonsUniformizationTheoremForThreeDimensionalManifolds_PUB}.
If a sutured manifold \(N\) is horizontally prime and atoroidal, then the sutured guts of \(N\) is either sutured torus or an acylindrical taut sutured manifold.
In the latter case, the sutured guts matches the pared guts with suitable pared assignment to \(N\) (and hence admits a complete hyperbolic structure of finite volume with geodesic boundary by same reason).
\end{remark}

\begin{definition}
Let \(F\) be a closed face of \(\mathcal{C}_{\mathrm{Fr}}(\varphi)\) of positive codimension.
A finite collection \(\mathcal{H}\) of subgroups of \(G=\pi_{1}(M)\) satisfies the \emph{cluster property} with respect to \(F\) if
\begin{enumerate}
\item each element of \(\mathcal{H}\) is an infinite-index quasi-convex subgroups of \(\pi_{1}(M)\),
\item if \(\gamma\) is a closed orbit of \(\varphi\) and \([\gamma]\in F\), then some power of \(\gamma\), as a free-homotopy loop, represents a conjugacy class in \(\pi_{1}(M)\) that has nonempty intersection with some element \(H\in \mathcal{H}\).
\end{enumerate}
\label{def:cluster}
\end{definition}
In \autocite{Liu2020_VirtualHomologicalSpectralRadiiForAutomorphismsOfSurfaces_PUB}, Liu gave a construction using Markov partition in the suspension flow case.
This plays a central role in many works of Liu \autocite{Liu2020_VirtualHomologicalSpectralRadiiForAutomorphismsOfSurfaces_PUB,Liu2022_FiniteVolumeHyperbolic3ManifoldsAreAlmostDeterminedByTheirFiniteQuotientGroups_PUB,Liu2025_TheEulerClassOneConjectureForFillableContactStructures_ARXv2}.
For example, based on cluster property and virtual specialness toolkits, Liu showed that any fibered cone can virtually become as complicated as you want.
Liu's construction depends on the choice of Markov partition and is merely a collection of subgroups.
Theorem \ref{thm:encodeclosedorbit} shows that the guts give a canonical submanifold realization of cluster property since the guts is a topological invariant of \(M\). We note that Theorem \ref{thm:encodeclosedorbit} is stronger than the cluster property.
\subsection{Distinguished orbits}
\label{sec:orgc6240c2}
Beyond the fundamental group, the sutured structure of the guts also encodes some typical closed orbits.
Assume that \(z\in \mathcal{C}_{2}(\varphi)\).
\begin{definition}[Distinguished orbit]
Let \(\gamma\) be a closed orbit of \(\varphi\) and let \(P\) be a prong of \(\gamma\).
Then \(\gamma\) is said to be \(z\)-\emph{distinguished} in \(P\)-direction if there exists a surface \(S\) with \([S]=z\) such that \(P\cap S\) contains a closed curve component.
Equivalently, according to \autocite[Lemma 8.3]{LandryMinskyTaylor2022_FlowsGrowthRatesAndTheVeeringPolynomial_PUB}, a power of \(\gamma\) is homotopic to a closed leaf of either \(W_{S}^{s}\) or \(W_{S}^{u}\), where \(W_{S}^{s/u}= W^{s/u}\cap S\) on \(S\).
\end{definition}
\begin{lemma}
Let \(\gamma\) be a closed orbit of \(\varphi\) and let \(P\) be a prong of \(\gamma\).
If \(\gamma\) is \(z\)-distinguished in \(P\)-direction, then there exists a facet \(F(z)\) such that:
\begin{enumerate}
\item there is an immersed annulus \(A\subset P\) whose interior is embedded and whose boundary consists of \(\gamma\) and a closed curve in a component \(S\) of \(F(z)\), and
\item \(A\) is disjoint from other components of \(F(z)\).
\end{enumerate}
\label{lem:distinguished}
\end{lemma}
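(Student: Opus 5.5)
Start from a surface \(S_{0}\) with \([S_{0}]=z\) such that \(P\cap S_{0}\) contains a closed curve component \(c\). The plan is to upgrade \(S_{0}\) to a transverse taut surface, then extend it to a facet.

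\textbf{Taut representative.} First replace \(S_{0}\) by a taut surface that still meets \(P\) in a closed curve. By the characterization quoted in Section~\ref{sec:veering} (Landry, Landry--Minsky--Taylor), \(z\in\mathcal{C}_{2}(\varphi)\) is represented by a surface almost transverse to \(\varphi\). In the almost transverse setting, the equivalent formulation via \autocite[Lemma 8.3]{LandryMinskyTaylor2022_FlowsGrowthRatesAndTheVeeringPolynomial_PUB} applies: a power of \(\gamma\) is homotopic to a closed leaf \(c\) of \(W^{s/u}_{S}\) on \(S\). So I will assume \(S\) is taut, almost transverse (relatively carried by \(\tau\) if convenient), and \(c\subset P\cap S\).

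\textbf{The annulus.} In the prong \(P\), which is a half-leaf of \(W^{s}(\gamma)\) or \(W^{u}(\gamma)\), the flow segments from \(c\) to \(\gamma\) sweep out an immersed annulus \(A\). Its interior is embedded because the prong half-leaf minus \(\gamma\) is an embedded open annulus, and \(\partial A=\gamma\cup c\). Choosing \(c\) to be the component of \(P\cap S\) closest to \(\gamma\) along \(P\) makes \(\operatorname{int}(A)\) disjoint from \(S\). If blowups intervene near a singular \(\gamma\), I work in the orbit space as in Lemma~\ref{lem:elliptic-boundary} and Proposition~\ref{prop:shadow3}: a closed curve of \(P\cap S\) corresponds to a periodic boundary slice leaf of a shadow containing the fixed point of \(\gamma\).

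\textbf{Extending to a facet.} Extend the single component \(S\) to a maximal collection of disjoint, non-parallel, transverse taut surfaces representing \(z\). This is possible because any such maximal extension is a facet, and by Lemma~\ref{lem:choice:transversefacet} and Theorem~\ref{thm:LMTparallel} it can be taken transverse. The remaining task is arranging that the other components avoid \(A\). Any other component \(S'\) meeting \(\operatorname{int}(A)\) crosses \(P\) in a closed curve \(c'\) nested between \(\gamma\) and \(c\). Since \(S'\) is disjoint from \(S\) and transverse, I would replace \(S\) by \(S'\) in the role of ``the component containing the boundary'', taking the innermost component, meaning the one whose curve in \(P\) is closest to \(\gamma\). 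Finitely many components meet \(P\) in this way, so an innermost curve exists. The annulus from \(\gamma\) to that innermost curve then meets no other component.

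The main obstacle is making \(\operatorname{int}(A)\) embedded and disjoint from all facet components when \(\gamma\) is singular and a blowup is needed. In that case \(P\) becomes a union of blown-up annuli, and the flowlines from \(c\) may limit onto a blown-up orbit rather than \(\gamma\) itself. I would handle this as in the proof of Theorem~\ref{thm:encodeclosedorbit}: replace \(\gamma\) by the fixed vertex orbit \(\gamma'\) of the blown-up complex, which is isotopic to \(\gamma\), and take \(A\) inside the blown-up prong ending on \(\gamma'\). The annulus is immersed only along its \(\gamma\)-boundary, which covers \(\gamma\) possibly several times when the prong is permuted by the return map.
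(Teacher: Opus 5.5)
Your overall plan matches the paper's: take the distinguishing surface, extend it to a transverse facet, and pass to the component whose curve in \(P\) is nearest \(\gamma\). The gap is that the step carrying the whole content of the lemma is asserted, not proved. You claim that any other facet component \(S'\) meeting \(\operatorname{int}(A)\) meets \(P\) in a closed curve nested between \(\gamma\) and \(c\).

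This is not automatic. A surface transverse to \(\varphi\) meets \(P\) in a \(1\)-manifold transverse to the flow segments, and its components need not close up. Suppose \(S'\) crossed \(\gamma\). Then \(S'\cap P\) would contain arcs leaving \(\gamma\) into \(\operatorname{int}(A)\), and they cannot exit through \(c\), since \(S'\cap S=\emptyset\). Every annulus in \(P\) from \(\gamma\) to an essential closed curve contains a collar of \(\gamma\), hence the germs of these arcs, so no ``innermost'' choice helps. The same objection applies to your claim that taking \(c\) innermost makes \(\operatorname{int}(A)\) disjoint from \(S\). This, rather than the blow-up bookkeeping in your last paragraph, is where the difficulty lies.

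The missing idea is homological.
\begin{enumerate}
\item \(A\) is a free homotopy from \(c\subset S\) to a power of \(\gamma\), and \(c\) can be pushed off the two-sided surface \(S\). Hence \(\langle[\gamma],z\rangle=0\).
\item Every facet component represents \(z\) and is positively transverse, so each point of \(\gamma\cap S'\) would contribute \(+1\). Hence \(\gamma\cap S'=\emptyset\).
\item The paper then argues in the orbit space. For a lift \(\widetilde{P}\) of \(P\) and a lift \(\widetilde{S}'\) meeting it, the shadow \(\Theta(\widetilde{S}')\) contains \(\Theta(\operatorname{int}\widetilde{P})\) but not \(\Theta(\widetilde{\gamma})\). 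So \(\Theta(\widetilde{\gamma})\) lies on its frontier, and, as in Lemma~\ref{lem:elliptic-boundary}, \(\widetilde{S}'\cap\widetilde{P}\) is invariant under a power of the generator of \(\pi_{1}(\gamma)\). It therefore projects to a closed curve.
\end{enumerate}
Once \(\gamma\cap S'=\emptyset\) is known, compactness also suffices. \(S'\cap A\) is compact and misses both \(\gamma\) and \(c\), so it lies in a compact subannulus of \(\operatorname{int}(A)\) foliated by flow segments. There, a closed \(1\)-manifold transverse to the segments is a finite union of essential circles, which also justifies the existence of an innermost one.

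Separately, your ``taut representative'' step does not do what it claims. Knowing that some almost transverse surface represents \(z\) says nothing about how that surface meets \(P\). The definition has to be read with the distinguishing surface already almost transverse, which is how the paper uses it.
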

\begin{proof}
We give two ways to prove this lemma.
By definition, there exists a surface \(S\) with \([S]\) and an annulus \(A\subset P\) connecting \(\gamma\) and a closed curve in \(S\).
We extend \(S\) to a transverse facet \(F(z)\).

Assume that a component \(S'\) intersects \(\mathrm{int}(P)\).
We claim that \(S'\cap P\) is a closed curve in \(S'\) and the conlusion follows by choosing a nearest \(S'\).
Since \(S'\) is homologous to \(S\), \(\gamma\) is also disjoint from \(S'\).
Consider the orbit space projection map \(\pi:\widetilde{M}\to P_{\varphi}\).
Choose a lift \(\widetilde{P}\) of \(P\) in \(\widetilde{M}\), whose projection image is a prong of \(\pi(\widetilde{\gamma})\).
There is a lift \(\widetilde{S}'\) of \(S'\) so that \(\pi(\widetilde{S}')\) contains \(\pi(\mathrm{int}(\widetilde{P}))\) and is disjoint from \(\pi(\widetilde{\gamma})\).
So \(\pi(\widetilde{\gamma})\in \partial \pi(\widetilde{S}')\), and hence \(\widetilde{S}'\cap \widetilde{P}\) is \(\pi_{1}(\gamma)\)-invariant and projects a closed curve in \(S'\) (see Lemma \ref{lem:elliptic-boundary} or \autocite[Lemma 8.3]{LandryMinskyTaylor2022_FlowsGrowthRatesAndTheVeeringPolynomial_PUB}).
Therefore, \(S'\cap P\) is a closed curve in \(S'\).

Another way relies on veering triangulation toolkits, which is a bit complicated but more figurative.
Let \(T_{\tau}\) be a tube system which contains all singular orbits and \(\gamma\) and let \(T_{\gamma}\) be the tube containing \(\gamma\). Assume that \(\tau\) be a veering triangulation on \(M\setminus T_{\tau}\).
We show that \(S\) can be isotoped to be carried by \(\tau\) and intersect \(T_{\gamma}\) in a ladderpole annulus.
We pull the closed curve of \(S\) inside \(T_{\tau}\) along \(P\) and puncture it as in Figure \ref{fig:pullbackpuncture}.
Denote the punctured surface by \(S^{\circ}\) and let \(M^{\circ}=M\setminus T_{\tau}\).
\(M^{\circ}\) admits a Fried blowup \(\varphi^{\circ}\) of \(\varphi\), and \(S^{\circ}\) is a properly embedded surface in \(M^{\circ}\) and transverse to \(\varphi^{\circ}\).
By \autocite[Theorem 5.1]{LandryMinskyTaylor2025_TransverseSurfacesAndPseudoAnosovFlows_PUB}, we can isotope \(S^{\circ}\) to be carried by \(\tau\). We cap off \(S^{\circ}\) by the removed (ladderpole) annulus in \(T_{\gamma}\), then the resulting \(S\) intersect \(T_{\gamma}\) in a ladderpole annulus.
Assume that \(F(z)\) is a maximal \(\tau\)-facet, then components of \(F(z)\) intersects \(T_{\gamma}\) at some ladderpole annuli. We just choose a \(S\) nearest to \(\gamma\).
\end{proof}

\begin{figure}[htbp]
\centering
\includegraphics[scale=0.9]{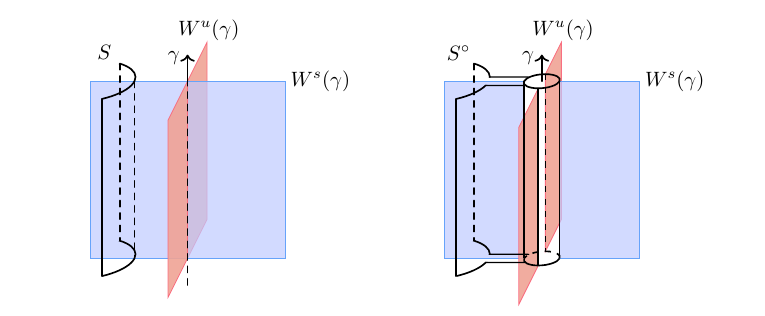}
\caption{\label{fig:pullbackpuncture}Pull a closed curve on \(S\) and puncture it.}
\end{figure}

\begin{corollary}
Let \(z\in \mathcal{C}_{2}(\varphi)\).
Let \(D_{n}(z)\) be the set of regular closed orbits which is \(z\)-distinguished in at least \(n\) directions. Then
\begin{enumerate}
\item each element in \(D_{1}(z)\) has a power which is isotoped into \(R_{\pm}\)-part of \(\mathrm{Guts}(z)\),
\item \(\#D_{2}(z)\leq\) the number of sutured annuli in \(\mathrm{Guts}(z)\),
\item \(\#D_{4}(z)\leq\) the number of 4-STs in \(\mathrm{Guts}(z)\).
\end{enumerate}
\label{cor:distinguished}
\end{corollary}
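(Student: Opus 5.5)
The plan is to feed each distinguished direction through Lemma \ref{lem:distinguished} and then follow the resulting annulus through Construction \ref{cst:flow}, using Lemma \ref{lem:windowproduct} and Lemma \ref{lem:gutsclassification} to locate where it ends up.

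For (1), let \(\gamma\in D_{1}(z)\) be distinguished in the direction of a prong \(P\). Lemma \ref{lem:distinguished} gives a facet \(F(z)\), a component \(S\), and an annulus \(A\subset P\). Its boundary is \(\gamma\) together with a closed curve \(c\subset S\), and \(A\) avoids the other components of \(F(z)\). Since \(A\) lies in a (un)stable leaf, it is \(\varphi\)-saturated away from \(c\). Pushing \(c\) slightly off \(S\) then places \(A\) in \(M-F_{\varphi}(z)\), where it is a saturated half-annulus joining \(\gamma\) to \(R_{+}\) (or \(R_{-}\)). After cutting along \(\mathcal{A}_{\varphi}\), both \(\gamma\) and a neighborhood of \(A\) stay in the guts. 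Indeed, \(\gamma\) is a closed orbit, so by the argument of Theorem \ref{thm:encodeclosedorbit} it lies in the guts. Also, aligned annuli cannot cross a saturated annulus transversally in its interior except along orbits, so they can be taken disjoint from \(A\) up to isotopy. Flowing \(c\) along \(A\) back to \(\gamma\) isotopes a power of \(\gamma\) onto \(c\), which lies on \(R_{\pm}\) of the guts. The power is needed because \(\partial A\to\gamma\) may be a cover.

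For (2) and (4), the key point is that several distinguished directions produce non-product annuli. If \(\gamma\) is distinguished in two prongs \(P_{1},P_{2}\), the two annuli \(A_{1},A_{2}\) glue along \(\gamma\) into an annulus \(A_{1}\cup A_{2}\) in \(M-F(z)\). Here I must arrange, by choosing nearest components as in the first proof of Lemma \ref{lem:distinguished}, that a single facet works for both prongs simultaneously. There are two cases.

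\begin{itemize}
\item If \(P_{1},P_{2}\) are opposite prongs of the same (stable or unstable) leaf, the boundaries lie on opposite sides. The resulting annulus is a product annulus containing the closed orbit \(\gamma\), so after cutting it bounds a sutured annulus carrying \(\gamma\) as its core.
\item If the prongs are adjacent (one stable, one unstable), the boundaries lie on the same side. The annulus is then a non-product annulus, so by Lemma \ref{lem:gutsclassification} the component is a 4-ST or a non-longitudinal 2-ST. Since \(\gamma\) is regular it must be a 4-ST, whose suture annuli again contain \(\gamma\) up to isotopy.
\end{itemize}

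In either case I assign to \(\gamma\) a sutured annulus of \(\mathrm{Guts}(z)\) whose core is freely homotopic to a power of \(\gamma\). Injectivity of \(\gamma\mapsto\) annulus follows because distinct primitive closed orbits of a flow without perfect fits are not freely homotopic to powers of one another. Otherwise the corresponding elements would fix two points of \(P_{\varphi}\), forcing a lozenge by Proposition \ref{prop:actionclassification}. For (4), all four prongs distinguished yields surfaces on both sides in every quadrant. Surgering as in the proof of Proposition \ref{prop:singulargutsinvariance} produces a 4-ST whose unique closed orbit is \(\gamma\), and the same non-freely-homotopic argument gives injectivity into 4-ST components.

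The main obstacle is the simultaneity step: realizing the annuli for two (or four) prongs with one common facet, disjoint from the intermediate facet components and from \(\mathcal{A}_{\varphi}\). I would first try the orbit-space argument of Lemma \ref{lem:distinguished}: each nearest component's shadow has \(\pi(\widetilde\gamma)\) on its boundary, and convexity (Proposition \ref{prop:shadow1}) then separates the relevant quadrants.
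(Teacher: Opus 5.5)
Your overall route matches the paper's proof. You get (1) from Lemma \ref{lem:distinguished}, glue the prong annuli along \(\gamma\), locate the result in a sutured annulus or a 4-ST, and get injectivity from the absence of freely homotopic closed orbits. \textbf{However, your case analysis for (2) has the sides reversed.} Along a stable prong \(P\), forward flowlines run from \(c=P\cap S\) toward \(\gamma\). So the annulus between \(c\) and \(\gamma\) lies on the positive side of \(S\), and \(c\) lies on \(R_{-}\) of the piece containing \(\gamma\). Along an unstable prong the flow runs from \(\gamma\) out to \(c\), so \(c\) lies on \(R_{+}\). Hence two opposite prongs of one leaf (both stable or both unstable) give an annulus with both boundary circles on the same side. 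That is a non-product annulus, and it is what yields the 4-ST. An adjacent stable/unstable pair gives a product annulus. You assert the reverse in both bullets. The proof of Lemma \ref{lem:regular-invariantline} says the same thing from the orbit-space side: for a product annulus the two shadows cannot lie on opposite sides of a full leaf through the fixed point, so the prongs they contain are equal or adjacent, never opposite. The paper's proof uses this correct dichotomy: adjacent directions put \(\gamma\) into a sutured annulus, and non-adjacent ones put \(\gamma\) into a 4-ST.

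Your sentence ``since \(\gamma\) is regular it must be a 4-ST'' is also wrong as stated. A regular orbit whose stable and unstable leaves are M\"obius bands has only two prongs in \(M\). Each prong annulus meets \(\gamma\) along a double cover, so \(A_{1}\cup A_{2}\) is not an annulus. The paper puts such an orbit in a non-longitudinal 2-ST, whose sutured annuli carry \(\gamma^{2}\). Once you swap the two bullets and add this case, every case still produces a sutured annulus whose core is freely homotopic to a power of \(\gamma\). Your counting and injectivity arguments for (2) then go through. Statement (3) follows because four distinguished directions always include an opposite pair. The simultaneity issue you single out as the main obstacle is secondary; a double-curve-sum argument as in Lemma \ref{lem:singularinvariance1} is the natural way to make one maximal facet serve all distinguished prongs.
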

\begin{proof}
The first statement is a direct corollary of Lemma \ref{lem:distinguished}.
Assume that \(\gamma\) is a regular closed orbit and is \(z\)-distinguished in two directions. If \(\gamma\) has only two prongs, then \(\gamma\) is in a non-longitudinal 2-ST and corresponds to a sutured annulus.
If \(\gamma\) has four prongs and distinguished in two adjacent directions, then \(\gamma\) is isotoped into the a sutured annulus.
If \(\gamma\) is distinguished in two non-adjacent direction, \(\gamma\) is in a 4-ST.
The correspondence is injective since \(\varphi\) is without perfect fits and hence has no freely-homotopic closed orbits.
Therefore, the cardinality of \(D_{2}(z)\) is less or equal to the number of sutured annuli in \(\mathrm{Guts}(z)\).
The final statement is immediate.
\end{proof}
\subsection{Growth rate and entropy}
\label{sec:org4aa82c5}
In this section, we show that, for positive classes in \(H^{1}(M)\), the growth rate and entropy of the canonical semiflow exists.
In particular, on the closure of the fibered cone, there is a well-defined growth rates functions and entropy function on the guts.

Let \(F\) be an open face of \(\mathcal{C}_{2}(\varphi)\). Denote by \(\varphi_{F}\) the canonical semiflow on \(\mathrm{Guts}(M,F)\) and by \(\mathcal{O}(\varphi_{F})\) the set of closed orbits of \(\varphi_{F}\).
We define the positive class cone \(\mathcal{C}^{+}(F)\subset H^{1}(M)\) as the set of cohomology classes which is positive on \(\mathcal{O}(\varphi_{F})\).
We define the exponential growth rate for a positive class \(\eta\in \mathcal{C}^{+}(F)\) as
\[ \mathrm{gr}_{\varphi_{F}}(\eta)= \lim_{L\to \infty} \#\{\gamma\in \mathcal{O}(\varphi_{F}): \eta(\gamma)\leq L\}^{\frac{1}{L}}. \]
We note that \(\mathcal{C}^{+}(F)\) is an open cone in \(H^{1}(M)\), and \(\gamma\in \mathcal{O}(\varphi_{F})\) if and only if \(\zeta(\gamma)=0\) for some \(\zeta\in F\) according to Theorem \ref{thm:encodeclosedorbit}.
\begin{thm:growthrate}[Growth rate]
For any positive class \(\eta\in H^{1}(M)\), the growth rate \(\mathrm{gr}_{\varphi_{F}}(\eta)\) exists.
The growth rate \(\mathrm{gr}_{\varphi_{F}}(\eta)\)  is strictly greater than 1 for every positive \(\eta\in H^{1}(M)\) if and only if \(\varphi_{F}\) contains infinitely many primitive closed orbits.
\label{thm:growthrate}
\end{thm:growthrate}
\begin{proof}
The proof proceeds similarly to \autocite[Section 7, 8]{LandryMinskyTaylor2022_FlowsGrowthRatesAndTheVeeringPolynomial_PUB}

Let \(\tau\) be the veering triangulation associated to \(\varphi\). We choose any \(z\in F\) and any maximal \(\tau\)-transverse facet \(F(z)\).
Let \(\Phi\) be the flow graph of \(\tau\). We refer the reader to \autocite[Section 6]{LandryMinskyTaylor2022_FlowsGrowthRatesAndTheVeeringPolynomial_PUB} for a detailed introduction to the flow graph, and to \autocite{Mcmullen2015_EntropyAndTheCliquePolynomial_PUB} for the growth rate of a directed graph.
The flow graph can be embedded into \(M\) so that its edges are all positively transverse to \(F(z)\).
Denote by \(\Phi|F(z)\) the subgraph of \(\Phi\) removing edges which intersect \(F(z)\).
\(\eta\) pulls back to a strongly positive class in \(H^{1}(M-F(z))\).
\autocite[Theorem 8.1]{LandryMinskyTaylor2022_FlowsGrowthRatesAndTheVeeringPolynomial_PUB} applies here with some modifications.
Let \(\mathcal{O}^{r}(\varphi)\) denote the set of regular orbits.
Since singular orbits are only linear growth rate, \autocite[Theorem 8.1]{LandryMinskyTaylor2022_FlowsGrowthRatesAndTheVeeringPolynomial_PUB} implies that the regular exponential growth rate
\[ \mathrm{gr}_{\varphi_{F}}^{r}(\eta)= \lim_{L\to \infty} \#\{\gamma\in \mathcal{O}^{r}(\varphi_{F}): \eta(\gamma)\leq L\}^{\frac{1}{L}} \]
exists and equal to the growth rate of the directed graph \(\Phi|F(z)\) .
In \(\varphi_{F}\), a singular orbit may be blown up to finitely many closed orbit, which have linear growth rate.
So 
\[ \mathrm{gr}_{\varphi_{F}}(\eta)= \lim_{L\to \infty} \#\{\gamma\in \mathcal{O}(\varphi_{F}): \eta(\gamma)\leq L\}^{\frac{1}{L}} \]
exists and equals to the regular exponential growth rate.

The last statement follows directly from \autocite[Proposition 7.7]{LandryMinskyTaylor2022_FlowsGrowthRatesAndTheVeeringPolynomial_PUB}.
\end{proof}
Different choices of facet \(F(z)\) may give different graphs \(\Phi|F(z)\).
But a canonical minimal flow graph \(\Phi|F\) exists for an open face \(F\).
This is based on \autocite[Lemma 3.2]{Scharlemann1989_SuturedManifoldsAndGeneralizedThurstonNorms_PUB} and the proof of Theorem \ref{thm:openfaceinvariance}.

Let \(z\in F\). We form the minimal subgraph \(\Phi|z\) as follows.
An edge \(e\) of \(\Phi\) is in \(\Phi|z\) if a surface \(S\) with \([S]= z\) is relatively carried by \(\tau\) and intersects \(e\).
In fact, this subgraph can be achieved by a facet \(F(z)\).
First, we choose any transverse facet \(F(z)\) which is relatively carried by \(\tau\).
If an edge \(e\in \Phi|F(z)\)  is not in \(\Phi|z\), then a surface \(S\) is relatively carried by \(\tau\) and intersects \(e\). If \(S\) is disjoint from \(F(z)\), we add it to \(F(z)\) directly.
Otherwise, by \autocite[Lemma 3.2]{Scharlemann1989_SuturedManifoldsAndGeneralizedThurstonNorms_PUB}, a homologically-nontrivial component of \(S\oplus kF(z)\) is disjoint from \(kF(z)\), is relatively carried by \(\tau\) and intersects \(e\). We add this component to \(F(z)\).
After finitely many steps, we obtain a facet \(F(z)\) which achieve the minimal subgraph \(\Phi|z\).

For two different \(y,z\in F\), the minimal subgraphs \(\Phi|y\) and \(\Phi|z\) are the same.
As in the proof of Theorem \ref{thm:openfaceinvariance}, let \(y,z\) be contained in the open segment \((u,v)\) and \(S_{u}\) and \(S_{v}\) are constructed in Lemma \ref{lem:AZsurfaces}.
Let \(N(B)\) be the regular neighborhood of the branched surface formed by \(S_{u}\) and \(S_{v}\).
Via \(N(B)\), we can transform a facet associated to \(y\) to a facet associated to \(z\), and vice verse.
In particular, we can transform a facet \(F(y)\) which achieves \(\Phi|y\) to a facet which achieves \(\Phi|z\).
Therefore, \(\Phi|y\) and \(\Phi|z\) are identical.

Based on the argument above, a more canonical characterization of \(\Phi|F\) is as follows.
An edge \(e\) of \(\Phi\) is in \(\Phi|F\) if a surface \(S\) with \([S]\in F\) is relatively carried by \(\tau\) and intersects \(e\).

Due to \autocite[Thoerem 8.1]{LandryMinskyTaylor2022_FlowsGrowthRatesAndTheVeeringPolynomial_PUB}, the growth rate \(\mathrm{gr}_{\varphi_{F}}(\eta)\) is equal to the growth rate of the directed graph \(\Phi|F\).
Also, it equals the reciprocal of the smallest root of the specialization \(V^{\eta}_{\varphi_{F}}\) of the veering polynomial.

\begin{remark}
The positive cone \(\mathcal{C}^{+}(F)\subset H^{1}(M)\) is not intrinsic for the semiflow \(\varphi_{F}\) on the guts.
But the method here does not apply to proving the existence of the growth rate for the positive class in \(H^{1}(\mathrm{Guts}(F))\) (which is positive on \(\mathcal{O}(\varphi_{F})\)).
If we introduce the ``strongly positive'' definition (see \autocite[Section 8.2]{LandryMinskyTaylor2022_FlowsGrowthRatesAndTheVeeringPolynomial_PUB}), the growth rate for a strongly positive class in \(H^{1}(M-F(z))\) exists.
(Roughly speaking, a strong positive class is positive, in addition, on those curves on the boundary which are homotopic to a closed orbit.)
But it seems still hard for the strongly positive class in \(H^{1}(\mathrm{Guts}(F))\).
The difficulty is that we don't know how to place product annuli in a ``good'' position with the flow graph.
\end{remark}

As the logarithm of the growth rate, the entropy plays a more central role in many fields.
We summarize the basic properties of the entropy function.
The associated \emph{entropy function} is
\[ \begin{aligned}
\mathrm{ent}_{\varphi_{F}}: \mathcal{C}^{+}(F) &\to [0,+\infty)\\
\eta&\mapsto \log( \mathrm{gr}_{\varphi_{F}}(\eta) ).
\end{aligned} \]
Combining Theorem \ref{thm:growthrate} and \autocite[Theorem 9.1]{LandryMinskyTaylor2022_FlowsGrowthRatesAndTheVeeringPolynomial_PUB}, we have
\begin{theorem}[Entropy]
The entropy function \(\mathrm{ent}_{\varphi_{F}}\) is continuous, convex, and homogeneous of degree -1.
\label{thm:entropy}
\end{theorem}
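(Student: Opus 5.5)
The plan is to reduce the statement to the corresponding facts in \autocite[Section 9]{LandryMinskyTaylor2022_FlowsGrowthRatesAndTheVeeringPolynomial_PUB} via the identification obtained in the proof of Theorem \ref{thm:growthrate}. There, for every \(\eta\in\mathcal{C}^{+}(F)\), \(\mathrm{gr}_{\varphi_{F}}(\eta)\) equals the growth rate of the directed graph \(\Phi|F\), the canonical minimal subgraph of the flow graph of \(\tau\). Equivalently, it is the reciprocal of the smallest positive root of the specialization \(V^{\eta}_{\varphi_{F}}\) of the (restricted) veering polynomial. The crucial point is that \(\Phi|F\) depends only on the open face \(F\) and not on \(\eta\), as shown after Theorem \ref{thm:growthrate}. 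So \(\mathrm{ent}_{\varphi_{F}}\) is the entropy function of a single fixed finite directed graph \(\Phi|F\), with edges weighted by the pairing of \(\eta\) with the cycle classes, viewed as a function of the weighting class \(\eta\).

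With this in hand, I would verify the three properties in this order.

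\textbf{Homogeneity.} For \(t>0\), replacing \(\eta\) by \(t\eta\) rescales the counting parameter: \(\#\{\gamma:t\eta(\gamma)\le L\}=\#\{\gamma:\eta(\gamma)\le L/t\}\). Hence \(\mathrm{gr}(t\eta)=\mathrm{gr}(\eta)^{1/t}\) and \(\mathrm{ent}(t\eta)=t^{-1}\mathrm{ent}(\eta)\). This uses only the existence of the limit from Theorem \ref{thm:growthrate}.

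\textbf{Convexity and continuity.} I would apply McMullen's clique polynomial theory \autocite{Mcmullen2015_EntropyAndTheCliquePolynomial_PUB}, as in \autocite[Theorem 9.1]{LandryMinskyTaylor2022_FlowsGrowthRatesAndTheVeeringPolynomial_PUB}. For a directed graph with edge weights depending linearly on \(\eta\), the value \(\mathrm{ent}(\eta)\) is the unique \(s\) with spectral radius of \(A(e^{-s\eta})\) equal to \(1\). Equivalently, \(1/\mathrm{ent}\) is characterized as the support function of a convex set determined by the cycle classes. Log-convexity of the spectral radius in the weights (Kingman's theorem) then gives convexity of \(\mathrm{ent}\) on the open cone \(\mathcal{C}^{+}(F)\). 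A convex finite function on an open convex set is continuous, so continuity is automatic.

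The hypotheses of the cited theorem must hold for \(\Phi|F\). Its cycles must be exactly the closed orbits counted in \(\mathcal{O}(\varphi_{F})\), up to finitely many singular or blown-up orbits of linear growth, which Theorem \ref{thm:growthrate} already handles. Moreover, \(\eta\in\mathcal{C}^{+}(F)\) must be positive on all directed cycles of \(\Phi|F\). I expect this positivity to be the main obstacle, since positivity on \(\mathcal{O}(\varphi_{F})\) must be transferred to all cycles of \(\Phi|F\), not just those realized by orbits. I would first try the flow-graph/orbit correspondence of \autocite[Section 6]{LandryMinskyTaylor2022_FlowsGrowthRatesAndTheVeeringPolynomial_PUB}: every directed cycle of \(\Phi\) is homotopic to a closed orbit or a bounded concatenation of them. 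Combined with Theorem \ref{thm:encodeclosedorbit}, the cycles of \(\Phi|F\) should then be homologous to nonnegative combinations of classes of orbits in \(\mathcal{O}(\varphi_{F})\), on which \(\eta\) is positive.
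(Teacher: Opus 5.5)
Your proposal is correct and follows the paper's route. The paper's proof simply combines Theorem~\ref{thm:growthrate}, together with the subsequent observation that \(\mathrm{gr}_{\varphi_{F}}(\eta)\) is the growth rate of the fixed face-dependent graph \(\Phi|F\), with \autocite[Theorem 9.1]{LandryMinskyTaylor2022_FlowsGrowthRatesAndTheVeeringPolynomial_PUB}; you have unpacked what that cited theorem supplies (homogeneity by rescaling \(L\), convexity via McMullen/Kingman, continuity from convexity on an open cone), and your explicit check that \(\eta\in\mathcal{C}^{+}(F)\) is positive on every directed cycle of \(\Phi|F\) makes precise a hypothesis the paper leaves implicit.
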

\section{Further discussions and questions}
\label{sec:discussion}
\subsection{Boundary case}
\label{sec:org3f66450}
It is not hard to generalize results in this paper with some modifications, to a compact manifold with toral boundaries that admits a pseudo-Anosov flow.
We refer readers to \autocite[Section 3]{LandryMinskyTaylor2025_TransverseSurfacesAndPseudoAnosovFlows_PUB} for a detailed definition of almost pseudo-Anosov flow on compact manifolds.
In short, we say that a compact manifold with toral boundaries admits a pseudo-Anosov flow if it admits a Fried's blowup of a pseudo-Anosov flow on some closed orbits.

In the boundary case, the guts associated to a homology class is still well-defined.
But the well-definedness of the guts on open faces needs extra assumptions, that is,
\begin{theorem}[{\cite[Theorem 1.2]{AgolZhang2022_GutsInSuturedDecompositionsAndTheThurstonNorm_ARXv1}}]
Let \(M\) be an orientable, irreducible 3-manifold with toral boundaries and non-degenerate Thurston norm.
Let \(y,z\) be two elements in an open face of the Thurston norm unit sphere.
If there is an open segment \((v,w)\) containing \(y,z\) such that the restriction of \(v\) and \(w\) on each toral boundary are not in opposite orientations, then the guts \(\mathrm{Guts}(y)\) is equivalent to \(\mathrm{Guts}(z)\).
\end{theorem}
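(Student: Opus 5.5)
The plan is to run the branched-surface argument used above for Theorem \ref{thm:openfaceinvariance}, now in the relative setting $H_{2}(M,\partial M)$ and with properly norm-minimizing surfaces. First, I would establish a boundary version of Lemma \ref{lem:AZsurfaces}. Since $v,w$ lie in a common closed face, choose properly norm-minimizing surfaces $S_{v},S_{w}$ in minimal position with $[S_{v}]=v$ and $[S_{w}]=w$. The aim is that $aS_{v}\oplus bS_{w}$ is properly norm-minimizing for all positive integers $a,b$. Norm-minimality of the sum follows from linearity of $x$ on the closed face, provided no component of $S_{v}\setminus S_{w}$ or $S_{w}\setminus S_{v}$ has nonnegative Euler characteristic; such components can be removed by the usual disk and annulus swaps.

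The new input, and the place where the hypothesis enters, is \emph{properness}. On a boundary torus $P$, the curves $\partial S_{v}\cap P$ and $\partial S_{w}\cap P$ are parallel families, each coherently oriented. By assumption, the restrictions of $v$ and $w$ to $P$ are not in opposite orientations. Hence after isotopy the two families either are disjoint and parallel with the same orientation, or meet transversally. In either case the oriented cut-and-paste produces curves on $P$ that are all oriented the same way, namely in the class $av|_{P}+bw|_{P}$, with no cancelling pairs. If the restrictions were opposite, the sum would acquire oppositely oriented parallel boundary curves; these would have to be tubed off, and this tubing is exactly what can change the guts. So this step is where the hypothesis is used.

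Next, I would build the taut oriented branched surface $B$ carrying $S_{v}$ and $S_{w}$ as in \autocite[Proposition 8]{Oertel1985_HomologyBranchedSurfaces_PUB}; it meets $\partial M$ in a train track carrying the coherent boundary curves. For $S=aS_{v}\oplus bS_{w}$ representing a multiple of $y$, I would split $N(B)$ along $2S$ to obtain the product $I$-bundle $L_{B}(S)$. The disk-component argument from Section \ref{sec:invarianceopenface} shows that $\partial_{v}N(B)$ consists of product annuli in $M-2S$. Then:
\begin{enumerate}
\item Extend $S$ to a facet $F_{S}$. Then $F_{S}\cap(M\setminus N(B))$ is a maximal collection of horizontal surfaces in $M-N(B)$, using the criterion that a surface in $M-2S$ is homologous to $S$ exactly when it is horizontal in $M-N(B)$.
\item Replace the pieces inside $N(B)$ by parallel copies of $S'=a'S_{v}\oplus b'S_{w}$, where $S'$ represents a multiple of $z$. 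This gives a facet $F_{S'}$ that agrees with $F_{S}$ outside $N(B)$.
\end{enumerate}

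Finally, I would decompose $M$ along $F_{S}$ and $F_{S'}$ and then along $\partial_{v}N(B)$. The only pieces that differ are subsets of $N(B)$, which are products, and $\partial_{v}N(B)$ is a product decomposition surface. By Gabai's Lemma~3.12 and the uniqueness of sutured guts (Theorem \ref{thm:AZguts}), discarding windows leaves the same guts, namely the guts of $M-N(B)$ relative to $H_{S}$. Therefore $\mathrm{Guts}(y)$ is equivalent to $\mathrm{Guts}(z)$.

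The main obstacle I expect is the second paragraph. One must check that coherent orientation on every boundary torus survives the double curve sum, and also the maximality step (1), for all $a,b$ simultaneously. For this I would first try normalizing $S_{v}\cap P$ and $S_{w}\cap P$ to geodesic lines in a flat metric on $P$, so that intersections are minimal and every resolution is forced to follow the orientation.
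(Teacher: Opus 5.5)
Your route is the one the paper points to. The paper does not reprove this statement; it quotes it from Agol--Zhang and only remarks that the branched-surface trick of Section~\ref{sec:invarianceopenface} is what needs the boundary hypothesis. That matches your use of the hypothesis to keep everything carried by \(B\) coherently oriented on each torus.

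However, one ingredient you import from the closed case fails once there is boundary, and two later steps rest on it. You propose to arrange that no component of \(S_{v}\setminus S_{w}\) or \(S_{w}\setminus S_{v}\) has \(\chi\geq 0\).
\begin{itemize}
\item If \(v|_{P}\) and \(w|_{P}\) are not parallel on some torus \(P\), the boundary curves must cross. Then \(S_{v}\cap S_{w}\) contains \(k\) arcs, where \(2k\) is the sum over tori of the absolute algebraic intersection numbers of \(v|_{P}\) and \(w|_{P}\), a quantity fixed by homology.
\item Cutting along an arc raises \(\chi\) by one, so the pieces of \(S_{v}\setminus S_{w}\) have total Euler characteristic \(k-x(v)\).
\item Whenever \(k>x(v)\), which the hypotheses do not exclude, some piece is a disk.
\end{itemize}
These rectangles are forced by the crossings on \(\partial M\) (they are product disks of \(M-S_{w}\)), so no disk or annulus swap removes them. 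Your norm-minimality criterion therefore cannot be arranged in general. The ``disk-component argument'' of Section~\ref{sec:invarianceopenface}, which you cite for \(\partial_{v}N(B)\), relies on the same closed-case fact, so it cannot be quoted verbatim either.

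The conclusions survive under a weaker normalization. Require only that every circle of \(S_{v}\cap S_{w}\) be essential in both surfaces, and that the boundary curves be straight in a flat metric on each torus.
\begin{itemize}
\item Double-curve sum is additive in \(\chi\) even along arcs. By linearity of \(x\) on the face, \(aS_{v}\oplus bS_{w}\) is norm-minimizing once it has no sphere or disk components; null-homologous tori and annuli can be discarded.
\item A sphere component contains no arc seams. It is therefore glued from pieces along circles only, and would contain a disk piece bounded by an intersection circle, which is excluded.
\item A disk component has boundary equal to a component of the oriented resolution on some torus. Straight transverse families resolve into curves that move monotonically in the strictly convex cone spanned by the two slopes, so these curves are essential. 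The disk would then compress \(\partial M\), which is impossible since \(M\) is irreducible with non-degenerate norm.
\end{itemize}
This essentiality, not orientation bookkeeping, is the real job of your flat metric.

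The \(L_{B}(S)\) claim is rescued the same way. The disk in \(2S\) bounded by \(\alpha\) avoids \(\partial M\), so it contains no arc seams and only circle pieces occur. The unavoidable rectangles merely produce \(D^{2}\times I\) components of \(L_{B}(S)\). Also, \(\partial_{v}N(B)\) consists of product annuli together with product disks (strips over branch arcs ending on \(\partial M\)), not only annuli; Gabai's Lemma~3.12 covers both.

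Finally, maximality is not an issue that must be handled uniformly in \((a,b)\). \(H_{S}\) is fixed once. A surface representing \(z\) disjoint from \(F_{S'}\) is horizontal in \(L_{B}(S')\) and in \(M-N(B)\), hence parallel to a component of \(F_{S'}\) by maximality of \(H_{S}\).
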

In fact, the branched surface trick in Section \ref{sec:invarianceopenface} requires this assumption.

In the aspect of flows, in Lemma \ref{lem:invariantline}, two product annulus (together with the facet) may cobound a \(T^{2}\times I\) where \(T^{2}\) is a torus.
In Lemma \ref{lem:gutsclassification}, there is an extra case \(T^{2}\times I\) with sutures on its boundary.
Proposition \ref{prop:singulargutsinvariance} can be generalized to the \(T^{2}\times I\) case verbatim since we have fixed the flow on the boundary.
\subsection{Characterization of flows on the guts}
\label{sec:orge8c5520}
A combinatorial characterization has many advantages.
Traditionally, one uses a Markov partition to characterize a flow.
The Markov partition is useful to characterize orbits, but it is neither canonical or well-behaved for transverse surfaces.
For pseudo-Anosov flows without perfect fits (equivalently, without freely-homotopic closed orbits), Agol and Guéritaud proposed a construction of veering triangulation.
Veering triangulations characterize transverse surfaces well, see \autocite{Landry2022_VeeringTriangulationsAndTheThurstonNorm_PUB,LandryMinskyTaylor2025_TransverseSurfacesAndPseudoAnosovFlows_PUB}.

This paper constructs a canonical semiflow on the guts.
One may construct a Markov partition for it, which will encode closed orbits combinatorially.
The semiflow on the guts still contains no freely-homotopic closed orbits.
Two natural questions arise:
\begin{enumerate}
\item is there a combinatorial model, like the veering triangulation, that encodes transverse surfaces or other objects in the guts elegantly (or more generally, any sutured manifold with a tight semiflow) and has a correspondence theory between semiflows and the combinatorial model?
In \autocite{Mosher1996_LaminationsAndFlowsTransverseToFiniteDepthFoliations}, Mosher proposed dynamic pairs to characterize the semiflows on a sutured manifold. But different dynamic pairs may induce same semiflows.
\item does Transverse Surface Theorem \autocite{LandryMinskyTaylor2025_TransverseSurfacesAndPseudoAnosovFlows_PUB} still hold, in some form, for a sutured manifold with a tight semiflow?
In \autocite{HuangTaylor2026_ConstructingDepthOneLaminationsTransverseToPseudoAnosovFlows_ARXv1}, Huang and Taylor prove a homologous version of the Transverse Surface Theorem in the first cut case.
Precisely speaking, for any nonnegative class \(\alpha\) in \(H^{1}(M-S)\), they construct an almost transverse properly embedded surface in \(M-S\) representing \(\alpha\).
\end{enumerate}
At least for veering triangulation, it seems difficult to generalize the combinatorial tricks in \autocite{Landry2022_VeeringTriangulationsAndTheThurstonNorm_PUB} to the guts case and to see how cutting product annuli affects the flow graph of veering triangulation; one might not be able to isotope a product annulus to a ``good'' position with veering triangulation.
\autocite[Section 11.4]{LandryTsang2025_EndperiodicMapsSplittingSequencesAndBranchedSurfaces_PUB} ask similarly, if there is a combinatorial way to recover the stable and unstable veering branched surfaces from some cellular decomposition in sutured case.

Broadly speaking, \autocite{LandryMinskyTaylor2025_TransverseSurfacesAndPseudoAnosovFlows_PUB} and this paper cut a flow into small pieces, while \autocite{Mosher1996_LaminationsAndFlowsTransverseToFiniteDepthFoliations} and \autocite{LandryTsang2025_EndperiodicMapsSplittingSequencesAndBranchedSurfaces_PUB} build a flow from pieces up to the whole.
To obtain a flow analogue of Gabai's sutured decomposition sequence, the sutured version of Transverse Surface Theorem is necessary.
The classification of tight semiflows on a fixed sutured manifold is also a natural question.
Tsang asks whether one can characterize all tight semiflows that only have finitely many primitive closed orbits, except the canonical ones on 4-STs and non-longitudinal 2-STs.
\subsection{Sutured version of cones}
\label{sec:org216be60}
Fried's work \autocite[Exposé 14]{FathiLaudenbachPoenaruKimMargalit2012_ThurstonsWorkOnSurfacesMn48_PUB} shows that a suspension flow represents a fibered cone of Thurston norm unit ball.
And the dual cone is spanned by the homology classes of all closed orbits.
Similar results hold for pseudo-Anosov without perfect fits (and veering triangulation) \autocite{LandryMinskyTaylor2022_FlowsGrowthRatesAndTheVeeringPolynomial_PUB}.
In general, the \emph{sutured Thurston norm} generalizes the Thuston norm to the sutured manifolds.
One may ask whether we can compute the sutured Thurston face by dynamical infomations.
Further, one may ask if a dual relation holds for the tight semiflows without freely-homotopic orbits: are the cone spanned by closed orbits and the cone spanned by (almost) transverse surfaces dual?
\subsection{Floer homology, volume, entropy and the guts}
\label{sec:org88e9d27}
Both hyperbolic geometry and Heegaard Floer homology have immense applications to the study of 3-manifolds.
However, until now, there are few known connections between them.
Lin and Lipnowski asked that is there any relationship between the topological invariants arising from the hyperbolic geometry and from Floer homology.

One promising path bridging them comes from the connection between Heegaard Floer invariants of a mapping torus and the fixed points of its monodromy, which originated in Cotton-Clay's work on symplectic Floer homology \autocite{Cotton-Clay2009_SymplecticFloerHomologyOfAreaPreservingSurfaceDiffeomorphisms_PUB}.
Recently, Liu \autocite{Liu2024_EntropyVersusVolumeViaHeegaardDiagrams_ARXv2} showed a way to relate entropy and volume, starting from volume, and then proceeding through Heegaard splitting representation length, the rank of Floer homology, fixed points of the monodromy, and finally arrive at the entropy.

The sutured Floer homology is invariant under horizontal and vertical sutured decomposition, and the volume of hyperbolic manifold is controlled by the volume of its guts \autocite[Theorem 9.1]{AgolDunfieldStormThurston2005_LowerBoundsOnVolumesOfHyperbolicHaken3Manifolds_ARXv2} \autocite[Theorem 5.5]{CalegariFreedmanWalker2009_PositivityOfTheUniversalPairingIn3Dimensions_PUB}.
So semiflows on the guts may give a new bridge to relate entropy and volume.
Can we derive a version of Floer homology directly from the semiflow on sutured manifold?
In \autocite{AlfieriTsang2025_HeegaardFloerTheoryAndPseudoAnosovFlowsI_ARXv1,AlfieriTsang2025_HeegaardFloerTheoryAndPseudoAnosovFlowsIi_ARXv1}, Alfieri and Tsang find a way to compute the Floer homology of a manifold by the dyanmics of a supported fully-punctured pseudo-Anosov flow with no perfect fits.
The chain complex is generated by certain closed orbits, like the periodic Floer homology.
But the general case seems very hard.
\appendix
\section{}
\label{app:A}
We give a concise proof of Proposition \ref{prop:shadow3}.
\begin{prop:shadow3}
Let \(l\) be a component of the frontier of \(\Theta(\widetilde{S})\) in \(P_{\varphi}\). Then,
\begin{enumerate}
\item \(l\) is a slice leaf of \(\mathcal{F}^{s/u}\). If it is a slice leaf \(l\) of \(\mathcal{F}^{s}\), then as a sequence of point \(\{p_{n}\}_{n\in \Number}\) tends to a point in \(l\), the preimages \((\Theta|_{\widetilde{S}})^{-1}(p_{n})\) in \(\widetilde{S}\) escape in the positive direction.
Similar conclusion holds for unstable boundary slice leaves.
\item \(l\) is periodic, namely, there is an element \(g\in \pi_{1}(M)\) such that \(g\) fixes \(l\) and hence a unique point in \(l\).
\end{enumerate}
\end{prop:shadow3}
\begin{proof}
According to \autocite[Proposition 4.1]{Fenley2009_GeometryOfFoliationsAndFlowsI_PUB} or \autocite[7.8]{LandryMinskyTaylor2025_TransverseSurfacesAndPseudoAnosovFlows_PUB}, any component \(l\) is a slice leaf of \(\mathcal{F}^{s/u}\) and if \(l\) is a stable slice leaf, then \(\widetilde{S}\) escape in the positive direction when approaching \(\Theta^{-1}(l)\).

We now focus on the second statement.
For compact \(M\) and transitive \(\varphi\), this was proven in \autocite[8.7]{LandryMinskyTaylor2025_TransverseSurfacesAndPseudoAnosovFlows_PUB} using veering triangulation.
Their method, however, does not generalize easily to the non-transitive case.

An earlier result for closed manifolds \(M\) and pure pseudo-Anosov flow \(\varphi\) appears in \autocite[Section 5]{Fenley1999_SurfacesTransverseToPseudoAnosovFlowsAndVirtualFibersIn3Manifolds_PUB}. Fenley's approach can be adapted to non-transitive setting with minor modifications.
We provide a concise adaptation below.

We recall the settings first.
Fix a universal lift \(\widetilde{S}\) of \(S\) and denote by \(\pi:\widetilde{S}\to S\) the covering map.
Let \(W_{\varphi}^{s}\) be the stable foliation in \(M\) associated to \(\varphi\) and \(W_{\varphi}^{u}\) be the unstable one. We note that all blown up annuli belong to \(W_{\varphi}^{s}\) and \(W_{\varphi}^{u}\) simultaneously in our notation.
The universal lifts of \(W^{s}_{\varphi}\) and \(W_{\varphi}^{u}\) project to two singular foliations on \(P_{\varphi}\), denoted by \(\mathcal{F}^{s}_{\varphi}\) and \(\mathcal{F}_{\varphi}^{u}\) respectively. 
Since \(S\) is transverse to \(\varphi\), there is a pair of induced singular foliations on \(S\), denoted by \(W_{S}^{s}\) and \(W_{S}^{u}\). If \(S\) intersects a blown-up annulus \(A\), their intersection is a closed curve on \(S\), called a \emph{blown-up closed leaf}.
Evidently, both \(W_{S}^{s}\) and \(W_{S}^{u}\) contain all blown-up closed leaves and are transverse elsewhere.
Moreover, the universal lift \(\widetilde{W}_{S}^{s/u}\) of these two foliation is identical with the restriction of \(\mathcal{F}_{\varphi}^{s/u}\) on \(\Theta(\widetilde{S})\).

\textbf{Claim:} Let \(\beta\) be a closed leaf of \(\mathcal{F}_{S}^{s}\). The projection of any lift \(\widetilde{\beta}\subset \widetilde{W}_{S}^{s}\) to the orbit space \(P_{\varphi}\) is either
\begin{enumerate}
\item an open blown-up segment, or
\item a stable prong of a periodic point lying on \(\partial \Theta(\widetilde{S})\).
\end{enumerate}

\textbf{Proof:} Such claim generalize \autocite[Proposition 5.7]{Fenley1999_SurfacesTransverseToPseudoAnosovFlowsAndVirtualFibersIn3Manifolds_PUB} to almost pseudo-Anosov case.
Let \(\mathcal{F}^{s}(\widetilde{\beta})\) be the stable leaf containing \(\widetilde{\beta}\) and there is a deck transformation \(g\in \pi_{1}(S)\) which preserves \(\mathcal{F}^{s}(\widetilde{\beta})\).
Then, \(g\) fixes a point \(\gamma\) in \(\mathcal{F}^{s}(\widetilde{\beta})\).
We observe that \(\gamma\) cannot lie in \(\Theta(\widetilde{S})\): otherwise we could choose a point \(p\) in \(\gamma\cap \widetilde{S}\), and by the \(g\)-invariance of \(\gamma\) and \(\widetilde{S}\), \(g^{k}(p)\) would also belong to their intersection, contradicting that \(\#(\gamma\cap \widetilde{S})\leq 1\).
By the structure of \(\partial \Theta(\widetilde{S})\), there is a unique boundary component \(l\in \mathcal{F}^{-}\) separating \(\gamma\) and \(\widetilde{\beta}\), hence fixed by \(g\).
Without loss of generosity, we assume that \(\gamma\in l\).
Clearly, \(\widetilde{\beta}\) has a ray asymptotic to \(\gamma\), and thus belongs to one prong of \(\mathcal{F}^{s}(\gamma)\).
If \(\widetilde{\beta}\) has non-empty intersection with blown-up trees, then we fall into case 1 directly by the structure of \(\Theta(\widetilde{S})\) near blown-up trees (Proposition \ref{prop:shadow2}).
Otherwise, \(\widetilde{\beta}\) blows down to an ordinary stable slice leaf in a pseudo-Anosov flow, and hence equal to one stable prong of \(\gamma\). \hfill\(\blacksquare\)

% TODO: add the definition of blown up tree on orbit space 

We remove all Reeb annuli in \(\mathcal{F}_{S}^{s}\) and collapse the two boundary closed leaves of each such annulus to a single leaf. This yields a Reebless foliation, which is straightened to be a geodesic lamination \(\mathcal{G}_{S}^{s}\).
Since the deep region between a perfect fit contains no singularities, \autocite[Lemma 5.13, Lemma 5.14]{Fenley1999_SurfacesTransverseToPseudoAnosovFlowsAndVirtualFibersIn3Manifolds_PUB} still hold in the almost pseudo-Anosov case.

Assume that \(\mathcal{G}\) is a minimal sublamination which is not a closed leaf.
We claim that \(\mathcal{G}\) is equal to \(\mathcal{G}_{S}^{s}\). In particular, \(\mathcal{G}_{S}^{s}\) is minimal.
Clearly, \(\mathcal{G}\) cannot intersect a small neighborhood of blown-up closed leaves: otherwise, \(\mathcal{G}\) would contain a leaf limit to the blown-up closed leaves.
The proof of \autocite[Proposition 5.10]{Fenley1999_SurfacesTransverseToPseudoAnosovFlowsAndVirtualFibersIn3Manifolds_PUB} carries over verbatim. Consequently, each leaf of \(\mathcal{G}\) is not thick, namely, has exactly one preimage in the Reebless part of \(\mathcal{F}_{S}^{s}\).
Proceeding along the proof of \autocite[Theorem 5.9]{Fenley1999_SurfacesTransverseToPseudoAnosovFlowsAndVirtualFibersIn3Manifolds_PUB}, we know that if \(\mathcal{G}\) contains no closed leaf, then the complement region of \(\mathcal{G}\) is the union of ideal polygons, and hence \(\mathcal{G}\) is equal to \(\mathcal{G}_{S}^{s}\).

Let \(l\) be an arbitrary stable component of \(\partial \Theta(\widetilde{S})\).
We choose a regular point \(p\in l\). Let \(\widetilde{L}=\mathcal{F}^{u}(p)\cap \Theta(\widetilde{S})\) be the bi-infinite leaf of \(\widetilde{\mathcal{F}}_{S}^{u}\).
If \(l\) intersects a blown-up segment or \(\widetilde{L}\) projects to a closed leaf, the conclusion follows directly.

Let \(\mathcal{G}\) be a minimal sublamination of the closure of \(\pi(\widetilde{L})\).
If \(\mathcal{G}\) is only a closed leaf, then the ray \(\widetilde{L}\) projects to a leaf ray \(L\) on \(S\), which spirals around a closed leaf \(\alpha\).
We choose a lift of \(\alpha\) in \(\Theta(\widetilde{S})\), denote by \(\widetilde{\alpha}\). So, as in lemma \ref{lem:elliptic-boundary}, \(\widetilde{\alpha}\) is either an open blown-up segment or a stable prong of a periodic point.
In both cases, lifts of \(L\) sufficiently close to \(\widetilde{\alpha}\) must limit to a periodic boundary component of \(\Theta(\widetilde{S})\) by the local stucture near \(\widetilde{\alpha}\). The existence of sufficiently close lift is guaranteed by the fact that \(L\) spirals around \(\alpha\).
Therefore, \(\widetilde{L}\) limits to a periodic boundary component, namely \(l\).

Otherwise, \(\mathcal{G}\) is equal to \(\mathcal{G}_{S}^{s}\). In particular, \(\widetilde{L}\) limits on itself.
If \(\widetilde{L}\) is not thick, then \(l\) is periodic by Claim 1 in the proof of \autocite[Proposition 5.10]{Fenley1999_SurfacesTransverseToPseudoAnosovFlowsAndVirtualFibersIn3Manifolds_PUB}.
Otherwise, \(\widetilde{L}\) is thick, which is impossible by the arguments in \autocite[Proposition 5.10]{Fenley1999_SurfacesTransverseToPseudoAnosovFlowsAndVirtualFibersIn3Manifolds_PUB}.

In summary, \(l\) is periodic. The conclusion for unstable boundary leaves holds similarly.
\end{proof}
\printbibliography
\end{document}